\title[Gysin sequences and $SU(2)$-symmetries of $C^*$-algebras]{Gysin sequences and $SU(2)$-symmetries of $C^*$-algebras}
\date{\today}
\author{Francesca Arici, Jens Kaad}
\address{Mathematical Institute, Leiden University, P.O. Box 9512, 2300 RA
Leiden, the Netherlands}
\email{f.arici@math.leidenuniv.nl}
\address{Department of Mathematics and Computer Science, The University of Southern Denmark, Campusvej 55, DK-5230 Odense M, Denmark}
\email{kaad@imada.sdu.dk}
\theoremstyle{plain}
\newtheorem{thm}{Theorem}[section]
\newtheorem{prop}[thm]{Proposition}
\newtheorem{lemma}[thm]{Lemma}
\newtheorem{cor}[thm]{Corollary}
\theoremstyle{definition}
\newtheorem{dfn}[thm]{Definition}
\newtheorem{rem}[thm]{Remark}
\newtheorem{example}[thm]{Example}
\theoremstyle{plain}
\numberwithin{equation}{section}
\newcommand{\alpheqn}[1][\relax]{
     \refstepcounter{equation}
     \if#1\relax \relax
       \else \label{#1}
     \fi  
     \setcounter{saveeqn}{\value{equation}}%
    \setcounter{equation}{0}%
    \renewcommand{\theequation}{\thealphequation}}
\newcommand{\reseteqn}{\setcounter{equation}{\value{saveeqn}}%
     \renewcommand{\theequation}{\thearabicequation}}
\providecommand{\mathscr}{\mathcal} 
\newcommand{\Cs}{$C^*$-}
\newcommand{\cd}{\cdot}
\newcommand{\ot}{\otimes}
\newcommand{\hot}{\widehat \otimes}
\newcommand{\op}{\oplus}
\newcommand{\bop}{\bigoplus}
\newcommand{\opo}{\oplus \ldots \oplus}
\newcommand{\olo}{\otimes\ldots\otimes}
\newcommand{\plp}{+ \ldots +}
\newcommand{\we}{\wedge}
\newcommand{\ci}{\circ}
\newcommand{\ti}{\times}
\newcommand{\nn}{\mathbb{N}}
\newcommand{\zz}{\mathbb{Z}}
\newcommand{\rr}{\mathbb{R}}
\newcommand{\cc}{\mathbb{C}}
\newcommand{\ga}{\gamma}
\newcommand{\Ga}{\Gamma}
\newcommand{\de}{\delta}
\newcommand{\De}{\Delta}
\newcommand{\io}{\iota}
\newcommand{\la}{\lambda}
\newcommand{\om}{\omega}
\newcommand{\si}{\sigma}
\newcommand{\Si}{\Sigma}
\newcommand{\te}{\theta}
\newcommand{\Te}{\Theta}
\newcommand{\ze}{\zeta}
\newcommand{\pa}{\partial}
\newcommand{\ov}{\overline}
\newcommand{\C}[1]{\mathcal{#1}}
\newcommand{\T}[1]{\textup{#1}}
\newcommand{\B}[1]{\mathbb{#1}}
\newcommand{\s}[1]{\mathscr{#1}}
\newcommand{\fork}[2]{\left\{ \begin{array}{#1} #2 \end{array} \right.} 
\newcommand{\ma}[2]{\left(\begin{array}{#1} #2 \end{array} \right)}
\newcommand{\su}{\subseteq}
\newcommand{\q}{\qquad}
\newcommand{\qq}{\qquad \qquad}
\newcommand{\qqq}{\qquad \qquad \qquad}
\newcommand{\inn}[1]{\langle #1 \rangle}
\newcommand{\binn}[1]{\big\langle #1 \big\rangle}
\newcommand{\Falg}{F_{\T{alg}}}
\newcommand{\Cc}{\mathbb{C}}
\begin{document}

\subjclass[2020]{19K35, 46L80; 46L85, 46L08, 30H20} 
\keywords{KK-theory, Pimsner algebras, Subproduct systems, Fusion rules, $SU(2)$-symmetry, Gysin sequence.} 

\begin{abstract}
Motivated by the study of symmetries of $C^*$-algebras, as well as by multivariate operator theory, we  introduce the notion of an $SU(2)$-equivariant subproduct system of Hilbert spaces. We analyse the resulting Toeplitz and Cuntz--Pimsner algebras and provide results about their topological invariants through Kasparov's bivariant $K$-theory. In particular, starting from an irreducible representation of $SU(2)$, we show that the corresponding Toeplitz algebra is equivariantly $KK$-equivalent to the algebra of complex numbers. In this way, we obtain a six term exact sequence of $K$-groups containing a noncommutative analogue of the Euler class.
\end{abstract}
\maketitle

\tableofcontents
\parskip 1ex
\linespread{1.1}

Motivated by the study of symmetries of \Cs algebras, as well as by multivariate operator theory, in this paper we introduce the notion of an $SU(2)$-equivariant subproduct system of Hilbert spaces. Starting from a unitary representation of the Lie group $SU(2)$ on a finite-dimensional Hilbert space, we give an algorithm for constructing such an equivariant subproduct system and describe the associated Toeplitz--Pimsner and Cuntz--Pimsner algebras.

In the spirit of noncommutative topology, we compute topological invariants through Kasparov's bivariant $K$-theory \cite{Kas80}. In particular, we provide a partial answer to Open Question 3 in \cite[Section 6]{Vis12} concerning the computation of the $K$-theory groups of the Cuntz--Pimsner and Toeplitz--Pimsner algebras of a subproduct system. More precisely, our main result, Theorem \ref{t:KKequiv}, concerns $KK$-equivalence between the Toeplitz algebra of the subproduct system of an irreducible $SU(2)$-representation and the \Cs algebra of complex numbers. We further use this equivalence result to prove that the defining extension for the Cuntz--Pimsner algebra of a subproduct system induces an exact sequence in operator $K$-theory which contains a noncommutative Euler class and hence resembles a Gysin sequence. Using the exact sequence, we are able to compute the $K$-theory groups of the Cuntz--Pimsner algebra of our $SU(2)$-subproduct system. To our knowledge, this is the first time that the $K$-theory groups of \Cs algebras associated to subproduct systems are explicitly computed.

Our work fits into the framework of noncommutative topology, building on representation theoretic techniques, as well as Kasparov's bivariant $K$-theory.  One of our driving motivations lies in the noncommutative description of principal fibre bundles through Hopf--Galois extensions, a theory which works both algebraically and topologically \cite{BdCH}. This approach allows one to extend the scope to consider symmetries implemented by compact quantum groups. 

It is natural to try to extend this analogy to bundles with fibres other than quantum groups, as described in \cite{BrSz19}, where the authors initiated the development of an algebraic framework for noncommutative bundles with quantum homogeneous fibres. Here, however, we still focus on the group case and set the basis for an \emph{operator theoretic} approach to the study of sphere bundles with fibre the three-dimensional sphere. We are following the bottom-up approach offered by both the classical construction of the associated principal $G$-bundle to a fibre bundle with structure group $G$, and the construction of the sphere bundle of a Hermitian vector bundle.

We build upon the earlier work \cite{AKL16}, where we observed how the Cuntz--Pimsner algebra \cite{Pim97} of a noncommutative line bundle can be interpreted as the algebra of functions on a noncommutative circle bundle. This analogy also works at the level of topological invariants: Pimsner's construction naturally yields an exact sequence in $K$-theory, which mimics the classical Gysin sequence for circle bundles \cite{Gys,Ka78}.

The generalisation of this construction to structure groups different from $U(1)$ is not so straightforward and has, to our knowledge, escaped a satisfactory treatment. For instance, when applying Pimsner's construction to the module of sections of a complex $n$-dimensional vector bundle, possibly carrying the action of a compact group $G$, the resulting \Cs algebra has the structure of a bundle of algebras with fibres the Cuntz algebra $\B O_n$ \cite{Vas05}, a very different object from the algebra of functions of the associated principal $G$-bundle. Nevertheless, understanding the properties and symmetries of such \Cs algebras is an interesting question, which was recently addressed in \cite{Dea21}, where the author studied the Cuntz--Pimsner algebras constructed starting from the action of a compact group $G$ on a complex Hermitian vector bundle and their crossed products by $G$.

Inspired by the representation theory of the group $SU(2)$, in particular by the Clebsch--Gordan theory, we adopt a novel approach, which relies on the theory of subproduct systems of \Cs correspondences. Subproduct systems were first described by Shalit and Solel in \cite{ShSo09}, inspired by the dilation theory of semigroups of completely positive maps, and independently by Bhat and Mukherjee \cite{BM11} in the Hilbert space setting, under the name of \emph{inclusion systems}. Motivated by examples in quantum electrodynamics, the related notion of \emph{interacting Fock spaces} was investigated in  \cite{AcSk08,ALV97}. The theory of subproduct systems was further developed by Viselter, who extended the notions of covariant representation and of Cuntz--Pimsner algebras of a \Cs correspondence to this more general framework \cite{Vis11,Vis12}. More recently, Dor-On and Markiewicz \cite{DorMa14, DorMa17} applied the theory of subproduct systems to the study of stochastic matrices. 

Another motivation for our work can be found in the question of understanding operator and \Cs algebras arising from zeros of polynomials in noncommutative variables. This relates to the programme of studying noncommutative domains initiated by Popescu \cite{Po06,Po07}. In \cite[Section 7]{ShSo09} Shalit and Solel established a noncommutative \emph{Nullstellensastz}: every homogeneous ideal $I$ in the algebra of noncommutative polynomials corresponds to a unique subproduct system, and vice versa. In our case, for every $n \in \nn$, we consider noncommutative varieties whose defining ideal in the free algebra $ \cc \langle X_0, \dotsc, X_n \rangle $ is generated by a single degree-two homogeneous polynomial arising from the \emph{determinant} of an $SU(2)$-representation. From a purely algebraic perspective, our setting is closely related to the one-relator quadratic regular Koszul algebras of global dimension two studied in \cite{Zh97,Zh98}.

Given the central role played by representation theory in our approach, this work also connects with the recent preprint
\cite{And15}, where the author introduced $\mathbb{G}$ subproduct systems for $\mathbb{G}$ a compact quantum group and constructed the associated Cuntz--Pimsner algebras solely from representation-theoretic data.
The main difference here lies in the fact that while Anderson obtained one subproduct system for every compact (quantum) group, here instead we describe a recipe for a \emph{family} of subproduct systems, one for every irreducible representation of the group $SU(2)$.

The outline of the paper is as follows. Section \ref{s:pre} is devoted to preliminaries on the theory of subproduct systems: we introduce the notion of $G$-equivariant subproduct system of \Cs correspondences, which we then specialise to the Hilbert space case. At the end of the section, we recall the one-to-one correspondence between subproduct systems of Hilbert spaces and ideals in the algebra of noncommutative polynomials. 

In Section \ref{s:subsu2} we show how, starting from a unitary representation of the Lie group $SU(2)$ on a finite-dimensional Hilbert space, one can construct an \emph{$SU(2)$-equivariant subproduct system} of Hilbert spaces over the semi-group $\mathbb{N}_0$.  An essential ingredient in our construction is what we call the \emph{determinant} of the representation. This determinant will resurface later in our computations in $KK$-theory as one of the summands in the Euler class of the representation. 

We proceed to studying the fusion rules of our equivariant $SU(2)$-subproduct system in Section \ref{s:fusion}. This section contains several lemmas containing explicit computations and showcasing interesting combinatorial properties, on which our later analysis relies. In particular, the structural properties of our subproduct systems naturally lead us to the commutation relations in the Toeplitz algebras, described in Section \ref{s:commreltoe}. 

Finally, we focus on $K$-theoretic invariants: Section \ref{s:KKequiv} is dedicated to the proof of $KK$-equivalence between the Toeplitz algebra of an irreducible $SU(2)$-representation and the algebra of complex numbers $\cc$. In Section \ref{s:gysin}, we present our main application: we establish a Gysin sequence in operator $K$-theory and employ it to compute the $K$-theory groups of the Cuntz--Pimsner algebra of the subproduct system. As mentioned earlier, this is, to our knowledge, the first time that the $K$-theory groups of \Cs algebras associated to subproduct systems are computed. In the final section we conclude the paper by mentioning a few open questions that we would like to address in the future.

\section{Preliminaries on subproduct systems}\label{s:pre}
In this section, we review the theory of subproduct systems of correspondences, specialising to the Hilbert space case. From the point of view of multivariate operator theory, subproduct systems of Hilbert space provide the natural framework for the study of tuples of operators subject to polynomial constraints. We shall elaborate on this analogy in the last part of the section.

For a pair of $C^*$-correspondences $X$ and $Y$ over the same $C^*$-algebra $B$, we let $X \hot_B Y$ denote their interior tensor product, which is again a $C^*$-correspondence over the $C^*$-algebra $B$ (see for instance \cite[Section 4]{Lan}). In the case where $G$ is a locally compact group and both $X$ and $Y$ are $G$-$C^*$-correspondences over the same $G$-$C^*$-algebra $B$ we turn $X \hot_B Y$ into a $G$-$C^*$-correspondence as well by equipping it with the diagonal action $g(\xi \ot \eta) := g(\xi) \ot g(\eta)$. We say that a $C^*$-correspondence $X$ over $B$ is \emph{faithful} when the left action $B \to \B L(X)$ is an injective $*$-homomorphism and \emph{essential} when $B \cd X$ is a norm-dense $B$-submodule of $X$.

\begin{dfn}[{\cite{ShSo09},\cite{Vis12}}]\label{d:sps}
Suppose that $\{E_m\}_{m\in \nn_0}$ is a sequence of essential and faithful $C^*$-correspondences over a $C^*$-algebra $B$ and that $\io_{k,m} : E_{k + m} \to E_k \hot_B E_m$ is a bounded adjointable isometry for every $k,m \in \nn_0$. We say that $(E, \io)$ is a \emph{subproduct system} over $B$ when the following holds for all $k,l,m \in \nn_0$: 
\begin{enumerate}
\item $E_0 = B$;
\item $\io_{0,m} : E_m \to E_0 \hot_B E_m$ and $\io_{m,0} : E_m \to E_m \hot_B E_0$ are the canonical identifications (so that the adjoints are induced by the bimodule structure on $E_m$) and;
\item the two bounded adjointable isometries $(1_k \ot \io_{l,m}) \ci \io_{k,l + m}$ and $(\io_{k,l} \ot 1_m) \ci \io_{k+l,m} : E_{k + l + m} \to E_k \hot_B E_l \hot_B E_m$ agree, where $1_k$ and $1_m$ denote the identity operators on $E_k$ and $E_m$, respectively.
\end{enumerate}
We refer to the bounded adjointable isometries $\io_{k,m} : E_{k + m} \to E_k \hot_B E_m$, $k,m \in \nn_0$, as the structure maps of our subproduct system. 
\end{dfn}

Note that for every $k,m \in \nn_0$, we have the orthogonal projections
\begin{equation}
\label{eq:proj_nm}
p_{k,m} = \io_{k,m} \io^*_{k,m} : E_k \hot_B E_m \to E_{k + m} \su E_k \hot_B E_m .
\end{equation}

\begin{dfn}
Let $G$ be a locally compact group and let $(E,\io)$ be a subproduct system over a $C^*$-algebra $B$. We say that $(E,\io)$ is a \emph{$G$-subproduct system} when $B$ is a $G$-$C^*$-algebra and $E_m$ is a $G$-$C^*$-correspondence for all $m \in \nn$, such that the structure maps $\io_{k,m} : E_{k + m} \to E_k \hot_B E_m$ are $G$-equivariant for all $k,m \in \nn_0$.  
\end{dfn} 

\begin{example}
If $(X, \phi)$ is an essential and faithful $C^*$-correspondence over a $C^*$-algebra $B$, then the sequence $\{X^{\hot_B m}\}_{m = 0}^\infty$ defines a subproduct system over $B$, where the structure maps are given by the canonical identifications $X^{\hot_B (m + k)} \cong X^{\hot_B m} \hot_B X^{\hot_B k}$.
\end{example}

\begin{dfn}
Given a subproduct system $(E,\io)$ over $\nn_0$, one defines its \emph{Fock correspondence} as the infinite Hilbert $C^*$-module direct sum $F := \op_{m = 0}^\infty E_m$. 
\end{dfn}

In the case where $G$ is a locally compact group and $(E,\io)$ is a $G$-subproduct system it holds that the Fock correspondence $F$ is a $G$-Hilbert $C^*$-module where the action of $G$ on $F$ is given by
\[
g( \{ \xi_m \}_{m = 0}^\infty) := \{ g(\xi_m) \}_{m = 0}^\infty
\]
for all $g \in G$ and $\{ \xi_m\}_{m = 0}^\infty \in F$.

For each $\xi \in E_k$, we define the \emph{creation operator} $T_\xi \in \B L (F)$ as
\[
T_\xi : F \to F \q T_\xi( \ze ) := \io^*_{k,m}(\xi \ot \ze) , \q \ze \in E_m \su F.
\]
\begin{dfn}
Let $(E,\io)$ be a subproduct system over $\nn_0$. We define the \emph{Toeplitz algebra} of the subproduct system $E$, denoted $\B T_E$, as the smallest unital $C^*$-subalgebra of $\B L( F)$ that contains all the creation operators, i.e.
\[
T_\xi \in \B T_E \q \T{for all } \xi \in E_k \, \, , \, \,\, k \in \nn_0 .
\]
\end{dfn}

\begin{lemma}\label{l:toepaction}
Let $G$ be a locally compact group and suppose that $(E,\io)$ is a $G$-subproduct system. Then the assignment $g(T_\xi) := T_{g(\xi)}$ defines a strongly continuous action of $G$ on the Toeplitz algebra $\B T_E$. 
\end{lemma}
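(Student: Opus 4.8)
The plan is to implement the prospective $G$-action on $\B T_E$ by conjugation with the operators implementing $G$ on the Fock module $F$, and then to verify continuity by a standard density argument. For $g \in G$ I define $U_g \colon F \to F$ by $U_g(\{\xi_m\}_{m}) := \{g(\xi_m)\}_{m}$. Since $g$ acts on $B$ by an automorphism and satisfies $\inn{g(\xi),g(\eta)} = g(\inn{\xi,\eta})$ on each summand $E_m$, one checks that $U_g$ is a well-defined $\cc$-linear bijection of $F$ with inverse $U_{g^{-1}}$ and with $\inn{U_g\xi, U_g\eta} = g(\inn{\xi,\eta})$ for all $\xi,\eta \in F$. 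It follows that for every $S \in \B L(F)$ the operator $\T{Ad}(U_g)(S) := U_g S U_{g^{-1}}$ again belongs to $\B L(F)$, with adjoint $U_g S^* U_{g^{-1}}$; hence $\T{Ad}(U_g)$ is a $*$-automorphism of $\B L(F)$ and $\T{Ad}(U_g) \ci \T{Ad}(U_h) = \T{Ad}(U_{gh})$.

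Next I would compute the action of $\T{Ad}(U_g)$ on the creation operators. The structure map $\io_{k,m} \colon E_{k+m} \to E_k \hot_B E_m$ is $G$-equivariant for the diagonal action, and a routine check shows that its adjoint $\io_{k,m}^*$ is then $G$-equivariant as well; therefore, for $\xi \in E_k$ and $\zeta \in E_m \su F$,
\[
U_g T_\xi U_{g^{-1}}(\zeta) \;=\; U_g\,\io_{k,m}^*\big(\xi \ot g^{-1}(\zeta)\big) \;=\; \io_{k,m}^*\big(g(\xi) \ot \zeta\big) \;=\; T_{g(\xi)}(\zeta),
\]
so that $\T{Ad}(U_g)(T_\xi) = T_{g(\xi)}$. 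Since $\T{Ad}(U_g)$ and $\T{Ad}(U_{g^{-1}})$ are $*$-automorphisms of $\B L(F)$ mapping the generating set $\{T_\xi : \xi \in E_k,\ k \in \nn_0\}$ onto itself, they restrict to mutually inverse $*$-automorphisms of $\B T_E$. Setting $g(a) := \T{Ad}(U_g)(a)$ for $a \in \B T_E$ then yields an action of $G$ on $\B T_E$ by $*$-automorphisms agreeing with the stated formula on creation operators.

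It remains to establish strong (point-norm) continuity. Each $g(\cdot)$ is isometric, so by the usual $\ep/3$-estimate it suffices to show that $g \mapsto g(a)$ is norm-continuous for $a$ in the dense $*$-subalgebra of $\B T_E$ spanned by finite products whose factors are creation operators $T_\xi$ or their adjoints $T_\xi^*$. For such $a$, the element $g(a)$ is the corresponding product with each $\xi$ replaced by $g(\xi)$; recalling $\|T_\xi\| \le \|\xi\| = \|g(\xi)\|$, joint continuity of multiplication on bounded sets, and continuity of the adjoint, the claim reduces to norm-continuity of $g \mapsto T_{g(\xi)}$. This in turn follows from $\|T_{g(\xi)} - T_{h(\xi)}\| = \|T_{g(\xi) - h(\xi)}\| \le \|g(\xi) - h(\xi)\|$ together with the strong continuity of the $G$-action on the correspondence $E_k$.

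I do not expect a genuine obstacle here: the only mildly delicate points are bookkeeping, namely that $U_g$ is a well-defined isometry of the Hilbert module direct sum $F$ which covers the automorphism $g$ of $B$ rather than being $B$-linear, and that $G$-equivariance of $\io_{k,m}$ transfers to $\io_{k,m}^*$. Both become immediate once the definitions of a $G$-$C^*$-correspondence and of the structure maps are unwound.
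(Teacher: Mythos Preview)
Your proof is correct and follows essentially the same approach as the paper: both implement the action by conjugation on the Fock module, use $G$-equivariance of the structure maps to verify $gT_\xi g^{-1}=T_{g(\xi)}$, and deduce strong continuity from $\|T_\xi\|\le\|\xi\|$ together with continuity of $g\mapsto g(\xi)$ on each $E_k$ via a density argument. You are simply more explicit about the bookkeeping (well-definedness of $U_g$, equivariance of $\io_{k,m}^*$, the $\ep/3$ reduction), which the paper leaves implicit.
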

\begin{proof}
Since the structure maps are $G$-equivariant it holds that 
\[
g T_\xi g^{-1}(\eta) = g \io^*_{k,m}( \xi \ot g^{-1} \eta) = \io^*_{k,m}(g \xi \ot \eta) = T_{g(\xi)}(\eta). 
\]
This proves that we have a well-defined action of $G$ on $\B T_E$. The strong continuity follows from the continuity properties of the action of $G$ on each $E_k$, $k \in \nn_0$. Indeed, since $E_k$ is a $G$-$C^*$-correspondence it holds for every $\xi \in E_k$ that the map $G \to E_k$ given by $g \mapsto g(\xi)$ is continuous. Moreover, since $\| T_\xi \| \leq \| \xi \|$ we obtain that the map $G \to \B T_E$ given by $g \mapsto T_{g(\xi)}$ is continuous for every $\xi \in E_k$. Finally, the strong continuity of our $G$-action follows since the Toeplitz algebra is generated by the creation operators $T_\xi$, $\xi \in E_k$. 
\end{proof}

Covariant representations of subproduct systems were studied in \cite{Vis11}. In the subsequent work \cite{Vis12}, the author described how one can associate a Cuntz--Pimsner algebra to every subproduct system over $\nn_0$. This algebra is constructed as a quotient of the Toeplitz algebra of the subproduct system by a suitable gauge invariant ideal (cf. \cite[Definition 2.6]{Vis12}). We recall the definition here:

For each $m \in \nn_0$ we let $Q_m : F \to F$ denote the orthogonal projection with image $E_m \su F$. 

\begin{dfn}
Let $(E,\io)$ be a subproduct system over $\nn_0$. The \emph{Cuntz--Pimsner algebra} of the subproduct system $(E,\io)$, denoted $\B O_E$, is the unital $C^*$-algebra obtained as the quotient of the Toeplitz algebra $\B T_E$ by the ideal
\[
\B I_E := \big\{ x \in \B T_E \mid \lim_{m \to \infty}\| Q_m x \| = 0 \big\} .
\]
Thus, $\B O_E := \B T_E / \B I_E$.
\end{dfn} 

In the case where $G$ is a locally compact group acting on a subproduct system $(E,\io)$, we obtain that our action of $G$ on the Toeplitz algebra $\B T_E$ descends to an action of $G$ on the Cuntz--Pimsner algebra. Indeed, this follows immediately since $g(\B I_E) \su \B I_E$ for all $g \in G$.

Viselter furthermore proved that, if $(E,\io)$ is a subproduct system of finite-dimensional Hilbert spaces, then the ideal $\B I_E$ is isomorphic to $\B K (F)$ (cf. \cite[Corollary 3.2]{Vis12}). Thus, in this case we have that $\B O_E = \B T_E / \B K(F)$.

\subsection{Subproduct systems and zeros of polynomials in noncommutative variables}\label{ss:poly}
We conclude this section by recalling how subproduct systems offer a framework for studying tuples of operators satisfying relations given by homogeneous polynomials. Our main reference is \cite[Section 7]{ShSo09}. In what follows we will restrict our attention to the finite-dimensional case.

Let $X := \lbrace x_0, \dots, x_n \rbrace$ be a finite set of $n + 1$ variables. We shall denote the free monoid generated by $X$ by $\langle X \rangle$, with unit the empty word, denoted by $1$. We denote by $X^m$ the set of all words of length $m$ in $\langle X \rangle$, so that the free monoid $\langle X \rangle$ is naturally graded by length.

Let $\cc \langle X \rangle := \cc \langle x_0, \dotsc, x_n \rangle$ denote the complex free associative unital algebra generated by $X$. Similarly to the free monoid, the free associative unital algebra $\cc \langle X \rangle$ is also graded by length. An element of $\cc \langle X \rangle$ is called a noncommutative polynomial. A noncommutative polynomial $f\in  \cc \langle X \rangle$ is \emph{homogeneous of degree $m$} if $f \in \cc X^{m}$.
By a homogeneous ideal in $\cc \langle X \rangle$ we mean a two-sided ideal which is the $\cc$-linear span of a set of homogeneous noncommutative polynomials belonging to the ideal. 

Let  $T = (T_0,T_1,\ldots,T_n)$ be an $(n+1)$-tuple of operators acting on a Hilbert space $H$. If $\alpha = (\alpha_1, \dotsc,\alpha_m) \in X^m$ is a word of length $m$, then we shall use the multi-index notation to indicate the product
\[
T^{\alpha} := T_{\alpha_1} \dotsc T_{\alpha_m},
\]
with the convention that $T^{1} = 1_{H}$.

If $p(x) = \sum c_{\alpha} x^{\alpha} \in \cc \langle X \rangle$ is a noncommutative polynomial, then $p(T)$ refers to the linear combination of operators $p(T) := \sum c_{\alpha} T^{\alpha}$. 

\begin{prop}[{\cite[Proposition 7.2]{ShSo09}}]\label{prop:1to1poly}
Let $H$ be an $(n+1)$-dimensional Hilbert space with orthonormal basis $\lbrace e_i \rbrace_{i=0}^n$. Then there is a bijective inclusion-reversing correspondence between proper homogeneous ideals $J  \subseteq \mathbb{C} \langle x_0,\ldots,x_n \rangle$ and standard subproduct systems $\lbrace E_m \rbrace_{m \in \mathbb{N}_0}$ with $E_1 \subseteq H$ (all structure maps are given by canonical inclusions).
\end{prop}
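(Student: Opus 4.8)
The plan is to construct the bijection explicitly in both directions and check that the two constructions are mutually inverse and inclusion-reversing. Throughout we identify $H$ with $\cc X^1 = \Span\{e_0,\dots,e_n\}$ via $e_i \leftrightarrow x_i$, so that the $m$-th graded piece $\cc X^m$ of $\cc\langle X\rangle$ is identified with the $m$-fold tensor power $H^{\ot m}$ by sending a word $x_{\alpha_1}\cdots x_{\alpha_m}$ to $e_{\alpha_1}\ot\cdots\ot e_{\alpha_m}$; this is the natural orthonormal basis of $H^{\ot m}$. A \emph{standard} subproduct system with $E_1\su H$ is by definition one in which every structure map $\io_{k,m}\colon E_{k+m}\to E_k\hot E_m$ is the canonical inclusion, so that the data amounts to a sequence of subspaces $E_m \su H^{\ot m}$ satisfying $E_0=\cc$, $E_1\su H$, and the compatibility condition that under the identification $H^{\ot(k+m)} = H^{\ot k}\ot H^{\ot m}$ one has $E_{k+m}\su E_k\ot H^{\ot m}$ and $E_{k+m}\su H^{\ot k}\ot E_m$ for all $k,m$ (this is the subproduct condition, together with associativity, rephrased for inclusions).

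First, from a proper homogeneous ideal $J$ I would produce a subproduct system by setting $E_m := (J\cap \cc X^m)^\perp \su H^{\ot m}$, the orthogonal complement inside the $m$-th graded component; equivalently $E_m$ is the image of $\cc X^m$ under the orthogonal projection killing the degree-$m$ part of $J$. Properness of $J$ gives $J\cap\cc X^0 = 0$, hence $E_0=\cc$. The key point is that $E_{m+k}\su E_m\ot H^{\ot k}$: since $J$ is a two-sided ideal, $J\cap\cc X^{m+k} \ssu (J\cap\cc X^m)\cdot X^k$ as subspaces of $H^{\ot(m+k)}=H^{\ot m}\ot H^{\ot k}$, and taking orthogonal complements reverses this inclusion, giving exactly $E_{m+k}\su E_m\ot H^{\ot k}$; symmetrically using $X^m\cdot(J\cap\cc X^k)\su J$ gives $E_{m+k}\su H^{\ot m}\ot E_k$. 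Associativity of the structure maps is automatic because all maps are canonical inclusions. Conversely, from a standard subproduct system $\{E_m\}$ I would set $J_m := E_m^\perp \su \cc X^m$ and $J := \bigoplus_m J_m$. That $J$ is a two-sided ideal follows by reversing the previous computation: the inclusions $E_{m+k}\su E_m\ot H^{\ot k}$ and $E_{m+k}\su H^{\ot m}\ot E_k$ dualize to $J_m\cdot X^k \su J_{m+k}$ and $X^m\cdot J_k\su J_{m+k}$, which together with $\cc$-linearity say precisely that $J$ is closed under left and right multiplication by the generators, hence a two-sided ideal; it is homogeneous by construction, and proper because $J_0 = E_0^\perp = \cc^\perp = 0$.

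Next I would check that these two assignments are mutually inverse: starting from $J$, forming $E_m=(J\cap\cc X^m)^\perp$ and then $J_m=E_m^\perp = (J\cap\cc X^m)^{\perp\perp} = J\cap\cc X^m$, so $\bigoplus_m J_m = J$ since $J$ is homogeneous; the other composite is the same computation read backwards. Finally, inclusion-reversal: if $J\su J'$ are two proper homogeneous ideals, then $J\cap\cc X^m \su J'\cap\cc X^m$ for every $m$, so taking orthogonal complements gives $E'_m\su E_m$ for all $m$, i.e. $E'\su E$ as subproduct systems.

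I expect the only genuinely delicate step to be the verification that orthogonal complementation interchanges "two-sided ideal" with "subproduct-compatible family of subspaces" — concretely, the inclusions $(J\cap\cc X^m)\cdot X^k \su J\cap\cc X^{m+k}$ and their dual form under $\perp$. This is where the combinatorics of the grading and the identification $\cc X^{m+k}\cong\cc X^m\ot\cc X^k$ must be handled carefully, in particular checking that the orthogonal complement of $V\ot W$ inside $H^{\ot m}\ot H^{\ot k}$ relates correctly to $V^\perp$ and $W^\perp$ when $V,W$ are spanned by basis words (here one uses that the relevant subspaces are spanned by subsets of the standard monomial basis, so all orthogonality statements reduce to statements about sets of words). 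Everything else — properness, $E_0=\cc$, associativity, bijectivity, monotonicity — is then formal.
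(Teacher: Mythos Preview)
Your approach is essentially the same as the paper's: the paper does not give a full proof either, but sketches the same two constructions (take $E_m^J := H^{\ot m} \ominus J^{(m)}$ going one way, and $J^E := \bigoplus_m E_m^\perp$ going the other) and refers the remaining verification back to the definition of a subproduct system. Your write-up is in fact more detailed than what the paper records.

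One correction, though: in your final paragraph you propose handling the ``delicate step'' by observing that the relevant subspaces are spanned by subsets of the standard monomial basis, so that orthogonality reduces to combinatorics of words. This is false in general---a homogeneous ideal $J$ need not be a monomial ideal (e.g.\ the ideal generated by $x_0x_1 - x_1x_0$), so $J\cap\cc X^m$ is typically not spanned by words. Fortunately you do not need this. The only orthogonal-complement identity required is that for an arbitrary subspace $V\su H^{\ot m}$ one has $(V\ot H^{\ot k})^\perp = V^\perp \ot H^{\ot k}$ inside $H^{\ot m}\ot H^{\ot k}$, which is elementary and holds without any basis hypothesis. Applied with $V = J\cap\cc X^m$ (noting that $(J\cap\cc X^m)\cdot X^k = (J\cap\cc X^m)\ot H^{\ot k}$ since $X^k$ spans $H^{\ot k}$), this gives $E_{m+k}\su E_m\ot H^{\ot k}$ directly; intersecting with the symmetric inclusion $E_{m+k}\su H^{\ot m}\ot E_k$ yields $E_{m+k}\su E_m\ot E_k$ as needed. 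So the step you flagged as delicate is actually routine, and the crutch you reached for is both unnecessary and incorrect.
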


The correspondence works as follows: for a noncommutative polynomial $p=\sum c_{\alpha} x^{\alpha} \in \cc\inn{X}$, we write $p(e) = \sum c_{\alpha} e_{\alpha_1} \otimes \dotsc \otimes e_{\alpha_m}$. To any proper homogeneous ideal $J \subseteq \mathbb{C} \langle X \rangle $, we associate the standard subproduct system with fibres $E^J_m := H^{\otimes m} \ominus \lbrace p(e) \vert p \in J^{(m)} \rbrace$, for every $m \geq 0$, where $J^{(m)}$ denotes the degree $m$ component of the ideal $J$.

Conversely, given a standard subproduct system of Hilbert spaces $\lbrace E_m \rbrace_{m \in \mathbb{N}_0}$ with $E_1 \su H$, we associate to it the proper homogeneous ideal $J^{E} = \mathrm{span}_{\cc} \lbrace p \in \mathbb{C}\langle X \rangle \ \vert \ \exists m >0 : p(e) \in H^{\otimes m} \ominus E_m \rbrace$. 

The fact that the two maps are inverses to each other follows from the properties of the structure maps of a subproduct system outlined in Definition \ref{d:sps}.

Following \cite[Definition 7.3]{ShSo09}, we refer to $E^J$ and $J_E$ as the \emph{subproduct system associated to the ideal $J$}, and the \emph{ideal associated to the subproduct system $E$}, respectively.

Note that, while the subproduct system $E^J$ associated to a proper homogeneous ideal $J  \subseteq \cc \langle X \rangle$ depends on the choice of orthonormal basis for the Hilbert space $H$, different choices give rise to isomorphic subproduct systems (cf. \cite[Proposition 7.4]{ShSo09}).

In this work, we will be considering subproduct systems arising from a homogeneous ideal generated by a single degree two homogeneous polynomial. From an algebraic viewpoint, these ideals are  examples of the defining ideals for the one-relator quadratic regular Koszul algebras of global dimension two studied in \cite{Zh97,Zh98}. 

\section{Subproduct systems from $SU(2)$-actions}\label{s:subsu2}
Let $\tau : SU(2) \to U(H)$ be a strongly continuous unitary representation of the Lie group $SU(2)$ on a finite-dimensional Hilbert space $H$. 

We shall in this section see how every such representation $\tau : SU(2) \to U(H)$ gives rise to an $SU(2)$-subproduct system of finite-dimensional Hilbert spaces. These subproduct systems and their associated Cuntz--Pimsner algebras are the main focus of the present paper. To our knowledge these Cuntz--Pimsner algebras have so far only been studied in the particular case where the representation agrees with the fundamental representation of $SU(2)$ on $\cc^2$.

In that case, our procedure recovers the symmetric subproduct system on $\cc^2$ (cf. \cite[Example 1.3]{ShSo09} and \cite[Example 2.3]{Vis12}). In the language of \cite{And15}, this is an example of a $\mathbb{G}$-subproduct system, namely \emph{the} $SU(2)$-subproduct system. However, while \cite{And15} shows how to construct a canonical subproduct systems for every compact (quantum) group, here we focus on the $SU(2)$ case and obtain a \emph{family} of subproduct systems: one for every finite-dimensional representation.

\begin{dfn}
We define the \emph{determinant} of $H$ with respect to the representation $\tau$ as the subspace of invariant elements with respect to the diagonal action $\tau \otimes \tau$ on the tensor product $H \otimes H$: 
\[
\T{det}(\tau,H) = \lbrace \xi \in H \otimes H \mid 
\big( \tau(g) \ot \tau(g) \big)\xi = \xi  \quad \forall g \in SU(2) \rbrace.
\]
\end{dfn}

For each $m \in \{2,3,\ldots \}$ and each $i \in \{1,2,\ldots,m-1\}$ we define the strongly continuous unitary representation
\[
\De_m(i) : SU(2) \to U( H^{\ot m}) \q \De_m(i) := 1^{\ot (i-1)} \ot (\tau \ot \tau) \ot 1^{\ot (m - i-1)} .
\]
We then have the subspace $K_m(i) \su H^{\ot m}$ of invariant elements given by
\begin{equation}\label{eq:invariant}
K_m(i) := \lbrace \xi \in H^{\ot m} \mid \De_m(i)(g)(\xi) = \xi, \quad \forall g \in SU(2) \rbrace,
\end{equation}
and we consider the vector space span:
\begin{equation}\label{eq:invarsum}
K_m := \T{span}_{\Cc}\big\{ \xi \mid \xi \in K_m(i) \T{ for some } i \in \{1,2,\ldots,m-1\} \big\}
= \sum_{i = 1}^{m-1} K_m(i) \su H^{\ot m} .
\end{equation}
In particular, we remark that $K_2 = K_2(1) = \T{det}(\tau,H)$. 

Note that we have the following isomorphisms of vector spaces: 
\begin{equation}\label{eq:quad_quot}
K_m = K_2 \ot H^{\ot (m-2)} +H \ot K_2 \ot H^{\ot (m-3)} \plp H^{\ot (m-2)} \ot K_2 \su H^{\ot m} .
\end{equation}

For each $m \in \nn_0$ we put
\[
E_m(\tau, H) := \fork{ccc}{ K_m^\perp \su H^{\ot m} & \T{for} & m \geq 2 \\ H & \T{for} & m = 1 \\ \Cc & \T{for} & m = 0 } .
\]
When the representation $\tau : SU(2) \to U(H)$ is clear from the context we will suppress it from the notation and put $E_m := E_m(\tau,H)$.

We record the following:

\begin{lemma}\label{l:diarep}
Let $m \in \{2,3,\ldots \}$. The diagonal representation
\[
\tau^{\ot m} : SU(2) \to U( H^{\ot m})
\]
restricts to a strongly continuous unitary representation of $SU(2)$ on the subspace $E_m \su H^{\ot m}$.
\end{lemma}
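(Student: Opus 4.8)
The plan is to show that the subspace $E_m = K_m^\perp$ is invariant under the diagonal representation $\tau^{\ot m}$, from which the lemma follows immediately: the restriction of a strongly continuous unitary representation to an invariant subspace is again a strongly continuous unitary representation. Since $E_m$ is the orthogonal complement of $K_m$ inside $H^{\ot m}$, and $\tau^{\ot m}$ acts by unitaries, it suffices to prove that $K_m$ is invariant under $\tau^{\ot m}$; indeed, if a unitary operator preserves a subspace it preserves its orthogonal complement.

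First I would reduce the invariance of $K_m$ to the invariance of each summand $K_m(i)$, since $K_m = \sum_{i=1}^{m-1} K_m(i)$ by \eqref{eq:invarsum} and a sum of invariant subspaces is invariant. So the heart of the matter is to check that $\tau^{\ot m}(g)$ maps $K_m(i)$ into itself for every $g \in SU(2)$ and every $i \in \{1,\dots,m-1\}$. This is where one uses that $SU(2)$ is a group together with the specific tensor-leg structure of the representations $\De_m(i)$. Concretely, for $\xi \in K_m(i)$ one has $\De_m(i)(h)(\xi) = \xi$ for all $h$, and one wants $\De_m(i)(h)\big(\tau^{\ot m}(g)\xi\big) = \tau^{\ot m}(g)\xi$ for all $h$. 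The key observation is that $\tau^{\ot m}(g)$ commutes with $\De_m(i)(h)$: on the two legs $i, i+1$ the operator $\De_m(i)(h)$ is $\tau(h)\ot\tau(h)$ while $\tau^{\ot m}(g)$ restricts to $\tau(g)\ot\tau(g)$, so the relevant commutator question on those two legs is whether $(\tau(g)\ot\tau(g))$ commutes with $(\tau(h)\ot\tau(h))$ — and this is false in general.

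So the commutation idea does not work directly, and I would instead argue as follows: for $\xi \in K_m(i)$ and $g \in SU(2)$, to check $\tau^{\ot m}(g)\xi \in K_m(i)$ I need $\De_m(i)(h)\tau^{\ot m}(g)\xi = \tau^{\ot m}(g)\xi$ for all $h \in SU(2)$. Using that $\De_m(i)(h) = 1^{\ot(i-1)}\ot(\tau(h)\ot\tau(h))\ot 1^{\ot(m-i-1)}$ and $\tau^{\ot m}(g) = \tau(g)^{\ot m}$, I can write $\De_m(i)(h)\tau^{\ot m}(g) = \tau^{\ot m}(g)\cdot\big[1^{\ot(i-1)}\ot(\tau(g)^{-1}\tau(h)\tau(g))^{\ot 2}\ot 1^{\ot(m-i-1)}\big] = \tau^{\ot m}(g)\De_m(i)(g^{-1}hg)$. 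Hence $\De_m(i)(h)\tau^{\ot m}(g)\xi = \tau^{\ot m}(g)\De_m(i)(g^{-1}hg)\xi = \tau^{\ot m}(g)\xi$, where the last equality uses $\xi \in K_m(i)$ applied to the element $g^{-1}hg \in SU(2)$, which ranges over all of $SU(2)$ as $h$ does (in fact one only needs it for this particular element). This shows $\tau^{\ot m}(g)\xi \in K_m(i)$, so $K_m(i)$ is invariant, hence $K_m$ is invariant, hence $E_m = K_m^\perp$ is invariant.

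The main (and only) subtle point is precisely the manipulation in the previous paragraph: recognising that although $\tau^{\ot m}(g)$ does not commute with $\De_m(i)(h)$, conjugation of $\De_m(i)(h)$ by $\tau^{\ot m}(g)$ simply reparametrises the group element from $h$ to $g^{-1}hg$, which leaves the invariant subspace $K_m(i)$ fixed since invariance is required for all group elements. Once this is in hand, strong continuity of $\tau^{\ot m}|_{E_m}$ is inherited from strong continuity of $\tau^{\ot m}$ on $H^{\ot m}$ (a finite-dimensional tensor product of strongly continuous representations is strongly continuous, and restriction to a closed invariant subspace preserves this), and unitarity is automatic since $\tau^{\ot m}(g)$ is unitary on $H^{\ot m}$ and preserves the decomposition $H^{\ot m} = E_m \op K_m$. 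This completes the proof.
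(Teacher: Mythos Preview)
Your proof is correct and follows essentially the same approach as the paper: both reduce to showing that each $K_m(i)$ is $\tau^{\ot m}$-invariant by verifying $\De_m(i)(h)\tau(g)^{\ot m}\xi = \tau(g)^{\ot m}\xi$ for $\xi \in K_m(i)$. The only cosmetic difference is in the algebraic manipulation: you use the conjugation identity $\De_m(i)(h)\tau^{\ot m}(g) = \tau^{\ot m}(g)\De_m(i)(g^{-1}hg)$, while the paper factors $\tau(g)^{\ot m} = \big(\tau(g)^{\ot(i-1)}\ot 1^{\ot 2}\ot\tau(g)^{\ot(m-i-1)}\big)\De_m(i)(g)$ and uses $\De_m(i)(h)\De_m(i)(g) = \De_m(i)(hg)$ --- both routes exploit the group structure in the same way.
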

\begin{proof}
Since $\tau^{\ot m}$ is a unitary representation, it suffices to show that each $K_m(i) \su H^{\ot m}$ is an invariant subspace for $\tau^{\ot m}$. Thus, let $\xi \in K_m(i)$ for some $i \in \{1,2,\ldots,m-1\}$ and let $g,h \in SU(2)$. We then have that
\[
\begin{split}
\De_m(i)(h) \tau(g)^{\ot m}(\xi)
& = \big( \tau(g)^{\ot (i-1)} \ot 1^{\ot 2} \ot \tau(g)^{\ot (m-i - 1)} \big) \De_m(i)(h) \De_m(i)(g)(\xi) \\
& = \big( \tau(g)^{\ot (i-1)} \ot 1^{\ot 2} \ot \tau(g)^{\ot (m-i - 1)} \big)(\xi) = \tau(g)^{\ot m}(\xi) .
\end{split}
\]
This proves the lemma.
\end{proof}

For each $m \geq 2$, we denote the strongly continuous representation of $SU(2)$ on $E_m$ by
\[
\tau_m : SU(2) \to U(E_m).
\]

Clearly, $SU(2)$ also acts on $E_1 = H$ (via the representation $\tau$) and on $\Cc$ (via the trivial representation).

We consider the sequence $E = \{ E_m\}_{m = 0}^\infty$ of finite-dimensional $SU(2)$-Hilbert spaces together with the structure maps $\io_{k,m} : E_{k  + m} \to E_k \ot E_m$, $k , m \in \nn_0$, induced by the canonical identification $H^{\ot (k+m)} \cong H^{\ot k} \ot H^{\ot m}$.

\begin{prop}
The pair $(E,\io)$ is an $SU(2)$-subproduct system.
\end{prop}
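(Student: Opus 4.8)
The plan is to verify the three axioms of Definition \ref{d:sps} together with the equivariance condition, the essential and faithful conditions being automatic in the Hilbert space setting (every $E_m$ is a nonzero finite-dimensional Hilbert space over $B = \cc$, on which the ``left action'' is scalar multiplication). The only genuine content is axiom (3), the coassociativity of the structure maps, once we know that $\io_{k,m}$ is a well-defined bounded adjointable isometry $E_{k+m} \to E_k \ot E_m$; this last point is not quite trivial because $E_{k+m} \su H^{\ot(k+m)}$ and we must check that under the canonical identification $H^{\ot(k+m)} \cong H^{\ot k} \ot H^{\ot m}$ the subspace $E_{k+m}$ lands inside the subspace $E_k \ot E_m$. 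First I would record that $\io_{k,m}$, being the restriction of a unitary identification of Hilbert spaces, is automatically an isometry with adjoint the compression $p_{k,m}$ to $E_{k+m}$, so adjointability is free; only the inclusion $E_{k+m} \su E_k \ot E_m$ must be argued.

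To prove that inclusion it is cleaner to dualise and show that the orthogonal complements satisfy $K_k \ot H^{\ot m} + H^{\ot k} \ot K_m \su K_{k+m}$ inside $H^{\ot(k+m)}$. This is immediate from the description \eqref{eq:invarsum}, \eqref{eq:quad_quot}: each summand $K_k(i) \ot H^{\ot m}$ equals $K_{k+m}(i)$ for $i \in \{1,\dots,k-1\}$, and each summand $H^{\ot k} \ot K_m(j)$ equals $K_{k+m}(k+j)$ for $j \in \{1,\dots,m-1\}$, so the left-hand side is a sum of subspaces of the form $K_{k+m}(\ell)$ with $\ell \in \{1,\dots,k+m-1\}$, hence contained in $K_{k+m}$. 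Taking orthogonal complements in the respective tensor-product Hilbert spaces (and using that $(A \ot H^{\ot m} + H^{\ot k} \ot B)^\perp = A^\perp \ot H^{\ot m} \cap H^{\ot k} \ot B^\perp \supseteq E_k \ot E_m$) gives $E_{k+m} \su E_k \ot E_m$, as desired. I expect this step — identifying which $K_n(\ell)$'s appear and handling the boundary indices — to be the main obstacle, though it is more bookkeeping than difficulty.

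Axioms (1) and (2) are then a matter of unwinding definitions: $E_0 = \cc = B$ by fiat, and $\io_{0,m}$, $\io_{m,0}$ are literally the canonical identifications $H^{\ot m} \cong \cc \ot H^{\ot m} \cong H^{\ot m} \ot \cc$ restricted to $E_m$, so their adjoints are the bimodule identifications. For axiom (3), both composites $(1_k \ot \io_{l,m}) \ci \io_{k,l+m}$ and $(\io_{k,l} \ot 1_m) \ci \io_{k+l,m}$ are, by construction, restrictions to $E_{k+l+m}$ of the single canonical unitary identification $H^{\ot(k+l+m)} \cong H^{\ot k} \ot H^{\ot l} \ot H^{\ot m}$ (which is coassociative on the nose); since both land in the same subspace $E_k \ot E_l \ot E_m$ by the inclusion established above, they agree. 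Finally, $SU(2)$-equivariance of each $\io_{k,m}$ is clear since the canonical identification $H^{\ot(k+m)} \cong H^{\ot k} \ot H^{\ot m}$ intertwines $\tau^{\ot(k+m)}$ with $\tau^{\ot k} \ot \tau^{\ot m}$, and by Lemma \ref{l:diarep} these restrict to the representations $\tau_{k+m}$ on $E_{k+m}$ and $\tau_k \ot \tau_m$ on $E_k \ot E_m$; thus $B = \cc$ with the trivial action and all $E_m$ are $SU(2)$-$C^*$-correspondences with equivariant structure maps, which is exactly what is required.
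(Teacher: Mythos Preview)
Your proof is correct and follows essentially the same approach as the paper: dualise to show $K_k \ot H^{\ot m} + H^{\ot k} \ot K_m \su K_{k+m}$, then conclude $E_{k+m} \su E_k \ot E_m$ and note equivariance. One cosmetic point: in your parenthetical the intersection $A^\perp \ot H^{\ot m} \cap H^{\ot k} \ot B^\perp$ actually \emph{equals} $E_k \ot E_m$ (not merely contains it), and it is this direction you need for the chain of inclusions to close; since equality holds, the argument is unaffected.
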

\begin{proof}
Consider $k,m \in \nn_0$, we need to verify that $E_{k  + m} \su E_k \ot E_m$. We assume that $k,m \geq 2$ and leave the remaining (easier) cases to the reader. We recall that $E_l = K_l^\perp$ for all $l \in \{2,3,\ldots \}$, so we need to show that
\[
K_{k + m}^\perp \su K_k^\perp \ot K_m^\perp,
\]
but this is equivalent to showing that
\[
K_k \ot H^{\ot m} + H^{\ot k} \ot K_m  = (K_k^\perp \ot K_m^\perp)^\perp \su K_{k + m} .
\]
The inclusion $K_k \ot H^{\ot m} + H^{\ot k} \ot K_m \su K_{k+ m}$ is an immediate consequence of the definition of $K_l$ for $l \in \{2,3,\ldots \}$, see \eqref{eq:invariant} and \eqref{eq:invarsum}. 

By definition of the involved $SU(2)$-actions we obtain that the inclusions $\io_{k,m} : E_{k + m} \to E_k \ot E_m$ are $SU(2)$-equivariant.
\end{proof}

\begin{rem}
Note that our subproduct system is by construction isomorphic to the \emph{maximal} subproduct system with prescribed fibres $E_1=H$ and $ E_2:=\det(\tau,H)^\perp$, as defined in \cite[Section 6.1]{ShSo09}. However, the context in \cite[Section 6.1]{ShSo09} does not in general yield the extra structure of an $SU(2)$-subproduct system. 
\end{rem}

We denote the Fock space of our $SU(2)$-equivariant subproduct system by
\[
F := F(\tau,H) := \op_{m = 0}^\infty E_m(\tau,H) = \op_{m = 0}^\infty E_m 
\]
and the associated strongly continuous action of $SU(2)$ on $F$ by
\begin{equation}\label{eq:tauinf}
\tau_\infty := \op_{m = 0}^\infty \tau_m :  SU(2) \to U(F) .
\end{equation}
For each $m \in \nn_0$, we recall that the orthogonal projection onto $E_m \su F$ is denoted by $Q_m : F \to F$. 

We apply the notation 
\[
\B T:= \B T(\tau,H) \su \B L(F) \q \T{and} \q \B O := \B O(\tau,H) := \B T/ \B K(F)  .
\]
for the associated Toeplitz algebra and Cuntz--Pimsner algebra. By the observations carried out in Section \ref{s:pre} we see that both the Toeplitz algebra and the Cuntz--Pimsner algebra carry a gauge action of $SU(2)$.

We let $\Falg \su F$ denote the algebraic direct sum of the subspaces $E_m \su F$:
\[
\Falg := F_{\T{alg}}(\tau,H) := \T{span}\{ \xi \in F \mid \xi \in E_m \T{ for some } m \in \nn_0 \big\} .
\]

We also define $F_+ \su F$ as the Hilbert space direct sum
\[
F_+ := \op_{m = 1}^\infty E_m
\]
and denote the vacuum vector by $\om := 1 \in E_0 = \cc \su F$, so that $F_+$ identifies with the orthogonal complement $(\cc \om)^\perp \su F$. In particular, we have that
\[ 
F_{+} = (1-Q_0)F \quad \T{and} \quad \cc \om = Q_0 F.
\]

\begin{rem}
Since the Hilbert space $H$ is finite-dimensional, it follows from the definition of the determinant as a subspace of $H \otimes H$, that the correspondence from Proposition \ref{prop:1to1poly} maps the generators of $\T{det}(\tau,H)$ to a finite number of quadratic polynomials. Therefore, our subproduct system corresponds to an ideal generated by a finite collection of quadratic polynomials, and this ideal in turn corresponds to a quadratic algebra (through the correspondence described, for instance, in \cite[Chapter 4]{Ma18}).

It is therefore not surprising that we make use of the identity \eqref{eq:quad_quot} when inductively constructing our subproduct system: the same formula is used in algebra for realising any given quadratic algebra as a quotient of the tensor algebra. 
\end{rem}

\subsection{Example: the case of the fundamental representation}\label{ss:funda}
We are now going to describe the subproduct system over $\nn_0$ coming from the fundamental representation $\rho : SU(2) \to U(\Cc^2)$. We let $\{f_0,f_1\}$ denote the standard basis for $\Cc^2$. 

We have that
\[
\T{det}(\rho, \Cc^2) = \Cc \cd ( f_0 \ot f_1 - f_1 \ot f_0 ) \su \Cc^2 \ot \Cc^2
\] 
and thus that
\[
K_m(i) = (\Cc^2)^{\ot (i-1)} \ot \Cc \cd (f_0 \ot f_1 - f_1 \ot f_0) \ot (\Cc^2)^{\ot m - i - 1}
\]
for all $m \in \{2,3,\ldots \}$ and all $i \in \{1,2,\ldots, m-1\}$.  Remark in particular that $\T{det}(\rho,\Cc^2)$ agrees with the usual determinant of $\Cc^2$ namely the wedge-product $\Cc^2 \we \Cc^2 \su \Cc^2 \ot \Cc^2$.

Let now $m \in \nn$. We recall that the $m$-fold symmetric tensor product of a finite-dimensional Hilbert space $H$ may be defined as the invariant subspace
\[
H^{\ot_S m} := \big\{ \xi \in H^{\ot m} \mid \si(\xi) = \xi \quad \forall \si \in S_m \big\},
\]
where the symmetric group $S_m$ acts unitarily on $H^{\ot m}$ via the rule 
\[
\Phi_{\si}( \xi_1 \ot \xi_2 \olo \xi_m) := \xi_{\si^{-1}(1)} \ot \xi_{\si^{-1}(2)} \olo \xi_{\si^{-1}(m)} .
\]
In particular, we have the identity of vector spaces
\[
(\Cc^2)^{\ot_S m} = E_m(\rho,\Cc^2) .
\]
This is a consequence of the Clebsch--Gordan theory for the representations of $SU(2)$ (cf. \cite[{Appendix C}]{Ha15}) and of the properties of the symmetric subproduct system \cite[Examples 1.3, 6.4]{ShSo09}.

For each $m \in \nn$, we define the vectors
\[ 
f_0^k f_1^{m-k} := p_m(f_0^{\ot k} \ot f_1^{\ot (m-k)}), \quad k=0, \dots, m,
\]
where $p_m : (\cc^2)^{\ot m} \to (\cc^2)^{\ot m}$ denotes the orthogonal projection onto the symmetric tensor product $(\cc^2)^{\ot_S m} \su (\cc^2)^{\ot m}$.
The vectors $\lbrace f_0^k f_1^{m-k}, \ k=0, \dots, m \rbrace$ form an orthogonal vector space basis for $E_m(\rho, \Cc^2)$ and their norm is given by the combinatorial expression
\begin{equation}\label{eq:norm_basis_fund_rep}
\| f_0^k f_1^{m-k} \|^2 = \frac{k! (m-k)!}{m!}
\end{equation} 
as described in \cite[Lemma 3.8]{Arv98}.

Thanks to the identification between symmetric tensors and homogeneous polynomials, we obtain a unitary isomorphism between the resulting Fock space $F(\rho,\Cc^2)$ and the Drury--Arveson space $H^2_2$, see \cite{Dru,Arv98,Sh14}. 

%
 %

On our Fock space we introduce the unbounded selfadjoint operator $N : \T{Dom}(N) \to F(\rho,\Cc^2)$ defined by $N(\xi) = m \cd \xi$ for every homogeneous $\xi \in E_m$. The domain of $N$ is given explicitly by 
\[
\T{Dom}(N) := \big\{ \{ \xi_m\}_{m = 0}^\infty \in F \mid \{ m \cd \xi_m \}_{m = 0}^\infty \in F \big\} .
\]
The unbounded selfadjoint operator $N$ is referred to as the \emph{number operator}.

\begin{thm}[{cf. \cite[Proposition 5.3]{Arv98}, \cite[Example 6.4]{ShSo09}}]
\label{t:toe3sph}
The Toeplitz algebra $\B T(\rho,\Cc^2)$ associated to the fundamental representation is the $C^*$-subalgebra of $\B L(F)$ generated by the two operators $T_0 := T_{f_0}$ and $T_1:=T_{f_1}$. These satisfy the commutation relations
\begin{align}
& \label{eq:Toe_fund1} T_0T_1 =T_1T_0, \\
& \label{eq:Toe_fund2}T_0^* T_0 + T_1^* T_1 = (2+N)(1+N)^{-1},\\
& \label{eq:Toe_fund3} T_i^*T_j - T_jT_i^* = (1+N)^{-1} (\delta_{ij}1-T_jT_i^*).
\end{align}
\end{thm}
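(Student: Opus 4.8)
The plan is to carry out everything inside the identification $E_m(\rho,\cc^2) = (\cc^2)^{\ot_S m}$, working with the orthogonal basis $\{f_0^kf_1^{m-k}\}_{k=0}^m$ of $E_m$ and its norms $\|f_0^kf_1^{m-k}\|^2 = k!(m-k)!/m!$ from \eqref{eq:norm_basis_fund_rep}, so that each assertion reduces to a finite computation on basis vectors.

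First I would record the general multiplicativity identity $T_\xi T_\eta = T_{\io^*_{k,l}(\xi \ot \eta)}$, valid for $\xi \in E_k$ and $\eta \in E_l$; it follows by writing both sides out on a homogeneous vector $\ze \in E_m$ and invoking condition (3) of Definition \ref{d:sps} after taking adjoints. Since here every structure map $\io^*_{k,l}$ is the symmetrisation projection onto $(\cc^2)^{\ot_S(k+l)}$, iterating this identity yields $T_0^kT_1^{m-k} = T_{f_0^kf_1^{m-k}}$, and hence $T_0(f_0^kf_1^{m-k}) = f_0^{k+1}f_1^{m-k}$ and $T_1(f_0^kf_1^{m-k}) = f_0^kf_1^{m-k+1}$. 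Because the vectors $f_0^kf_1^{m-k}$ span $E_m$ and $\xi \mapsto T_\xi$ is linear, every creation operator lies in $C^*(T_0,T_1)$, which gives $\B T(\rho,\cc^2) = C^*(T_0,T_1)$. Relation \eqref{eq:Toe_fund1} is then immediate, since $T_0T_1 = T_{\io^*_{1,1}(f_0\ot f_1)}$ and $\io^*_{1,1}(f_0\ot f_1) = \tfrac12(f_0\ot f_1 + f_1\ot f_0)$ is symmetric in its two factors.

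Next I would compute the annihilation operators from the formula for $T_i$ just obtained, using orthogonality of the basis together with the norm formula: a one-line inner-product computation shows that for $m \geq 1$
\[
T_0^*(f_0^kf_1^{m-k}) = \frac{k}{m}\, f_0^{k-1}f_1^{m-k}, \qquad T_1^*(f_0^kf_1^{m-k}) = \frac{m-k}{m}\, f_0^kf_1^{m-k-1},
\]
while $T_i^*$ annihilates $E_0$. Then I would verify \eqref{eq:Toe_fund2} and \eqref{eq:Toe_fund3} by evaluating both sides on a basis vector $f_0^kf_1^{m-k}$, using that $N$ acts as the scalar $m$ on $E_m$ (all operators occurring in the relations preserve the decomposition $F = \op_m E_m$, so this suffices). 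For \eqref{eq:Toe_fund2} one finds that $T_0^*T_0 + T_1^*T_1$ acts on $E_m$ by $\tfrac{k+1}{m+1} + \tfrac{m+1-k}{m+1} = \tfrac{m+2}{m+1}$, which is exactly $(2+N)(1+N)^{-1}$ there. For \eqref{eq:Toe_fund3}, the symmetry $f_0 \leftrightarrow f_1$, $k \leftrightarrow m-k$ reduces the four cases $(i,j) \in \{0,1\}^2$ to $(i,j) = (0,0)$ and $(i,j) = (0,1)$, and in each case one computes $T_i^*T_j$ and $T_jT_i^*$ on $f_0^kf_1^{m-k}$ and checks the asserted identity, dealing with the fibre $E_0$ (where the annihilation operators vanish) separately. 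I do not expect a genuine conceptual obstacle: the argument is entirely computational, and the only point requiring real care is the bookkeeping on the orthogonal basis, including the $m=0$ edge case, with the norm formula \eqref{eq:norm_basis_fund_rep} doing the essential work in the computation of the adjoints $T_i^*$.
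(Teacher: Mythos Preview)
Your argument is correct: the multiplicativity $T_\xi T_\eta = T_{\io^*_{k,l}(\xi \ot \eta)}$ follows from condition~(3) of Definition~\ref{d:sps}, the basis formulae for $T_i$ and $T_i^*$ are exactly as you claim, and the verification of the three relations on basis vectors goes through without issue (including the $m=0$ edge case).

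Regarding the comparison, the paper does not supply its own proof of this statement: the theorem is quoted from Arveson and from Shalit--Solel, and your direct computational approach is essentially the classical one carried out in those references. What the paper does instead is prove, in Section~\ref{s:commreltoe}, a general theorem valid for every irreducible representation $\rho_n$ (relations \eqref{eq:non_self}--\eqref{eq:Toe_sph2}), and then remark that it specialises to Theorem~\ref{t:toe3sph} when $n=1$. That proof goes through the $SU(2)$-equivariant machinery developed in Section~\ref{s:fusion}: relation~\eqref{eq:Toe_adj} is obtained from the unitary decomposition $\io_L \io_L^* + V_L V_L^* = 1$ of Proposition~\ref{p:decompleft} together with the explicit formula for $V_L$ in Proposition~\ref{p:isometries}, rather than from basis computations. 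Your approach is simpler and self-contained for $n=1$, while the paper's approach is representation-theoretic and works uniformly for all $n$; note also that the general relation~\eqref{eq:Toe_adj} expresses $T_i^* T_j$ in terms of $T_{n-i} T_{n-j}^*$ rather than $T_j T_i^*$, so recovering the exact form of~\eqref{eq:Toe_fund3} from it still requires the identity $\sum_i T_i T_i^* = 1 - Q_0$.
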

In other words, the pair of operators $(T_0,T_1)$ is a commuting, essentially normal row contraction. We remark that the two operators also satisfy $T_0T_0^* + T_1T_1^* = 1-Q_0$, i.e., the contraction is pure.

\begin{thm}[{\cite[Thm. 5.7]{Arv98}}]
The Toeplitz algebra $\B T(\rho,\Cc^2)$ contains the algebra of compact operators on the Drury--Arveson space $H^2_2$, and we have an exact sequence of $C^*$-algebras
\begin{equation}
\xymatrix{ 0 \ar[r] & \B K(H^2_2) \ar[r] & \B T(\rho,\Cc^2)\ar[r] & C(S^3) \ar[r] & 0},
\end{equation}
where $C(S^3)$ is the commutative $C^*$-algebra of continuous functions on the $3$-sphere $S^3 \su \B C^2$. In particular, we have that the Cuntz--Pimsner algebra $\B O(\rho,\Cc^2)$ is isomorphic to $C(S^3)$.
\end{thm}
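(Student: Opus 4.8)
The plan is to build the short exact sequence by hand out of the commutation relations recorded in Theorem~\ref{t:toe3sph}, and then to identify the quotient $\B T(\rho,\Cc^2)/\B K(H^2_2)$ with $C(S^3)$ through Gelfand duality together with a transitivity argument for the (extended) gauge action. First I would check that the compacts sit inside the Toeplitz algebra as an ideal. The remark following Theorem~\ref{t:toe3sph} gives $Q_0 = 1 - T_0T_0^* - T_1T_1^* \in \B T(\rho,\Cc^2)$. For a word $\al=(\al_1,\dots,\al_m)$ in the letters $\{0,1\}$ one computes, using $T_\xi(\zeta) = \io^*(\xi\ot\zeta)$ and the fact that $\io^*$ realises the orthogonal projection onto the symmetric tensors, that $T_\al\om = T_{f_{\al_1}}\cdots T_{f_{\al_m}}\om = p_m(f_{\al_1}\ot\cdots\ot f_{\al_m}) \in E_m$; as $\al$ runs over all words of length $m$ these symmetrised monomials span $E_m = (\Cc^2)^{\ot_S m}$. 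Hence $T_\al Q_0 T_\be^*$ is the rank-one operator $\zeta\mapsto\inn{T_\be\om,\zeta}\,T_\al\om$, and letting $\al,\be$ vary the closed span of these operators is all of $\B K(F)=\B K(H^2_2)$. Thus $\B K(H^2_2)\su\B T(\rho,\Cc^2)$, and it is an ideal because it is already an ideal in $\B L(F)$.

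Next I would identify the quotient. Let $\pi\colon\B T(\rho,\Cc^2)\to A:=\B T(\rho,\Cc^2)/\B K(H^2_2)$ be the quotient map. The operator $(1+N)^{-1}$ is compact --- it is diagonal with eigenvalue $(m+1)^{-1}$ on the finite-dimensional subspace $E_m$ --- so \eqref{eq:Toe_fund3} forces $\pi(T_i)^*\pi(T_j)=\pi(T_j)\pi(T_i)^*$ for all $i,j$, while \eqref{eq:Toe_fund1} gives $\pi(T_0)\pi(T_1)=\pi(T_1)\pi(T_0)$. Therefore $A$ is a commutative unital $C^*$-algebra generated by the two normal elements $\pi(T_0),\pi(T_1)$, so by Gelfand duality $A\cong C(X)$ where $X\su\Cc^2$ is the joint spectrum of $(\pi(T_0),\pi(T_1))$; it is nonempty because $A\neq 0$, the Fock space $F$ being infinite-dimensional. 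Since $(2+N)(1+N)^{-1}=1+(1+N)^{-1}\equiv 1\pmod{\B K(F)}$, relation \eqref{eq:Toe_fund2} gives $\pi(T_0)^*\pi(T_0)+\pi(T_1)^*\pi(T_1)=1$ in $A$, so $X\su S^3$.

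It remains to prove $X=S^3$. Every $u\in U(\Cc^2)$ fixes the line $\Cc\cd(f_0\ot f_1-f_1\ot f_0)$ as a subspace, since $uf_0\we uf_1=\det(u)\,f_0\we f_1$, hence preserves each $K_m(i)$ and so each $E_m$; the unitaries $U_u:=\op_{m\geq 0}u^{\ot m}|_{E_m}$ on $F$ satisfy $U_uT_\xi U_u^*=T_{u\xi}$, so $\mathrm{Ad}(U_u)$ normalises $\B T(\rho,\Cc^2)$ and $\B K(H^2_2)$ and descends to an automorphism of $A=C(X)$. Under the embedding $X\hookrightarrow\Cc^2$ these automorphisms act as the standard linear action of $U(2)$ on $\Cc^2$; since $U(2)$ is transitive on $S^3$ and $X$ is a nonempty $U(2)$-invariant subset of $S^3$, we conclude $X=S^3$. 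This yields the exact sequence $0\to\B K(H^2_2)\to\B T(\rho,\Cc^2)\to C(S^3)\to 0$, and since $\B O(\rho,\Cc^2)=\B T(\rho,\Cc^2)/\B K(F)$ by Viselter's identification of $\B I_E$ with $\B K(F)$ for subproduct systems of finite-dimensional Hilbert spaces (\cite[Corollary 3.2]{Vis12}, recalled in Section~\ref{s:pre}), we obtain $\B O(\rho,\Cc^2)\cong C(S^3)$.

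The crux is the identification $X=S^3$: the inclusion $X\su S^3$ drops straight out of \eqref{eq:Toe_fund2}, but surjectivity is not implied by the commutation relations alone and needs the extra input that the extended gauge action of $U(2)$ is transitive on the $3$-sphere. A secondary point requiring a little care is the spanning claim in the first step, namely that the symmetrised monomials $T_\al\om$ exhaust $E_m$ --- this is precisely where the structure of the symmetric subproduct system (equivalently, Clebsch--Gordan) is used.
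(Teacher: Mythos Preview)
The paper does not supply a proof of this statement; it is quoted as \cite[Thm.~5.7]{Arv98} and treated as background. Your argument is correct and self-contained, and it is essentially the natural way to recover Arveson's result from the relations \eqref{eq:Toe_fund1}--\eqref{eq:Toe_fund3} already recorded in the paper: compactness of $(1+N)^{-1}$ kills the right-hand side of \eqref{eq:Toe_fund3} in the quotient, giving a commutative $C^*$-algebra generated by two normals whose joint spectrum lies in $S^3$ by \eqref{eq:Toe_fund2}; the $U(2)$-transitivity argument then pins down the spectrum exactly. One small imprecision: the induced action of $u\in U(2)$ on the character space sends the point $(z_0,z_1)$ to $(u^T z)_j=\sum_k u_{kj}z_k$ rather than the ``standard'' action $uz$, but since $u\mapsto u^T$ is a bijection of $U(2)$ this is immaterial for transitivity.
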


The above Toeplitz extension is well-studied and understood. Moreover, the Toeplitz algebra is known to be $KK$-equivalent to the complex numbers. We are going to prove that this is a general feature of the Toeplitz algebras of the $SU(2)$-subproduct systems constructed from irreducible $SU(2)$-representations.

\subsection{Computation of determinants}
We now provide a computation of the subspace $\det(\tau,H) \su H \ot H$, starting with the case where the representation $\tau : SU(2) \to U(H)$ is irreducible. We disregard the case where $\tau$ is (unitarily equivalent to) the trivial representation on $\cc$.

We put $n := \T{dim}(H) - 1 \in \nn$ and we let $L_n = (\cc^2)^{\ot_S n}$ denote the $n$-fold symmetric tensor product $\cc^2$. We let $\rho_n : SU(2) \to U(L_n)$ denote the irreducible representation obtained by restriction of the $n$-fold tensor product of the fundamental representation. For each $k \in \{0,1,\ldots,n\}$ we define the unit vector
\begin{equation}\label{eq:orthobasis}
e_k := \sqrt{ \frac{n!}{k! (n-k)!} } \cd {f_0^k f_1^{n-k}} \in L_n 
\end{equation}
so that $\{e_k\}_{k = 0}^n$ is an orthonormal basis for $L_n$, see Subsection \ref{ss:funda}.

\begin{prop}\label{p:irrdet}
Suppose that $\tau : SU(2) \to U(H)$ is irreducible and let $V : L_n \to H$ be a unitary operator intertwining $\tau$ with $\rho_n$.  
Then the determinant $\det(\tau,H) \su H \ot H$ is a one-dimensional vector space spanned by the vector
\[
(V \ot V)\big( (n+1)^{-1/2} \sum_{k = 0}^n (-1)^{n-k} e_k \ot e_{n-k} \big) .
\]
\end{prop}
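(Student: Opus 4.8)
The plan is to reduce everything to a computation inside the symmetric powers $L_n = (\cc^2)^{\ot_S n}$ via the intertwiner $V$. Since $V \ot V$ is unitary and intertwines $\tau \ot \tau$ with $\rho_n \ot \rho_n$, the subspace $\det(\tau,H)$ is the image under $V \ot V$ of the invariant subspace $\big(L_n \ot L_n\big)^{SU(2)}$, so it suffices to show that the latter is one-dimensional and spanned by $\omega_n := (n+1)^{-1/2} \sum_{k=0}^n (-1)^{n-k} e_k \ot e_{n-k}$.

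First I would establish one-dimensionality. By the Clebsch--Gordan decomposition, $\rho_n \ot \rho_n \cong \rho_{2n} \op \rho_{2n-2} \op \cdots \op \rho_0$, and each irreducible summand $\rho_{2j}$ appears with multiplicity one; the trivial representation $\rho_0$ occurs exactly once (as the bottom summand, since the two copies have the same highest weight $n$). Hence $\dim \big(L_n \ot L_n\big)^{SU(2)} = 1$. This is clean and standard, so it is not the obstacle.

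The substantive step is to verify that $\omega_n$ is actually invariant, i.e.\ that $(\rho_n(g) \ot \rho_n(g))\omega_n = \omega_n$ for all $g \in SU(2)$. Here I would use that $L_n = (\cc^2)^{\ot_S n}$ sits inside $(\cc^2)^{\ot n}$ and that the invariant vector of $\rho_1 \ot \rho_1$ on $\cc^2 \ot \cc^2$ is the antisymmetric vector $f_0 \ot f_1 - f_1 \ot f_0$ spanning $\cc^2 \we \cc^2$ (the genuine determinant of $\cc^2$). Thus $(f_0 \ot f_1 - f_1 \ot f_0)^{\ot n} \in (\cc^2)^{\ot 2n}$ is $\rho_1^{\ot 2n}$-invariant. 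The idea is to apply the projection onto $L_n \ot L_n \su (\cc^2)^{\ot n} \ot (\cc^2)^{\ot n}$ — which is $SU(2)$-equivariant — to a suitably ordered version of this vector, and identify the result (up to scalar) with $\sum_k (-1)^{n-k} e_k \ot e_{n-k}$. Concretely: take the element obtained by placing the $i$-th "$f_0$-slot" in the first tensor leg and the $i$-th "$f_1$-slot" in the second tensor leg (and vice versa), then symmetrize each leg separately. Expanding $\prod_{i=1}^n (f_0 \ot f_1 - f_1 \ot f_0)$ along these lines produces, after symmetrization, terms proportional to $(p_n(f_0^{\ot k} \ot f_1^{\ot(n-k)})) \ot (p_n(f_0^{\ot(n-k)} \ot f_1^{\ot k}))$ with sign $(-1)^{n-k}$ and a binomial coefficient $\binom{n}{k}$; converting from the $f_0^k f_1^{n-k}$ basis to the orthonormal basis $e_k$ via \eqref{eq:orthobasis} and \eqref{eq:norm_basis_fund_rep} absorbs the binomial coefficients and leaves exactly $\sum_k (-1)^{n-k} e_k \ot e_{n-k}$. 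Finally, one checks this vector has norm $\sqrt{n+1}$ (each $e_k \ot e_{n-k}$ is a unit vector and the $n+1$ terms are mutually orthogonal), which gives the stated normalization factor $(n+1)^{-1/2}$.

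The main obstacle is the bookkeeping in this last identification: getting the combinatorial factors and signs to come out right when pushing the antisymmetric tensor through the symmetrization projections, and being careful that $p_n$ applied to a permuted version of $f_0^{\ot k}\ot f_1^{\ot(n-k)}$ gives the same vector $f_0^k f_1^{n-k}$. An alternative, perhaps cleaner, route that avoids some of this is to exploit uniqueness: since the invariant subspace is one-dimensional, it is enough to exhibit \emph{any} nonzero invariant vector and then normalize, so one could instead verify invariance of $\sum_k (-1)^{n-k} e_k \ot e_{n-k}$ directly by checking it is annihilated by the raising and lowering operators $E, F$ of the Lie algebra $\mathfrak{su}(2)_{\cc} = \mathfrak{sl}(2,\cc)$ acting diagonally — using the standard formulas $E e_k \propto e_{k-1}$, $F e_k \propto e_{k+1}$ on $L_n$ — which reduces to a short recursion in the coefficients rather than a tensor-power expansion. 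I would present whichever of the two computations is shorter in the write-up and relegate the other to a remark.
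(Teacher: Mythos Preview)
Your proposal is correct. Both you and the paper invoke the Clebsch--Gordan decomposition $\rho_n \ot \rho_n \cong \bop_{m=0}^n \rho_{2m}$ to get one-dimensionality of the invariant subspace, so that part is identical. The difference lies in how the explicit spanning vector is identified. The paper's proof simply quotes the known Clebsch--Gordan coefficients: it writes down the intertwining unitary $W : \bop_{m=0}^n L_{2m} \to L_n \ot L_n$ and reads off $W(1) = \sum_{k,l} C^{0,0}_{n/2,k-n/2,n/2,l-n/2}\, e_k \ot e_l$, which is exactly $(n+1)^{-1/2}\sum_k (-1)^{n-k} e_k \ot e_{n-k}$. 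Your route instead \emph{derives} this vector, either by pushing the $n$-th tensor power of the antisymmetric invariant $f_0 \ot f_1 - f_1 \ot f_0$ through the symmetrization $p_n \ot p_n$ (the binomial factors indeed cancel against the normalisation in \eqref{eq:orthobasis}, as you anticipate), or by checking annihilation under the diagonal $\mathfrak{sl}(2,\cc)$-action. The paper's argument is shorter if one is willing to cite the Clebsch--Gordan coefficient table; yours is more self-contained and makes the appearance of the alternating sum transparent rather than a lookup.
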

\begin{proof}
Using the representation theory for $SU(2)$ we know that we may find a unitary operator $W$ from $\op_{m = 0}^n L_{2m}$ to $L_n \ot L_n$ intertwining the representations $\op_{m = 0}^n \rho_{2m}$ and $\rho_n \ot \rho_n$. The structure of this unitary operator is determined by the Clebsch--Gordan coefficients and on $L_0 = \cc$ it is given by 
\[
W(1) = (n+1)^{-1/2} \sum_{k = 0}^n (-1)^{n-k} e_k \ot e_{n-k} = \sum_{k,l = 0}^n C^{0,0}_{n/2,k-n/2,n/2,l-n/2} \cd e_k \ot e_l . \qedhere
\]
\end{proof}

\begin{rem}
Going back to the correspondence described in Subsection \ref{ss:poly}, the homogeneous ideal associated to the subproduct system of the irreducible representation $\rho_n : SU(2) \to U(L_n)$ is the proper homogeneous ideal in the free algebra on $(n+1)$ generators $\cc \langle x_0, \dotsc, x_n \rangle$ generated by the single degree two homogeneous polynomial $p(x_0, \dotsc, x_n) = \sum_{i=0}^n (-1)^i x_i x_{n-i}$.
\end{rem} 

In the more general case where $\tau : SU(2) \to U(H)$ need not be irreducible, we choose a unitary operator $V : \bop_{m = 0}^\infty L_m^{\op k_m} \to H$ intertwining the representations $\bop_{m = 0}^\infty \rho_m^{\op k_m}$ and $\tau$, where $k_m \in \nn_0$ for all $m \in \nn_0$ and we identify $L_n^{\op 0}$ with $\{0\}$. Of course, since $H$ is finite-dimensional, there exists an $M \in \nn_0$ such that $k_m = 0$ for all $m \geq M$.

\begin{prop}\label{p:compdetredu}
The determinant $\det(\tau,H) \su H \ot H$ has dimension $\sum_{m = 0}^\infty k_m^2$ and is unitarily isomorphic to the Hilbert space
\[
\bop_{m = 0}^\infty \det(\rho_m,L_m)^{\op k_m^2} \su \bop_{m =0}^\infty (L_m \ot L_m)^{\op k_m^2}
\]
via the isometry 
\[
\begin{CD}
\bop_{m =0}^\infty (L_m \ot L_m)^{\op k_m^2} \cong \bop_{m = 0}^\infty ( L_m^{\op k_m} \ot L_m^{\op k_m}) 
@>{\io}>> H \ot H,
\end{CD}
\]
where $\io$ is defined in degree $m$ by $\io(\xi_m \ot \eta_m) := V( \xi_m \de_m) \ot V(\eta_m \de_m)$.
\end{prop}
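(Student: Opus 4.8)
The plan is to reduce the general case to the irreducible case settled in Proposition~\ref{p:irrdet}, using that the diagonal $SU(2)$-action is compatible with direct sums and tensor products. First I would observe that, since $V : \bop_{m=0}^\infty L_m^{\op k_m} \to H$ intertwines $\bop_m \rho_m^{\op k_m}$ with $\tau$, the unitary $V \ot V$ intertwines the diagonal action of $SU(2)$ on $\big( \bop_m L_m^{\op k_m}\big)^{\ot 2}$ with the diagonal action $\tau \ot \tau$ on $H \ot H$. An equivariant unitary carries invariant vectors to invariant vectors, so $V \ot V$ restricts to a unitary isomorphism from the space of $SU(2)$-invariants of $\big( \bop_m L_m^{\op k_m}\big)^{\ot 2}$ onto $\det(\tau,H)$; it therefore suffices to compute the former.

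Next I would decompose $\big( \bop_j L_j^{\op k_j}\big)^{\ot 2} \cong \bop_{m,n}(L_m \ot L_n)^{\op k_m k_n}$ as a direct sum of $SU(2)$-representations for the diagonal action, which reduces the problem to computing the invariants of each summand $(L_m \ot L_n)^{\op k_m k_n}$. The key input is the Clebsch--Gordan rule (cf.\ \cite[Appendix~C]{Ha15}): $L_m \ot L_n \cong L_{|m-n|} \op L_{|m-n|+2} \op \cdots \op L_{m+n}$, so $L_m \ot L_n$ contains the trivial representation if and only if $m = n$, and then with multiplicity one. Hence each cross summand with $m \neq n$ contributes nothing, while the invariant subspace of $(L_m \ot L_m)^{\op k_m^2}$ equals $\det(\rho_m,L_m)^{\op k_m^2}$ --- of dimension $k_m^2$, since $\det(\rho_m,L_m) \su L_m \ot L_m$ is one-dimensional by Proposition~\ref{p:irrdet}. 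Summing over $m$, the space of invariants of $\big(\bop_j L_j^{\op k_j}\big)^{\ot 2}$ is exactly the subspace $\bop_m \det(\rho_m,L_m)^{\op k_m^2}$ of $\bop_m (L_m \ot L_m)^{\op k_m^2} \su \big(\bop_j L_j^{\op k_j}\big)^{\ot 2}$, of total dimension $\sum_m k_m^2$.

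It remains to transport this along $V \ot V$ and to recognise the resulting embedding as the stated map $\io$. Let $j$ be the isometric inclusion of the ``diagonal block'' $\bop_m (L_m^{\op k_m})^{\ot 2}$ into $\big(\bop_l L_l^{\op k_l}\big)^{\ot 2}$. Then $(V \ot V)\ci j$ is, in degree $m$, the assignment $\xi_m \ot \eta_m \mapsto V(\xi_m \de_m) \ot V(\eta_m \de_m)$, i.e.\ it is $\io$; being a composite of an isometric inclusion with a unitary, $\io$ is an isometry. By the two previous steps $\io$ maps $\bop_m \det(\rho_m,L_m)^{\op k_m^2}$ onto $\det(\tau,H)$, which yields both the dimension formula and the asserted unitary isomorphism.

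The part that requires genuine care is the bookkeeping in the middle step: one must fix the canonical identification $\big(\bop_l L_l^{\op k_l}\big)^{\ot 2} \cong \bop_{m,n}(L_m \ot L_n)^{\op k_m k_n}$, verify that it is $SU(2)$-equivariant for the diagonal actions, and then match the $k_m^2$ copies of $\det(\rho_m,L_m)$ with the pairs in $\{1,\dots,k_m\}^2$ labelling the diagonal summands, so that the embedding obtained is precisely the prescribed $\io$. The representation theory itself --- the vanishing of invariants of $L_m \ot L_n$ for $m \neq n$ --- is classical and presents no difficulty.
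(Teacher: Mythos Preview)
Your proposal is correct and follows essentially the same route as the paper: transport the problem along the intertwiner $V\ot V$, decompose $\big(\bop_m L_m^{\op k_m}\big)^{\ot 2}\cong \bop_{m,l}(L_m\ot L_l)^{\op k_m k_l}$, use Clebsch--Gordan to kill the off-diagonal blocks, and invoke Proposition~\ref{p:irrdet} for the dimension count. The paper's version is terser and leaves the identification of the embedding with $\io$ implicit, whereas you have made that bookkeeping explicit; there is no substantive difference.
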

\begin{proof}
Using the unitary operator $V : \op_{m = 0}^\infty L_m^{\op k_m} \to H$ we identify $H \ot H$ with 
\[
\big( \op_{m = 0}^\infty L_m^{\op k_m} \big) \ot \big( \op_{l = 0}^\infty L_l^{\op k_l} \big)
\cong \op_{m,l = 0}^\infty (L_m \ot L_l)^{\op k_m \cd k_l} .
\]
Under this unitary isomorphism the representation $\tau \ot \tau$ identifies with the representation $\op_{m,l = 0}^\infty (\rho_m \ot \rho_l)^{\op k_m \cd k_l}$. Since the tensor product of representations $\rho_m \ot \rho_l$ contains no copy of the trivial representation for $m \neq l$, the determinant in question identifies with $\op_{m = 0}^\infty \det(\rho_m,L_m)^{\op k_m^2}$. The claim concerning the dimension of the determinant now follows immediately from Proposition \ref{p:irrdet}.
\end{proof}

\section{Fusion rules for an $SU(2)$-equivariant subproduct system}
\label{s:fusion}
From now on, we fix a strictly positive integer $n \in \nn$ and consider the irreducible representation $\rho_n : SU(2) \to U(L_n)$. We write $\{ e_k \}_{k = 0}^n$ for the orthonormal basis for the Hilbert space $L_n = (\cc^2)^{\ot_S n}$ introduced in \eqref{eq:orthobasis}. We put
\[
D := \det(\rho_n,L_n) \su L_n \ot L_n 
\]
so that $D$ is a one-dimensional vector space spanned by the unit vector 
\begin{equation}\label{eq:delta_vect}
\de := \frac{1}{\sqrt{n+1}} \cd \sum_{k = 0}^n (-1)^k e_k \ot e_{n-k} \in D ,
\end{equation}
as shown in Proposition \ref{p:irrdet}.

We have an associated sequence of finite-dimensional Hilbert spaces $\{ E_m \}_{m = 0}^\infty := \{ E_m(\rho_n,L_n) \}_{m = 0}^\infty$ defined as in Section \ref{s:subsu2}. Each of these Hilbert spaces carries a strongly continuous unitary representation of $SU(2)$ which in degree $m \in \nn_0$ is induced by the tensor product $\rho_n^{\ot m} : L_n^{\ot m} \to L_n^{\ot m}$. We emphasise that these representations are in general not irreducible (unless $n = 1$).

The main result of this section is the following orthogonal decomposition of the tensor products:

\begin{thm}\label{t:fusionintro}
For each $k,l \in \nn_0$ there exists an explicit $SU(2)$-equivariant unitary isomorphism
\[
{ E_k \ot E_l \cong E_{k + l} \op E_{k + l -2} \opo E_{|k-l|} .}
\]
\end{thm}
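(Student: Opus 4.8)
The plan is to reduce the statement to the single ``building block'' $SU(2)$-equivariant isomorphism
\[
E_m \ot E_1 \cong E_{m+1} \op E_{m-1}, \qquad m \geq 0,
\]
with the conventions $E_0 = \cc$ and $E_{-1} := 0$ (so that for $m = 0$ it reads $E_1 \cong E_1$). Granting this, the general fusion rule follows by induction on $l$. The cases $l \leq 1$ are trivial, respectively the building block. For $l \geq 2$ one tensors the building block $E_{l-1} \ot E_1 \cong E_l \op E_{l-2}$ with $E_k$, applies the inductive hypothesis to $E_k \ot E_{l-1}$ and the building block again to each of its summands tensored with $E_1$, and then cancels the extra summand $E_k \ot E_{l-2}$ (inductive hypothesis) against the contributions it produces. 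Since every Hilbert space in sight carries a finite-dimensional unitary $SU(2)$-representation, this cancellation may be carried out in the representation ring $R(SU(2))$, where an elementary manipulation of multisets of indices telescopes the identity to $[E_k][E_l] = \sum_{j=0}^{\min(k,l)} [E_{k+l-2j}]$; equivalently, the building block gives $[E_m] = p_m([E_1])$ for the Chebyshev polynomials $p_m$ of the second kind (normalised so that $p_0 = 1$, $p_1 = y$, $p_{m+1} = y p_m - p_{m-1}$), and the asserted decomposition is the classical product formula $p_k p_l = \sum_{j=0}^{\min(k,l)} p_{k+l-2j}$. Equality of characters of unitary $SU(2)$-representations then produces an $SU(2)$-equivariant unitary isomorphism, which one keeps explicit by following the explicit building blocks and the Clebsch--Gordan data through the induction.

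To prove the building block, observe first that $\iota_{m,1} \colon E_{m+1} \to E_m \ot E_1$ is an $SU(2)$-equivariant isometry by the subproduct structure, so that $E_m \ot E_1 = \iota_{m,1}(E_{m+1}) \op W$ with $W := (E_m \ot E_1) \ominus \iota_{m,1}(E_{m+1})$ an $SU(2)$-subrepresentation; the task is to identify $W$ with $E_{m-1}$. Write $D = \det(\rho_n,L_n) = \cc \cd \delta$; by \eqref{eq:quad_quot} one has $K_{m+1} = K_m \ot L_n + L_n^{\ot(m-1)} \ot D$, and taking orthogonal complements in $L_n^{\ot(m+1)}$ this yields $\iota_{m,1}(E_{m+1}) = (E_m \ot E_1) \cap \big( L_n^{\ot(m-1)} \ot D^\perp \big)$, where $D^\perp$ is the complement of $D$ inside $L_n \ot L_n$. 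Now introduce the $SU(2)$-equivariant contraction against $\delta$ on the last two tensor legs,
\[
c \colon L_n^{\ot(m+1)} \to L_n^{\ot(m-1)}, \qquad c(\zeta \ot w) := \langle \delta, w \rangle \, \zeta \quad (\zeta \in L_n^{\ot(m-1)},\ w \in L_n \ot L_n),
\]
whose adjoint is $\eta \mapsto \eta \ot \delta$ and which satisfies $c \, c^* = \mathrm{id}$. Restricting $c$ to $E_m \ot E_1$, its kernel is $(E_m \ot E_1) \cap \ker c = (E_m \ot E_1) \cap \big( L_n^{\ot(m-1)} \ot D^\perp \big) = \iota_{m,1}(E_{m+1})$, while its image is contained in $E_{m-1}$: for $\xi \in E_m \ot E_1$ and $\eta$ a generator of $K_{m-1}(i)$ with $1 \leq i \leq m-2$ we have $\langle c\xi, \eta \rangle = \langle \xi, \eta \ot \delta \rangle = 0$, because $\eta \ot \delta \in K_{m-1}(i) \ot D \su K_m(i) \ot L_n \su (E_m \ot E_1)^\perp$. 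Hence $c$ descends to an $SU(2)$-equivariant injection $W \hookrightarrow E_{m-1}$.

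It remains to show this injection is onto, equivalently that $\dim W = \dim E_{m-1}$, equivalently the dimension recursion $\dim E_{m+1} + \dim E_{m-1} = (n+1) \dim E_m$. By Proposition \ref{prop:1to1poly}, $\dim E_m = \dim A_m$, where $A = \cc\langle x_0, \dots, x_n \rangle / (f)$ is the quadratic algebra of the single relation $f = \sum_{i=0}^n (-1)^i x_i x_{n-i}$ attached to our subproduct system. In the degree-lexicographic order with $x_0 > \dots > x_n$, the leading monomial of $f$ is $x_0 x_n$, which has no nontrivial overlap with itself; therefore $\{f\}$ is a noncommutative Gr\"obner basis of $(f)$, so that the words in $x_0, \dots, x_n$ avoiding $x_0 x_n$ as a factor form a linear basis of $A$. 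Counting length-$m$ such words gives precisely $\dim A_m = (n+1) \dim A_{m-1} - \dim A_{m-2}$ for $m \geq 2$, which is the required recursion. (Alternatively, one may invoke the Koszulity of this one-relator quadratic algebra, cf.\ \cite{Zh97,Zh98}, whose Hilbert series is $(1 - (n+1)t + t^2)^{-1}$.) Upgrading the resulting equivariant linear isomorphism $W \cong E_{m-1}$ to a unitary one via polar decomposition completes the proof of the building block, and with it of the theorem.

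The step I expect to be the main obstacle is the surjectivity in the last paragraph: the contraction $c$ does not respect the various orthogonal decompositions in play, so $\dim W = \dim E_{m-1}$ cannot be extracted by a naive linear-algebra count, and one genuinely needs the combinatorial input on the Hilbert series of $A$ (via the absence of overlaps, or Koszulity). A secondary point demanding care is propagating $SU(2)$-equivariance — and, should one want canonical rather than merely existing unitaries, handling the non-canonicity of the cancellation in $R(SU(2))$ — when passing from the building block to the general fusion rule.
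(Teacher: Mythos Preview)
Your argument is correct and gives a genuinely different route to the theorem than the paper's. Your building block is essentially the adjoint of the paper's: where you contract against $\delta$ on the last two legs and note the kernel is $\iota_{m,1}(E_{m+1})$, the paper instead builds recursive maps $G_m : E_{m-1} \to E_m \ot E_1$ (essentially $\eta \mapsto p_{m,1}(\eta \ot \delta)$ up to scalars, cf.\ Lemma~\ref{l:iotagee}) and proves directly that $\ma{cc}{\io_{m,1} & V_m}$ is unitary. For the dimension recursion you invoke a Gr\"obner-basis/word count on the one-relator algebra, whereas the paper obtains it from an orthogonal decomposition $K_{m+1} \cong (K_m \ot E_1) \op G_m(E_{m-1})$ (Lemma~\ref{l:decomp}). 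The real divergence is in the passage to general $k,l$: you cancel in $R(SU(2))$ via the Chebyshev product formula, while the paper constructs explicit intertwiners $\si_{k,m} : E_k \ot E_m \to E_{k+1} \ot E_{m+1}$ and proves by hand (Proposition~\ref{p:sigma*sigma} through Theorem~\ref{t:iso_fusion}) that $\ma{cccc}{\io_{k,m} & \si \io_{k-1,m-1} & \ldots}$ is unitary after normalisation.

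Your approach is shorter and more conceptual, and the Gr\"obner step is a nice way to get the Hilbert series without the inner-product computations of Lemma~\ref{l:leftfus}. What it costs is exactly what you flag at the end: the unitary you produce for general $k,l$ comes from complete reducibility rather than a formula, so it is ``explicit'' only in principle. The paper needs the concrete intertwiners $V_m$, $V_m'$, $\si_{k,m}$ and the exact constants in their inner products throughout Sections~\ref{s:quasi}--\ref{s:KKequiv} (for the quasi-homomorphism $(\psi_+,\psi_-)$, the operator $v^{TB}$, and the homotopy), so from the paper's point of view the explicit construction is not optional. If you only want the isomorphism statement, your argument suffices.
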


We view Theorem \ref{t:fusionintro} as an expression of the \emph{fusion rules} for our $SU(2)$-equivariant subproduct system. Moreover, for $n > 1$ one may interpret Theorem \ref{t:fusionintro} as a non-irreducible solution to the fusion rules of $SU(2)$. For $n = 1$ we exactly recover the usual (irreducible) fusion rules of $SU(2)$ (see for instance \cite{CFT}). {The fusion rules presented in Theorem \ref{t:fusionintro} play a key role in our later computation of the $K$-theory of the Toeplitz algebra $\B T(\rho_n, L_n)$.}

For every $k,m \in \nn_0$, we remind the reader of the notation  
\[
\io_{k,m} : E_{k + m} \to E_k \ot E_m \q \T{and} \q p_{k,m} := \io_{k,m} \io^*_{k,m} : E_k \ot E_m \to E_k \ot E_m
\]
for the inclusion and the associated orthogonal projection.
%

\subsection{Preliminaries on integer sequences}
We consider the sequence of strictly positive integers $\{ d_m \}_{m = 0}^\infty$ defined recursively by the formula:
\begin{equation}\label{eq:d_m_rec}
d_0 := 1 \, \, , \, \, \, d_1 := n+1 \, \, , \, \, \,
d_m := d_1 \cd d_{m-1} - d_{m-2} \, \, , \, \, \, m \geq 2 .
\end{equation}
We furthermore put $d_{-1} :=0$. These sequences are well-studied and understood and we refer the reader to the Online Encyclopaedia of Integer Sequences where examples are given, \cite{seq:d2,seq:d3,seq:d4,seq:d5}. 

Later on, in Lemma \ref{l:decomp}, we shall see that $d_m = \T{dim}(E_m)$ for all $m \in \nn_0$. Towards this goal, we start out by summarising various identities involving the numbers $d_m \in \nn$, $m \in \nn_0$. 

\begin{lemma}\label{l:prop_d_con}
Let $m,k,l \in \nn_0$. We have the identities
\[
d_m^2 - d_{m-1} d_{m+1} = 1 \q \mbox{and} \q \sum_{i = 0}^l d_{k+m + 2i} =  d_{k + l} d_{m + l} - d_{k-1} d_{m-1} .
\]
\end{lemma}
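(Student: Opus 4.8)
The plan is to prove both identities by induction on the recursion \eqref{eq:d_m_rec}, treating the two assertions in turn since the second one will be deduced from the first.

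\textbf{The first identity.} I would prove $d_m^2 - d_{m-1} d_{m+1} = 1$ by induction on $m$. For $m = 0$ we have $d_0^2 - d_{-1} d_1 = 1 - 0 = 1$. For the inductive step, assume $d_m^2 - d_{m-1} d_{m+1} = 1$ and compute, using $d_{m+2} = d_1 d_{m+1} - d_m$,
\[
d_{m+1}^2 - d_m d_{m+2} = d_{m+1}^2 - d_m(d_1 d_{m+1} - d_m) = d_{m+1}(d_{m+1} - d_1 d_m) + d_m^2 .
\]
Now $d_{m+1} - d_1 d_m = -d_{m-1}$ by the recursion, so this equals $d_m^2 - d_{m-1} d_{m+1} = 1$ by the inductive hypothesis. (One should check the small-index edge case $m=0$ separately so that $d_{-1}$ is handled correctly, but this is exactly the base case above.)

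\textbf{The second identity.} I would prove $\sum_{i=0}^l d_{k+m+2i} = d_{k+l} d_{m+l} - d_{k-1} d_{m-1}$ by induction on $l$, with $k,m$ fixed but arbitrary. For $l = 0$ the claim reads $d_{k+m} = d_k d_m - d_{k-1} d_{m-1}$, which is itself a (symmetric) identity I would establish first, by induction on $\min(k,m)$ or on $k$: for $k = 0$ it says $d_m = d_0 d_m - d_{-1} d_{m-1} = d_m$, and the step uses the recursion $d_{k+1+m} = d_1 d_{k+m} - d_{k-1+m}$ together with the hypotheses for $k$ and $k-1$, reorganised via $d_1 d_k - d_{k-1} = d_{k+1}$. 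For the inductive step in $l$, write
\[
\sum_{i=0}^{l+1} d_{k+m+2i} = d_{k+m} + \sum_{i=0}^{l} d_{(k+1)+(m+1)+2i} = d_{k+m} + d_{k+1+l} d_{m+1+l} - d_k d_m ,
\]
applying the inductive hypothesis with $k,m$ replaced by $k+1,m+1$. It then remains to verify the purely recursive identity
\[
d_{k+l+1} d_{m+l+1} - d_{k+l} d_{m+l} = d_k d_m - d_{k-1} d_{m-1} + \big( d_{k+l} d_{m+l} - d_{k-1} d_{m-1} \big) - d_{k+m} ,
\]
i.e. after cancellation $d_{k+l+1} d_{m+l+1} - d_{k+l} d_{m+l} + d_{k+m} = d_k d_m$, wait --- more cleanly: it suffices to show $d_{k+l+1}d_{m+l+1} - 2 d_{k+l}d_{m+l} + \ldots$; rather than carry this out here, I would reduce everything to the single ``addition formula'' $d_{a+b} = d_a d_b - d_{a-1} d_{b-1}$ (the $l=0$ case) applied twice, which collapses both sides to the same expression.

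\textbf{Main obstacle.} The only real work is bookkeeping with the shifted indices and the boundary value $d_{-1} = 0$; the conceptual content is the single addition formula $d_{a+b} = d_a d_b - d_{a-1} d_{b-1}$, from which the telescoping sum identity follows by a short induction on $l$ and the determinant-like identity $d_m^2 - d_{m-1}d_{m+1} = 1$ follows by specialising or by the direct induction above. I expect the fiddliest point to be organising the induction so that the edge cases ($k = 0$, $m = 0$, or $l = 0$) are covered without circular reasoning; choosing to establish the addition formula $d_{a+b} = d_a d_b - d_{a-1}d_{b-1}$ as a standalone lemma first makes the rest routine.
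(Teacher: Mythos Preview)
Your approach is essentially the same as the paper's: establish the addition formula $d_{k+m} = d_k d_m - d_{k-1}d_{m-1}$ by induction (this is the $l=0$ case, and the paper calls it ``the only tricky part''), then induct on $l$.

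Your write-up of the induction step in $l$ is muddled, though. After your line
\[
\sum_{i=0}^{l+1} d_{k+m+2i} = d_{k+m} + d_{k+1+l}\, d_{m+1+l} - d_k d_m ,
\]
you are already done: the addition formula (applied \emph{once}, not twice) gives $d_{k+m} - d_k d_m = -d_{k-1} d_{m-1}$, so the right-hand side equals $d_{k+l+1} d_{m+l+1} - d_{k-1} d_{m-1}$ as required. The ``wait --- more cleanly'' detour and the attempted identity with $-2d_{k+l}d_{m+l}$ are unnecessary and incorrect as written; drop them. (Equivalently, and slightly more in the paper's spirit, one can peel off the \emph{top} term $d_{k+m+2(l+1)}$ instead of the bottom one and apply the addition formula with indices $k+l+1$, $m+l+1$; either way the step is a one-liner.)
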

\begin{proof}
For the convenience of the reader, we provide a proof of the second of the two identities. The proof runs by induction on $l \in \nn_0$ but the only tricky part is the induction start. So suppose that $l = 0$. We shall prove by induction on $m \in \nn_0$ that
\begin{equation}\label{eq:did4}
d_{k + m} = d_k d_m - d_{k-1} d_{m-1}
\end{equation}
whenever $k \in \nn_0$ is fixed. For $m = 0,1$ there is nothing to prove, so supposing that the identity in \eqref{eq:did4} is verified for all $m \in \{0,1,\ldots,m_0\}$ for some $m_0 \in \nn$, we compute that
\[
\begin{split}
d_{k + m_0 + 1} & = d_{k + m_0} d_1 - d_{k + m_0 -1} = (d_k d_{m_0} - d_{k-1} d_{m_0-1} ) d_1 - d_k d_{m_0-1} + d_{k - 1} d_{m_0-2} \\
& = d_k (d_{m_0} d_1 - d_{m_0-1}) - d_{k-1} (d_{m_0-1} d_1 - d_{m_0-2})
= d_k d_{m_0+1} - d_{k-1} d_{m_0} .
\end{split}
\]
This proves the lemma.
\end{proof}

We remind the reader that $n \in \nn$, i.e., we are excluding the case of the trivial representation. This is essential for our results, which do not hold for $n=0$.

\begin{lemma}\label{l:quocon}
The sequence of quotients $\left\lbrace {d_{m-1}}/{d_m} \right\rbrace_{m = 0}^\infty$ is strictly increasing and converges to the limit $\ga_n = ({n+1 - \sqrt{ (n+1)^2 - 4}})/2 \in (0,1]$.
\end{lemma}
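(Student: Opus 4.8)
The plan is to analyse the recursion $d_m = d_1 d_{m-1} - d_{m-2}$ at the level of the quotients $q_m := d_{m-1}/d_m$. First I would observe that since all $d_m$ are strictly positive, dividing the recursion by $d_{m-1}$ gives, for $m \geq 1$, the relation $d_{m+1}/d_m = d_1 - d_{m-1}/d_m$, i.e.
\[
q_{m+1} = \frac{1}{d_1 - q_m}, \qquad q_0 = 0, \quad q_1 = \frac{1}{n+1}.
\]
So the whole sequence is an orbit of the Möbius map $\varphi(t) = 1/(d_1 - t) = 1/(n+1-t)$. The fixed points of $\varphi$ solve $t^2 - (n+1)t + 1 = 0$, namely $\gamma_n = \tfrac{1}{2}\big( (n+1) - \sqrt{(n+1)^2-4}\big)$ and $\gamma_n' = \tfrac{1}{2}\big((n+1)+\sqrt{(n+1)^2-4}\big)$; note $\gamma_n \gamma_n' = 1$ and for $n \geq 1$ one has $0 < \gamma_n \leq 1 \leq \gamma_n'$ (with equality exactly at $n=1$, where $\gamma_1 = \gamma_1' = 1$). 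I would record that $\gamma_n \in (0,1]$ as claimed, handling the case $n=1$ separately if needed since then the map is parabolic.

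Next I would prove monotonicity and the bound $q_m < \gamma_n$ for all $m$ simultaneously by induction. Since $\varphi$ is increasing on $(-\infty, n+1)$ and $q_0 = 0 < q_1 = 1/(n+1)$, applying $\varphi$ repeatedly shows $q_m < q_{m+1}$ for all $m$, provided the iterates stay below $n+1$ — which follows because $\varphi(t) < \gamma_n' \leq n+1$ whenever $t < \gamma_n'$, and one checks $q_1 < \gamma_n'$. For the upper bound, I would show $q_m < \gamma_n$ by induction: $q_0 = 0 < \gamma_n$, and if $q_m < \gamma_n$ then $q_{m+1} = \varphi(q_m) < \varphi(\gamma_n) = \gamma_n$ by monotonicity and the fixed-point property. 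Hence $\{q_m\}$ is strictly increasing and bounded above by $\gamma_n$, so it converges to some limit $q_\infty \leq \gamma_n$. Passing to the limit in $q_{m+1} = \varphi(q_m)$ (continuity of $\varphi$ near $q_\infty < n+1$) shows $q_\infty$ is a fixed point of $\varphi$ lying in $[0,\gamma_n]$; since the only fixed point in that interval is $\gamma_n$ itself (as $\gamma_n' \geq 1 \geq \gamma_n$, and when $n>1$ strictly $\gamma_n' > \gamma_n$), we conclude $q_\infty = \gamma_n$.

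Alternatively, and perhaps more cleanly for the write-up, I could use the first identity of Lemma \ref{l:prop_d_con}, $d_m^2 - d_{m-1}d_{m+1} = 1$: dividing by $d_m d_{m+1}$ gives $q_{m+1} - q_m \cdot \tfrac{d_m}{d_{m+1}}\cdot\tfrac{d_{m+1}}{d_m}$... more directly, $\tfrac{d_m}{d_{m+1}} - \tfrac{d_{m-1}}{d_m} = \tfrac{1}{d_m d_{m+1}} > 0$, which is exactly strict monotonicity of $\{q_m\}$ with no induction needed. Then boundedness by $1$ is immediate since $d_{m+1} = d_1 d_m - d_{m-1} > d_m$ for $n \geq 1$ (as $d_m \geq d_{m-1}$ inductively and $d_1 \geq 2$), so $q_m < 1$; convergence follows, and the limit is identified as above by solving the fixed-point equation. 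The main obstacle — really the only subtle point — is the degenerate case $n = 1$, where $\gamma_1 = 1$ is a double root and the Möbius map is parabolic; there one must check directly that $q_m = m/(m+1) \to 1$, which is elementary, and verify the interval endpoint "$\in (0,1]$" is attained. I expect the rest to be routine.
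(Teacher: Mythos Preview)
Your proposal is correct. Your alternative approach---using $d_m^2 - d_{m-1}d_{m+1} = 1$ from Lemma~\ref{l:prop_d_con} to get $q_{m+1} - q_m = 1/(d_m d_{m+1}) > 0$, then bounding by~$1$ via $d_{m+1} > d_m$ and identifying the limit from the quadratic---is essentially the paper's argument: the paper writes the same difference as a telescoping sum $q_m = \sum_{j=1}^m 1/(d_{j-1}d_j)$, bounds $d_m \geq m+1$ to get convergence of the series, and solves the equivalent quadratic $\gamma_n = \tfrac{1}{n+1}(1+\gamma_n^2)$.

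Your first approach, recasting the recursion as iteration of the M\"obius map $\varphi(t) = 1/(n+1-t)$ and arguing via monotonicity of~$\varphi$ and its fixed points, is a genuinely different and self-contained route that avoids invoking Lemma~\ref{l:prop_d_con} altogether; it makes the dynamical picture explicit and explains structurally why the smaller root is the attractor. The paper's approach, by contrast, leverages the already-proved identity $d_m^2 - d_{m-1}d_{m+1} = 1$ and is slightly shorter. Your concern about the parabolic case $n=1$ is overcautious: the inductive bound $q_m < \gamma_n$ and the fixed-point identification both go through unchanged there, since~$\varphi$ is still strictly increasing on $(-\infty,2)$ and the unique fixed point in $[0,1]$ is~$1$.
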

\begin{proof}
We first remark that $d_{m+1}> d_{m}$ for all $m \in \nn_0$, and hence that $d_{m+1} \geq m + 1$ (because $d_0 = 1$). Indeed, assuming that $d_m > d_{m-1}$ for some $m \in \nn$ we obtain that
\[
d_{m + 1} -d_m = d_m \cd n  - d_{m-1}
> d_{m-1} \cd (n - 1) \geq 0,
\]
since $n \in \nn$ by our standing assumptions. The claimed result now follows by induction (remark that the assumption $n \in \nn$ translates into the strict inequality $d_1 > d_0$.

We also observe that Lemma \ref{l:prop_d_con} implies that
\[ \frac{d_{m-1}}{ d_m} = \sum_{j = 1}^m \left( \frac{d_{j-1}}{d_j}
-\frac{d_{j-2}}{d_{j-1}} \right)
= \sum_{j = 1}^m \frac{1}{d_{j-1}d_j} .\]
This shows that our sequence is strictly increasing and moreover, our lower bound on the dimensions imply that the infinite sum $\sum_{j = 1}^\infty \frac{1}{d_{j-1}d_j}$ converges.

In order to compute the limit $\ga_n$ we apply \eqref{eq:d_m_rec} to see that
\[
\frac{d_{m-1}}{d_m} = \frac{d_m+d_{m-2}}{d_m d_1} =\frac{1}{d_1} +  \frac{d_{m-2}}{d_1 d_m}
= \frac{1}{n+1} + \frac{1}{n+1} \cd \frac{d_{m-2}}{d_m},
\]
for all $m \in \nn$, implying by taking limits that
\[
\ga_n = \frac{1}{n+1} + \frac{1}{n+1} \cd \ga_n^2 .
\]
The above quadratic equation has only one solution in the interval $(0,1]$, which yields
\begin{equation}
\label{eq:gamma}
\ga_n = \frac{n+1 - \sqrt{ (n+1)^2 - 4}}{2} .
\end{equation}
This proves the claim.
\end{proof}

\begin{rem}
Note that $d_{m}$ agrees with the number of length $m$ words in the alphabet $\lbrace 0, 1, 2, \dots, n \rbrace $ that do not contain the string $(0,n)$ (cf. \cite[Corollary 37]{Jan15}). In particular, our sequences are an example of cardinality sequences of word systems: thanks to \cite[Proposition 3.2]{GS14}, for every finite-dimensional subproduct systems of Hilbert spaces $\lbrace H_m \rbrace_{m\in \nn_0}$, there exists a word system $\lbrace X_m \rbrace_{m\in \nn_0}$ such that $\dim(H_m) = \vert X_m \vert$ for all $m \in \nn_0$ (see also \cite[Lemma 1.1]{An82} for a noncommutative algebraic version of this claim). However, the subproduct system associated to the word system described above is, in general, not isomorphic to the original one.

For $n \geq 2$, the constant $\gamma_n$ in \eqref{eq:gamma} equals the Perron--Frobenius eigenvalue of the $(n+1)\times (n+1)$-matrix with all entries equal to $1$ and except for a single $0$ in position $(1,n+1)$. See for instance \cite[Observation 1.4.2]{Kit}. For $n=1$ we cannot use the Perron--Frobenius theory because the matrix associated to the set of words in the alphabet is not an \emph{irreducible} one. Still the above ratio converges to the highest eigenvalue of said $2 \ti 2$-matrix. 
\end{rem}

 To end this subsection, we define the strictly positive integers
\begin{equation}\label{eq:mucon}
\mu_m := \frac{d_m d_{m-1}}{d_1} \, \, , \, \,\, m \in \nn .
\end{equation}

Using the recursive definition \eqref{eq:d_m_rec}, it can be verified that the sequences $\{ \mu_m \}_{m = 1}^\infty$ and $\{ d_m \}_{m = 0}^\infty$ are connected via the identity
\begin{equation}\label{eq:mudee}
d_m^2 = \mu_m + \mu_{m+1} \, \, , \, \,\, m \in \nn .
\end{equation}
This can be used to prove that the sequence $\{ \mu_m \}_{m = 1}^\infty$ can also be obtained using the recurrence relation
\begin{equation}
\mu_{m+1} = ((n+1)^2-2) \mu_m - \mu_{m-1} +1 , \quad \mu_1=1, \quad \mu_2= (n+1)^2 -1 .
\end{equation}
It is easy to see that by definition, for $n=1$ we obtain the triangular numbers \cite{seq:triangle}, as \eqref{eq:mucon} reduces to a binomial coefficient. For small $n > 1$, we also recover some well known combinatorial sequences: for $n=2, 3$ we obtain the sequences \cite{seq:mun2} and \cite{seq:mun3}, respectively. For $n=4$, the sequence $\{ 5\mu_m \}_{m = 1}^\infty$ agrees with \cite{seq:5mu4}.
 
Since at the moment of writing this paper the sequences $\{ \mu_m\}_{m = 1}^\infty$ for $n \geq 4$ were not listed in the OEIS, we are currently in the process of updating the database. We have done so starting from the sequence for $n=4$ \cite{seq:mu4}. 


\subsection{Decomposing tensor products by $E_1$ from the right}
We start out by proving the decomposition result in Theorem \ref{t:fusionintro} in the case where the second representation space is just $E_1$. Thus, for every $m \in \nn$, we are going to show that $E_m \ot E_1 \cong E_{m+1} \op E_{m-1}$ via an $SU(2)$-equivariant unitary.

We recall that $K_2 = \cc \cd \delta$ and for every $m \geq 2$ we have that
\[
K_m = \sum_{i = 0}^{m-2} L_n^{\ot i} \ot K_2 \ot L_n^{\ot (m-2-i)} .
\]
We also put $K_1 = K_0 := \{0\}$ and define $E_m = K_m^\perp \su L_n^{\ot m}$ for all $m \in \nn_0$. As in Definition \ref{d:sps}, we denote the identity operator on the Hilbert space $E_m$, with the symbol $1_m$.


We recursively define a linear map $G_m : E_{m-1} \to K_{m+1}$ for each $m \in \nn$:
\begin{equation}\label{eq:Gm_rec}
G_1(1) := \de \, \, , \, \, \, 
G_m := G_{m-1} \ot 1 + (-1)^{(n+1)(m-1)} d_{m-1} \cd 1_{m-1} \ot G_1 \q \T{for } m\geq 2,
\end{equation}
where we are suppressing the inclusion $\io_{m-2,1} : E_{m-1} \to E_{m-2} \ot E_1$ and the obvious identification $\iota_{m-1,0} : E_{m-1} \stackrel{\cong}{\longrightarrow} E_{m-1} \ot E_0$.

\begin{lemma}\label{l:rightequiv}
Let $m \in \nn$. The linear map $G_m : E_{m-1} \to K_{m+1}$ is equivariant meaning that 
\[
\rho_n^{\ot (m+1)}(g) G_m = G_m \rho_n^{\ot (m-1)}(g) \q \mbox{for all } g \in SU(2) .
\]
\end{lemma}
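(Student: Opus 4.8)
The statement is that the recursively defined map $G_m : E_{m-1} \to K_{m+1}$ is $SU(2)$-equivariant. The natural approach is induction on $m \in \nn$, tracking the two building blocks in the recursion \eqref{eq:Gm_rec}: the tensoring of $G_{m-1}$ by the identity on $E_1 = L_n$, and the term $1_{m-1} \ot G_1$.

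\emph{Base case.} For $m = 1$ the map $G_1 : \cc \to K_2 = \cc\cd\de$ sends $1 \mapsto \de$. Here $\cc$ carries the trivial representation and, since $\de$ spans $\det(\rho_n, L_n) = K_2$, which by definition is the space of $\rho_n \ot \rho_n$-invariant vectors, we have $\rho_n^{\ot 2}(g)\de = \de$ for all $g \in SU(2)$. Hence $\rho_n^{\ot 2}(g) G_1 = G_1 = G_1 \cdot 1_\cc = G_1 \rho_n^{\ot 0}(g)$, which is exactly equivariance in degree $1$.

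\emph{Inductive step.} Assume $G_{m-1} : E_{m-2} \to K_m$ is equivariant, i.e.\ $\rho_n^{\ot m}(g) G_{m-1} = G_{m-1}\rho_n^{\ot(m-2)}(g)$. I would analyse the two summands of $G_m = G_{m-1}\ot 1 + (-1)^{(n+1)(m-1)} d_{m-1}\cdot 1_{m-1}\ot G_1$ separately. For the first summand one uses that the inclusion $\io_{m-2,1} : E_{m-1} \to E_{m-2}\ot E_1$ is $SU(2)$-equivariant (proved in the proposition establishing that $(E,\io)$ is an $SU(2)$-subproduct system), so that on $E_{m-1}$, suppressing $\io_{m-2,1}$, the operator $G_{m-1}\ot 1$ intertwines $\rho_n^{\ot(m-1)}$ with $\rho_n^{\ot m}\ot\rho_n = \rho_n^{\ot(m+1)}$ by the inductive hypothesis tensored with the identity. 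For the second summand, $1_{m-1}\ot G_1$ maps $E_{m-1}\ot E_0 \to E_{m-1}\ot K_2$; since $1_{m-1}$ intertwines $\rho_n^{\ot(m-1)}$ with itself and $G_1$ intertwines the trivial representation with $\rho_n^{\ot 2}$, the tensor product $1_{m-1}\ot G_1$ intertwines $\rho_n^{\ot(m-1)}\ot\mathbf{1}_\cc = \rho_n^{\ot(m-1)}$ with $\rho_n^{\ot(m-1)}\ot\rho_n^{\ot 2} = \rho_n^{\ot(m+1)}$, again using equivariance of $\io_{m-1,0}$. The scalar $(-1)^{(n+1)(m-1)} d_{m-1}$ is irrelevant for equivariance since scalars commute with the group action. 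Adding the two intertwiners gives that $G_m$ is equivariant. One should also remark that the target indeed lands in $K_{m+1} \su L_n^{\ot(m+1)}$: this is already built into the recursion via $K_{m+1} = K_m \ot L_n + L_n^{\ot(m-1)}\ot K_2$ (a special case of \eqref{eq:invarsum}/\eqref{eq:quad_quot}), so the codomain is consistent.

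\emph{Main obstacle.} The argument is essentially a bookkeeping exercise once the right identifications are in place; the only subtlety is being careful about the suppressed isometries $\io_{m-2,1}$ and $\io_{m-1,0}$ and checking that the $SU(2)$-actions on the various tensor factors $E_k \ot E_l \su L_n^{\ot k} \ot L_n^{\ot l}$ are compatible with the diagonal action on $L_n^{\ot(k+l)}$ — which is exactly the content of the structure maps being $SU(2)$-equivariant. So the expected hard part is purely notational: making the composition $E_{m-1}\xrightarrow{\io_{m-2,1}} E_{m-2}\ot E_1 \xrightarrow{G_{m-1}\ot 1} K_m \ot L_n \su L_n^{\ot(m+1)}$ and its equivariance transparent, and similarly for the second branch. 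No genuine analytic or combinatorial difficulty arises here; the combinatorial content ($d_{m-1}$, signs) plays no role in this particular lemma and will only matter when one later verifies that $G_m$ has image all of $K_{m+1}$ or computes norms.
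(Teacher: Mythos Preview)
Your proposal is correct and follows essentially the same approach as the paper: induction on $m$, with the base case relying on the $SU(2)$-invariance of $\de$ and the inductive step treating the two summands of the recursion \eqref{eq:Gm_rec} separately. The paper's proof is slightly more terse (it writes the computation for $G_{m+1}$ directly on a vector $\xi \in E_m$ without explicitly discussing the suppressed structure maps), but the substance is identical.
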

\begin{proof}
The proof runs by induction on $m \in \nn$. The case where $m = 1$ holds since $\rho_n(g)^{\ot 2}(\de) = \de = G_1(1)$. Suppose now that the equivariance condition holds for some $m \in \nn$. For $\xi \in E_m$, the recursive definition of the maps $G_m$ in \eqref{eq:Gm_rec} implies that
\[
\begin{split}
\rho_n^{\ot (m+2)}(g) G_{m+1}(\xi) 
& = \rho_n^{\ot (m+2)}(g) (G_m \ot 1)(\xi) +(-1)^{(n+1)m} {d_m} \cd  \rho_n^{\ot (m+2)}(g) (\xi \ot \delta) \\
& = (G_m \ot 1)\rho_n^{\ot m}(g) (\xi )+(-1)^{(n+1)m} {d_m}   \cd \rho_n^{\ot m}(g) (\xi) \ot \de  \\
& = G_{m+1} \rho_n^{\ot m}(g)(\xi) .
\end{split}
\]
This proves the lemma.
\end{proof}

\begin{lemma}\label{l:leftfus}
Let $m \in \nn$. It holds that
\begin{enumerate}
\item $\binn{(G_m \ot 1)(\xi), \eta \ot \de} = (-1)^{(n+1)m+1}\frac{d_{m-1}}{d_1} \cd \inn{\xi,\eta}$ for all $\xi \in E_{m-1} \ot E_1$, $\eta \in E_m$; 
\item $\binn{ G_m(\xi), G_m(\eta) } = \mu_m \cd \inn{\xi,\eta}$ for all $\xi,\eta \in E_{m-1}$;
\item $\binn{ (G_m \ot 1)(\xi), G_{m+1}(\eta)} = 0$ for all $\xi \in E_{m-1} \ot E_1, \eta \in E_m$.
\end{enumerate}
\end{lemma}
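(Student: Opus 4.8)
The plan is to prove the three identities by induction on $m \in \nn$, using the recursive definition \eqref{eq:Gm_rec} of $G_m$ together with the orthogonality relations that have already been established. All three claims are linear (indeed sesquilinear in the relevant variables) so it suffices to check them on the spanning vectors, and since $E_{m-1} \su E_{m-2} \ot E_1 \su L_n^{\ot(m-1)}$ we may freely compute inner products inside the ambient tensor powers of $L_n$.

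For item (1), I would expand $(G_m \ot 1)(\xi)$ using $G_m = G_{m-1}\ot 1 + (-1)^{(n+1)(m-1)} d_{m-1}\cdot 1_{m-1}\ot G_1$. Pairing against $\eta \ot \de$ with $\eta \in E_m \su L_n^{\ot m}$, the first summand $(G_{m-1}\ot 1 \ot 1)(\xi)$ contributes a term of the form $\binn{(G_{m-1}\ot 1)(\xi'),\eta'}\cdot\binn{\cdots,\de}$ which one argues vanishes: the relevant factor lands in $L_n^{\ot (m-2)} \ot E_1 \ot E_1$ paired with $\eta \ot \de$ where $\eta \in E_m = K_m^\perp$, and since $K_{m} \supseteq L_n^{\ot (m-2)}\ot K_2$ while the remaining tensor leg is forced into $\cc\de = K_2$, one gets orthogonality. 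The surviving summand is $(-1)^{(n+1)(m-1)} d_{m-1}\cdot \binn{\xi \ot \de,\eta \ot \de} = (-1)^{(n+1)(m-1)}d_{m-1}\|\de\|^2\inn{\xi,\eta}$; using $\|\de\| = 1$ and $d_1 = n+1$, comparing signs $(-1)^{(n+1)(m-1)} = (-1)^{(n+1)m+1}\cdot(-1)^{(n+1)} = (-1)^{(n+1)m+1}$ when $n$ is odd and directly otherwise, one must be slightly careful but the stated sign $(-1)^{(n+1)m+1}$ and the factor $d_{m-1}/d_1$ should emerge. (Here I would double-check that the factor is $d_{m-1}/d_1$ rather than $d_{m-1}$ — this presumably reflects a normalisation hidden in how $\de$ or $\xi$ sits inside the tensor power, or in how $\io_{m-1,1}$ is being suppressed; I would track that normalisation carefully from the definitions in Section \ref{s:subsu2}.)

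For item (2), I would again use the recursion: $\binn{G_m(\xi),G_m(\eta)}$ splits into four terms. The diagonal terms give $\binn{(G_{m-1}\ot 1)\xi,(G_{m-1}\ot 1)\eta} = \binn{G_{m-1}\xi',G_{m-1}\eta'}$ (which by the inductive hypothesis is $\mu_{m-1}\inn{\xi,\eta}$ up to the inclusions) plus $d_{m-1}^2\binn{\xi\ot\de,\eta\ot\de} = d_{m-1}^2\inn{\xi,\eta}$, while the two cross terms are of the form $\pm(-1)^{(n+1)(m-1)}d_{m-1}\binn{(G_{m-1}\ot 1)\xi,\eta'\ot\de}$, which by item (1) (applied at level $m-1$) equals $\pm(-1)^{(n+1)(m-1)}d_{m-1}\cdot(-1)^{(n+1)(m-1)+1}\tfrac{d_{m-2}}{d_1}\inn{\xi,\eta}$, i.e. $\mp \tfrac{d_{m-1}d_{m-2}}{d_1}\inn{\xi,\eta} = \mp\mu_{m-1}\inn{\xi,\eta}$. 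Collecting: $\mu_{m-1} + d_{m-1}^2 - 2\mu_{m-1}\cdot(\text{one cross term with the right sign})$; I expect the two cross terms to have opposite signs so that one of them cancels $\mu_{m-1}$ and the total becomes $d_{m-1}^2 - \mu_{m-1} = \mu_m$ by the identity \eqref{eq:mudee}. The bookkeeping of signs on the cross terms is the step I would be most careful about.

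For item (3), I would expand $G_{m+1}(\eta) = (G_m \ot 1)(\eta) + (-1)^{(n+1)m}d_m\cdot(\eta\ot\de)$ and pair with $(G_m\ot 1)(\xi)$ where $\xi \in E_{m-1}\ot E_1$. The first contribution is $\binn{(G_m\ot 1)\xi,(G_m\ot 1)\eta}$ which equals $\binn{G_m\xi',G_m\eta}$ up to the inclusion $\io_{m-1,1}$; but $\xi' \in E_{m-1}$ only after projecting, and more relevantly $\eta \in E_m$ whereas $G_m$ has domain $E_{m-1}$ — so I would instead view this term through item (1): it is $\binn{(G_m\ot 1)\xi, G_m\eta \ot \text{(nothing)}}$... rather, the cleanest route is to note $(G_m\ot 1)(\xi) \in K_{m+1}\ot E_1$ sits inside $K_{m+2}$-related subspaces, while pairing against $(G_m\ot 1)(\eta)$ and $\eta\ot\de$ separately: the term $(-1)^{(n+1)m}d_m\binn{(G_m\ot1)\xi,\eta\ot\de}$ is handled directly by item (1) at level $m$, giving $(-1)^{(n+1)m}d_m\cdot(-1)^{(n+1)m+1}\tfrac{d_{m-1}}{d_1}\inn{\xi,\eta} = -\mu_m\inn{\xi,\eta}$, and the term $\binn{(G_m\ot 1)\xi,(G_m\ot 1)\eta}$ should, by item (2) applied after identifying domains, equal $\mu_m\inn{\xi,\eta}$ — the two cancel. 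The subtlety is that $\eta$ ranges over $E_m$, not $E_{m-1}$, so item (2) is not literally applicable to $\binn{(G_m\ot 1)\xi,(G_m\ot 1)\eta}$; I would resolve this by checking that $(G_m\ot 1)$ restricted appropriately still satisfies the $\mu_m$-scaling on all of $E_m$ (which follows because $G_m\ot 1$ is, up to scalar, an isometry onto its image as a consequence of item (2) plus equivariance, or by a direct recursion parallel to (2)).

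The main obstacle I anticipate is not any single computation but the careful management of the suppressed inclusions $\io_{m-2,1}: E_{m-1}\to E_{m-2}\ot E_1$ and the identifications $\io_{m-1,0}$, together with keeping the signs $(-1)^{(n+1)(m-1)}$ consistent through the induction — in particular making sure that item (1) at level $m-1$ feeds correctly into items (2) and (3) at level $m$, and that the domain mismatch in (3) ($E_m$ versus $E_{m-1}$) is handled rigorously rather than glossed over. Once the sign and normalisation bookkeeping is pinned down, each identity reduces to the algebraic relations \eqref{eq:mucon}, \eqref{eq:mudee} among the $d_m$ and $\mu_m$ already recorded in Lemma \ref{l:prop_d_con}.
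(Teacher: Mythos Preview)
Your overall strategy --- induction on $m$ using the recursion \eqref{eq:Gm_rec} --- is exactly what the paper does, and your arguments for (2) and (3) are essentially the paper's (modulo some muddled sign talk in (2): the two cross terms have the \emph{same} sign, each contributing $-\mu_{m-1}$, so together they give $-2\mu_{m-1}$ and the total is $\mu_{m-1}+d_{m-1}^2-2\mu_{m-1}=d_{m-1}^2-\mu_{m-1}=\mu_m$; and the ``domain mismatch'' in (3) dissolves once you expand $\eta\in E_m\subseteq E_{m-1}\ot E_1$ as $\sum_j\eta_j\ot e_j$ and apply (2) componentwise).

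There is, however, a real gap in your treatment of (1). You write the surviving summand as $\binn{\xi\ot\de,\eta\ot\de}=\|\de\|^2\inn{\xi,\eta}$ and then speculate that the missing $1/d_1$ must be some hidden normalisation. It is not. Write $\xi=\sum_j\xi_j\ot e_j\in E_{m-1}\ot E_1$. The term $1_{m-1}\ot G_1$ in the recursion for $G_m$ acts on each $\xi_j\in E_{m-1}$, producing $\xi_j\ot\de$; after the outer $\ot\,1$, the second summand of $(G_m\ot 1)(\xi)$ is $\sum_j\xi_j\ot\de\ot e_j$, with $\de$ in tensor legs $m,\,m{+}1$ and $e_j$ in leg $m{+}2$. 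In $\eta\ot\de$, by contrast, $\de$ sits in legs $m{+}1,\,m{+}2$. The two copies of $\de$ are therefore \emph{offset by one position}, and an explicit computation using $\de=\frac{1}{\sqrt{n+1}}\sum_k(-1)^k e_k\ot e_{n-k}$ shows that only one term of each sum survives, yielding $\inn{\xi_j\ot\de\ot e_j,\eta\ot\de}=\frac{(-1)^n}{n+1}\inn{\xi_j\ot e_j,\eta}$. Summing over $j$ and combining with the prefactor $(-1)^{(n+1)(m-1)}d_{m-1}$ gives exactly $(-1)^{(n+1)m+1}\frac{d_{m-1}}{d_1}\inn{\xi,\eta}$. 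The vanishing of the first summand is, as you say, because $\T{Im}(G_{m-1})\subseteq K_m=E_m^\perp$, so the first $m$ legs are already orthogonal to $\eta$; your phrasing of that point is garbled but the conclusion is right.
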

\begin{proof}
$(1)$: We focus on the case where $m \geq 2$. Let $\xi = \sum_{j = 0}^n \xi_j \ot e_j \in E_{m-1} \ot E_1$ and $\eta \in E_m$ be given. We compute that
\[
\begin{split}
& \binn{(G_m \ot 1)(\xi), \eta \ot \de} 
= \sum_{j = 0}^n \binn{ G_m(\xi_j) \ot e_j, \eta \ot \de} \\
& \q = \sum_{j = 0}^n \binn{(G_{m-1} \ot 1)(\xi_j) \ot e_j, \eta \ot \de}
+ (-1)^{(n+1)(m-1)} d_{m-1}  \cd \sum_{j = 0}^n \inn{\xi_j \ot \de \ot e_j, \eta \ot \de} \\
& \q = (-1)^{(n+1)(m-1)} d_{m-1} \cd \sum_{j = 0}^n \frac{(-1)^n}{n+1} \cd \inn{\xi_j \ot e_j \ot e_{n - j} \ot e_j, \eta \ot e_{n-j} \ot e_j}\\
& \q = (-1)^{(n+1)m+1}\frac{d_{m-1}}{d_1} \cd \inn{\xi,\eta} ,
\end{split}
\]
where the third identity follows from the structure of the vector $\de = \frac{1}{\sqrt{n+1}} \cd \sum_{j = 0}^n (-1)^j e_j \ot e_{n-j}$ and from the inclusion $\T{Im}(G_{m-1}) \su K_m = E_m^\perp$.

$(2)$: The proof runs by induction on $m \in \nn$. For $m = 1$, the result follows since $\inn{\de,\de} = 1$. Next, given $m \geq 1$ we assume that $(2)$ holds and for $\xi,\eta \in E_m$ we then compute that
\[
\begin{split}
\binn{ G_{m+1}(\xi), G_{m+1}(\eta) } & = \binn{ (G_m \ot 1)(\xi), (G_m \ot 1)(\eta)}
+ d_m^2 \cd \inn{\xi \ot \de, \eta \ot \de}  \\ 
& \q +(-1)^{(n+1)m} d_{m} \cd  \left( \binn{ (G_m \ot 1)(\xi), \eta \ot \de}
+ \binn{ \xi \ot \de, (G_m \ot 1)(\eta)}  \right) \\
& = \mu_m \cd \inn{\xi,\eta} + d_m^2 \cd \inn{\xi,\eta} + (-1)^{(n+1)m} d_{m}  \cd (-1)^{(n+1)m+1}\frac{d_{m-1}}{d_1} \cd 2 \inn{\xi,\eta} \\
& = \mu_m \cd \inn{\xi,\eta} + d_m^2 \cd \inn{\xi,\eta} - 2 \frac{d_m d_{m-1}}{d_1}  \cd \inn{\xi,\eta} \\
& = (d_m^2 - \mu_m) \cd \inn{\xi,\eta} = \mu_{m+1} \cd \inn{\xi,\eta} ,
\end{split}
\]
where the second identity follows from the induction hypothesis and $(1)$ and the fifth identity follows from \eqref{eq:mudee}.

$(3)$: Let $\xi \in E_{m-1} \ot E_1$ and $\eta \in E_m$ be given. Using $(1)$ and $(2)$ we compute that
\[
\begin{split}
\binn{ (G_m \ot 1)(\xi), G_{m+1}(\eta)} & = \binn{ (G_m \ot 1)(\xi), (G_m \ot 1)(\eta)} \\
& \q + (-1)^{(n+1)m} d_m \cd \binn{ (G_m \ot 1)(\xi), \eta \ot \de} \\
& = \mu_m \cd \inn{\xi,\eta} - \frac{d_m d_{m-1}}{d_1} \cd \inn{\xi,\eta}
= 0 .
\end{split}
\]
This proves the lemma.
\end{proof}

\begin{lemma}\label{l:decomp}
The vector space sum yields a unitary isomorphism of Hilbert spaces
\[
(K_m \ot E_1) \op G_m(E_{m-1}) \cong K_{m+1} ,
\]
for all $m \geq 1$.
\end{lemma}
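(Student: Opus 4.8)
The plan is to show the decomposition $K_{m+1} = (K_m \otimes E_1) + G_m(E_{m-1})$ holds as a vector space sum and that the two summands are orthogonal of the correct dimension, so that the sum is in fact a direct sum and equals $K_{m+1}$. First I would recall that by definition $K_{m+1} = \sum_{i=0}^{m-1} L_n^{\otimes i} \otimes K_2 \otimes L_n^{\otimes (m-1-i)}$, and split this sum into the terms with $i \leq m-2$ (which are precisely $K_m \otimes L_n$) and the single term $i = m-1$, namely $L_n^{\otimes(m-1)} \otimes K_2 = L_n^{\otimes(m-1)} \otimes \cc\delta$. Using $L_n^{\otimes(m-1)} = E_{m-1} \oplus K_{m-1}$ (and the fact that $K_{m-1} \otimes \cc\delta \subseteq K_m \otimes L_n$), one reduces to showing that $K_{m+1}$ is spanned by $K_m \otimes E_1$ together with the image $(E_{m-1}\otimes\delta) \subseteq L_n^{\otimes(m+1)}$; and then that replacing $E_{m-1}\otimes\delta$ by $G_m(E_{m-1})$ does not change the span, since $G_m(\xi) - (-1)^{(n+1)(m-1)} d_{m-1}\, \xi \otimes \delta = (G_{m-1}\otimes 1)(\xi) \in K_m \otimes E_1 + K_m\otimes K_1$ lies in $K_m \otimes L_n$ (here using $\mathrm{Im}(G_{m-1}) \subseteq K_m$ from the recursion, together with the inclusion $K_m \otimes L_n \subseteq K_{m+1}$ and the decomposition $L_n = E_1$).

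Next I would establish that the sum is orthogonal and internal. Orthogonality of $K_m \otimes E_1$ with $G_m(E_{m-1})$ is essentially Lemma \ref{l:leftfus}(3): for $\xi \in K_m \otimes E_1$ and $\eta \in E_{m-1}$, one writes $\xi$ in terms of the basis $\{e_j\}$ in the last leg and uses that part (3) gives $\langle (G_{m-1}\otimes 1)(\zeta), G_m(\eta)\rangle = 0$ — but more directly, $\langle K_m \otimes E_1, G_m(E_{m-1})\rangle$: since $G_m(E_{m-1}) \perp (E_m \otimes E_1)$ would be the wrong statement, the cleaner route is to observe $\mathrm{Im}(G_m) \subseteq K_{m+1}$ and $G_m$ is injective because Lemma \ref{l:leftfus}(2) gives $\langle G_m(\xi), G_m(\eta)\rangle = \mu_m \langle \xi,\eta\rangle$ with $\mu_m > 0$. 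So $\dim G_m(E_{m-1}) = \dim E_{m-1} = d_{m-1}$. Then a dimension count closes the argument: $\dim K_{m+1} = \dim L_n^{\otimes(m+1)} - \dim E_{m+1} = d_1^{m+1} - d_{m+1}$ and $\dim(K_m \otimes E_1) = \dim K_m \cdot d_1 = (d_1^m - d_m) d_1 = d_1^{m+1} - d_1 d_m$, so the required identity $\dim K_{m+1} = \dim(K_m \otimes E_1) + \dim E_{m-1}$ reads $d_1^{m+1} - d_{m+1} = d_1^{m+1} - d_1 d_m + d_{m-1}$, i.e.\ $d_{m+1} = d_1 d_m - d_{m-1}$, which is exactly the recursion \eqref{eq:d_m_rec}. (Strictly, this dimension count presupposes $\dim E_m = d_m$, which is the content of Lemma \ref{l:decomp} combined with induction; so I would instead carry the induction hypothesis $\dim E_{m-1} = d_{m-1}$, $\dim E_m = d_m$ and deduce $\dim E_{m+1} = d_{m+1}$ and $\dim K_{m+1} = \dim(K_m \otimes E_1) \oplus G_m(E_{m-1})$ simultaneously.)

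Thus the argument is an induction on $m$: the base case $m = 1$ is $K_2 \otimes E_1 \oplus G_1(E_0) = K_2 \otimes L_n + \cc\delta$, but $G_1(E_0) = \cc\delta \subseteq K_2 \otimes L_n$? No — $\delta \in L_n \otimes L_n$ is not in $K_2 \otimes L_n = K_2 \otimes E_1$ unless $\delta$ lands there, which it does not since $\delta$ spans $K_2$ itself; so for $m=1$ one has $K_2 = \{0\}\otimes E_1 \oplus G_1(E_0)$ with $K_1 = \{0\}$, and indeed $\dim K_2 = 1 = 0 + d_0$. The inductive step is as above. I expect the main obstacle to be bookkeeping the orthogonality claim carefully: one must verify that $K_m \otimes E_1 \perp G_m(E_{m-1})$ inside $L_n^{\otimes(m+1)}$, and the honest way is to combine the recursion $G_m = (G_{m-1}\otimes 1) + (-1)^{(n+1)(m-1)} d_{m-1}(1_{m-1}\otimes G_1)$ with the two facts $\mathrm{Im}(G_{m-1}\otimes 1) \subseteq K_m \otimes E_1$ is \emph{false} in general (it is $\subseteq K_m \otimes L_n$), so one should instead prove the statement as an orthogonal decomposition of $K_{m+1}$ by exhibiting both $K_m \otimes E_1$ and $G_m(E_{m-1})$ as subspaces of $K_{m+1}$, using Lemma \ref{l:leftfus}(3) to get orthogonality of $G_m(E_{m-1})$ against $E_m \otimes E_1$ (hence against everything in $K_m \otimes E_1$ after noting $K_{m+1} \cap (E_m \otimes E_1)^\perp \cap \cdots$ — here one uses that within $L_n^{\otimes(m+1)} = (E_m \oplus K_m)\otimes(E_1)$, the space $K_m \otimes E_1$ is the orthogonal complement of $E_m \otimes E_1$ inside $L_n^{\otimes m}\otimes E_1$, and $G_m(E_{m-1}) \subseteq L_n^{\otimes m} \otimes E_1$ since $\delta \in L_n \otimes E_1$ — wait, $\delta \in L_n \otimes L_n$ and $E_1 = L_n$, so indeed $L_n^{\otimes(m+1)} = L_n^{\otimes m} \otimes E_1$ trivially). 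With that identification, $G_m(E_{m-1}) \perp E_m \otimes E_1$ from \ref{l:leftfus}(3) immediately yields $G_m(E_{m-1}) \subseteq K_m \otimes E_1 \oplus$ (orthocomplement within $L_n^{\otimes(m+1)}$ of $L_n^{\otimes m}\otimes E_1$, which is zero), hence $G_m(E_{m-1}) \perp$ is wrong — rather $G_m(E_{m-1}) \subseteq (E_m \otimes E_1)^\perp$, and since $L_n^{\otimes(m+1)} = (E_m\otimes E_1) \oplus (K_m \otimes E_1)$, we get $G_m(E_{m-1}) \subseteq K_m \otimes E_1$, contradicting injectivity unless — so in fact the correct reading must be that $G_m$ is \emph{not} valued in a way orthogonal to $E_m\otimes E_1$; rereading \ref{l:leftfus}(3), it is $\langle (G_m\otimes 1)(\xi), G_{m+1}(\eta)\rangle = 0$, a statement one level up. So the real obstacle is getting these indices straight, and the clean path is: $G_{m+1}(E_m) \perp E_{m+1}$ (since $\mathrm{Im}(G_{m+1}) \subseteq K_{m+1}$) and $G_{m+1}(E_m) \perp (G_m \otimes 1)(E_{m-1}\otimes E_1)$ by \ref{l:leftfus}(3); combined with the inductive decomposition of $K_m \otimes E_1$ this gives the orthogonal splitting of $K_{m+1}$, and I would present it in that inductive form.
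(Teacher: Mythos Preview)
Your plan eventually converges on the paper's approach, but the presentation is tangled and the final summary still carries index errors that you should fix before this becomes a proof.

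The surjectivity part (first paragraph) is correct and matches the paper exactly: write $K_{m+1} = K_m \otimes E_1 + E_{m-1} \otimes \cc\delta$, then use the recursion \eqref{eq:Gm_rec} to trade $\zeta \otimes \delta$ for $G_m(\zeta)$ modulo an element of $K_m \otimes E_1$.

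Your dimension-count approach in the second paragraph is circular, as you correctly diagnose: the identity $\dim E_m = d_m$ is Lemma \ref{l:dimension}, which is a \emph{consequence} of the present lemma. So the ``simultaneous induction'' you mention would have to be set up carefully, and in practice it is cleaner to prove orthogonality directly---which is what the paper does, and what you eventually arrive at.

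However, your final sentence is still off: you write ``$G_{m+1}(E_m) \perp E_{m+1}$ (since $\mathrm{Im}(G_{m+1}) \subseteq K_{m+1}$)''. Both halves are wrong. First, $G_{m+1}$ maps $E_m$ into $K_{m+2}$, not $K_{m+1}$. Second, the orthogonality you actually need for the inductive step is $G_{m+1}(E_m) \perp K_{m+1} \otimes E_1$, not $G_{m+1}(E_m) \perp E_{m+1}$. The correct argument, which is the paper's, runs as follows. By the surjectivity already established, any $\eta \in K_{m+1} \otimes E_1$ decomposes as $\eta = \xi + (G_m \otimes 1)(\rho)$ with $\xi \in K_m \otimes E_1 \otimes E_1$ and $\rho \in E_{m-1} \otimes E_1$. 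The pairing $\langle (G_m \otimes 1)(\rho), G_{m+1}(\zeta)\rangle$ vanishes by Lemma \ref{l:leftfus}(3). For $\langle \xi, G_{m+1}(\zeta)\rangle$, unwind $G_{m+1}$ via \eqref{eq:Gm_rec}: the term $\langle \xi, \zeta \otimes \delta\rangle$ vanishes because $K_m = E_m^\perp$, and the term $\langle \xi, (G_m \otimes 1)(\zeta)\rangle$ vanishes by the \emph{induction hypothesis} (orthogonality at level $m$). Your phrase ``combined with the inductive decomposition of $K_m \otimes E_1$'' should read ``of $K_{m+1} \otimes E_1$'', and you should spell out that both pieces of the recursion for $G_{m+1}$ are handled, not just the one coming from \ref{l:leftfus}(3).

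Once you straighten out these indices, your argument is the paper's.
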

\begin{proof}
For $m = 1$, the vector space decomposition follows immediately from the identities $G_1(E_0) = \cc \cd \delta =  K_2$ and $K_1 = \{0\}$.

Suppose thus that $m \geq 2$ and let $\xi \in K_{m+1}$ be given. Remark that 
\[
\begin{split}
K_{m+1} & = K_m \ot E_1 + E_1^{\ot (m-1)} \ot { K_2} = K_m \ot E_1 + K_{m-1} \ot { K_2} + E_{m-1} \ot { K_2} \\ 
& = K_m \ot E_1 + E_{m-1} \ot { K_2} .
\end{split}
\]
We may therefore choose $\eta \in K_m \ot E_1$ and $\ze \in E_{m-1}$ such that $\xi = \eta + \ze \ot \de$. Using \eqref{eq:Gm_rec} we then obtain that
\[
\begin{split}
\xi & = \eta + \frac{ (-1)^{(n+1)(m-1)}}{d_{m-1}} \cd \big( G_m(\ze) - (G_{m-1} \ot 1)(\ze) \big)
\end{split}
\]
Since $\T{Im}(G_{m-1}) \su K_m$ this proves the surjectivity claim.
 
To prove that the Hilbert space direct sum in question is isometrically isomorphic to $K_{m+1}$, we apply induction on $m \geq 1$. The case $m = 1$ has already been discussed, so suppose that the vector space sum yields an isometry for some $m \geq 1$ and let $\eta \in K_{m+1} \ot E_1$ and $\ze \in E_m$ be given. We need to show that $\binn{\eta, G_{m+1}(\ze)} = 0$. By the surjectivity part we may find $\xi \in K_m \ot E_1 \ot E_1$ and $\rho \in E_{m-1} \ot E_1$ such that $\eta = \xi + (G_m \ot 1)(\rho)$. By Lemma \ref{l:leftfus} part $(3)$, the induction hypothesis, and the fact that $K_m = E_m^\perp$, we then have the identities
\[
\begin{split}
\binn{ \eta, G_{m+1}(\ze)} 
& = \binn{\xi, G_{m+1}(\ze)} + \binn{ (G_m \ot 1)(\rho), G_{m+1}(\ze)}
= \binn{\xi, G_{m+1}(\ze)} \\
&  = \binn{\xi, (G_m \ot 1)(\ze)} + {(-1)^{(n+1)m} d_m}  \cd \binn{\xi,\ze \ot \de}
= 0 .
\end{split}
\]
This proves the lemma. \end{proof}

\begin{lemma}\label{l:dimension}
It holds that $\T{dim}(E_m)=d_m$ for all $m \in \nn_0$.
\end{lemma}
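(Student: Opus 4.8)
The plan is to turn the decomposition of Lemma \ref{l:decomp} into a recursion for $\dim(K_m)$, and then to match the resulting recursion for $\dim(E_m)$ with the defining recursion \eqref{eq:d_m_rec}.

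First I would record the elementary bookkeeping. Since $E_m = K_m^\perp \su L_n^{\ot m}$ for $m \geq 2$, while $E_0 = \cc$, $E_1 = L_n$ and $K_0 = K_1 = \{0\}$, we have for every $m \in \nn_0$ the identity
\[
\dim(E_m) + \dim(K_m) = \dim(L_n^{\ot m}) = (n+1)^m .
\]
In particular $\dim(E_0) = 1 = d_0$ and $\dim(E_1) = n+1 = d_1$, which will be the base cases of the final induction; also $\dim(E_1) = n+1$ gives $\dim(K_m \ot E_1) = (n+1)\dim(K_m)$.

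Next I would extract a recursion for $\dim(K_{m+1})$ from Lemma \ref{l:decomp}, which provides, for $m \geq 1$, a Hilbert space decomposition $K_{m+1} \cong (K_m \ot E_1) \op G_m(E_{m-1})$. The one extra ingredient needed is that $G_m$ is injective: by Lemma \ref{l:leftfus}(2) we have $\binn{G_m(\xi), G_m(\eta)} = \mu_m \cd \inn{\xi,\eta}$, and $\mu_m = d_m d_{m-1}/d_1 > 0$ by \eqref{eq:mucon}, so $G_m$ is a nonzero scalar multiple of an isometry and hence $\dim(G_m(E_{m-1})) = \dim(E_{m-1})$. Combining this with the previous paragraph gives
\[
\dim(K_{m+1}) = (n+1)\dim(K_m) + \dim(E_{m-1}), \qquad m \geq 1 .
\]

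Finally I would substitute $\dim(K_j) = (n+1)^j - \dim(E_j)$ into this recursion; the $(n+1)^{m+1}$ terms cancel and one is left with
\[
\dim(E_{m+1}) = (n+1)\dim(E_m) - \dim(E_{m-1}), \qquad m \geq 1 ,
\]
which is precisely the recursion \eqref{eq:d_m_rec} defining $d_{m+1} = d_1 d_m - d_{m-1}$. Since $\dim(E_0) = d_0$ and $\dim(E_1) = d_1$, an immediate induction gives $\dim(E_m) = d_m$ for all $m \in \nn_0$. I do not expect a genuine obstacle here, as all the real work is contained in Lemmas \ref{l:leftfus} and \ref{l:decomp}; the only point requiring a little care is keeping the low-degree cases $m = 0, 1$ (where $K_m = \{0\}$ and $E_m$ is defined separately) consistent with the dimension identity and with the starting index of the recursion for $\dim(K_{m+1})$.
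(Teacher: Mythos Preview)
Your proof is correct and follows essentially the same route as the paper: both use Lemma~\ref{l:decomp} to obtain $\dim(K_{m+1}) = (n+1)\dim(K_m) + \dim(E_{m-1})$, substitute $\dim(K_j) = (n+1)^j - \dim(E_j)$, and conclude by matching the resulting recursion with \eqref{eq:d_m_rec}. You are slightly more explicit in invoking Lemma~\ref{l:leftfus}(2) to justify $\dim(G_m(E_{m-1})) = \dim(E_{m-1})$, but this is implicit in the paper as well.
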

\begin{proof}
This is a consequence of Lemma \ref{l:decomp}, yielding the following identities of dimensions:
\[
\begin{split}
\T{dim}(E_{m+1}) & = (n+1)^{m+1} - \T{dim}(K_{m+1}) = (n+1)^{m+1} - (n+1) \cd \T{dim}(K_m) - \T{dim}(E_{m-1}) \\
& = (n+1) \cd \T{dim}(E_m) - \T{dim}(E_{m-1}).
\end{split}
\]
Since $d_0 = \T{dim}(E_0)$ and $d_1 = \T{dim}(E_1)$ and since the sequences $\{ d_m\}_{m = 0}^\infty$ and $\{ \T{dim}(E_m) \}_{m = 0}^\infty$ satisfy the same recursion formula, they must necessarily agree.
\end{proof}

\begin{rem}
Note that a subproduct system of Hilbert spaces $\lbrace E_m \rbrace_{m \in \nn_0}$ is called \emph{commutative} if the corresponding Fock space is a subspace of the symmetric Fock space on $E_1$ or, equivalently, if $E_m \subseteq E_1^{\otimes_S m}$ for all $m \in \nn_0$. It follows from Lemma \ref{l:dimension} that our subproduct systems are noncommutative for every $n>1$, as we have
$\dim(E_2) = (n+1)^2-1 > \binom{n+2}{2} = \dim((\cc^{n+1})^{\otimes_S 2})$.
\end{rem}

Lemma \ref{l:decomp} has the important consequence that the image of $G_m : E_{m - 1} \to K_{m + 1}$ is in fact equal to the intersection $K_{m + 1} \cap ( E_m \ot E_1)$. Moreover, Lemma \ref{l:leftfus} implies that the induced $SU(2)$-equivariant linear map
\begin{equation}\label{eq:V_m}
V_m :=  \frac{(-1)^{(n+1)(m-1)}}{\sqrt{\mu_m}} \cd G_m : E_{m-1} \to E_m \ot E_1
\end{equation}
is an isometry for all $m \geq 1$. We have therefore established the announced main result of this subsection:

\begin{prop}\label{p:decompright}
Let $m \in \nn$. The linear map
\[
\ma{cc}{\io_{m,1} & V_m} : E_{m + 1} \op E_{m - 1} \to E_m \ot E_1
\]
is an $SU(2)$-equivariant unitary isomorphism.
\end{prop}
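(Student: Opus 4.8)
The plan is to assemble the isomorphism from the pieces already established and verify that the combined map $\begin{pmatrix}\io_{m,1} & V_m\end{pmatrix}$ is a surjective isometry, which for a map between finite-dimensional Hilbert spaces is the same as being a unitary isomorphism. First I would record that the target $E_m \ot E_1$ decomposes orthogonally as $E_{m+1} \oplus (K_{m+1} \cap (E_m \ot E_1))$: indeed $E_m \ot E_1 = K_{m+1}^\perp \oplus (K_{m+1} \cap (E_m \ot E_1))$ inside $L_n^{\ot(m+1)}$ because $E_{m+1} = K_{m+1}^\perp \subseteq E_m \ot E_1$ (this containment is exactly the subproduct property proven earlier, and $\io_{m,1}$ is the inclusion). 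So it suffices to identify the second summand with the image of $V_m$.

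Next I would invoke Lemma \ref{l:decomp}: the unitary isomorphism $(K_m \ot E_1) \oplus G_m(E_{m-1}) \cong K_{m+1}$ shows in particular that $G_m(E_{m-1}) \subseteq K_{m+1}$, and since trivially $G_m(E_{m-1}) \subseteq E_m \ot E_1$ (each $G_m(\xi)$ lies in $E_m \ot E_1$ by the recursive definition \eqref{eq:Gm_rec}, as $\operatorname{Im}(G_{m-1}) \subseteq K_m$ forces $(G_{m-1}\ot 1)(\xi) \in K_m \ot E_1$... — more carefully, one sees from \eqref{eq:Gm_rec} that the image of $G_m$ is contained in $E_m \ot E_1$ precisely because the first summand $G_{m-1}\ot 1$ lands in $K_m \ot E_1 \perp$ the complement, wait — the cleaner route is: $G_m(E_{m-1})$ is orthogonal to $K_m \ot E_1$ inside $K_{m+1}$ by Lemma \ref{l:decomp}, and $K_{m+1} \cap (E_m \ot E_1) = K_{m+1} \ominus (K_{m+1} \cap (K_m \ot E_1))$; since $K_m \ot E_1 \subseteq K_{m+1}$ one has $K_{m+1}\cap(K_m \ot E_1) = K_m \ot E_1$, so $K_{m+1}\cap (E_m \ot E_1) = K_{m+1} \ominus (K_m \ot E_1) = G_m(E_{m-1})$, again by Lemma \ref{l:decomp}). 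Thus $K_{m+1} \cap (E_m \ot E_1) = G_m(E_{m-1}) = V_m(E_{m-1})$, and the orthogonal decomposition $E_m \ot E_1 = E_{m+1} \oplus V_m(E_{m-1})$ follows.

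Then I would check that $\begin{pmatrix}\io_{m,1} & V_m\end{pmatrix}$ is an isometry: $\io_{m,1}$ is an isometry onto $E_{m+1}$ by the subproduct axioms, $V_m$ is an isometry by Lemma \ref{l:leftfus}(2) together with the normalisation in \eqref{eq:V_m} (since $\binn{G_m(\xi),G_m(\eta)} = \mu_m\inn{\xi,\eta}$, dividing by $\sqrt{\mu_m}$ produces an isometry), and the two images $E_{m+1}$ and $V_m(E_{m-1})$ are mutually orthogonal — this is Lemma \ref{l:leftfus}(3), which says $\binn{(G_m\ot 1)(\xi), G_{m+1}(\eta)} = 0$, equivalently $\binn{\io_{m,1}(\zeta), V_m(\eta)} = 0$ for $\zeta \in E_{m+1}$, after unravelling that $G_{m+1}(E_m)$ is, up to scalar, exactly $V_{m+1}(E_m) = \io_{m,1}(E_{m+1})^{\perp}$ — hmm, this needs a small reindexing so I would spell it out: orthogonality of the two summands is precisely the content of the decomposition $E_m \ot E_1 = E_{m+1}\oplus V_m(E_{m-1})$ just obtained, so no further computation is needed. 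Finally, surjectivity is automatic from that same orthogonal decomposition, and equivariance is immediate since $\io_{m,1}$ is equivariant by construction and $V_m$ is equivariant by Lemma \ref{l:rightequiv} (equivariance of $G_m$) and the fact that the scalar in \eqref{eq:V_m} is central.

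The main obstacle I expect is purely bookkeeping rather than conceptual: carefully justifying the identity $K_{m+1}\cap(E_m \ot E_1) = G_m(E_{m-1})$ from Lemma \ref{l:decomp}, i.e.\ correctly handling the intersection with $E_m \ot E_1 = (K_m \ot E_1)^\perp$ restricted to the subspace $K_{m+1}$, and making sure the normalisation constants and signs in \eqref{eq:V_m} are consistent with Lemma \ref{l:leftfus}. All the genuine analytic and combinatorial work (the inner-product formulas, the value $\mu_m$, the recursive surjectivity argument) has already been done in Lemmas \ref{l:rightequiv}--\ref{l:decomp}, so this proposition is essentially their corollary.
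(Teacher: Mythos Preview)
Your proposal is correct and follows essentially the same route as the paper: the paper's argument is precisely that Lemma~\ref{l:decomp} yields $\T{Im}(G_m) = K_{m+1} \cap (E_m \ot E_1)$, Lemma~\ref{l:leftfus}(2) makes $V_m$ an isometry, and equivariance comes from Lemma~\ref{l:rightequiv}. Your extra care in spelling out $K_{m+1}\cap(E_m\ot E_1) = K_{m+1}\ominus(K_m\ot E_1)$ is the right way to extract that identification from Lemma~\ref{l:decomp}, and your observation that orthogonality of the two summands needs no separate computation once that decomposition is in hand is exactly how the paper proceeds (it does not invoke Lemma~\ref{l:leftfus}(3) here either).
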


\subsection{Decomposing tensor products by $E_1$ from the left}
The result of Proposition \ref{p:decompright} provides us with an $SU(2)$-equivariant unitary isomorphism $E_{m + 1} \op E_{m-1} \to E_1 \ot E_m$, for every $m \in \nn_0$, obtained by composing $\ma{cc}{\io_{m,1} & V_m}$ with the flip map $E_m \ot E_1 \to E_1 \ot E_m$. In this subsection we shall provide an alternative $SU(2)$-equivariant unitary isomorphism $E_{m+1} \op E_{m-1} \to E_1 \ot E_m$, where the relevant isometry $E_{m-1} \to E_1 \ot E_m$ is given by a recursive formula which is similar to \eqref{eq:Gm_rec}. This alternative $SU(2)$-equivariant unitary isomorphism will play an essential role in the rest of our work, as one of the building blocks for our proof of the $KK$-equivalence between the Toeplitz algebra and the complex numbers.

We define the linear maps $G_m' : E_{m-1} \to K_{m+1}$, $m \in \nn_0$, recursively by the formulae
\begin{equation}
\label{eq:Gprime_rec}
G_1'(1) := \de \, \, , \, \, \, 
G_m' := 1 \ot G_{m-1}' + (-1)^{(n+1)(m-1)} d_{m-1} \cd G_1' \ot 1_{m-1} \, \, , \, \,\,  m \geq 2 ,
\end{equation}
where the vector $\delta \in K_2$ and the constant $d_{m-1}$ are defined in \eqref{eq:delta_vect} and \eqref{eq:d_m_rec}.

Again, notice that we are suppressing the inclusion $\io_{1,m-2} : E_{m-1} \to E_1 \ot E_{m-2}$ (for $m \geq 2$) and the  obvious identification $\iota_{0,m-1} : E_{m-1} \stackrel{\cong}{\longrightarrow} E_{0} \ot E_{m-1}$. 

\begin{lemma}\label{l:leftequiv}
Let $m \in \nn$. The linear map $G_m' : E_{m-1} \to K_{m+1}$ is equivariant meaning that 
\[
\rho_n^{\ot (m+1)}(g) G_m' = G_m' \rho_n^{\ot (m-1)}(g) \q \mbox{for all } g \in SU(2) .
\]
\end{lemma}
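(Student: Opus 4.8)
The plan is to prove Lemma~\ref{l:leftequiv} by induction on $m \in \nn$, mirroring exactly the argument used for the right-hand version in Lemma~\ref{l:rightequiv}. The base case $m = 1$ is immediate: $G_1'(1) = \de$ and, since $\de$ spans the determinant $D = \det(\rho_n,L_n) \su L_n \ot L_n$, which by definition consists of vectors fixed by the diagonal action, we have $\rho_n(g)^{\ot 2}(\de) = \de = G_1'(1)$ for all $g \in SU(2)$, so the equivariance condition holds trivially in degree one.

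For the inductive step, suppose the statement holds for some $m \in \nn$; I would verify it for $m + 1$ by applying the recursive formula \eqref{eq:Gprime_rec}, namely $G_{m+1}' = 1 \ot G_m' + (-1)^{(n+1)m} d_m \cd G_1' \ot 1_m$, to an arbitrary $\xi \in E_m$. The key point is that $\rho_n^{\ot (m+2)}(g)$ factors compatibly with both summands: on the first summand $1 \ot G_m'$ one uses that $\rho_n^{\ot (m+2)}(g) = \rho_n(g) \ot \rho_n^{\ot(m+1)}(g)$ together with the induction hypothesis $\rho_n^{\ot(m+1)}(g) G_m' = G_m' \rho_n^{\ot(m-1)}(g)$, and on the second summand $G_1' \ot 1_m$ one uses that $\rho_n^{\ot(m+2)}(g) = \rho_n^{\ot 2}(g) \ot \rho_n^{\ot m}(g)$ together with $\rho_n^{\ot 2}(g)(\de) = \de$. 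Both computations push $\rho_n$ past the structure maps because the inclusions $\io_{1,m-1}$ and $\io_{0,m}$ (which are being suppressed in the notation) are $SU(2)$-equivariant, as established when $(E,\io)$ was shown to be an $SU(2)$-subproduct system. Combining the two pieces and re-collecting via \eqref{eq:Gprime_rec} yields $\rho_n^{\ot(m+2)}(g) G_{m+1}'(\xi) = G_{m+1}' \rho_n^{\ot m}(g)(\xi)$, completing the induction.

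There is no real obstacle here: the proof is essentially identical to that of Lemma~\ref{l:rightequiv}, with the only bookkeeping difference being that the ``new'' determinant factor now sits on the \emph{left} rather than on the right, so the tensor-leg decomposition of $\rho_n^{\ot(m+2)}(g)$ is read in the opposite order. The one point worth being slightly careful about is the sign $(-1)^{(n+1)m}$, but since it is a scalar it commutes with every unitary and plays no role in the equivariance argument. I would therefore present the proof compactly, writing out the single displayed chain of equalities
\[
\begin{split}
\rho_n^{\ot (m+2)}(g) G_{m+1}'(\xi)
& = \rho_n^{\ot (m+2)}(g) (1 \ot G_m')(\xi) + (-1)^{(n+1)m} d_m \cd \rho_n^{\ot (m+2)}(g)(\de \ot \xi) \\
& = (1 \ot G_m') \rho_n^{\ot m}(g)(\xi) + (-1)^{(n+1)m} d_m \cd \de \ot \rho_n^{\ot m}(g)(\xi) \\
& = G_{m+1}' \rho_n^{\ot m}(g)(\xi),
\end{split}
\]
and concluding that the lemma follows by induction.
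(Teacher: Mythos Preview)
Your proposal is correct and takes essentially the same approach as the paper, which simply states that the proof runs by induction on $m$ using the same argument as in Lemma~\ref{l:rightequiv}. Your displayed chain of equalities is exactly the mirror-image of that computation, with the determinant factor on the left instead of the right.
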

\begin{proof}
The proof runs by induction on $m \in \nn$, using the same argument as in the proof of Lemma \ref{l:rightequiv}.
\end{proof}

\begin{lemma}\label{l:rightfus}
Let $m \in \nn$. We have the identities
\begin{enumerate}
\item $\binn{(1 \ot G_m')(\xi), \de \ot \eta} =  (-1)^{(n+1)m+1}\frac{d_{m-1}}{d_1}  \cd \inn{\xi,\eta}$ for all $\xi \in E_1 \ot E_{m-1}$, $\eta \in E_m$;
\item $\binn{ G_m'(\xi), G_m'(\eta) } = \mu_m \cd \inn{\xi,\eta}$ for all $\xi,\eta \in E_{m-1}$;
\item $\binn{ (1 \ot G_m')(\xi), G_{m+1}'(\eta)} = 0$ for all $\xi \in E_1 \ot E_{m-1}$, $\eta \in E_m$.
\end{enumerate}
\end{lemma}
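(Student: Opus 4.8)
The plan is to mirror the proof of Lemma \ref{l:leftfus}, adapting each of the three identities to the ``left'' setting with $G_m'$ in place of $G_m$. The key observation is that the recursive formula \eqref{eq:Gprime_rec} for $G_m'$ is obtained from \eqref{eq:Gm_rec} by reflecting tensor factors, so the same inductive bookkeeping goes through with the roles of left and right multiplication interchanged.

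\emph{Proof of $(1)$.} I would proceed by analysing the case $m \geq 2$ directly (the case $m = 1$ reducing to $\inn{\delta,\delta} = 1$ up to the sign/constant factor, exactly as in Lemma \ref{l:leftfus}). Writing $\xi = \sum_{j = 0}^n e_j \ot \xi_j \in E_1 \ot E_{m-1}$ and expanding $(1 \ot G_m')(\xi)$ via \eqref{eq:Gprime_rec}, the term coming from $1 \ot (1 \ot G_{m-1}')$ lands in $E_1 \ot E_1 \ot K_m$, hence in $E_1 \ot K_m$, and since $\eta \in E_m = K_m^\perp$ and $\T{Im}(G_{m-1}') \su K_m$, this term pairs to zero against $\delta \ot \eta$. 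What survives is the term $(-1)^{(n+1)(m-1)} d_{m-1} \cd (G_1' \ot 1_{m-1}) = (-1)^{(n+1)(m-1)} d_{m-1} \cd \delta \ot (-)$, and pairing the explicit vector $\de = \frac{1}{\sqrt{n+1}} \sum_j (-1)^j e_j \ot e_{n-j}$ against itself in the first two tensor legs produces the normalisation factor $\frac{1}{n+1} = \frac{1}{d_1}$, together with one more sign $(-1)^n$; collecting signs gives $(-1)^{(n+1)(m-1)}(-1)^n = (-1)^{(n+1)m+1}$ and hence the claimed $(-1)^{(n+1)m+1}\frac{d_{m-1}}{d_1} \cd \inn{\xi,\eta}$.

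\emph{Proof of $(2)$ and $(3)$.} These now follow formally from $(1)$ by the exact same two computations as in Lemma \ref{l:leftfus}. For $(2)$ I would run induction on $m$: expand $\binn{G_{m+1}'(\xi), G_{m+1}'(\eta)}$ using \eqref{eq:Gprime_rec} into four terms, namely $\binn{(1 \ot G_m')(\xi),(1 \ot G_m')(\eta)} = \binn{G_m'(\xi), G_m'(\eta)} = \mu_m\inn{\xi,\eta}$ by the induction hypothesis (the leading $E_1$ tensor leg being matched), the term $d_m^2 \inn{\delta \ot \xi, \delta \ot \eta} = d_m^2 \inn{\xi,\eta}$, and two cross terms each equal to $(-1)^{(n+1)m} d_m \cd (-1)^{(n+1)m+1} \frac{d_{m-1}}{d_1} \inn{\xi,\eta}$ by $(1)$; summing gives $(\mu_m + d_m^2 - 2\mu_{m+1}^{?})\dots$ — more precisely $\mu_m + d_m^2 - 2\frac{d_m d_{m-1}}{d_1} = d_m^2 - \mu_m = \mu_{m+1}$ by \eqref{eq:mucon} and \eqref{eq:mudee}. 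For $(3)$, expand $\binn{(1 \ot G_m')(\xi), G_{m+1}'(\eta)}$ into $\binn{(1 \ot G_m')(\xi),(1 \ot G_m')(\eta)} + (-1)^{(n+1)m} d_m \binn{(1 \ot G_m')(\xi), \delta \ot \eta}$, which by $(2)$ and $(1)$ equals $\mu_m \inn{\xi,\eta} - \frac{d_m d_{m-1}}{d_1}\inn{\xi,\eta} = 0$ since $\mu_m = \frac{d_m d_{m-1}}{d_1}$ by \eqref{eq:mucon}.

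\emph{Main obstacle.} The only genuinely delicate point is the sign and normalisation accounting in $(1)$: one must verify that pairing $\delta$ against $\delta$ in the appropriate pair of tensor legs produces exactly $\frac{(-1)^n}{n+1}$ and that, combined with the $(-1)^{(n+1)(m-1)}$ from the recursion, one lands on the sign $(-1)^{(n+1)m+1}$ advertised in the statement; this parallels the third displayed identity in the proof of Lemma \ref{l:leftfus} part $(1)$ and uses crucially that $\T{Im}(G_{m-1}') \su K_m = E_m^\perp$ so that the ``wrong'' term drops out. Everything else is a routine transcription of the right-handed argument.
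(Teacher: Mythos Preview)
Your proposal is correct and matches the paper's approach exactly: the paper's own proof of this lemma is the single line ``The proof follows the proof of Lemma \ref{l:leftfus} \emph{verbatim},'' and you have spelled out precisely that mirrored argument with the correct sign and constant bookkeeping. The only cosmetic issue is the momentary ``$2\mu_{m+1}^{?}$'' hesitation in part $(2)$, which you immediately resolve correctly via \eqref{eq:mucon} and \eqref{eq:mudee}.
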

\begin{proof}
The proof follows the proof of Lemma \ref{l:leftfus} \emph{verbatim}.
\end{proof}

\begin{lemma}\label{l:leftcomp}
For each $m \in \nn$, the vector space sum yields a unitary isomorphism of Hilbert spaces
\[
(E_1 \ot K_m) \op G_m'(E_{m-1}) \cong K_{m+1} .
\]
\end{lemma}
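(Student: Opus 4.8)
The plan is to mirror the proof of Lemma \ref{l:decomp} almost \emph{verbatim}, replacing the right-hand maps $G_m$ by the left-hand maps $G_m'$ and using Lemma \ref{l:rightfus} in place of Lemma \ref{l:leftfus}. First I would treat the base case $m = 1$: here $G_1'(E_0) = \cc \cd \de = K_2$ and $E_1 \ot K_1 = \{0\}$ since $K_1 = \{0\}$, so the decomposition $(E_1 \ot K_1) \op G_1'(E_0) \cong K_2$ is immediate.

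For the inductive step, fix $m \geq 2$ and argue surjectivity of the vector space sum onto $K_{m+1}$. As in Lemma \ref{l:decomp}, I would start from the identity
\[
K_{m+1} = E_1 \ot K_m + K_2 \ot E_1^{\ot(m-1)} = E_1 \ot K_m + K_2 \ot K_{m-1} + K_2 \ot E_{m-1} = E_1 \ot K_m + K_2 \ot E_{m-1},
\]
which follows directly from the definition \eqref{eq:invarsum} of $K_{m+1}$ together with $K_2 \ot K_{m-1} \su K_2 \ot K_{m-1} \ldots$, i.e.\ from $K_2 \ot E_1^{\ot(m-1)} \su K_{m+1}$ and the orthogonal decomposition $E_1^{\ot(m-1)} = K_{m-1} \op E_{m-1}$ with $K_2 \ot K_{m-1} \su E_1 \ot K_m$. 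Then given $\xi \in K_{m+1}$, write $\xi = \eta + \de \ot \ze$ with $\eta \in E_1 \ot K_m$ and $\ze \in E_{m-1}$, and use the recursion \eqref{eq:Gprime_rec} to solve
\[
\xi = \eta + \frac{(-1)^{(n+1)(m-1)}}{d_{m-1}} \cd \big( G_m'(\ze) - (1 \ot G_{m-1}')(\ze) \big);
\]
since $\T{Im}(G_{m-1}') \su K_m$ by Lemma \ref{l:leftequiv} (or rather by the defining property that $G_{m-1}'$ lands in $K_m$), the term $(1 \ot G_{m-1}')(\ze)$ lies in $E_1 \ot K_m$, proving surjectivity.

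For orthogonality, I would again induct on $m \geq 1$: assuming $(E_1 \ot K_m) \op G_m'(E_{m-1})$ is an orthogonal sum, take $\eta \in E_1 \ot K_{m+1}$ and $\ze \in E_m$ and show $\binn{\eta, G_{m+1}'(\ze)} = 0$. Using the surjectivity part applied one degree lower (inside $E_1 \ot (\cdot)$), write $\eta = \xi + (1 \ot G_m')(\rho)$ with $\xi \in E_1 \ot E_1 \ot K_m$ and $\rho \in E_1 \ot E_{m-1}$. Then Lemma \ref{l:rightfus}(3) kills the $(1 \ot G_m')(\rho)$ contribution, and expanding $G_{m+1}' = 1 \ot G_m' + (-1)^{(n+1)m} d_m \cd G_1' \ot 1_m$ via \eqref{eq:Gprime_rec} together with the inclusion $\xi \in E_1 \ot E_1 \ot K_m = E_1 \ot (E_m^\perp \text{ in the appropriate slot})$ forces $\binn{\xi, (1 \ot G_m')(\ze)} = 0$ and $\binn{\xi, \de \ot \ze} = 0$, so $\binn{\eta, G_{m+1}'(\ze)} = 0$. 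The main (minor) obstacle will be bookkeeping the placement of the factor $K_m$ versus $E_m^\perp$ in the iterated tensor products so that orthogonality to $(1\ot G_m')$-images and to $\de\ot(\cdot)$ can be read off; this is exactly the role played by $K_m = E_m^\perp$ in the corresponding step of Lemma \ref{l:decomp}, and no new idea is needed beyond that substitution.
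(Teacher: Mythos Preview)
Your proposal is correct and follows exactly the approach the paper intends: the paper's own proof simply states that it is \emph{mutatis mutandis} the same as the proof of Lemma~\ref{l:decomp}, and you have carried out precisely that mirroring, replacing $G_m$ by $G_m'$, swapping left and right tensor factors, and invoking Lemma~\ref{l:rightfus} in place of Lemma~\ref{l:leftfus}. One small point of phrasing: when you decompose $\eta \in E_1 \ot K_{m+1}$ in the orthogonality step, you are using the surjectivity at the \emph{same} degree $m$ (already established for all $m$) tensored with $E_1$ on the left, not ``one degree lower''; this matches exactly how the paper uses surjectivity in Lemma~\ref{l:decomp}.
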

\begin{proof}
The proof is \emph{mutatis mutandis} the same as the proof of Lemma \ref{l:decomp}.
\end{proof}

In analogy with the previous subsection, we obtain from Lemma \ref{l:leftcomp} that the image of $G_m'$ agrees with the intersection $K_{m+1} \cap (E_1 \ot E_m)$ and, moreover, we see from Lemma \ref{l:rightfus} that the induced $SU(2)$-equivariant linear map
\begin{equation}\label{eq:V_mprime}
V_m' :=  \frac{(-1)^{(n+1)(m-1)}}{\sqrt{\mu_m}} \cd G_m' : E_{m-1} \to E_1 \ot E_m
\end{equation}
is an isometry for all $m \geq 1$. We announce the following:

\begin{prop}\label{p:decompleft}
Let $m \in \nn$. The linear map
\[
\ma{cc}{\io_{1,m} & V_m'} : E_{m + 1} \op E_{m - 1} \to E_1 \ot E_m
\]
is an $SU(2)$-equivariant unitary isomorphism.
\end{prop}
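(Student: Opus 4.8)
The plan is to follow the template of Proposition \ref{p:decompright} exactly, now using the ``from the left'' versions of all the auxiliary lemmas. First I would observe that the map $\bigl(\begin{smallmatrix} \io_{1,m} & V_m'\end{smallmatrix}\bigr) \colon E_{m+1} \op E_{m-1} \to E_1 \ot E_m$ is $SU(2)$-equivariant: the structure map $\io_{1,m}$ is equivariant by the definition of the subproduct system, and $V_m'$ is equivariant by Lemma \ref{l:leftequiv} (the normalisation constant $(-1)^{(n+1)(m-1)}/\sqrt{\mu_m}$ is scalar, hence does not affect equivariance). Next I would note that both summands map isometrically: $\io_{1,m}$ is an isometry by the subproduct system axioms, and $V_m'$ is an isometry by Lemma \ref{l:rightfus} part $(2)$, which gives $\langle G_m'(\xi), G_m'(\eta)\rangle = \mu_m \langle \xi,\eta\rangle$, so dividing by $\sqrt{\mu_m}$ produces an isometry.

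The key remaining points are that the two images are orthogonal and that together they span $E_1 \ot E_m$. For orthogonality, the image of $\io_{1,m}$ is $E_{m+1} = K_{m+1}^\perp$ inside $E_1 \ot E_m$, while the image of $V_m'$ is the image of $G_m'$ up to a scalar, which by Lemma \ref{l:leftcomp} equals $K_{m+1} \cap (E_1 \ot E_m)$; hence the image of $V_m'$ lies in $K_{m+1}$ and is therefore orthogonal to $E_{m+1}$ inside $E_1 \ot E_m$. For spanning, I would decompose $E_1 \ot E_m$ orthogonally as $\bigl(K_{m+1} \cap (E_1 \ot E_m)\bigr) \op \bigl(K_{m+1}^\perp \cap (E_1 \ot E_m)\bigr)$; the second piece is exactly $E_{m+1}$, the image of $\io_{1,m}$, and the first piece is exactly $G_m'(E_{m-1}) = \mathrm{Im}(V_m')$ by Lemma \ref{l:leftcomp}. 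Combining the isometry of each summand with the orthogonality of the images yields that $\bigl(\begin{smallmatrix} \io_{1,m} & V_m'\end{smallmatrix}\bigr)$ is an isometry onto $E_1 \ot E_m$, i.e.\ a unitary isomorphism.

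I expect the main obstacle to be purely bookkeeping: making sure that Lemma \ref{l:leftcomp} is being invoked in the precise form ``$\mathrm{Im}(G_m') = K_{m+1} \cap (E_1 \ot E_m)$'', which is the ``from the left'' analogue of the consequence of Lemma \ref{l:decomp} used just before Proposition \ref{p:decompright}. Once that identification is in hand, the argument is a two-line orthogonal-decomposition computation and there is no genuine analytic difficulty. In fact, since Lemmas \ref{l:leftequiv}, \ref{l:rightfus}, \ref{l:leftcomp} are stated as the verbatim left-handed counterparts of Lemmas \ref{l:rightequiv}, \ref{l:leftfus}, \ref{l:decomp}, the cleanest write-up is simply: ``The proof is \emph{mutatis mutandis} the same as the proof of Proposition \ref{p:decompright}.''
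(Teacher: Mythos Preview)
Your proposal is correct and matches the paper's approach exactly: the paper does not give a separate proof of Proposition \ref{p:decompleft} but simply ``announces'' it as the left-handed analogue of Proposition \ref{p:decompright}, relying on Lemmas \ref{l:leftequiv}, \ref{l:rightfus}, and \ref{l:leftcomp} in precisely the way you describe. Your suggested one-line write-up (``\emph{mutatis mutandis} the same as Proposition \ref{p:decompright}'') is in fact how the paper treats it.
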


\subsection{Orthogonal decomposition of tensor products of representations}
As we saw in Lemma \ref{l:decomp} and Lemma \ref{l:leftcomp}, we may change the codomains of the linear maps defined in \eqref{eq:Gm_rec} and \eqref{eq:Gprime_rec} and instead consider the $SU(2)$-equivariant linear maps
\[
G_m : E_{m-1} \to E_m \ot E_1 \q \T{and} \q G_m' : E_{m-1} \to E_1 \ot E_m
\]
for all $m \in \nn$. These linear maps then satisfy the recursive relations
\begin{equation}\label{eq:recursiota}
\begin{split}
& (\io_{m-1,1} \ot 1) \ci G_m = (G_{m-1} \ot 1) \ci \io_{m-2,1} + (-1)^{(n+1) (m-1)} d_{m-1} \cd 1_{m-1} \ot G_1 \q \T{and} \\
& (1 \ot \io_{1,m-1}) \ci G_m' = (1 \ot G_{m-1}') \ci \io_{1,m-2}  + (-1)^{(n+1) (m-1)} d_{m-1} \cd G_1' \ot 1_{m-1} 
\end{split}
\end{equation}
for all $m \geq 2$. We recall that $G_1'(1) = G_1(1) = \de$, where the unit vector $\de \in K_2$ was introduced in \eqref{eq:delta_vect}. 

For every $k,m \in \nn_0$ we introduce the $SU(2)$-equivariant linear map
\begin{equation}\label{eq:defsigma}
\si_{k,m} : E_k \ot E_m \to E_{k+1} \ot E_{m+1} \q \si_{k,m} := (1_{k+1} \ot \io^*_{1,m})(G_{k+1} \ot 1_m) .
\end{equation}
For $k = -1$ or $m = -1$ we put $\si_{k,m} := 0 : \{0\} \to E_{k+1} \ot E_{m+1}$. These linear maps are going to play a key role in establishing the main result of this section, namely the fusion rules for our $SU(2)$-equivariant subproduct system as announced in Theorem \ref{t:fusionintro}. Before we can study these maps in more detail we need a few preliminary lemmas.

\begin{lemma}\label{l:iotagee}
Let $m \in \nn$. It holds that
\[
\begin{split}
G_m & = (-1)^{(n+1) (m-1)} d_{m-1} \cd (\io^*_{m-1,1} \ot 1)(1_{m-1} \ot G_1) \q \mbox{and} \\ 
G_m' & = (-1)^{(n+1) (m-1)} d_{m-1} \cd (1 \ot \io^*_{1,m-1})(G_1 \ot 1_{m-1}) .
\end{split}
\]
\end{lemma}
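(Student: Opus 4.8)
The plan is to establish the first identity in full; the second then follows \emph{mutatis mutandis} from the symmetric left-handed argument, with the recursion \eqref{eq:Gm_rec} replaced by \eqref{eq:Gprime_rec} and Lemma \ref{l:decomp} replaced by Lemma \ref{l:leftcomp}. The case $m = 1$ will be immediate, since $G_1(1) = \de$, $d_0 = 1$, and $\io_{0,1}$ is the canonical identification, so that the right-hand side also evaluates to $\de$; hence I may assume $m \geq 2$ from now on.

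First I would unravel the recursive definition \eqref{eq:Gm_rec} into the identity of vectors
\[
G_m(\xi) = (G_{m-1} \ot 1)\io_{m-2,1}(\xi) + (-1)^{(n+1)(m-1)} d_{m-1} \cd (\xi \ot \de) \q \T{in } L_n^{\ot(m+1)} ,
\]
valid for every $\xi \in E_{m-1}$, where we read $L_n^{\ot(m+1)} = L_n^{\ot m} \ot L_n$. In this decomposition the first summand lies in $K_m \ot E_1$, because $\T{Im}(G_{m-1}) \su K_m$ by construction, whereas the second summand lies in $(E_{m-1} \ot E_1) \ot E_1 \su L_n^{\ot m} \ot L_n$ once the first $m$ tensor legs are grouped together. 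Let $P_m : L_n^{\ot m} \to E_m$ denote the orthogonal projection, and apply $P_m \ot 1$ to both sides of the displayed identity. On the left-hand side this does nothing, since $G_m(\xi) \in E_m \ot E_1$ by Lemma \ref{l:decomp} (precisely, by its consequence that $\T{Im}(G_m) = K_{m+1} \cap (E_m \ot E_1)$). On the right-hand side the first summand is annihilated, because $K_m = E_m^\perp$ inside $L_n^{\ot m}$, so that $P_m$ kills its first leg; and for the second summand I would use that the restriction of $P_m$ to the subspace $E_{m-1} \ot E_1 \su L_n^{\ot m}$ coincides with $\io^*_{m-1,1}$, both being the orthogonal projection onto $E_m$, which is a subspace of $E_{m-1} \ot E_1$, itself a subspace of $L_n^{\ot m}$. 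Combining these three observations yields
\[
G_m(\xi) = (-1)^{(n+1)(m-1)} d_{m-1} \cd (\io^*_{m-1,1} \ot 1)(\xi \ot \de) = (-1)^{(n+1)(m-1)} d_{m-1} \cd (\io^*_{m-1,1} \ot 1)(1_{m-1} \ot G_1)(\xi) ,
\]
which is the desired formula for $G_m$.

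The one point that genuinely requires care is the tensor-leg bookkeeping behind the three observations above, and in particular the claim that the ambient projection $P_m$, applied to a vector that already lies in $E_{m-1} \ot E_1$, returns the same value as the structure-map adjoint $\io^*_{m-1,1}$. This is however routine once one recalls that every structure map $\io_{k,l}$ of our subproduct system is the canonical isometric inclusion $E_{k+l} \hookrightarrow E_k \ot E_l$, so that its adjoint $\io^*_{k,l}$ is the orthogonal projection onto $E_{k+l}$, together with the elementary fact that orthogonal projection onto a fixed subspace produces the same result regardless of which ambient Hilbert space containing it one computes in.
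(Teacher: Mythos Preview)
Your proof is correct and follows essentially the same approach as the paper's own argument. The only cosmetic difference is that you frame the key projection as the ambient $P_m : L_n^{\ot m} \to E_m$ and then observe that its restriction to $E_{m-1}\ot E_1$ agrees with $\io_{m-1,1}^*$, whereas the paper writes the same step more tersely as $G_m = (\io_{m-1,1}^*\io_{m-1,1}\ot 1)G_m$ and then applies the recursion \eqref{eq:recursiota} directly; in both cases the first summand dies because $\T{Im}(G_{m-1}) \su K_m = E_m^\perp$.
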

\begin{proof}
We focus on proving the claim for $G_m : E_{m-1} \to E_m \ot E_1$. To this end, we compute that
\[
\begin{split}
G_m & = (\io^*_{m-1,1} \io_{m-1,1} \ot 1) G_m \\ 
& = (\io^*_{m-1,1} \ot 1)(G_{m-1} \ot 1) \io_{m-2,1} + (-1)^{(n+1)(m-1)} d_{m-1} \cd (\io^*_{m-1,1} \ot 1)(1_{m-1} \ot G_1) \\
& = (-1)^{(n+1)(m-1)} d_{m-1} \cd (\io^*_{m-1,1} \ot 1)(1_{m-1} \ot G_1) ,
\end{split}
\]
where the last identity follows since $\T{Im}(G_{m-1}) = K_m \cap (E_{m-1} \ot E_1)$ and since $\io_{m-1,1} \io^*_{m-1,1} : E_{m-1} \ot E_1 \to E_{m-1} \ot E_1$ is the orthogonal projection onto the subspace $E_m = K_m^\perp$. 
\end{proof}
\begin{lemma}\label{lem:iota*}
Let $m \in \nn$. It holds that
\[
\begin{split}
\iota_{m-1,1}^* & =  (-1)^{(n+1)m +1}\frac{d_1}{d_{m-1}} \cd  (1_m \otimes G_1^* ) (G_m \otimes 1) : E_{m-1} \otimes E_1 \to E_m \\
\iota_{1,m-1}^* & = (-1)^{(n+1)m +1}\frac{d_1}{d_{m-1}} \cd ( (G_1')^* \otimes 1_m) (1 \otimes G_m'): E_1 \otimes E_{m-1} \to E_m .
\end{split}
\]
\end{lemma}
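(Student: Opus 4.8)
**Proof plan for Lemma \ref{lem:iota*}.**

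The strategy is to reduce the identity for $\iota_{m-1,1}^*$ to the pairing formula in Lemma \ref{l:leftfus}(1), which is exactly the content that links $G_m$ to the inclusion $\iota_{m-1,1}$. The plan is as follows. First I would regard $\iota_{m-1,1}^* : E_{m-1}\otimes E_1 \to E_m$ as the restriction to $E_m \subseteq E_{m-1} \otimes E_1$ of the orthogonal projection $p_{m-1,1}$, and I would use the orthogonal decomposition $E_{m-1} \otimes E_1 = E_m \oplus G_m(E_{m-2})$ coming from Proposition \ref{p:decompright} (equivalently Lemma \ref{l:decomp}). So it suffices to check that the right-hand side operator $R_m := (-1)^{(n+1)m+1}\frac{d_1}{d_{m-1}}(1_m \otimes G_1^*)(G_m \otimes 1)$ (i) vanishes on $G_m(E_{m-2}) \subseteq K_m \otimes E_1 \subseteq \ldots$—wait, more precisely on the orthogonal complement of $E_m$ inside $E_{m-1}\otimes E_1$—and (ii) acts as the identity on $E_m$.

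For step (ii), I would compute the inner product $\binn{R_m(\xi),\eta}$ for $\xi \in E_m \subseteq E_{m-1}\otimes E_1$ and $\eta \in E_m$. Unwinding the adjoints, $\binn{R_m(\xi),\eta} = (-1)^{(n+1)m+1}\frac{d_1}{d_{m-1}}\binn{(G_m\otimes 1)(\xi), \eta \otimes \delta}$ (using $G_1(1) = \delta$, so $1_m \otimes G_1^*$ is the adjoint of $\eta \mapsto \eta \otimes \delta$). By Lemma \ref{l:leftfus}(1) this equals $(-1)^{(n+1)m+1}\frac{d_1}{d_{m-1}} \cdot (-1)^{(n+1)m+1}\frac{d_{m-1}}{d_1} \cdot \inn{\xi,\eta} = \inn{\xi,\eta}$, giving $R_m|_{E_m} = 1_m$. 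For step (i), I would note that the complement of $E_m$ in $E_{m-1}\otimes E_1$ is $G_m(E_{m-2})$, and then I need $R_m$ to annihilate it; since $R_m$ factors through $G_m \otimes 1$ followed by a contraction against $\delta$, a clean way is to observe that the target of $R_m$ is $E_m = K_m^\perp$ while a direct pairing computation—again invoking Lemma \ref{l:leftfus}(1), now with the ``other'' argument, or the orthogonality statement of part (3)—shows $\binn{R_m(G_m(\zeta)), \eta} = 0$ for all $\zeta \in E_{m-2}$, $\eta \in E_m$; combined with $R_m(G_m(\zeta)) \in E_m$ this forces $R_m(G_m(\zeta)) = 0$. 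Actually the quickest route is: since $E_m$ and its complement together span $E_{m-1}\otimes E_1$, and we have verified $R_m$ agrees with the projection $\iota_{m-1,1}^*$ on $E_m$, it remains only to check both kill the complement $G_m(E_{m-2})$; for $R_m$ this is the pairing $\binn{(G_m \otimes 1)G_m(\zeta), \eta \otimes \delta}$ which one evaluates using $\T{Im}(G_m) = K_{m+1}\cap(E_m\otimes E_1)$ together with the explicit form of $\delta$—this is the one genuinely computational point.

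The identity for $\iota_{1,m-1}^*$ follows by the same argument \emph{verbatim}, replacing $G_m$ by $G_m'$, Proposition \ref{p:decompright} by Proposition \ref{p:decompleft}, and Lemma \ref{l:leftfus} by Lemma \ref{l:rightfus}. The main obstacle I anticipate is keeping the bookkeeping of adjoints and sign conventions straight—in particular matching the two factors of $(-1)^{(n+1)m+1}$ so that they cancel, and making sure the factor $d_1/d_{m-1}$ is the exact reciprocal (up to sign) of the constant $d_{m-1}/d_1$ appearing in Lemma \ref{l:leftfus}(1)—rather than any conceptual difficulty; the structural input is entirely supplied by Lemma \ref{l:decomp}/Lemma \ref{l:leftfus} and their primed counterparts.
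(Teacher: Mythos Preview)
Your approach via the pairing formula in Lemma \ref{l:leftfus}(1) is correct, but you are making it harder than necessary. Notice that Lemma \ref{l:leftfus}(1) is stated for \emph{all} $\xi \in E_{m-1}\otimes E_1$, not just for $\xi \in E_m$. Hence your ``step (ii)'' computation
\[
\binn{R_m(\xi),\eta}
=(-1)^{(n+1)m+1}\tfrac{d_1}{d_{m-1}}\binn{(G_m\otimes 1)(\xi),\eta\otimes\delta}
=\inn{\xi,\eta}
\]
already holds for every $\xi\in E_{m-1}\otimes E_1$ and every $\eta\in E_m$. Since $R_m(\xi)\in E_m$, this immediately gives $R_m=\iota_{m-1,1}^*$. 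The orthogonal decomposition and your step (i) are therefore unnecessary; in particular there is no ``genuinely computational point'' left. (Incidentally, the complement of $E_m$ in $E_{m-1}\otimes E_1$ is $G_{m-1}(E_{m-2})$, not $G_m(E_{m-2})$.)

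The paper takes a slightly different and more operator-algebraic route: it substitutes the identity from Lemma \ref{l:iotagee},
\[
G_m=(-1)^{(n+1)(m-1)}d_{m-1}\,(\iota^*_{m-1,1}\otimes 1)(1_{m-1}\otimes G_1),
\]
into the right-hand side, commutes the operators acting on disjoint tensor factors, and then contracts using the scalar identity $(1\otimes G_1^*)(G_1\otimes 1)=\tfrac{(-1)^n}{d_1}\,1$ (which is the $m=1$ case of Lemma \ref{l:rightfus}(1)). Both arguments are short; the paper's avoids testing against vectors, while yours---once streamlined---is a one-line consequence of Lemma \ref{l:leftfus}(1).
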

\begin{proof}
We focus on proving the claim for $\io_{m-1,1}^* : E_{m-1} \ot E_1 \to E_m$. Using Lemma \ref{l:rightfus} $(1)$ and Lemma \ref{l:iotagee} we obtain that
\[
\begin{split}
& (-1)^{(n+1)m +1}\frac{d_1}{d_{m-1}} \cd  (1_m \otimes G_1^* ) (G_m \otimes 1) \\
& \q = (-1)^n d_1 \cd (1_m \otimes G_1^* )(\io^*_{m-1,1} \ot 1 \ot 1)(1_{m-1} \ot G_1 \ot 1) \\
& \q = (-1)^n d_1 \cd \io^*_{m-1,1} ( 1_{m-1} \ot 1 \ot G_1^*)(1_{m-1} \ot G_1 \ot 1) 
= \io^*_{m-1,1} . \qedhere
\end{split}
\] 
\end{proof}


\begin{lemma}\label{l:PP*}
Let $m \in \nn$. It holds that
\[
\begin{split}
p_{m-1,1} & = 1_{m-1} \ot 1 + (-1)^{(n+1)m +1}\frac{d_1}{d_{m-1}} \cd (G_{m-1} \otimes G_1^* )(\iota_{m-2,1} \otimes 1) \\
& \q : E_{m-1} \otimes E_1 \to E_{m-1} \ot E_1 \q \mbox{and} \\
p_{1,m-1} & = 1 \ot 1_{m-1} + (-1)^{(n+1)m +1}\frac{d_1}{d_{m-1}} ( G_1^* \otimes G_{m-1}')(1 \otimes \iota_{1,m-2})  \\
& \q : E_1 \otimes E_{m-1} \to E_1 \otimes E_{m-1} .
\end{split}
\]
\end{lemma}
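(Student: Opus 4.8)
The plan is to obtain both identities from the orthogonal decompositions recorded in Propositions \ref{p:decompright} and \ref{p:decompleft}. I will spell out the argument for $p_{m-1,1}$; the formula for $p_{1,m-1}$ then follows by repeating it with the ingredients of the right-hand decomposition replaced by their left-hand counterparts.

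First I would note that Proposition \ref{p:decompright}, applied with $m$ replaced by $m-1$, tells us that $\io_{m-1,1}$ and $V_{m-1}$ are isometries with mutually orthogonal ranges whose sum is all of $E_{m-1}\ot E_1$; hence the associated orthogonal projections add up to the identity, so that
\[
p_{m-1,1} = \io_{m-1,1}\io^*_{m-1,1} = 1_{m-1}\ot 1 - V_{m-1}V_{m-1}^* .
\]
By the definition \eqref{eq:V_m} of $V_{m-1}$ one has $V_{m-1}V_{m-1}^* = \mu_{m-1}^{-1}\, G_{m-1}G_{m-1}^*$, so it remains to identify $\mu_{m-1}^{-1}\, G_{m-1}G_{m-1}^*$ with minus the second summand appearing in the claimed formula. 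For $m=1$ both quantities vanish, since $E_{-1} = \{0\}$, so one may assume $m \geq 2$.

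Next I would rewrite $G_{m-1}^*$ by invoking Lemma \ref{l:iotagee} with $m$ replaced by $m-1$, namely $G_{m-1} = (-1)^{(n+1)(m-2)} d_{m-2}\cd (\io^*_{m-2,1}\ot 1)(1_{m-2}\ot G_1)$. Taking adjoints (recall $G_1 : E_0 = \cc \to E_1 \ot E_1$) and feeding the outcome back into $G_{m-1}G_{m-1}^*$ turns it, after the evident identification of $G_{m-1}\ci(1_{m-2}\ot G_1^*)$ with $G_{m-1}\ot G_1^*$, into $(-1)^{(n+1)(m-2)} d_{m-2}\cd (G_{m-1}\ot G_1^*)(\io_{m-2,1}\ot 1)$. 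Finally I would collect scalars: the sign $(-1)^{(n+1)(m-2)}$ equals $(-1)^{(n+1)m}$, and by \eqref{eq:mucon} we have $\mu_{m-1} = d_{m-1}d_{m-2}/d_1$, so $d_{m-2}/\mu_{m-1} = d_1/d_{m-1}$; since $-(-1)^{(n+1)m} = (-1)^{(n+1)m+1}$, this produces exactly the asserted expression for $p_{m-1,1}$. The identity for $p_{1,m-1}$ would be obtained verbatim, using Proposition \ref{p:decompleft}, the definition \eqref{eq:V_mprime} of $V_{m-1}'$, and the second identity in Lemma \ref{l:iotagee} in place of their right-hand analogues.

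Everything here is mechanical: the sole conceptual input is the unitarity of the decompositions in Propositions \ref{p:decompright}--\ref{p:decompleft}, which is exactly what forces the leading term $1_{m-1}\ot 1$ (resp.\ $1\ot 1_{m-1}$) and fixes the scalar $d_1/d_{m-1}$. I expect the only mild obstacle to be bookkeeping---tracking which copy of $E_1$ is contracted by $G_1^*$ in each composite, and checking that the parity $(-1)^{(n+1)(m-2)}$ coming from Lemma \ref{l:iotagee} reconciles with the sign $(-1)^{(n+1)m+1}$ in the statement.
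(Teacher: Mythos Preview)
Your argument is correct and is in fact cleaner than the paper's own proof. The paper does not invoke Proposition \ref{p:decompright}; instead it starts from $p_{m-1,1}=\io_{m-1,1}\io_{m-1,1}^*$, replaces $\io_{m-1,1}^*$ by the formula of Lemma \ref{lem:iota*}, and then expands $(\io_{m-1,1}\ot 1)G_m$ via the recursion \eqref{eq:recursiota}, so that the identity term $1_{m-1}\ot 1$ only appears after evaluating $(1_{m-1}\ot 1\ot G_1^*)(1_{m-1}\ot G_1\ot 1)$. Your route short-circuits this: the unitary decomposition gives $p_{m-1,1}=1_{m-1}\ot 1-V_{m-1}V_{m-1}^*$ immediately, and then taking the adjoint of Lemma \ref{l:iotagee} disposes of $G_{m-1}^*$ in one line. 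The trade-off is that the paper's proof relies only on results stated in the same subsection (Lemma \ref{lem:iota*} and \eqref{eq:recursiota}), whereas yours reaches back to Proposition \ref{p:decompright}; but that proposition is already available at this point, and using it makes the provenance of the leading term transparent. Your bookkeeping with the sign $(-1)^{(n+1)(m-2)}=(-1)^{(n+1)m}$ and the constant $d_{m-2}/\mu_{m-1}=d_1/d_{m-1}$ is correct, and the identification $G_{m-1}\circ(1_{m-2}\ot G_1^*)=G_{m-1}\ot G_1^*$ is indeed automatic since $G_1^*$ has values in $E_0=\cc$.
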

\begin{proof}
We focus on the orthogonal projection $p_{m-1,1} : E_{m-1} \ot E_1 \to E_{m-1} \ot E_1$. Using Lemma \ref{l:rightfus} (1), Lemma \ref{lem:iota*}, and the recursive relation from \eqref{eq:recursiota} we compute that
\[
\begin{split}
p_{m-1,1} 
& = \io_{m-1,1} \io^*_{m-1,1} = (-1)^{(n+1)m + 1} \frac{d_1}{d_{m-1}} \cd (1_{m-1} \ot 1 \ot G_1^*)(\io_{m-1,1} \ot 1 \ot 1)(G_m \ot 1) \\
& = (-1)^{(n+1)m + 1} \frac{d_1}{d_{m-1}} \cd ( G_{m-1} \ot G_1^*)( \io_{m-2,1} \ot 1) \\
& \q + (-1)^{(n+1)m + 1} \frac{d_1}{d_{m-1}} \cd (-1)^{(n+1)(m-1)} d_{m-1} \cd (1_{m-1} \ot 1 \ot G_1^*)(1_{m-1} \ot G_1 \ot 1) \\
& = (-1)^{(n+1)m + 1} \frac{d_1}{d_{m-1}} \cd (G_{m-1} \ot G_1^*)( \io_{m-2,1} \ot 1) + 1_{m-1} \ot 1 . \qedhere
\end{split}
\]
\end{proof}

\begin{prop}\label{p:sigma*sigma}
Let $k,m \in \nn_0$. We have the identity
\begin{equation}\label{eq:almostnormal}
\begin{split}
\si_{k,m}^* \si_{k,m} 
& = \frac{d_k d_{k+m+1}}{d_1 d_m} \cd 1_k \ot 1_m 
+ \frac{d_k d_{m-1}}{d_{k-1} d_m} \cd \si_{k-1,m-1} \si_{k-1,m-1}^* \\
& \q : E_k \ot E_m \to E_k \ot E_m .
\end{split}
\end{equation}
\end{prop}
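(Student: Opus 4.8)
The plan is to compute $\si_{k,m}^* \si_{k,m}$ directly from the definition $\si_{k,m} = (1_{k+1} \ot \io^*_{1,m})(G_{k+1} \ot 1_m)$, using the structural lemmas already established. First I would unfold
\[
\si_{k,m}^* \si_{k,m} = (G_{k+1}^* \ot 1_m)(1_{k+1} \ot \io_{1,m} \io^*_{1,m})(G_{k+1} \ot 1_m) = (G_{k+1}^* \ot 1_m)(1_{k+1} \ot p_{1,m})(G_{k+1} \ot 1_m),
\]
and then substitute the explicit formula for $p_{1,m}$ from Lemma \ref{l:PP*}. This splits the expression into two terms: one coming from the summand $1 \ot 1_m$ of $p_{1,m}$, which after applying Lemma \ref{l:leftfus} part $(2)$ (that $G_{k+1}^* G_{k+1} = \mu_{k+1} \cd 1_k$) produces a scalar multiple of $1_k \ot 1_m$; and one coming from the summand $(-1)^{(n+1)(m+1)+1} \tfrac{d_1}{d_m}(G_1^* \ot G_m')(1 \ot \io_{1,m-1})$ of $p_{1,m}$, which will need to be massaged into $\si_{k-1,m-1}\si_{k-1,m-1}^*$.

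For the first term, the scalar is $\mu_{k+1}/\mu_m \cdot (\text{something})$ — more precisely, after accounting for the normalisation factors hidden in $G$, the coefficient $d_k d_{k+m+1}/(d_1 d_m)$ should emerge; I would verify this via the identity $d_{k+m+1} = d_k d_{m+1} - d_{k-1} d_m$ of Lemma \ref{l:prop_d_con} together with $\mu_m = d_m d_{m-1}/d_1$ from \eqref{eq:mucon}. The cleaner route here is probably to recognise that $\si_{k,m}^*$ can also be expressed via $G'$ using Lemma \ref{l:iotagee} (which rewrites $G_{m}$ in terms of $\io^*_{m-1,1}$ and $G_1$, and dually for $G_m'$) and Lemma \ref{lem:iota*}, so that both $G_{k+1}$-factors and the $\io_{1,m}$-factor get expressed uniformly in terms of the single vector $G_1 = \de$; then the whole computation reduces to manipulating the recursion \eqref{eq:recursiota} for $G_{k+1}$ (applying $\io_{k,1}\ot 1$ to it) against the recursion for $G_m'$.

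For the second term, the key observation is that after plugging in the recursive relation \eqref{eq:recursiota} for $G_{k+1}$ — namely $(\io_{k,1}\ot 1)\ci G_{k+1} = (G_k \ot 1)\ci \io_{k-1,1} + (-1)^{(n+1)k} d_k \cd 1_k \ot G_1$ — and using Lemma \ref{lem:iota*} to replace $\io^*_{k,1}$-type factors, the piece involving $(G_k \ot 1)\io_{k-1,1}$ reassembles (after reindexing $k \mapsto k-1$, $m\mapsto m-1$) into $\si_{k-1,m-1}\si_{k-1,m-1}^*$, while the piece involving $1_k \ot G_1$ contracts against $G_1^*$ (using $\inn{\de,\de}=1$) and either vanishes by the orthogonality relations of Lemma \ref{l:leftfus}/\ref{l:rightfus} part $(3)$ or contributes to the scalar term already accounted for. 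Tracking the numerical coefficient $d_k d_{m-1}/(d_{k-1} d_m)$ through this step — it is a ratio of the normalisations $\sqrt{\mu_{k+1}}, \sqrt{\mu_k}, \sqrt{\mu_m}, \sqrt{\mu_{m-1}}$ together with the explicit constants $d_k, d_{m-1}$ appearing in \eqref{eq:recursiota} — is where the bookkeeping is heaviest. I expect the main obstacle to be precisely this coefficient-chasing: keeping the four different $\mu$-normalisations, the signs $(-1)^{(n+1)(\cdot)}$, and the $d$-ratios straight while the Clebsch–Gordan-type contractions of $\de$ against itself are carried out, and confirming that the "cross terms" (mixing the two summands of $p_{1,m}$ with the two summands of the $G_{k+1}$-recursion) cancel so that only the two advertised terms survive. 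The boundary cases $k=0$ or $m=0$ (where $\si_{k-1,m-1}=0$ by convention) should be checked separately but are immediate from $G_0 = 0$ and the definition.
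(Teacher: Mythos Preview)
Your proposal is correct and follows essentially the same approach as the paper: unfold $\si_{k,m}^*\si_{k,m}$ via the definition and $p_{1,m}$, substitute the formula from Lemma~\ref{l:PP*}, get $\mu_{k+1}\cdot 1_k\ot 1_m$ from the identity summand via Lemma~\ref{l:leftfus}(2), then rewrite $G_{k+1}^*$ through Lemma~\ref{l:iotagee} and apply the recursion \eqref{eq:recursiota} to split the remaining term into the $\si_{k-1,m-1}\si_{k-1,m-1}^*$ piece and a scalar correction, with the final coefficient $d_k d_{k+m+1}/(d_1 d_m)$ obtained from $\mu_{k+1}-d_k^2 d_{m-1}/(d_1 d_m)$ via Lemma~\ref{l:prop_d_con}. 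One small slip: the first term is exactly $\mu_{k+1}\cdot 1_k\ot 1_m$ with no $\mu_m$ in the denominator and no hidden normalisations in $G$ (those live in $V_m$, not $G_m$); the $d_m$ in the final answer comes entirely from the $d_1/d_m$ prefactor in the $p_{1,m}$ formula.
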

\begin{proof}
We focus on the case where $k,m \in \nn$. Using Lemma \ref{l:leftfus} and Lemma \ref{l:PP*} we see that
\[
\begin{split}
\si_{k,m}^* \si_{k,m} 
& = (G_{k+1} \ot 1_m)^* (1_{k+1} \ot p_{1,m})(G_{k+1} \ot 1_m) \\
& = \mu_{k+1} \cd 1_k \ot 1_m \\ 
& \q + (-1)^{(n+1)m + n} \frac{d_1}{d_m} \cd (G_{k+1} \ot 1_m)^*(1_{k+1} \ot G_1^* \ot G_m')(G_{k+1} \ot \io_{1,m-1}) .
\end{split}
\]
We continue by analysing the second term in this sum by applying Lemma \ref{l:iotagee} and the recursive relation from \eqref{eq:recursiota}:
\begin{equation}\label{eq:almostI}
\begin{split}
& (-1)^{(n+1)m + n} \frac{d_1}{d_m} \cd (G_{k+1} \ot 1_m)^*(1_{k+1} \ot G_1^* \ot G_m')(G_{k+1} \ot \io_{1,m-1}) \\
& \q = (-1)^{(n+1)(m + k) + n} \frac{d_1 d_k}{d_m} \cd (1_k \ot G_1^* \ot 1_m)(1_k \ot 1 \ot G_1^* \ot G_m') \\
& \qqq \qqq \qq \ci (\io_{k,1} \ot 1 \ot \io_{1,m-1})(G_{k+1} \ot 1_m) \\
& \q = (-1)^{(n+1)(m + k) + n} \frac{d_1 d_k}{d_m} \cd (1_k \ot G_1^* \ot 1_m)(G_k \ot G_1^* \ot G_m')(\io_{k-1,1}\ot \io_{1,m-1}) \\
& \qq + (-1)^{(n+1)m + n} \frac{d_1 d_k^2}{d_m} \cd (1_k \ot G_1^* \ot 1_m)(1_k \ot 1 \ot G_1^* \ot G_m')(1_k \ot G_1 \ot \io_{1,m-1}) .
\end{split}
\end{equation}
Using Lemma \ref{l:iotagee} and Lemma \ref{lem:iota*} we then obtain that the first term in the above sum is given by
\[
\begin{split}
& (-1)^{(n+1)(m + k) + n} \frac{d_1 d_k}{d_m} \cd (1_k \ot G_1^* \ot 1_m)(G_k \ot G_1^* \ot G_m')(\io_{k-1,1}\ot \io_{1,m-1}) \\
& \q = (-1)^{(n+1)(k+1)} \frac{d_k d_{m-1}}{d_m} \cd (1_k \ot \io_{1,m-1}^*)(G_k \ot G_1^* \ot 1_{m-1})(\io_{k-1,1}\ot \io_{1,m-1}) \\
& \q = \frac{d_k d_{m-1}}{d_m d_{k-1}} \cd \si_{k-1,m-1} \si_{k-1,m-1}^* ,
\end{split}
\]
corresponding to the second term in \eqref{eq:almostnormal} (in the case where $k,m \in \nn$). We continue with the remaining term in \eqref{eq:almostI} and apply Lemma \ref{l:rightfus}, Lemma \ref{lem:iota*}: 
\[
\begin{split}
& (-1)^{(n+1)m + n} \frac{d_1 d_k^2}{d_m} \cd (1_k \ot G_1^* \ot 1_m)(1_k \ot 1 \ot G_1^* \ot G_m')(1_k \ot G_1 \ot \io_{1,m-1}) \\
& \q = (-1)^{(n+1)m} \frac{d_k^2}{d_m} \cd (1_k \ot G_1^* \ot 1_m)(1_k \ot 1 \ot G_m')(1_k \ot \io_{1,m-1}) \\
& = - \frac{d_k^2 d_{m-1} }{d_1 d_m} \cd 1_k \ot 1_m .
\end{split}
\]
The result of the proposition now follows by an application of Lemma \ref{l:prop_d_con} in the case where $l = 0$, yielding that 
\[
\mu_{k+1} - \frac{ d_k^2 d_{m-1} }{d_1 d_m} = \frac{d_k d_{k+m+1}}{d_1 d_m} . \qedhere
\]
\end{proof}

The following lemmas contain further properties of the operators $\sigma_{k,m} : E_k \ot E_m \to E_{k+1} \ot E_{m+1}$, $k,m \in \nn_0$. For ease of notation, we omit the subscripts. 

\begin{lemma}\label{l:sigma_propI}
Let $k,m \in \nn_0$ and $j \in \nn$. We have the identity:
\[
\begin{split}
 \sigma^* \sigma^j 
& = \mu_{k + j}\cd \big(1 - \frac{d_k d_{m-1} }{d_{k+j} d_{m + j-1}} \big) \cd \sigma^{j-1} 
+ \frac{d_{m-1} d_{k+j-1}}{d_{k-1} d_{m+j-1}} \cd \sigma^j \sigma^* \\
& \q : E_k \ot E_m \to E_{k + j-1} \ot E_{m + j - 1}
\end{split}
\]
\end{lemma}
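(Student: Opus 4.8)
The plan is to argue by induction on $j \in \nn$, using Proposition \ref{p:sigma*sigma} both as the base case and as the engine of the inductive step. Throughout, $\sigma^j$ abbreviates the composite $\sigma_{k+j-1,m+j-1} \circ \cdots \circ \sigma_{k,m} : E_k \ot E_m \to E_{k+j} \ot E_{m+j}$, so that the object denoted $\sigma^* \sigma^j$ in the statement is $\sigma_{k+j-1,m+j-1}^* \sigma^j$; as in Proposition \ref{p:sigma*sigma} I restrict attention to $k,m \in \nn$, the degenerate cases being governed by the conventions $\sigma_{k,m} = 0$ for $k=-1$ or $m=-1$ and $d_{-1}=0$. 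For $j = 1$ the assertion is precisely Proposition \ref{p:sigma*sigma} after rewriting the scalar coefficient: using $\mu_{k+1} = d_{k+1} d_k / d_1$ together with the identity \eqref{eq:did4} (established in the proof of Lemma \ref{l:prop_d_con}) for the pair $(k+1,m)$, i.e. $d_{k+m+1} = d_{k+1} d_m - d_k d_{m-1}$, one gets $\tfrac{d_k d_{k+m+1}}{d_1 d_m} = \mu_{k+1}\big(1 - \tfrac{d_k d_{m-1}}{d_{k+1} d_m}\big)$, while the second summand $\tfrac{d_k d_{m-1}}{d_{k-1} d_m}\, \sigma_{k-1,m-1}\sigma_{k-1,m-1}^*$ is already the predicted $\tfrac{d_{m-1} d_k}{d_{k-1} d_m}\, \sigma\sigma^*$.

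For the inductive step, assume the identity at stage $j$. Factoring $\sigma^{j+1} = \sigma_{k+j,m+j} \circ \sigma^j$ and applying Proposition \ref{p:sigma*sigma} to the pair $(k+j,m+j)$ gives
\[
\sigma^* \sigma^{j+1} \;=\; \frac{d_{k+j}\, d_{k+m+2j+1}}{d_1\, d_{m+j}}\, \sigma^j \;+\; \frac{d_{k+j}\, d_{m+j-1}}{d_{k+j-1}\, d_{m+j}}\, \sigma_{k+j-1,m+j-1}\big(\sigma^* \sigma^j\big),
\]
where I have used that $\sigma_{k+j-1,m+j-1}^* \sigma^j$ is exactly $\sigma^* \sigma^j$. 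Now substitute the inductive hypothesis for $\sigma^* \sigma^j$ and left-multiply by $\sigma_{k+j-1,m+j-1}$, using the bookkeeping identities $\sigma_{k+j-1,m+j-1}\, \sigma^{j-1} = \sigma^j$ and $\sigma_{k+j-1,m+j-1}\,(\sigma^j \sigma^*) = \sigma^{j+1}\sigma^*$. This rewrites $\sigma^* \sigma^{j+1}$ as a scalar times $\sigma^j$ plus a scalar times $\sigma^{j+1}\sigma^*$, and it only remains to match the two scalars against the statement at stage $j+1$.

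The $\sigma^{j+1}\sigma^*$-coefficient collapses at once to $\tfrac{d_{k+j}\, d_{m+j-1}}{d_{k+j-1}\, d_{m+j}} \cdot \tfrac{d_{m-1}\, d_{k+j-1}}{d_{k-1}\, d_{m+j-1}} = \tfrac{d_{m-1}\, d_{k+j}}{d_{k-1}\, d_{m+j}}$, exactly as required. The $\sigma^j$-coefficient is the only point that needs a genuine (but short) computation: clearing $\mu_{k+j} = d_{k+j} d_{k+j-1}/d_1$ brings it to $\tfrac{d_{k+j}}{d_1 d_{m+j}}\big( d_{k+m+2j+1} + d_{k+j} d_{m+j-1} - d_k d_{m-1}\big)$, and applying \eqref{eq:did4} once more — to the pair $(k+j+1,m+j)$, giving $d_{k+m+2j+1} = d_{k+j+1} d_{m+j} - d_{k+j} d_{m+j-1}$ — turns the bracket into $d_{k+j+1} d_{m+j} - d_k d_{m-1}$, so that the coefficient equals $\mu_{k+j+1}\big(1 - \tfrac{d_k d_{m-1}}{d_{k+j+1} d_{m+j}}\big)$; this is exactly the coefficient demanded at stage $j+1$, and the induction closes. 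The main obstacle is bookkeeping: keeping the indices of the various creation maps $\sigma_{k+i,m+i}$ aligned so that the telescoping of the composites and the two applications of \eqref{eq:did4} fit together — no estimate or structural input beyond Proposition \ref{p:sigma*sigma} and the recursion \eqref{eq:d_m_rec} for the $d_m$ is needed.
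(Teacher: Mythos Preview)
Your proof is correct and follows essentially the same approach as the paper: induction on $j$ using Proposition \ref{p:sigma*sigma}, together with the arithmetic identity \eqref{eq:did4} from Lemma \ref{l:prop_d_con}. The only organisational difference is that the paper first records the intermediate formula with the telescoping sum $d_{k+m+2j-1} + d_{k+m+2j-3} + \cdots + d_{k+m+1}$ in the coefficient and then collapses it once at the end via the summation identity in Lemma \ref{l:prop_d_con}, whereas you prove the closed form directly, applying \eqref{eq:did4} at each inductive step; this is a matter of bookkeeping, not a different argument.
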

\begin{proof}
Applying Proposition \ref{p:sigma*sigma} we obtain by induction on $j \in \nn$ that
\[
\begin{split}
\si^* \si^j 
& = \frac{d_{k+j-1} ( d_{k + m + 2j -1} + d_{k + m + 2j - 3} \plp d_{k + m + 1})}{d_1 d_{m + j-1}} \si^{j-1}  \\
& \q + \frac{d_{k + j-1} d_{m - 1}}{d_{k -1} d_{m + j - 1}} \si^j \si^* .
\end{split}
\]
The result of the present lemma then follows by an application of Lemma \ref{l:prop_d_con}:
\[
\begin{split}
& \frac{d_{k+j-1} ( d_{k + m + 2j -1} + d_{k + m + 2j - 3} \plp d_{k + m + 1})}{d_1 d_{m + j-1}}  \\
& \q = \frac{d_{k+j-1} ( d_{k + j} d_{m + j-1} - d_k d_{m-1})}{d_1 d_{m + j-1}} = \mu_{k + j} \cd \big( 1 - \frac{d_k d_{m-1}}{d_{k+j} d_{m + j-1}} \big) . \qedhere
\end{split}
\]
\end{proof}

\begin{lemma}\label{l:sigma_propII}
Let $k,m \in \nn_0$ and $j \in \nn$. We have the identities: 
\[
\begin{split}
\sigma^* \io_{k,m} & = 0 : E_{k + m} \to E_{k-1} \ot E_{m-1} \q \mbox{and} \\ 
(\sigma^*)^j \sigma^j \io_{k,m} & = \prod_{i = 1}^j \mu_{k + i} \big(1 - \frac{d_k d_{m-1}}{d_{k+i} d_{m + i -1}} \big) \cd  \io_{k,m} 
: E_{k + m} \to E_k \ot E_m .
\end{split}
\]
\end{lemma}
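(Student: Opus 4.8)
The plan is to prove the two displayed identities in turn, and then to extract the second one from the first by an induction on $j$ together with Lemma \ref{l:sigma_propI}.

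\emph{The vanishing identity.} First I would dispose of $\si^* \io_{k,m} = 0$. One may assume $k,m \geq 1$, since for $k = 0$ or $m = 0$ the target $E_{k-1} \ot E_{m-1}$ collapses to $\{0\}$. The relevant operator is $\si_{k-1,m-1}^*$, and unfolding the definition \eqref{eq:defsigma} gives
\[
\si_{k-1,m-1}^* = (G_k^* \ot 1_{m-1})(1_k \ot \io_{1,m-1}) : E_k \ot E_m \to E_{k-1} \ot E_{m-1} .
\]
Next I would invoke the coassociativity axiom of Definition \ref{d:sps}(3) with parameters $(k,1,m-1)$, which yields $(1_k \ot \io_{1,m-1}) \io_{k,m} = (\io_{k,1} \ot 1_{m-1}) \io_{k+1,m-1}$, so that $\si^* \io_{k,m} = \big( (G_k^* \io_{k,1}) \ot 1_{m-1} \big) \io_{k+1,m-1}$. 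It then remains to observe that $G_k^* \io_{k,1} = 0$: indeed $\io_{k,1}$ has image $E_{k+1} = K_{k+1}^\perp \su E_k \ot E_1$, whereas the consequence of Lemma \ref{l:decomp} recorded immediately after it identifies the image of $G_k \colon E_{k-1} \to E_k \ot E_1$ with $K_{k+1} \cap (E_k \ot E_1)$; these two subspaces are mutually orthogonal inside $E_k \ot E_1$.

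\emph{The iterated identity.} I would then argue by induction on $j \in \nn_0$, with the trivial base case $j = 0$ (empty product; $(\si^*)^0 \si^0 \io_{k,m} = \io_{k,m}$), which specialises to the stated range $j \in \nn$. For the inductive step I would group the composition as $(\si^*)^j \si^j \io_{k,m} = (\si^*)^{j-1} \big( \si^* \si^j \big) \io_{k,m}$, arranging the bracket so that $\si^* \si^j$ is precisely the operator on $E_k \ot E_m$ appearing in Lemma \ref{l:sigma_propI}. Substituting that lemma, the summand involving $\si^j \si^* \io_{k,m}$ vanishes by the identity just proved (the inner $\si^*$ there being $\si_{k-1,m-1}^*$), leaving
\[
(\si^*)^j \si^j \io_{k,m} = \mu_{k+j}\Big(1 - \tfrac{d_k d_{m-1}}{d_{k+j} d_{m+j-1}}\Big) (\si^*)^{j-1} \si^{j-1} \io_{k,m} ,
\]
and the induction hypothesis applied to the right-hand side yields the claimed product.

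The main point to watch is the index bookkeeping hidden by the convention of suppressing subscripts on $\si$: one must check that the leftmost factor of $(\si^*)^j$ is $\si_{k,m}^*$, that the bracket $\si^* \si^j$ extracted in the inductive step is the one based at $E_k \ot E_m$ (so that Lemma \ref{l:sigma_propI} applies verbatim), and that the $\si^*$ hitting $\io_{k,m}$ in the leftover term is indeed $\si_{k-1,m-1}^*$, so that the vanishing identity is the applicable one. Once this is in place, the degenerate cases $k = 0$ or $m = 0$ need no separate treatment, since there $d_{m-1} = d_{-1} = 0$ (respectively $\si_{k-1,m-1} = 0$) while $\si^* \io_{k,m} = 0$ holds for trivial reasons.
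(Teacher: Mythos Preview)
Your proposal is correct and follows the same overall architecture as the paper: establish the vanishing $\si^*\io_{k,m}=0$ first, then feed it into Lemma \ref{l:sigma_propI} by induction on $j$ to obtain the product formula. The only difference is in how the vanishing is argued: the paper invokes Lemma \ref{l:iotagee} to rewrite $G_k^*$ as a scalar multiple of $1_{k-1}\ot\inn{\de,\cd}$ and then uses that $E_{k-1}\ot D\ot E_{m-1}$ is orthogonal to $E_{k+m}$, whereas you use coassociativity to factor out $G_k^*\io_{k,1}$ and kill it directly via $\T{Im}(G_k)=K_{k+1}\cap(E_k\ot E_1)\perp E_{k+1}=\T{Im}(\io_{k,1})$. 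Your route is slightly more economical since it avoids unpacking $G_k$ down to the $\de$-level; both are entirely valid.
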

\begin{proof}
By Lemma \ref{l:sigma_propI} it suffices to show that $\si_{k-1,m-1}^* \io_{k,m} = 0$. This is a triviality for $k = 0$ or $m = 0$ and for $k,m \in \nn$ we have that $\si_{k-1,m-1}^* \io_{k,m} = (G_k^* \ot 1_{m-1})(1_k \ot \io_{1,m-1})\io_{k,m} : E_{k+m} \to E_{k-1} \ot E_{m-1}$. However, by Lemma \ref{l:iotagee} this linear map is a scalar multiple of the inclusion $E_{k + m} \to E_{k-1} \ot E_1 \ot E_1 \ot E_{m-1}$ composed with $1_{k-1} \ot \inn{\de,\cd} \ot 1_{m-1}$. Since $E_{k-1} \ot D \ot E_{m-1}$ lies in the orthogonal complement of $E_{k + m} \su E_{k-1} \ot E_1 \ot E_1 \ot E_{m-1}$ we have proved the lemma.
\end{proof}

Our computations culminate in the following important result concerning the decomposition of the tensor product of two elements of our subproduct system of Hilbert spaces.
\begin{thm}\label{t:iso_fusion} 
Let $k,m \in \nn_0$ and put $l := \min\{k,m\}$. We have an $SU(2)$-equivariant unitary isomorphism
\[
W_{k,m} = \ma{cccc}{W_{k,m}^0 & W_{k,m}^1 & \ldots & W_{k,m}^l}: \bop_{j = 0}^l E_{k + m - 2j} \to E_k \ot E_m 
\]
defined component-wise by
\[
\begin{split}
W_{k,m}^j 
& = \prod_{i=1}^j \frac{1}{\sqrt{\mu_{ k-j+i}}}  \left( 1 - \frac{d_{ k-j} d_{ m-j-1}}{d_{ k-j+i} d_{ m-j+i-1}} \right)^{-1/2} \cd \si^j \io_{k-j,m-j} \\
& \q : E_{k + m - 2j} \to E_k \ot E_m 
\end{split}
\]
for all $j \in \big\{1,\ldots,l \big\}$ and $W_{k,m}^0 := \io_{k,m} : E_{k + m} \to E_k \ot E_m$.
\end{thm}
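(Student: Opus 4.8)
The plan is to verify that the putative isomorphism $W_{k,m}$ is (i) $SU(2)$-equivariant, (ii) isometric, and (iii) surjective; the statement then follows since $\dim\big(\bigoplus_{j=0}^l E_{k+m-2j}\big) = \sum_{j=0}^l d_{k+m-2j} = d_k d_m = \dim(E_k \ot E_m)$ by Lemma \ref{l:prop_d_con} (applied with the roles of $k,m$ suitably permuted, using $\min\{k,m\}=l$), so that a surjective isometry between spaces of equal dimension is automatically a unitary isomorphism. Equivariance is immediate: each $W_{k,m}^j$ is a scalar multiple of $\si^j \io_{k-j,m-j}$, and both $\si = \si_{\bullet,\bullet}$ and the structure maps $\io_{\bullet,\bullet}$ are $SU(2)$-equivariant by \eqref{eq:defsigma}, Lemma \ref{l:rightequiv}, and the construction in Section \ref{s:subsu2}.

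The heart of the argument is the isometry claim, which splits into two parts: each column $W_{k,m}^j$ is itself an isometry on $E_{k+m-2j}$, and distinct columns have orthogonal ranges. For the first part I would compute $(W_{k,m}^j)^* W_{k,m}^j$ as a scalar multiple of $\io_{k-j,m-j}^* (\si^*)^j \si^j \io_{k-j,m-j}$ and invoke Lemma \ref{l:sigma_propII}, which evaluates $(\si^*)^j \si^j \io_{k,m}$ precisely as $\prod_{i=1}^j \mu_{k+i}\big(1 - \tfrac{d_k d_{m-1}}{d_{k+i} d_{m+i-1}}\big) \cd \io_{k,m}$; after reindexing $k \rightsquigarrow k-j$, $m \rightsquigarrow m-j$, and composing with $\io_{k-j,m-j}^*$ (a coisometry), the normalising constants in the definition of $W_{k,m}^j$ are seen to be exactly what is needed to cancel this product, leaving $1_{k+m-2j}$. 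One should check the constants are well-defined, i.e.\ that the factors $1 - \tfrac{d_{k-j}d_{m-j-1}}{d_{k-j+i}d_{m-j+i-1}}$ are strictly positive for $1 \le i \le j \le l$; this follows from the strict monotonicity of $\{d_m\}$ established in the proof of Lemma \ref{l:quocon}.

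For the orthogonality of ranges, I would show $(W_{k,m}^{j'})^* W_{k,m}^{j} = 0$ for $j' \neq j$; by symmetry take $j' < j$. The relevant operator is a scalar multiple of $\io_{k-j',m-j'}^* (\si^*)^{j'} \si^{j} \io_{k-j,m-j}$, and one pushes the $\si^*$'s through using Lemma \ref{l:sigma_propI} (which rewrites $\si^*\si^j$ in terms of $\si^{j-1}$ and $\si^j\si^*$) to reduce, after $j'$ steps, to a sum of terms each ending in $\si^* \io_{\bullet,\bullet}$, which vanishes by the first identity of Lemma \ref{l:sigma_propII}; the $\si^j\si^*$ terms are handled because the innermost $\si^* \io_{k-j,m-j}$ already kills everything. (Care is needed to track that at no intermediate stage do we produce a $\si^{j'-1}\io$ that survives — but the power of $\si$ applied to $\io_{k-j,m-j}$ starts at $j > j'$, so every branch of the recursion still has at least one $\si^*$ hitting an $\io$.) Finally, surjectivity follows from the dimension count once isometry is known, since $\sum_{j=0}^l d_{k+m-2j} = d_k d_m$; alternatively one can argue surjectivity directly and inductively from Proposition \ref{p:decompright} and the definition \eqref{eq:defsigma} of $\si$, but the dimension argument is cleaner. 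The main obstacle I anticipate is the bookkeeping in the cross-term vanishing: keeping the recursion from Lemma \ref{l:sigma_propI} under control and confirming that every resulting summand retains a $\si^*$ adjacent to an $\io$ so that Lemma \ref{l:sigma_propII} applies.
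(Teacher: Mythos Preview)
Your proposal is correct and follows essentially the same approach as the paper: verify that each $W_{k,m}^j$ is an isometry via Lemma \ref{l:sigma_propII}, that distinct columns have orthogonal ranges, and conclude surjectivity from the dimension identity $\sum_{j=0}^l d_{k+m-2j} = d_k d_m$ of Lemma \ref{l:prop_d_con}, with equivariance inherited from the building blocks. One small simplification: for the orthogonality of ranges the paper avoids Lemma \ref{l:sigma_propI} entirely by taking the inequality the other way, $j < i$, so that $(W_{k,m}^i)^* W_{k,m}^j$ is proportional to $\io^* (\si^*)^{i-j}\big[(\si^*)^j \si^j \io_{k-j,m-j}\big] = c\,\io^*(\si^*)^{i-j}\io_{k-j,m-j} = 0$, using only the two identities of Lemma \ref{l:sigma_propII}; your route via Lemma \ref{l:sigma_propI} also works but requires at the end the observation $\io_{k-j',m-j'}^*\,\si = (\si^*\io_{k-j',m-j'})^* = 0$, which is the adjoint of the first identity in Lemma \ref{l:sigma_propII} rather than a ``$\si^*$ hitting an $\io$'' as you phrased it.
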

\begin{proof}
By Lemma \ref{l:sigma_propII} we have that $W_{k,m}^j : E_{k + m - 2j} \to E_k \ot E_m$ is an isometry for all $j \in \{0,1,\ldots,l\}$. Moreover, it follows from Lemma \ref{l:sigma_propII} that $(W_{k,m}^i)^* W_{k,m}^j = 0 : E_{k+m-2j} \to E_{k + m - 2i}$ whenever $0 \leq j < i \leq l$. These two observations establish that $W_{k,m} : \bop_{j = 0}^l E_{k + m - 2j} \to E_k \ot E_m$ is an isometry. The fact that $W_{k,m}$ is surjective now follows by dimension considerations since Lemma \ref{l:prop_d_con} implies that $d_k d_m = \sum_{j = 0}^l d_{k + m - 2l + 2j}$. The $SU(2)$-equivariance of $W_{k,m}$ is a consequence of the $SU(2)$-equivariance of the structure maps of our subproduct system  and the definition in \eqref{eq:defsigma} together with Lemma \ref{l:rightequiv}. 
\end{proof}

\section{Commutation relations for the Toeplitz algebra}\label{s:commreltoe}
Throughout this section we fix an $n \in \nn$ and consider the Toeplitz algebra coming from the irreducible representation $\rho_n : SU(2) \to U(L_n)$. We let $\{e_j \}_{j = 0}^n$ denote the orthonormal basis for $L_n$ introduced in \eqref{eq:orthobasis}. In particular, we have the associated Toeplitz operators
\[
T_j := T_{e_j} : F \to F \q j \in \{0,1,\ldots,n\} . 
\]
For each $j \in \{0,1,2,\ldots,n\}$ we also introduce the bounded operator $T'_j : F \to F$ defined by
\[
T_j'(\xi) := \io^*_{m,1}(\xi \ot e_j) \q \T{for all } \xi \in E_m . 
\]
In other words, $T'_j$ is the \emph{right} creation operator associated to the vector $e_j \in E_1 = L_n$.

We define the $SU(2)$-equivariant bounded operators $\io_L : F \to E_1 \ot F$ and $\io_R : F \to F \ot E_1$ by $\io_L(\xi) := \io_{1,m-1}(\xi)$ and $\io_R(\xi) := \io_{m-1,1}(\xi)$ for homogeneous elements $\xi \in E_m$ with $m \geq 1$ and for $\xi \in E_0$ we put $\io_L(\xi) = 0$ and $\io_R(\xi) = 0$. 

\begin{lemma}\label{lem:iota*TT*}
We have the identities
\[ 
\begin{split}
\io^*_L & = \sum_{j=0}^n { \inn{e_j,\cd} } \ot T_j : E_1 \ot F \to F \q \mbox{and} \\
\io^*_R & = \sum_{j=0}^n T_j' \ot { \inn{e_j,\cd}} : F \ot E_1 \to F .
\end{split}
\]
\end{lemma}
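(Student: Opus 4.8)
The plan is to verify both identities by a direct adjointness computation on elementary tensors, exploiting that $F=\bop_{m=0}^\infty E_m$ is an orthogonal direct sum and that $\io_L$ and $\io_R$ are "block diagonal" for this grading: they raise degree by one and act through the structure maps $\io_{1,m-1}$, respectively $\io_{m-1,1}$ (and kill $E_0$). Thus the computation of the adjoints $\io_L^*$, $\io_R^*$ will be reduced to the adjoints $\io_{1,m}^*$, $\io_{m,1}^*$ of the structure maps, which are already part of the subproduct system data. I would first note that all operators in sight are bounded, so it suffices to check the identities on the dense subspace of $E_1\ot F$ (resp. $F\ot E_1$) spanned by elementary tensors $\eta\ot\ze$ with $\eta\in E_1$ and $\ze\in E_m$ homogeneous.

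For the first identity I would fix such $\eta\ot\ze$ with $\ze\in E_m$ and pair it against a homogeneous test vector $\xi\in E_k\su F$. Since $\io_L(\xi)\in E_1\ot E_{k-1}$ for $k\ge 1$ and $\io_L(\xi)=0$ for $k=0$, orthogonality of the grading forces $\inn{\eta\ot\ze,\io_L(\xi)}=0$ unless $k=m+1$, in which case $\io_L(\xi)=\io_{1,m}(\xi)$ and the defining adjoint relation gives $\inn{\eta\ot\ze,\io_{1,m}(\xi)}=\inn{\io_{1,m}^*(\eta\ot\ze),\xi}$. As $\io_{1,m}^*(\eta\ot\ze)\in E_{m+1}\su F$, this identifies $\io_L^*(\eta\ot\ze)=\io_{1,m}^*(\eta\ot\ze)$. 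On the other hand, applying $\sum_{j=0}^n\inn{e_j,\cd}\ot T_j$ to $\eta\ot\ze$ and unwinding $T_j(\ze)=\io_{1,m}^*(e_j\ot\ze)$, the sum collapses via the orthonormal basis expansion $\eta=\sum_{j=0}^n\inn{e_j,\eta}\,e_j$ of $E_1=L_n$ to exactly $\io_{1,m}^*(\eta\ot\ze)$. Comparing the two expressions yields the formula for $\io_L^*$; the formula for $\io_R^*$ is proved \emph{mutatis mutandis}, using $\io_R(\xi)\in E_{k-1}\ot E_1$, the vanishing on $E_0$, and the right creation operators $T_j'(\ze)=\io_{m,1}^*(\ze\ot e_j)$.

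The argument is essentially bookkeeping and I do not expect a genuine obstacle. The only points needing a moment of care are: keeping the convention under which $\eta\mapsto\inn{e_j,\eta}$ is linear, so that the basis expansion of $\eta$ produces exactly the coefficients appearing in the stated sums (and not their complex conjugates); and confirming that no spurious cancellation occurs on the degree-zero part, which is automatic since $\io_{1,m}^*(\eta\ot\ze)$ always sits in the strictly positive degree component $E_{m+1}$.
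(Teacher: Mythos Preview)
Your proposal is correct and takes essentially the same approach as the paper: both verify the identities on elementary tensors by unwinding the definitions of $\io_L$, $\io_R$, and the creation operators. The paper's proof is slightly terser---it works directly with basis vectors $e_i\ot\xi$ rather than a general $\eta\ot\ze$, and writes the single chain $\io_L^*(e_i\ot\xi)=\io_{1,m}^*(e_i\ot\xi)=T_i(\xi)=\sum_j(\inn{e_j,\cd}\ot T_j)(e_i\ot\xi)$---but the content is identical.
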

\begin{proof}
Let $\xi \in E_m$ and $i \in \{0,1,\ldots,n\}$ be given. We compute that
\[
\io^*_L(e_i \ot \xi)= \io^*_{1,m}(e_i \otimes \xi) = T_i(\xi) = \sum_{j = 0}^n ({ \inn{e_j,\cd}} \ot T_j)(e_i \ot \xi) . 
\]
The identity involving $\io_R^* : F \ot E_1 \to F$ is proved in the same way.
\end{proof}

We are now going to further analyse the structural properties of the $SU(2)$-equivariant isometries $V_m : E_{m-1} \to E_m \ot E_1$ and $V^\prime_m : E_{m-1} \to E_1 \ot E_m$ defined in \eqref{eq:V_m} and \eqref{eq:V_mprime}.

\begin{lemma}\label{l:leftright_Toe}
Let $m \in \nn$. For every $\xi \in E_{m-1}$, we have the identities
\[
\begin{split}
V_m'(\xi) & = \sqrt{ d_{m-1}/ d_m} \cd \sum_{j = 0}^n {(-1)^j } \cd e_j \ot T_{n-j}(\xi) \q \mbox{and} \\
V_m(\xi) & = \sqrt{ d_{m-1} / d_m } \cd \sum_{j = 0}^n {(-1)^{n-j} } \cd T'_{n-j}(\xi) \ot e_j .
\end{split}
\]
\end{lemma}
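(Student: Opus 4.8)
The plan is to reduce both formulas to the explicit, non-recursive expressions for $G_m$ and $G_m'$ provided by Lemma~\ref{l:iotagee}, and then simply read off the creation operators. I would begin with $V_m'$. Recall from \eqref{eq:V_mprime} that $V_m' = \frac{(-1)^{(n+1)(m-1)}}{\sqrt{\mu_m}} \cd G_m'$; combining this with the identity $G_m' = (-1)^{(n+1)(m-1)} d_{m-1} \cd (1 \ot \io^*_{1,m-1})(G_1 \ot 1_{m-1})$ from Lemma~\ref{l:iotagee}, the two sign factors cancel and one obtains $V_m'(\xi) = \frac{d_{m-1}}{\sqrt{\mu_m}} \cd (1 \ot \io^*_{1,m-1})(\de \ot \xi)$ for every $\xi \in E_{m-1}$, where $\de = G_1(1) \in K_2 \su E_1 \ot E_1$.

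Next I would substitute the explicit form $\de = \frac{1}{\sqrt{n+1}} \sum_{j=0}^n (-1)^j e_j \ot e_{n-j}$ from \eqref{eq:delta_vect}, apply $(1 \ot \io^*_{1,m-1})$ term by term, and use that $\io^*_{1,m-1}(e_{n-j} \ot \xi) = T_{n-j}(\xi)$ — this is just the definition of the creation operator $T_{n-j} = T_{e_{n-j}}$, or equivalently Lemma~\ref{lem:iota*TT*} applied to $\io_L^*$. This yields $V_m'(\xi) = \frac{d_{m-1}}{\sqrt{\mu_m}\sqrt{n+1}} \sum_{j=0}^n (-1)^j e_j \ot T_{n-j}(\xi)$. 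The constant then simplifies: since $\mu_m = d_m d_{m-1}/d_1$ with $d_1 = n+1$, we have $\mu_m(n+1) = d_m d_{m-1}$, hence $\frac{d_{m-1}}{\sqrt{\mu_m}\sqrt{n+1}} = \frac{d_{m-1}}{\sqrt{d_m d_{m-1}}} = \sqrt{d_{m-1}/d_m}$, giving the first claimed identity.

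The formula for $V_m$ is obtained by the mirror-image argument: starting from \eqref{eq:V_m} together with the companion identity $G_m = (-1)^{(n+1)(m-1)} d_{m-1} \cd (\io^*_{m-1,1} \ot 1)(1_{m-1} \ot G_1)$ from Lemma~\ref{l:iotagee}, one gets $V_m(\xi) = \frac{d_{m-1}}{\sqrt{\mu_m}} (\io^*_{m-1,1} \ot 1)(\xi \ot \de)$. Plugging in $\de$ once more, using $\io^*_{m-1,1}(\xi \ot e_k) = T'_k(\xi)$ (the definition of the right creation operator, or Lemma~\ref{lem:iota*TT*} applied to $\io_R^*$), and the same constant simplification gives $V_m(\xi) = \sqrt{d_{m-1}/d_m}\sum_{k=0}^n (-1)^k T'_k(\xi) \ot e_{n-k}$; reindexing via $j = n-k$ turns this into the stated expression, with the sign becoming $(-1)^{n-j}$.

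I do not expect a genuine obstacle here — once Lemma~\ref{l:iotagee} is invoked, the computation is routine. The only points deserving care are (i) tracking which tensor leg each operator acts on when unwinding $(1 \ot \io^*_{1,m-1})(G_1 \ot 1_{m-1})$ versus $(\io^*_{m-1,1} \ot 1)(1_{m-1} \ot G_1)$, so that the correct creation operator ($T_{n-j}$ on the right, $T'_k$ on the left) appears, and (ii) the index substitution in the second identity that produces the sign $(-1)^{n-j}$.
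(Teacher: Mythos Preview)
Your proposal is correct and follows essentially the same approach as the paper: invoke Lemma~\ref{l:iotagee} to replace $G_m'$ (resp.\ $G_m$) by the non-recursive expression, cancel the sign factors against those in \eqref{eq:V_mprime} (resp.\ \eqref{eq:V_m}), expand $\de$ explicitly, read off the creation operators, and simplify the constant via $\mu_m(n+1)=d_m d_{m-1}$. The paper's proof is the same computation, written out for $V_m'$ and declared \emph{mutatis mutandis} for $V_m$.
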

\begin{proof}
By definition of $V_m' : E_{m-1} \to E_1 \ot E_m$ and by Lemma \ref{l:iotagee} it holds that
\[
\begin{split}
V_m'(\xi) & = \frac{(-1)^{(n+1)(m-1)}}{\sqrt{\mu_m}} \cd G_m'(\xi) = \frac{d_{m-1}}{\sqrt{\mu_m}}(1 \ot \io^*_{1,m-1})(\de \ot \xi) \\
& = \frac{d_{m-1}}{\sqrt{\mu_m \cd (n+1)}}  \cd \sum_{j = 0}^n (-1)^j \cd e_j \ot T_{n-j}(\xi) 
=  \sqrt{\frac{d_{m-1}}{d_m}} \cd \sum_{j = 0}^n (-1)^j \cd e_j \ot T_{n-j}(\xi)  ,
\end{split}
\]
where the last equality follows from the definition of the constant $\mu_m$ in \eqref{eq:mucon}.

The proof of the second identity follows \emph{mutatis mutandis} the proof of the first one. 
\end{proof}

\subsection{The dimension operator}
Recall that $\Falg \su F$ denotes the algebraic Fock space defined as the vector space direct sum of the vector spaces $E_m$, $m \in \nn_0$. 

\begin{dfn}\label{d:dim_op}
We define the \emph{dimension operator} $D : \T{Dom}(D) \to F$ as the closure of the unbounded operator $\mathcal{D}: \Falg \to F$, given by $\C {D}(\xi) = d_m \cd \xi$ for $\xi \in E_m$. 
\end{dfn}

Observe that the dimension operator is positive and invertible and that the inverse $D^{-1} : F \to F$ is an $SU(2)$-equivariant compact operator. In particular $D^{-1} \in \B T$.

In the special case of the fundamental representation, the operator $D$ equals $N+1$, where $N$ is the number operator.

We further define the $SU(2)$-equivariant bounded positive invertible operator 
\begin{equation}\label{eq:defphi}
\Phi: F \to F \q \Phi \xi= \frac{d_m}{d_{m+1}} \xi \, \, \, \T{ for all } \xi \in E_m .
\end{equation}


\begin{lemma}\label{l:phitoe}
The bounded invertible operator $\Phi : F \to F$ belongs to the Toeplitz algebra $\B T$.
\end{lemma}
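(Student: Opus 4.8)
The strategy is to identify the \emph{inverse} $\Phi^{-1} \in \B L(F)$, which acts as multiplication by the scalar $d_{m+1}/d_m$ on the homogeneous subspace $E_m \su F$, with the element $\sum_{j = 0}^n T_j^* T_j$ of the Toeplitz algebra $\B T$. Granting this, one observes that $\Phi^{-1}$ is a positive operator whose eigenvalues $\{ d_{m+1}/d_m \}_{m \geq 0}$ form, by Lemma \ref{l:quocon}, a decreasing sequence converging to $1/\ga_n \geq 1 > 0$; hence the spectrum of $\Phi^{-1}$ is contained in $[1/\ga_n, n+1]$ and in particular $\Phi^{-1}$ is invertible in $\B L(F)$. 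Since the function $t \mapsto t^{-1}$ is continuous on a neighbourhood of the spectrum of $\Phi^{-1}$, continuous functional calculus yields $\Phi = (\Phi^{-1})^{-1} \in C^*(\{ \Phi^{-1}, 1 \}) \su \B T$, which is the claim.

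To prove that $\sum_{j = 0}^n T_j^* T_j = \Phi^{-1}$, it suffices by polarisation to check the equality of quadratic forms $\sum_{j = 0}^n \| T_j \xi \|^2 = (d_{m+1}/d_m) \| \xi \|^2$ for every $m \in \nn_0$ and every $\xi \in E_m$. For $m = 0$ this is immediate since $T_j(1) = e_j$, so the left-hand side equals $n + 1 = d_1 / d_0$. For $m \geq 1$ and $\xi \in E_m$, we use that $T_j \xi = \io^*_{1,m}(e_j \ot \xi)$ together with $\io_{1,m} \io^*_{1,m} = p_{1,m}$ to write $\| T_j \xi \|^2 = \binn{ p_{1,m}(e_j \ot \xi), e_j \ot \xi }$. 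Substituting the expression for $p_{1,m}$ from Lemma \ref{l:PP*}, the leading term $1 \ot 1_m$ contributes $\sum_j \| e_j \ot \xi \|^2 = (n+1) \| \xi \|^2$, while the remaining term is evaluated by expanding $\io_{1,m-1}(\xi) = \sum_j e_j \ot \xi_j$ with $\xi_j \in E_{m-1}$, using the explicit description of $G_m'$ from Lemma \ref{l:iotagee} (cf. Lemma \ref{l:leftright_Toe}) and of the vector $\de$, and finally using that $\sum_j T_j(\xi_j) = \io^*_{1,m-1}(\io_{1,m-1}(\xi)) = \xi$ since $\io_{1,m-1}$ is an isometry (here $\io^*_{1,m-1}(e_j \ot \cd) = T_j$ by Lemma \ref{lem:iota*TT*}). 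After the signs $(-1)^{(n+1)(m \pm 1)}$ cancel in pairs, this correction term equals $-(d_{m-1}/d_m) \| \xi \|^2$, and the recursion $d_{m+1} = d_1 d_m - d_{m-1}$ from \eqref{eq:d_m_rec} gives
\[
\sum_{j = 0}^n \| T_j \xi \|^2 = \Big( d_1 - \frac{d_{m-1}}{d_m} \Big) \| \xi \|^2 = \frac{d_{m+1}}{d_m} \| \xi \|^2 ,
\]
as required.

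The only delicate point is the sign bookkeeping in the evaluation of the correction term coming from Lemma \ref{l:PP*}; everything else, in particular the passage from the identity $\sum_{j = 0}^n T_j^* T_j = \Phi^{-1}$ to the membership $\Phi \in \B T$, is formal.
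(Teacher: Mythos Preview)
Your proof is correct but takes a different route from the paper's. The paper gives a one-line argument: by Lemma~\ref{l:quocon} the eigenvalues $d_m/d_{m+1}$ of $\Phi$ converge to $\gamma_n$, so $\Phi - \gamma_n \cdot 1_F$ is a compact operator on $F$; since $\B K(F) \subseteq \B T$ and $1_F \in \B T$, this immediately gives $\Phi \in \B T$. Your approach instead establishes the explicit identity $\Phi^{-1} = \sum_{j=0}^n T_j^* T_j$ and then inverts via the continuous functional calculus. This is more laborious, but it has the advantage of not depending on the inclusion $\B K(F) \subseteq \B T$, and it yields as a byproduct the relation~\eqref{eq:Toe_sph2}, which the paper proves separately and later, via Proposition~\ref{p:decompleft} and Proposition~\ref{p:isometries} rather than by expanding $p_{1,m}$ through Lemma~\ref{l:PP*}.
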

\begin{proof}
Let $\ga_n \in (0,1]$ be the constant defined in Lemma \ref{l:quocon}. Since $\Phi -\gamma_n \cd 1_F$ is a compact operator on $F$ and $\B K(F)\subseteq \B T$, we obtain the result of the lemma. 
\end{proof}

%

We define the $SU(2)$-equivariant isometries $V_R : F \to F \ot E_1$ and $V_L : F \to E_1 \ot F$ by
\[
V_R(\xi) = V_m(\xi) \q \T{and} \q V_L(\xi) = V_m'(\xi)
\]
for all $\xi \in E_{m-1} \su F$. We may then restate the result of Lemma \ref{l:leftright_Toe} as follows:

\begin{prop}\label{p:isometries}
For every $\xi \in F$, we have the identities
\[
\begin{split}
V_L(\xi) & = \sum_{j = 0}^n (-1)^j \cd e_j \ot T_{n-j} \Phi^{1/2} (\xi) \q \mbox{and} \\
V_R(\xi) & = \sum_{j = 0}^n (-1)^{n-j} \cd T_{n-j}' \Phi^{1/2} (\xi) \ot e_j .
\end{split}
\]
\end{prop}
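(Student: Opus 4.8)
The plan is to obtain Proposition \ref{p:isometries} as a direct reformulation of Lemma \ref{l:leftright_Toe}, the only new ingredient being that the scalar $\sqrt{d_{m-1}/d_m}$ appearing there can be absorbed into the operator $\Phi^{1/2}$. First I would record that the bounded positive operator $\Phi$ of \eqref{eq:defphi} is diagonal with respect to the grading $F = \op_{m} E_m$ and acts on $E_{m-1}$ as multiplication by the scalar $d_{m-1}/d_m$; consequently its positive square root $\Phi^{1/2}$ acts on $E_{m-1}$ as multiplication by $\sqrt{d_{m-1}/d_m}$. Since the left creation operators $T_{n-j}$ and the right creation operators $T'_{n-j}$ are linear, for a homogeneous vector $\xi \in E_{m-1}$ one has $T_{n-j}\Phi^{1/2}(\xi) = \sqrt{d_{m-1}/d_m}\cd T_{n-j}(\xi)$, and similarly $T'_{n-j}\Phi^{1/2}(\xi) = \sqrt{d_{m-1}/d_m}\cd T'_{n-j}(\xi)$.

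Next I would substitute these into the right-hand sides of the two claimed identities. On the homogeneous subspace $E_{m-1} \su F$ the operator $V_L$ coincides with $V_m'$ and $V_R$ coincides with $V_m$ by definition, so the two identities of the proposition become precisely the two formulas of Lemma \ref{l:leftright_Toe}. This establishes Proposition \ref{p:isometries} for every $\xi$ in a homogeneous subspace $E_{m-1}$, $m \in \nn$, and hence for every $\xi$ in the algebraic Fock space $\Falg \su F$.

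Finally, I would invoke density: $\Falg$ is dense in $F$, and both sides of each identity are bounded operators — $V_L$ and $V_R$ are isometries, while the right-hand sides are finite sums $\sum_{j} (-1)^{\pm j}\, e_j \ot T_{n-j}\Phi^{1/2}$ (resp.\ with $T'_{n-j}$), which are bounded because the $e_j$ are orthonormal, so that $\big\| \sum_{j} (-1)^j e_j \ot \eta_j \big\|^2 = \sum_{j} \| \eta_j \|^2$. Hence the identities extend by continuity from $\Falg$ to all of $F$. I do not expect any genuine obstacle here: the statement is a cosmetic repackaging of Lemma \ref{l:leftright_Toe}, and the only point requiring a moment's care is that $\Phi^{1/2}$ is applied \emph{before} the creation operator, which is harmless by linearity; the degenerate case $m = 1$, where $\xi$ is a multiple of the vacuum $\om$ and $V_L(\om) = \de$, is already covered by Lemma \ref{l:leftright_Toe}.
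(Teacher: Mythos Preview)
Your proposal is correct and matches the paper's approach exactly: the paper presents this proposition as a direct restatement of Lemma \ref{l:leftright_Toe} without further proof, and you have simply spelled out the obvious identification $\Phi^{1/2}|_{E_{m-1}} = \sqrt{d_{m-1}/d_m}\cdot 1_{m-1}$ together with the density extension.
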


\subsection{Commutation relations}
We now present the commutation relations for our Toeplitz algebras in the general case of an irreducible representation $\rho_n : SU(2) \to U(L_n)$ for $n \geq 1$. These commutation relations can be used to recover the commutation relations in Theorem \ref{t:toe3sph} in the case of the fundamental representation. For the time being we do not know whether there are any further relations in the Toeplitz algebra $\B T(\rho_n,L_n)$.

We start out by remarking that 
\begin{equation}\label{eq:sumcomp}
\sum_{i = 0}^n T_i T_i^* = \io_L^* \io_L = 1_F - Q_0 . 
\end{equation} 

\begin{thm}
Let $n \in \nn$, and consider the irreducible representation $\rho_n : SU(2) \to U(L_n)$. Then the Toeplitz operators $T_i$, with $i=0,\dotsc,n$ satisfy the following commutation relations:
\begin{align}
 \label{eq:non_self} & \sum_{i = 0}^n (-1)^i T_i T_{n-i} =0, \\
  \label{eq:Toe_adj} & 
  T_i^*T_j = \delta_{ij} { \cd 1_F} +  (-1)^{i+j+1} \big((n+1) { \cd 1_F} - \Phi^{-1} \big) T_{n-i} T_{n-j}^* \\
  \label{eq:Toe_sph2} & \sum_{i=0}^n T_i^*T_i = \Phi^{-1}.
\end{align}
\end{thm}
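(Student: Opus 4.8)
The plan is to verify the three relations by expressing everything in terms of the already-established $SU(2)$-equivariant isometries, principally the maps $V_L, V_R$ from Proposition \ref{p:isometries} and the basic identity \eqref{eq:sumcomp}, and then translating back using Lemma \ref{lem:iota*TT*} and Lemma \ref{l:leftright_Toe}. For \eqref{eq:non_self}, the relation $\sum_i (-1)^i T_i T_{n-i} = 0$ should follow directly from the fact that the range of $V_m \colon E_{m-1} \to E_m \ot E_1$ lands in $K_{m+1}^{\ }$ composed appropriately, or more concretely from Proposition \ref{p:isometries}: the operator $\xi \mapsto \sum_i (-1)^i T_i T_{n-i}(\xi)$ on $E_m$ is, up to the scalar $\sqrt{d_m/d_{m+1}}\,$, the composition $\io_{m,1}^* \ci (\text{something landing in }\operatorname{Im}(G_{m+1}))$; since $\operatorname{Im}(G_{m+1}) = K_{m+2} \cap (E_{m+1}\ot E_1) \su E_{m+1}^\perp$ and $\io_{m,1}^*$ kills $K_{m+1}$, the composite vanishes. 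Concretely I would compute $\io_{m,1}^* V_{m+1}(\xi)$ two ways and read off the identity, or simply expand $\sum_i (-1)^i T_i'^* $ against $\delta$.

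For \eqref{eq:Toe_adj}, the key input is the formula for the projection $p_{m-1,1} = \io_{m-1,1}\io_{m-1,1}^*$ from Lemma \ref{l:PP*}. Writing $T_i^* T_j$ on a homogeneous $\xi \in E_m$ as $(\inn{e_i,\cd}\ot 1_F)\,\io_L\io_L^*\,(\inn{e_j,\cd}\ot 1_F)^*$ via Lemma \ref{lem:iota*TT*}, and noting $\io_L\io_L^* = p_{1,m}$ on $E_1 \ot E_m$, I substitute the second formula of Lemma \ref{l:PP*} for $p_{1,m+1}$. The first term $1\ot 1_m$ produces the $\delta_{ij} 1_F$ contribution; the second term, involving $G_1^* \ot G_{m}'$ and $\io_{1,m-1}$, produces a term proportional to $T_{n-i} T_{n-j}^*$ because $G_1 = \delta = \frac{1}{\sqrt{n+1}}\sum_k (-1)^k e_k \ot e_{n-k}$ pairs $e_i$ with $(-1)^{?}e_{n-i}$, and the scalars $d_1/d_{m}$ combine with the definition of $\Phi$ in \eqref{eq:defphi} to give exactly $(n+1)\cdot 1_F - \Phi^{-1}$ on the relevant degree. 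Here I must be careful to track on which summand of $F$ the operator $\Phi$ acts, and that the coefficient $(n+1) - d_1 d_{m-1}/(d_{m-1}d_m)\cdot(\cdots)$ reassembles correctly degree by degree; this bookkeeping of the scalar $d_1/d_{m-1}$ versus $d_m/d_{m+1}$ is the step most prone to sign and index errors.

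Finally, \eqref{eq:Toe_sph2}, $\sum_i T_i^* T_i = \Phi^{-1}$, I would obtain either by setting $j = i$ in \eqref{eq:Toe_adj} and summing — using $\sum_i T_{n-i} T_{n-i}^* = \sum_i T_i T_i^* = 1_F - Q_0$ from \eqref{eq:sumcomp} and $\sum_i (-1)^{2i} = n+1$ — or directly: $\sum_i T_i^* T_i = \io_L^*{}^*\io_L^* $ rearranged, i.e. $\sum_i T_i^* T_i$ acting on $E_m$ equals $\io_{1,m}^* $ applied after $\io_{1,m}$ summed against the basis, which is $\mu$-normalized and collapses to the scalar $d_m/d_{m+1}$ by the norm computation $\|V_{m+1}\| = 1$ in Lemma \ref{l:leftfus}(2) together with \eqref{eq:mucon}. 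The cleanest route is the first: from \eqref{eq:Toe_adj} with $i=j$, $\sum_i T_i^* T_i = (n+1)1_F + \big((n+1)1_F - \Phi^{-1}\big)\sum_i T_{n-i}T_{n-i}^* \cdot(-1) $, wait — the sign is $(-1)^{2i+1} = -1$, so $\sum_i T_i^* T_i = (n+1)1_F - \big((n+1)1_F - \Phi^{-1}\big)(1_F - Q_0)$; on $F_+$ this is $(n+1)1_F - (n+1)1_F + \Phi^{-1} = \Phi^{-1}$, and on $Q_0 F = \cc\om$ one checks directly that $T_i^* T_i \om = 0$ while $\Phi^{-1}\om = (d_1/d_0)\om = (n+1)\om$, so the vacuum needs separate care — indeed $\sum_i T_i^*T_i\,\om = \sum_i \inn{e_i,e_i}$? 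No: $T_i\om = e_i$, so $T_i^*T_i\om = T_i^* e_i = \|e_i\|^2\om = \om$, hence $\sum_i T_i^* T_i \om = (n+1)\om = \Phi^{-1}\om$. The main obstacle throughout is the careful degree-by-degree tracking of the ratios $d_{m-1}/d_m$ through the recursion \eqref{eq:d_m_rec} and \eqref{eq:mucon}; everything else is assembling the already-proven lemmas.
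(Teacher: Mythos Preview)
Your argument for \eqref{eq:Toe_sph2} matches the paper's exactly: set $i=j$ in \eqref{eq:Toe_adj}, sum, and use \eqref{eq:sumcomp}; the vacuum case is handled implicitly since $\Phi^{-1}Q_0 = (n+1)Q_0$.

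For \eqref{eq:Toe_adj} you propose substituting the explicit formula for $p_{1,m}$ from Lemma~\ref{l:PP*} and then chasing constants. The paper takes a cleaner but equivalent route: from Proposition~\ref{p:decompleft} one has $\io_L\io_L^* + V_LV_L^* = 1$ on $E_1\ot F$, so
\[
T_i^*T_j = (\inn{e_i,\cd}\ot 1_F)\,\io_L\io_L^*\,(e_j\ot 1_F) = \delta_{ij}\,1_F - (\inn{e_i,\cd}\ot 1_F)\,V_LV_L^*\,(e_j\ot 1_F),
\]
and Proposition~\ref{p:isometries} gives $(\inn{e_i,\cd}\ot 1_F)V_L = (-1)^i T_{n-i}\Phi^{1/2}$ directly, so the second term becomes $(-1)^{i+j+1}T_{n-i}\Phi T_{n-j}^*$. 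The final form then follows in one line from \eqref{eq:defphi} and the degree shift $T_{n-i}(E_m)\subseteq E_{m+1}$. Your route via Lemma~\ref{l:PP*} is really the same computation one step earlier in the chain of lemmas, but packaging the constants into $\Phi^{1/2}$ via Proposition~\ref{p:isometries} \emph{before} pairing against $e_i,e_j$ avoids precisely the index bookkeeping you flag as error-prone.

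For \eqref{eq:non_self} your sketch is muddled: you invoke $\io_{m,1}^*$ and $G_{m+1}$, which are right-sided objects, but the $T_i$ are \emph{left} creation operators. The actual argument is much simpler: by associativity of the subproduct system, $\sum_i (-1)^i T_iT_{n-i}\,\xi = \sqrt{n+1}\,\io_{2,m}^*\big(\io_{1,1}^*(\delta)\ot\xi\big)$ for $\xi\in E_m$, and $\io_{1,1}^*(\delta) = 0$ because $\delta\in K_2 = E_2^\perp$ by Proposition~\ref{p:irrdet}. No fusion maps are needed.
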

\begin{proof} 
The relation in \eqref{eq:non_self} follows from our computation of the determinant in Proposition \ref{p:irrdet} (cf. \cite[\S 10]{ShSo09}).

We now move on to establishing the relation in \eqref{eq:Toe_adj}. Consider $i, j \in \{0,1,\ldots,n\}$. By Proposition \ref{p:decompleft} we have that $\io_L \io_L^* +  V_L V_L^* = 1_F \ot 1 : F \ot E_1 \to F \ot E_1$ and hence that
\[
T_i^* T_j = (\inn{e_i , \cd } \ot 1_F ) \io_L \io_L^* (e_j \ot 1_F) 
= \de_{ij} \cd 1_F - (\inn{e_i , \cd } \ot 1_F ) V_L V_L^* (e_j \ot 1_F) .
\]
Then, upon using Proposition \ref{p:isometries} we obtain that $( \inn{e_i,\cd} \ot 1_F) V_L = (-1)^i T_{n-i} \Phi^{1/2}$ and hence that
\[
T_i^* T_j = \de_{ij} \cd 1_F + (-1)^{i+j+1} T_{n - i} \Phi T_{n-j}^* .
\]
The relation in \eqref{eq:Toe_adj} now follows by the definition of $\Phi : F \to F$ from \eqref{eq:defphi} upon noting that $T_{n-i}(E_m) \su E_{m+1}$ and $d_1 - d_{m+2} / d_{m+1} = d_m / d_{m+1}$ for all $m \in \nn_0$.

We are now left with proving the relation in \eqref{eq:Toe_sph2}. From the identities in \eqref{eq:sumcomp} and \eqref{eq:Toe_adj} we obtain that
\[
\begin{split}
\sum_{i=0}^n T_i^*T_i  
& =  (n+1) { \cd 1_F} -   \big( (n+1) { \cd 1_F} - \Phi^{-1} \big) \sum_{i=0}^n T_{n-i} T_{n-i}^* \\
& { =  (n+1) { \cd 1_F} -   \big( (n+1) { \cd 1_F} - \Phi^{-1} \big) (1_F - Q_0)
= \Phi^{-1} .}
\end{split}
\]
This ends the proof of the theorem.
\end{proof}



\section{A quasi-homomorphism from the Toeplitz algebra to the complex numbers}\label{s:quasi}
Let $n \in \nn$ be given and consider the irreducible representation $\rho_n : SU(2) \to U(L_n)$. We denote the corresponding Toeplitz algebra by $\B T \su \B L(F)$, where $F = \bop_{m = 0}^\infty E_m$ denotes the Fock space. In this section we start relating the $K$-theory of the Toeplitz algebra to the $K$-theory of the complex numbers by constructing an $SU(2)$-equivariant quasi-homomorphism $(\psi_+,\psi_-)$ from $\B T$ to $\B C$. 

Both of the $*$-homomorphisms $\psi_+$ and $\psi_-$ act on the Hilbert space direct sum $F \op F$ and we define $\psi_+ : \B T \to \B L(F \op F)$ by $\psi_+(x) := x \op x$ for all $x \in \B T$. The construction of $\psi_- : \B T \to \B L(F \op F)$ uses the representation theoretic considerations from Section \ref{s:fusion}. 

Recall that $V_R : F \to F \ot E_1$ denotes the $SU(2)$-equivariant isometry defined by
\[
V_R(\xi) := V_{m+1}(\xi) = {\frac{(-1)^{(n+1)m}}{\sqrt{\mu_{m+1}}}} \cd G_{m+1}(\xi) \in { E_{m+1}} \ot E_1 \su F \ot E_1
\] 
for every homogeneous $\xi \in E_m \su F$, $m \in \nn_0$. Moreover, we have the $SU(2)$-equivariant linear map $\io_R : F \to F \ot E_1$ defined by
\[
\io_R(\xi) := \io_{m-1,1}(\xi) \in E_{m-1} \ot E_1 \su F \ot E_1
\]
for every homogeneous $\xi \in E_m \su F$, $m \in \nn$ and $\io_R(\xi) = 0$ for $\xi \in E_0$. It follows from Proposition \ref{p:decompright} that the $SU(2)$-equivariant linear map 
\begin{equation}\label{eq:WR}
W_R : F \ot E_1 \to F \op F \q W_R = { \ma{c}{\io_R^* \\ V_R^*} }
\end{equation}
is an isometry and that the image agrees with the subspace $F_+ \op F \su F \op F$. We may thus define the $*$-homomorphism
\[
\psi_- : \B T \to \B L( F \op F) \q \psi_-(x) := W_R (x \ot 1) W_R^* .
\]

We also recall that we have the $SU(2)$-equivariant linear map $\io_L : F \to E_1 \ot F$  defined by the formula
\[
\io_L(\xi) := \io_{1,m-1}(\xi) \in E_1 \ot E_{m-1} \su E_1 \ot F
\]
for homogeneous elements $\xi \in E_m \su F$ with $m \in \nn$ ad $\io_L(\xi) = 0$ for $\xi \in E_0$. 

We announce the following result:

\begin{prop}\label{p:quasi}
The pair of $*$-homomorphisms $(\psi_+,\psi_-)$ defines an $SU(2)$-equivariant quasi-homomorphism from $\B T$ to $\cc$ and hence a class $[\psi_+,\psi_-] \in KK_0^{SU(2)}(\B T,\cc)$.
\end{prop}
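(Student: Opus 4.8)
The plan is to verify the two defining conditions of an $SU(2)$-equivariant quasi-homomorphism (in Cuntz's sense): namely that both $\psi_+$ and $\psi_-$ are $SU(2)$-equivariant $*$-homomorphisms $\B T \to \B L(F \op F)$ into the same multiplier algebra, and that the difference $\psi_+(x) - \psi_-(x)$ lies in the compact operators $\B K(F \op F)$ for every $x \in \B T$ — where we use the identification $\B K(F \op F) \cong \B K$ coming from the fact that $F$ is an infinite-dimensional separable Hilbert space, so that the difference induces a class in $KK_0^{SU(2)}(\B T,\cc)$. Equivariance of $\psi_+$ is immediate since the $SU(2)$-action on $\B T$ is the restriction of conjugation by $\tau_\infty$, and $\psi_+(x) = x \op x$ intertwines the diagonal action $\tau_\infty \op \tau_\infty$. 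For $\psi_-$, equivariance follows once we observe that $W_R : F \ot E_1 \to F \op F$ is $SU(2)$-equivariant: this is precisely Proposition~\ref{p:decompright} together with the $SU(2)$-equivariance of $\io_R$ (structure maps are equivariant) and of $V_R$ (Lemma~\ref{l:rightequiv}, via \eqref{eq:V_m}); conjugation by an equivariant isometry sends equivariant operators to equivariant operators, and $x \mapsto x \ot 1$ on $\B L(F) \to \B L(F \ot E_1)$ is equivariant for the diagonal action on $F \ot E_1$.

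The substantive point is showing $\psi_+(x) - \psi_-(x) \in \B K(F \op F)$ for all $x \in \B T$. Since both $\psi_\pm$ are $*$-homomorphisms and $\B T$ is generated as a $C^*$-algebra by the creation operators $T_j = T_{e_j}$, $j \in \{0,\dots,n\}$, and since $\B K(F\op F)$ is an ideal, it suffices to check that $\psi_+(T_j) - \psi_-(T_j)$ is compact for each $j$. Unwinding definitions, $\psi_-(T_j) = W_R (T_j \ot 1) W_R^*$, and $W_R W_R^*$ is the projection onto $F_+ \op F$, which differs from $1_{F\op F}$ by the finite-rank projection $Q_0 \op 0$; so modulo finite-rank operators we may work with $W_R W_R^* (T_j \op T_j) W_R W_R^*$ versus $W_R(T_j \ot 1)W_R^*$, and the claim reduces to showing that $W_R^*(T_j \op T_j)W_R - (T_j \ot 1)W_R^* W_R = W_R^*(T_j\op T_j)W_R - (T_j\ot 1)$ is compact on $F \ot E_1$. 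Writing $W_R^* = \ma{cc}{\io_R & V_R}$ and expanding, this difference is built out of the four operators $\io_R^* T_j \io_R$, $\io_R^* T_j V_R$, $V_R^* T_j \io_R$, $V_R^* T_j V_R$, compared with $T_j \ot 1$. The key commutation/near-commutation facts are: $\io_R$ intertwines $T_j$ (left creation) with $T_j \ot 1$ up to the degree-shift bookkeeping captured by $Q_0$; and the $V_R$-terms carry an extra factor $\Phi^{1/2}$ (Proposition~\ref{p:isometries}), where $\Phi - \ga_n 1_F$ is compact (Lemma~\ref{l:phitoe}), so that after replacing $\Phi$ by the scalar $\ga_n$ the remaining algebraic identity is an \emph{exact} relation forced by the fusion rules — concretely by Proposition~\ref{p:decompright} applied in each tensor degree — and the error terms are compact because they involve either $\Phi^{1/2} - \ga_n^{1/2}1_F$, the finite-rank projections $Q_m$ for small $m$, or the operators $D^{-1} \in \B K(F)$.

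The main obstacle I expect is precisely this last computation: organising the cross-terms $\io_R^* T_j V_R$ and $V_R^* T_j \io_R$ and showing that, after peeling off the scalar limit of $\Phi$, they cancel against the corresponding pieces of $T_j \ot 1$ up to compacts. This is where the explicit recursive formulas \eqref{eq:Gm_rec} for $G_m$ and the resulting formula for $V_R$ in Proposition~\ref{p:isometries}, together with the identity $\io_L^*\io_L = 1_F - Q_0$ from \eqref{eq:sumcomp} and the commutation relations \eqref{eq:non_self}--\eqref{eq:Toe_sph2}, do the real work; one should expect to express $V_R^* T_j \io_R$ in terms of the $T'_i$ and $\inn{e_i,\cdot}$ and recognise the result, modulo $\B K(F)$, as the matching block of $T_j \ot 1$ under the decomposition $F \ot E_1 \cong F_+ \op F$. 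Once compactness of $\psi_+(T_j) - \psi_-(T_j)$ is established for all $j$, the pair $(\psi_+,\psi_-)$ is an $SU(2)$-equivariant quasi-homomorphism and the associated class $[\psi_+,\psi_-] \in KK_0^{SU(2)}(\B T,\cc)$ is well-defined, which is the assertion of the proposition.
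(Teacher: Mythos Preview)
Your overall architecture is right: equivariance is straightforward (as you say, via equivariance of $W_R$), and the substance is showing $\psi_+(x)-\psi_-(x)\in\B K(F\op F)$ on generators. But the specific mechanism you propose for the compactness has a gap, and the paper's route is different in two important respects.

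First, the paper works with the adjoints $T_j^*$ (together with $1_F$) rather than $T_j$. This is not a cosmetic choice: the identity $(T_j^*\ot 1)\io_R=\io_R T_j^*$ holds \emph{exactly}, by associativity of the subproduct-system structure maps. Writing out the $2\times 2$ matrix $W_R(T_j^*\ot 1)W_R^*$, this identity kills the bottom-left entry (via $V_R^*\io_R=0$) and makes the top-left entry equal to $\io_R^*\io_R\,T_j^*=T_j^*$ modulo finite rank. With $T_j$ instead of $T_j^*$ there is no such exact intertwining with $\io_R$, so your four-block comparison would already require an additional compactness argument just for the $\io_R$-blocks.

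Second, and more seriously, the remaining two blocks reduce to the statement that $(T_j^*\ot 1)V_R - V_R T_j^*$ is compact, and this is the content of Proposition~\ref{p:comdif}. The paper proves it by a direct degree-by-degree norm estimate: Lemma~\ref{l:diffvee} computes the inner product $\io_{1,m-2}^*(1\ot V_{m-1})^*(\io_{1,m-1}\ot 1)V_m=(1-1/d_{m-1}^2)^{1/2}$, whence $\|(\io_{1,m}\ot 1)V_{m+1}-(1\ot V_m)\io_{1,m-1}\|^2=2\bigl(1-\sqrt{1-1/d_m^2}\bigr)\to 0$. This is the genuine source of compactness, and it is \emph{not} a consequence of $\Phi-\ga_n\in\B K(F)$. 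Your proposal to ``replace $\Phi$ by $\ga_n$ and use an exact algebraic identity forced by the fusion rules'' does not work: the identity $\io_R\io_R^*+V_R V_R^*=1$ from Proposition~\ref{p:decompright} needs the degree-dependent factor $\Phi^{1/2}$ in $V_R$ precisely to hold, so substituting the scalar $\ga_n^{1/2}$ breaks it rather than leaving an exact relation behind. The commutation relations \eqref{eq:non_self}--\eqref{eq:Toe_sph2} do not supply the missing piece either; the asymptotic agreement of the two isometries $(\io_{1,m}\ot 1)V_{m+1}$ and $(1\ot V_m)\io_{1,m-1}$ is a separate computation that you have not anticipated.
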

\begin{proof} The $SU(2)$-equivariance of the two $*$-homomorphisms follows from the $SU(2)$-equivariance of $W_R : F \ot E_1 \to F \op F$ together with the observation that the action of $SU(2)$ on the Toeplitz algebra is obtained via conjugation with the corresponding action on the Fock space $F$, see Lemma \ref{l:toepaction}.

For each $x \in \B T$ we have to show that the difference $\psi_+(x) - \psi_-(x) = x \op x - W_R(x \ot 1) W_R^*$ is a compact operator on $F \op F$. Since $\B T$ is generated as a $C^*$-algebra by the operators $T_j^* : F \to F$, $j \in \{0,1,\ldots,n\}$ together with the unit $1_F : F \to F$, it suffices to prove compactness when $x \in \B T$ agrees with one of these operators. For the case of the unit $1_F : F \to F$ we have that $1_F \op 1_F - W_R W_R^*$ agrees with the orthogonal projection onto the one-dimensional subspace $(F_+ \op F)^\perp \cong {\B C}$ so we focus on the operator $T_j^* : F \to F$ for a fixed $j \in \{0,1,\ldots,n\}$. We compute that
\[
W_R(T_j^* \ot 1) W_R^* = \ma{cc}{ \io_R^* (T_j^* \ot 1) \io_R & \io_R^*(T_j^* \ot 1)V_R \\ V_R^*(T_j^* \ot 1) \io_R & V_R^* (T_j^* \ot 1) V_R} .
\]
Applying the identities $(T_j^* \ot 1) \io_R = \io_R T_j^*$, $V_R^* \io_R = 0$ (see Proposition \ref{p:decompright}), and using the fact that $\io_R^* \io_R$ is the orthogonal projection onto $F_+ \su F$, we obtain that
\[
W_R(T_j^* \ot 1) W_R^* \sim \ma{cc}{ T_j^* & \io_R^*(T_j^* \ot 1)V_R \\ 0 & V_R^* (T_j^* \ot 1) V_R}
\]
modulo compact operators. Now, by Proposition \ref{p:comdif} here below in Subsection \ref{ss:norm} we have that the operator $(T_j^* \ot 1) V_R$ agrees with $V_R T_j^*$ modulo compact operators. But this implies the result of this proposition, using that $V_R^* V_R = 1_F$ and $\io_R^* V_R = 0$.
\end{proof}

We are eventually going to show that the Toeplitz algebra $\B T$ is $KK$-equivalent to $\B C$ and the class $[\psi_+,\psi_-] \in KK_0^{SU(2)}(\B T,\B C)$ provides us with one of the two relevant morphisms. The other morphism is given by the unital inclusion $i : \cc \to \B T$, which defines a class $[i] \in KK_0^{SU(2)}(\cc,\B T)$.

\begin{prop}\label{p:easyprod}
The interior Kasparov product $ [i] \hot_{\B T} [\psi_+,\psi_-]$ agrees with the unit ${\bf 1}_\cc \in KK_0^{SU(2)}(\cc,\cc)$.
\end{prop}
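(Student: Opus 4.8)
The plan is to compute the Kasparov product $[i] \hot_{\B T} [\psi_+,\psi_-]$ explicitly by tracing how the quasi-homomorphism behaves on the image of the unital inclusion $i : \cc \to \B T$. Since $[i]$ is represented by the $*$-homomorphism $i$, functoriality of the Kasparov product gives $[i] \hot_{\B T} [\psi_+,\psi_-] = [\psi_+ \ci i, \psi_- \ci i] \in KK_0^{SU(2)}(\cc,\cc)$, so the task reduces to identifying this quasi-homomorphism from $\cc$ to $\cc$. First I would note that $\psi_+ \ci i$ sends the unit $1 \in \cc$ to the identity operator $1_F \op 1_F$ on $F \op F$, while $\psi_- \ci i$ sends $1$ to $W_R W_R^*$, which by the discussion preceding Proposition \ref{p:quasi} is the orthogonal projection onto $F_+ \op F \su F \op F$.

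Next I would observe that the difference $\psi_+(1_F) - \psi_-(1_F) = (1_F \op 1_F) - W_R W_R^*$ is exactly the orthogonal projection onto the one-dimensional complement $(F_+ \op F)^\perp \su F \op F$, which is spanned by the vacuum vector $\om \in E_0 = \cc$ in the first summand. Thus the quasi-homomorphism $(\psi_+ \ci i, \psi_- \ci i)$ from $\cc$ to $\cc$ is, up to unitary equivalence, the standard one given by a pair of representations on a Hilbert space differing by a rank-one projection: $\cc \ni z \mapsto (z \cdot P + z \cdot (1-P), \, z \cdot (1 - P))$ where $P$ is the rank-one projection onto $\cc \om$. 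By the very definition of the class of a quasi-homomorphism in $KK_0(\cc,\cc) \cong \zz$, such a pair represents the generator, i.e.\ the image of the rank-one projection under the isomorphism $K_0(\cc) \cong \zz$. Concretely, the associated Fredholm-type data is the pair of compatible representations on $F \op F$ whose difference is a single rank-one projection, and the induced map $K_0(\cc) \to K_0(\cc)$ sends $[1] \mapsto [1]$.

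The cleanest way to make this rigorous is to use the description of $KK_0^{SU(2)}(\cc,\cc)$ via $SU(2)$-equivariant Fredholm modules (or equivalently via the index pairing with $K_0$). The class $[\psi_+ \ci i, \psi_- \ci i]$ is computed by evaluating it against the generator $[1] \in K_0^{SU(2)}(\cc)$, and the result is the equivariant index of the operator obtained by compressing. Since $\psi_+(1) - \psi_-(1)$ is the rank-one projection onto the $SU(2)$-invariant vector $\om$ (the action of $SU(2)$ on $E_0 = \cc$ is trivial by construction, see \eqref{eq:tauinf}), this index is $+1$ as an element of the representation ring restricted appropriately, and under the identification $KK_0^{SU(2)}(\cc,\cc) \cong R(SU(2))$ it corresponds to the trivial representation, i.e.\ the unit ${\bf 1}_\cc$. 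The main obstacle I anticipate is not the conceptual content—which is essentially the statement that a quasi-homomorphism differing by a rank-one invariant projection represents the unit—but rather bookkeeping the $SU(2)$-equivariance carefully: one must check that the projection $1_F \op 1_F - W_R W_R^*$ is genuinely $SU(2)$-equivariant (which follows from the equivariance of $W_R$ established before Proposition \ref{p:quasi}) and that the vacuum line carries the trivial representation, so that the resulting class in the equivariant $KK$-group is precisely the unit and not merely a unit up to some character. Once that is in place, the identification with ${\bf 1}_\cc$ is immediate from the normalisation of the Kasparov product.
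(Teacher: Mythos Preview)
Your proposal is correct and follows essentially the same approach as the paper: represent the product by the composed quasi-homomorphism $(\psi_+ \ci i, \psi_- \ci i)$, observe that $\psi_+(1) - \psi_-(1) = 1_{F \op F} - W_R W_R^*$ is the rank-one projection onto the vacuum line $\cc\omega \op \{0\}$, and conclude that this represents ${\bf 1}_\cc$. The paper's own proof is the same argument compressed to three sentences; your additional remarks on equivariance and the identification with the trivial representation in $R(SU(2))$ are helpful elaborations but not a different route.
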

\begin{proof}
The interior Kasparov product $ [i] \hot_{\B T} [\psi_+,\psi_-]$ is represented by the $SU(2)$-equivariant quasi-homomorphism $(\psi_+ \ci i, \psi_- \ci i)$. The $*$-homomorphism $\psi_+ \ci i : \cc \to \B L(F \op F)$ is unital whereas $(\psi_- \ci i)(1) = W_RW^*_R : F \op F \to F \op F$. Since $1_{F \op F} - W_RW^*_R : F \op F \to F \op F$ is the orthogonal projection onto the one-dimensional subspace $\cc  \omega \op \{0\} \su F \op F$, this proves the proposition.
\end{proof}

\subsection{Compactness of commutators}\label{ss:norm}
In this subsection we are providing the remaining ingredient for the proof of Proposition \ref{p:quasi}. More precisely, we shall see in Proposition \ref{p:comdif} that the difference $V_R T_j^* - (T_j^* \ot 1) V_R : F \to F \ot E_1$ is indeed a compact operator.

\begin{lemma}\label{l:diffvee}
For each $m \geq 2$ we have the identity
\[
\io^*_{1,m-2} (1 \ot V_{m-1})^* (\io_{1,m-1} \ot 1) V_m =  \left( 1 - \frac{1}{d_{m-1}^2} \right)^{1/2} \cd 1_{m-1} .
\]
\end{lemma}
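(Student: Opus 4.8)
The plan is to evaluate the composite operator on the left-hand side explicitly in terms of the left creation operators $T_j$ and the right creation operators $T'_j$, and then to invoke the commutation relations established in Section \ref{s:commreltoe}.

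\emph{Step 1 (an explicit formula).} Fix $\xi \in E_{m-1}$. By Lemma \ref{l:leftright_Toe},
\[
V_m(\xi) = \sqrt{d_{m-1}/d_m}\cdot\sum_{j = 0}^n (-1)^{n-j}\, T'_{n-j}(\xi)\ot e_j ,
\]
and taking adjoints in the same formula gives $V_{m-1}^*(\ze \ot e_j) = \sqrt{d_{m-2}/d_{m-1}}\,(-1)^{n-j}(T'_{n-j})^*(\ze)$ for $\ze \in E_{m-1}$. Dualising the first identity of Lemma \ref{lem:iota*TT*} gives $\io_{1,m-1}(\eta) = \sum_{p = 0}^n e_p \ot T_p^*(\eta)$ for $\eta \in E_m$, while $\io^*_{1,m-2}(e_p \ot \ze) = T_p(\ze)$ for $\ze \in E_{m-2}$ is just the definition of $T_p$. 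Composing these four maps, observing that in every surviving summand the sign $(-1)^{n-j}$ occurs twice and hence cancels, and noting that the prefactors $\sqrt{d_{m-1}/d_m}$ (from $V_m$) and $\sqrt{d_{m-2}/d_{m-1}}$ (from $V_{m-1}^*$) multiply to $\sqrt{d_{m-2}/d_m}$, one obtains
\[
\io^*_{1,m-2} (1 \ot V_{m-1})^* (\io_{1,m-1} \ot 1) V_m = \sqrt{d_{m-2}/d_m}\cdot \sum_{l,p = 0}^n T_p (T'_l)^* T_p^* T'_l \q \T{on } E_{m-1} .
\]

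\emph{Step 2 (simplifying the operator sum).} The key observation is that left and right creation operators commute: specialising the associativity axiom (3) of Definition \ref{d:sps} to the triple $(1,m,1)$ and taking adjoints yields $\io^*_{1,m+1}(1_1 \ot \io^*_{m,1}) = \io^*_{m+1,1}(\io^*_{1,m} \ot 1_1)$, which unwinds precisely to $T_p T'_l = T'_l T_p$ for all $p,l$; passing to adjoints, $T_p^*$ commutes with $(T'_l)^*$. Hence $T_p (T'_l)^* T_p^* T'_l = T_p T_p^* (T'_l)^* T'_l$, and the double sum factors as
\[
\sum_{l,p} T_p (T'_l)^* T_p^* T'_l = \Big( \sum_p T_p T_p^* \Big)\Big( \sum_l (T'_l)^* T'_l \Big) = (1_F - Q_0)\,\Phi^{-1} ,
\]
using \eqref{eq:sumcomp} together with the primed analogue $\sum_l (T'_l)^* T'_l = \Phi^{-1}$ of \eqref{eq:Toe_sph2}. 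The latter holds by the left--right flip symmetry of our construction: the order-reversing unitary on $L_n^{\ot m}$ sends $\de$ to $(-1)^n \de$, hence preserves every subspace $K_m$ and therefore every $E_m$, and it conjugates $T_j$ into $T'_j$ (and fixes $\Phi$), so \eqref{eq:Toe_sph2} transports verbatim.

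\emph{Step 3 (conclusion).} Restricting to $E_{m-1}$ with $m \geq 2$ we have $Q_0|_{E_{m-1}} = 0$ and $\Phi^{-1}|_{E_{m-1}} = d_m/d_{m-1}$, so the operator of Step 1 equals $\sqrt{d_{m-2}/d_m}\cdot (d_m/d_{m-1})\cdot 1_{m-1} = (\sqrt{d_{m-2}d_m}/d_{m-1})\cdot 1_{m-1}$. Finally, the identity $d_{m-1}^2 - d_{m-2}d_m = 1$ from Lemma \ref{l:prop_d_con} rewrites $\sqrt{d_{m-2}d_m}/d_{m-1} = \sqrt{1 - 1/d_{m-1}^2}$, which is the claim.

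I expect the leg-tracking in Step 1 --- keeping straight on which tensor factor each creation operator acts, and checking that the signs $(-1)^{n-j}$ cancel --- to be the most error-prone part; the conceptual content of the argument is the commutation relation $T_p T'_l = T'_l T_p$ and having the primed version of \eqref{eq:Toe_sph2} available.
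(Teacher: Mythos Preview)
Your proof is correct, but it takes a different route from the paper's. The paper stays entirely within the structure-map formalism of Section~\ref{s:fusion}: it uses \eqref{eq:V_m} and Lemma~\ref{l:iotagee} to write $(1\ot V_{m-1})^*$ as a constant times $(1\ot 1_{m-2}\ot G_1^*)(1\ot\io_{m-2,1}\ot 1)$, then invokes the associativity identity $(1\ot\io_{m-2,1})\io_{1,m-1}=(\io_{1,m-2}\ot 1)\io_{m-1,1}$ to collapse the composite to a scalar multiple of $V_m^*V_m=1_{m-1}$, and finishes with Lemma~\ref{l:prop_d_con}.

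You instead unpack everything into left and right creation operators $T_p,\,T'_l$, reduce to evaluating $\big(\sum_p T_pT_p^*\big)\big(\sum_l (T'_l)^*T'_l\big)$ on $E_{m-1}$, and finish via \eqref{eq:sumcomp} together with the primed analogue of \eqref{eq:Toe_sph2}. The latter is not stated in the paper, and your flip-symmetry justification is a genuinely additional (though straightforward) ingredient. Both arguments rest on the same associativity axiom---yours via $T_pT'_l=T'_lT_p$, the paper's via the structure-map identity above---so the conceptual core is shared. The paper's route is shorter and uses nothing beyond Section~\ref{s:fusion}; your route is more concrete and shows transparently how the scalar $\sqrt{d_{m-2}d_m}/d_{m-1}$ assembles from the two sums, at the price of importing Section~\ref{s:commreltoe} and supplying the flip argument.
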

\begin{proof}
Using Lemma \ref{l:iotagee} and \eqref{eq:V_m} we see that 
\begin{equation}\label{eq:veestar}
(1 \ot V_{m-1})^* 
= \frac{ (-1)^{(n+1)m}}{\sqrt{\mu_{m-1}}} \cd (1 \ot G_{m-1}^*) 
= \frac{d_{m-2}}{\sqrt{\mu_{m-1}}} \cd (1 \ot 1_{m-2} \ot G_1^*)(1 \ot \io_{m-2,1} \ot 1) .
\end{equation}
Next, we have the structural identity 
\[
(1 \ot \io_{m-2,1})\io_{1,m-1} = (\io_{1,m-2} \ot 1)\io_{m-1,1} : E_m \to E_1 \ot E_{m-2} \ot E_1 ,
\]
which combined with \eqref{eq:veestar} yields that
\[
\begin{split}
& \io^*_{1,m-2} (1 \ot V_{m-1})^* (\io_{1,m-1} \ot 1) V_m \\
& \q = \frac{d_{m-2}}{\sqrt{\mu_{m-1}}} \cd \io^*_{1,m-2} (1 \ot 1_{m-2} \ot G_1^*)(\io_{1,m-2} \ot 1 \ot 1)(\io_{m-1,1} \ot 1) V_m \\
& \q = \frac{d_{m-2}}{\sqrt{\mu_{m-1}}} \cd (1_{m-1} \ot G_1^*)(\io_{m-1,1} \ot 1) V_m .
\end{split}
\]
Using Lemma \ref{l:prop_d_con} and \eqref{eq:veestar} one more time together with the fact that $V_m : E_{m-1} \to E_m \ot E_1$ is an isometry we then get that
\[
\begin{split}
& \frac{d_{m-2}}{\sqrt{\mu_{m-1}}} \cd (1_{m-1} \ot G_1^*)(\io_{m-1,1} \ot 1) V_m \\
& \q = \frac{d_{m-2} \cd \sqrt{\mu_m}}{\sqrt{\mu_{m-1}} \cd d_{m-1}} \cd V_m^* V_m 
= \frac{\sqrt{d_{m-2} d_m}}{d_{m-1}} \cd 1_{m-1} = \left( 1 - \frac{1}{d_{m-1}^2} \right)^{1/2} \cd 1_{m-1}. \qedhere
\end{split}
\]
\end{proof}

\begin{prop}\label{p:comdif}
The difference
\[
(T_j^* \ot 1) V_R - V_R T_j^* : F \to F \ot E_1
\]
is a compact operator.
\end{prop}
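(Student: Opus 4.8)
The plan is to express $V_R$ and $T_j^*$ explicitly in terms of the grading of the Fock space and then estimate the difference $(T_j^* \ot 1) V_R - V_R T_j^*$ degree by degree, showing that its restriction to $E_m \su F$ tends to zero in operator norm as $m \to \infty$; since $F \ot E_1$ is the Hilbert space direct sum of the finite-dimensional pieces $E_m \ot E_1$, this is enough to conclude compactness. First I would recall from Proposition \ref{p:isometries} and the definition of $V_R$ that on $E_m$ one has $V_R = V_{m+1}$, and from Lemma \ref{l:leftright_Toe} the closed formula $V_{m+1}(\xi) = \sqrt{d_m/d_{m+1}} \cd \sum_{j=0}^n (-1)^{n-j} T'_{n-j}(\xi) \ot e_j$. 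The operator $T_j^*$ lowers degree by one, so $(T_j^* \ot 1) V_R$ and $V_R T_j^*$ both map $E_m$ into $E_m \ot E_1$, and the discrepancy between the two will come entirely from (i) the ratio $\sqrt{d_{m-1}/d_m}$ versus $\sqrt{d_m/d_{m+1}}$, which by Lemma \ref{l:quocon} both converge to $\sqrt{\ga_n}$, and (ii) a commutation between $T_j^*$ and the right creation operators $T'_i$, mediated by the isometries $V_m$ and the inclusions $\io_{k,1}$.

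The key structural input is Lemma \ref{l:diffvee}: it says that up to the scalar $(1 - d_{m-1}^{-2})^{1/2}$, which converges to $1$, the composite $\io^*_{1,m-2}(1 \ot V_{m-1})^*(\io_{1,m-1} \ot 1) V_m$ equals the identity on $E_{m-1}$. I would use this to "move" the creation operator $T_j^*$ (which, via Lemma \ref{lem:iota*TT*}, is essentially $\io_L^*$ paired against $e_j$ from the left) past $V_R$ at the cost of replacing $V_R$ by $V_R$ on a lower-degree piece, incurring only errors of the form $(1 - d_{m-1}^{-2})^{1/2} - 1$ together with $\sqrt{d_{m-1}/d_m} - \sqrt{d_m/d_{m+1}}$ and a cross term controlled by $\io^*_{1,m-1} V_m$-type expressions that vanish by the orthogonality relations of Lemma \ref{l:leftfus}/Lemma \ref{l:rightfus} and the identity $V_R^* \io_R = 0$ from Proposition \ref{p:decompright}. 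Each such error is a scalar multiple of a bounded operator supported on $E_m$, with the scalar tending to zero; summing the telescoping estimate over $m$ and using that the $E_m$ are mutually orthogonal gives a norm-convergent approximation of $(T_j^* \ot 1) V_R - V_R T_j^*$ by finite-rank operators.

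The main obstacle I anticipate is bookkeeping the several isometries and inclusions ($\io_L$, $\io_R$, $V_m$, $V'_m$, $\si_{k,m}$) correctly so that the algebraic cancellations actually produce a purely "scalar times bounded" remainder on each $E_m$; in particular one must be careful that the non-trivial commutator of $T_j^*$ with the \emph{right} creation operators does not itself contribute a non-compact term, and here the crucial point is that by Lemma \ref{l:leftright_Toe} the vectors $V_m(\xi)$ and $V'_m(\xi)$ are built from the \emph{same} family $\{T_i, T'_i\}$ with matching sign patterns, so that the left creation operator $T_j^*$ interacts with $V_R$ through the inclusions $\io_{1,m-1}$ in a way that is governed entirely by Lemma \ref{l:diffvee}. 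Once that identity is invoked, the remaining estimates are elementary consequences of $d_{m-1}/d_m \to \ga_n$ and $d_m \to \infty$ (Lemma \ref{l:quocon}), so the heart of the argument is really the combination of Lemma \ref{l:diffvee} with the orthogonality relations already established in Section \ref{s:fusion}.
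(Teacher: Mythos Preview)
Your proposal is correct and hinges on the same key ingredient as the paper, namely Lemma~\ref{l:diffvee} together with the factoring $T_j^* = (\inn{e_j,\cd}\ot 1_F)\io_L$, followed by a degree-by-degree norm estimate. The paper's execution is more streamlined than what you outline: after factoring through $\io_L$, one is reduced to showing that $(\io_L \ot 1)V_R - (1 \ot V_R)\io_L$ is compact, and on $E_{m-1}$ both summands $(\io_{1,m-1}\ot 1)V_m$ and $(1\ot V_{m-1})\io_{1,m-2}$ are isometries whose inner product is exactly the scalar $(1-d_{m-1}^{-2})^{1/2}$ from Lemma~\ref{l:diffvee}; this gives the norm of the difference as $\sqrt{2}\,(1-(1-d_{m-1}^{-2})^{1/2})^{1/2}\to 0$ in one line. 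Your detour through the $T'_i$ expansion and the separate ``ratio term'' $\sqrt{d_{m-1}/d_m}-\sqrt{d_m/d_{m+1}}$ is unnecessary and slightly obscures the argument---once you pass to $\io_L$, no commutator $[T_j^*,T'_i]$ ever needs to be controlled, and there is only one error term, not several.
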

\begin{proof}
Since $T_j^* = (\inn{e_j, \cd} \ot 1_F)\io_L : F \to F$, it is enough to show that the difference
\[
( \io_L \ot 1 ) V_R - (1 \ot V_R ) \io_L : F \to E_1 \ot F \ot E_1
\]
is a compact operator.

Notice first that the Hilbert space $E_{m-1} \su F$ is finite-dimensional for each $m \in \nn$ and that both $(\io_L \ot 1 ) V_R$ and $(1 \ot V_R) \io_L$ map $E_{m-1}$ into $E_1 \ot E_{m-1} \ot E_1$. The corresponding restrictions are given by $(\io_{1,m-1} \ot 1) V_m$ and $(1 \ot V_{m-1}) \io_{1,m-2} : E_{m-1} \to E_1 \ot E_{m-1} \ot E_1$. It therefore suffices to show that the sequence of operator norms
\[
\big\{ \| {(\io_{1,m-1} \ot 1) V_m - (1 \ot V_{m-1}) \io_{1,m-2}} \| \big\}_{m = 1}^\infty
\]
converges to zero.

Let $m \geq 2$. Using Lemma \ref{l:diffvee} together with the fact that $(\io_{1,m-1} \ot 1) V_m$ and $(1 \ot V_{m-1}) \io_{1,m-2}$ are isometries, we obtain that
\[
\begin{split}
& \big( (\io_{1,m-1} \ot 1)V_m  - (1 \ot V_{m-1})\io_{1,m-2} \big)^* \big( (\io_{1,m-1} \ot 1)V_m - (1 \ot V_{m-1})\io_{1,m-2} \big) \\
& \q = 2 \left( 1 - { \left( 1 - \frac{1}{d_{m-1}^2} \right)^{1/2}} \right) \cd 1_{m-1} .
\end{split}
\] 
which implies that 
\begin{equation}\label{eq:normdiff}
\| (\io_{1,m-1} \ot 1) V_m - (1 \ot V_{m-1}) \io_{1,m-2} \| 
= \sqrt{2} \cd \Big( 1 - { \left( 1 - \frac{1}{d_{m-1}^2} \right)^{1/2}} \Big)^{1/2} .
\end{equation}

The result of the lemma now follows since the sequence $\{ 1/d_{m-1}^2 \}_{m = 1}^\infty$ converges to zero (using again the global assumption that $n \geq 1$). 
\end{proof}

In fact, we can do slightly better than the above proposition:

\begin{lemma}\label{l:commdim}
Let $p \in [0,1]$. It holds that
\[
(D^p \ot 1)\big( (T_j^* \ot 1) V_R - V_R T_j^*\big) D^{1-p} : \Falg \to F \ot E_1
\]
extends to a bounded operator.
\end{lemma}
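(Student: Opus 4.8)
The plan is to revisit the estimate underlying Proposition \ref{p:comdif} and keep track of how the dimension operator $D$ interacts with the error term. As in the proof of Proposition \ref{p:comdif}, write $T_j^* = (\inn{e_j,\cd} \ot 1_F)\io_L$, so it suffices to show that $(D^p \ot 1 \ot 1)\big((\io_L \ot 1)V_R - (1 \ot V_R)\io_L\big)D^{1-p} : \Falg \to E_1 \ot F \ot E_1$ is bounded. Both $(\io_L \ot 1)V_R$ and $(1 \ot V_R)\io_L$ preserve the grading in the sense that they map $E_{m-1}$ into $E_1 \ot E_{m-1} \ot E_1$, with restrictions $(\io_{1,m-1} \ot 1)V_m$ and $(1 \ot V_{m-1})\io_{1,m-2}$ respectively. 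On the homogeneous component $E_{m-1}$ the operator $D^p \ot 1 \ot 1$ acts as multiplication by $d_{m-1}^p$ and $D^{1-p}$ acts as multiplication by $d_{m-1}^{1-p}$, so the restriction of the whole operator to $E_{m-1}$ is the scalar $d_{m-1}^p \cdot d_{m-1}^{1-p} = d_{m-1}$ times the difference $(\io_{1,m-1} \ot 1)V_m - (1 \ot V_{m-1})\io_{1,m-2}$. Hence it suffices to show that the sequence
\[
\big\{ d_{m-1} \cdot \| (\io_{1,m-1} \ot 1)V_m - (1 \ot V_{m-1})\io_{1,m-2} \| \big\}_{m = 1}^\infty
\]
is bounded.

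From the exact formula \eqref{eq:normdiff} established in the proof of Proposition \ref{p:comdif}, the norm in question equals $\sqrt{2}\cdot\big(1 - (1 - d_{m-1}^{-2})^{1/2}\big)^{1/2}$, so the relevant sequence is $\big\{ \sqrt{2}\, d_{m-1}\big(1 - (1 - d_{m-1}^{-2})^{1/2}\big)^{1/2} \big\}_{m=1}^\infty$. Using the elementary inequality $1 - \sqrt{1 - t} \leq t$ for $t \in [0,1]$ with $t = d_{m-1}^{-2}$, we get
\[
d_{m-1}\big(1 - (1 - d_{m-1}^{-2})^{1/2}\big)^{1/2} \leq d_{m-1} \cdot d_{m-1}^{-1} = 1,
\]
so the sequence is bounded by $\sqrt{2}$. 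This gives the desired boundedness, with the resulting bounded operator having norm at most $\sqrt{2}$. (For $m = 1$ the difference vanishes since $\io_L$ annihilates $E_0$, so there is nothing to check in that degree.) Since the operator is defined on the dense subspace $\Falg$ and is bounded there, it extends to a bounded operator on all of $F$, completing the proof.

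I do not anticipate a genuine obstacle here: the main content is already contained in the exact identity \eqref{eq:normdiff} from the proof of Proposition \ref{p:comdif}, and the only new ingredient is the observation that the extra factor of $d_{m-1}$ coming from $D^p \ot 1$ and $D^{1-p}$ is exactly cancelled by the decay rate $O(d_{m-1}^{-1})$ of the norm of the difference. The one point to be careful about is that $D^p$ and $D^{1-p}$ act on the correct tensor-leg grading — i.e. that both terms in the difference genuinely shift the total degree in the same way, so that sandwiching with $D^p \ot 1$ on the left and $D^{1-p}$ on the right produces the single homogeneous scalar $d_{m-1}$ rather than a mismatched product — but this is immediate from the fact that $\io_L$ lowers the degree by one, $V_R$ raises it by one, and $\io_{1,m-1}$, $V_m$ are the corresponding restrictions to $E_{m-1}$.
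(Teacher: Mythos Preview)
Your argument is correct and follows essentially the same approach as the paper: reduce to the homogeneous components, observe that the sandwiching by $D^p$ and $D^{1-p}$ produces a single scalar factor $d_{m-1}$ (the paper indexes on $E_m$ and gets $d_m$, but this is just a shift), and then use the exact norm identity \eqref{eq:normdiff} together with the elementary inequality $1-\sqrt{1-t}\leq t$ to bound the resulting sequence by $\sqrt 2$. One small notational slip: after writing $T_j^* = (\inn{e_j,\cd}\ot 1_F)\io_L$ and passing to the expression with codomain $E_1\ot F\ot E_1$, the operator $D^p$ should sit on the middle $F$-leg, i.e.\ $1\ot D^p\ot 1$, not $D^p\ot 1\ot 1$; your subsequent claim that this acts by $d_{m-1}^p$ is correct for $1\ot D^p\ot 1$, so the computation is unaffected.
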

\begin{proof}
We first remark that the unbounded operator $(D^p \ot 1)\big( (T_j^* \ot 1) V_R - V_R T_j^*\big) D^{1-p} : \Falg \to F \ot E_1$ maps the subspace $E_m$ into $E_m \ot E_1$ for each $m \in \nn_0$. It therefore suffices to show that the supremum over $m \in \nn_0$ of the corresponding operator norms is finite.

Let $m \in \nn$ be given. We compute that
\[
\begin{split}
& (D^p \ot 1)\big( (T_j^* \ot 1) V_R - V_R T_j^* \big) D^{1-p} \vert_{E_m} \\
& \q = d_m \cd \big( (\inn{e_j,\cd} \ot 1_m \ot 1)(\io_{1,m} \ot 1)V_{m+1} - (\inn{e_j, \cd} \ot V_m)\io_{1,m-1} \big) \\
& \q = d_m \cd (\inn{e_j,\cd} \ot 1_m \ot 1) \big( (\io_{1,m} \ot 1)V_{m+1} - (1 \ot V_m)\io_{1,m-1} \big) .
\end{split}
\]
The result of the present lemma then follows from \eqref{eq:normdiff} by noting that
\[
\begin{split}
& d_m^2 \cd \| (\io_{1,m} \ot 1)V_{m+1} - (1 \ot V_m)\io_{1,m-1} \|^2 \\
& \q = 2 d_m^2 \cd (1 - \sqrt{1 - 1/d_m^2}) \leq 2 . \qedhere
\end{split}
\]
\end{proof}

\section{The $K$-theory of the Toeplitz algebra}\label{s:KKequiv}
Recall from Section \ref{s:quasi} that we have an $SU(2)$-equivariant isometry $W_R : F \ot E_1 \to F \op F$ (cf. \eqref{eq:WR}), which we use 
to define the $*$-homomorphism
\[
\psi_- : \B T \to \B L( F \op F) \q \psi_-(x) := W_R (x \ot 1) W^*_R .
\]
We clearly also have the $*$-homomorphism $\psi_+ : \B T \to \B L(F \op F)$, $\psi_+(x) := x \op x$.

We saw in Proposition \ref{p:quasi} that the pair { $(\psi_+,\psi_-)$} is an $SU(2)$-equivariant quasi-homomorphism form $\B T$ to $\B C$ and we therefore have a class $[\psi_+,\psi_-] \in KK_0^{SU(2)}(\B T,\B C)$. We moreover saw in Proposition \ref{p:easyprod} that the interior Kasparov product $[i] \hot_{\B T} [\psi_+,\psi_-] \in KK_0^{SU(2)}(\B C,\B C)$ agrees with the unit ${\bf 1}_{\B C}$, where we recall that $[i] \in KK_0^{SU(2)}(\B C,\B T)$ is the class associated with the unital inclusion $i : \B C \to \B T$.

In this section we are going to prove the following main result:

\begin{thm}\label{t:KKequiv}
The interior Kasparov product $[\psi_+,\psi_-] { \hot_{\B C}} [i]$ agrees with the unit ${\bf 1}_{\B T} \in KK_0^{SU(2)}(\B T,\B T)$. In particular, we have that $\B T$ and $\B C$ are $KK$-equivalent in an $SU(2)$-equivariant way.
\end{thm}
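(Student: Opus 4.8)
The goal is to compute the interior Kasparov product $[\psi_+,\psi_-]\hot_\cc[i]\in KK_0^{SU(2)}(\B T,\B T)$ and show it equals ${\bf 1}_{\B T}$. Since we already know from Proposition \ref{p:easyprod} that $[i]\hot_{\B T}[\psi_+,\psi_-]={\bf 1}_\cc$, together with the theorem this exhibits $[i]$ and $[\psi_+,\psi_-]$ as mutually inverse $KK^{SU(2)}$-equivalences, so the ``in particular'' is immediate. The strategy I would follow is the standard one for showing that a composite of a quasi-homomorphism with a (split) inclusion is the identity: realise $[\psi_+,\psi_-]\hot_\cc[i]$ explicitly as an equivariant Kasparov $\B T$--$\B T$-bimodule built from the difference of the two $*$-homomorphisms $\psi_\pm$ composed with $i$, and then construct an operator homotopy (or a direct unitary equivalence up to compacts, i.e. a homotopy through the ``standard'' degenerate part) connecting it to the identity bimodule $(\B T, \B T, 0)$ with its canonical grading.

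First I would write down the Kasparov module representing the product. The inclusion $i:\cc\to\B T$ sends the rank-one projection onto $\cc\om$ to the corresponding rank-one projection $Q_0:F\to F$, so $[i]$ is represented by $(\cc\om\ot_\cc \B T,\,i,\,0)\cong (Q_0 F\ot \B T,0)$, essentially just the Hilbert $\B T$-module $\overline{Q_0 F\cdot\B T}$. Composing, $[\psi_+,\psi_-]\hot_\cc[i]$ is represented by the $\zz/2$-graded Kasparov $\B T$--$\B T$-module
\[
\big( (F\op F)\hot_\cc(\om\ot\B T),\ \psi_+\op\psi_-,\ 0 \big),
\]
where the two summands carry opposite gradings, with left $\B T$-action by $\psi_+(x)\op\psi_-(x)=(x\op x)\op W_R(x\ot 1)W_R^*$ restricted to the appropriate one-dimensional pieces. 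Concretely: $\psi_+\ci i$ lands in $\B K(F\op F)$ applied to $\om\op\om$, and $\psi_-\ci i$ lands in the piece $W_R(\om\ot 1)W_R^*$. Here the crucial computation, already carried out in the proof of Proposition \ref{p:easyprod}, is that $W_R(\om\ot 1)W_R^*$ is the orthogonal projection onto $\io_R^*(\om\ot E_0)\op 0$; but $\io_R(\om)=0$ since $\om\in E_0$, so in fact $\psi_-\ci i=0$ on the generator $\om$, whereas $\psi_+\ci i$ is nonzero. This asymmetry is precisely what will make the product nontrivial and equal to ${\bf 1}_{\B T}$ rather than $0$.

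Concretely, then, the module is built from the Hilbert $\B T$-module $\B T\op 0$ (from $\psi_+\ci i$, sitting in the $E_0$-summand of the first copy of $F$) together with a \emph{degenerate} piece coming from the second, opposite-graded copy — and the whole point is that that piece is genuinely degenerate. The clean way to see this is to observe that via $W_R$ the module $(F\op F,\psi_-)$ is unitarily $(F\ot E_1,\,\cdot\ot 1)$; composing with $[i]$ gives $(F\ot E_1)\hot_{\cc}(\om\ot\B T)$ with the left action factoring through $\io_R$, which annihilates $\om$. Thus $\psi_-\ci i$ contributes a module of the form $(N, 0, 0)$ with zero left action — a degenerate Kasparov module — and these represent $0$ in $KK$. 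Removing it by a standard homotopy (adding/removing degenerate modules), we are left with $(\B T, i_{\B T}\cdot\,\text{(via }\psi_+\ci i\text{)},0)$, which after unwinding $\psi_+\ci i(1)=Q_0\op Q_0$ restricted suitably is exactly the identity bimodule $(\B T,\mathrm{id},0)$ representing ${\bf 1}_{\B T}$. All the maps in sight are $SU(2)$-equivariant because $W_R$, $\io_R$, $Q_0$, and the inclusion $i$ are, so the homotopy is equivariant throughout.

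\textbf{The main obstacle.} The delicate point is not the algebra but the bookkeeping of gradings and the verification that the degenerate summand really is removable \emph{equivariantly} and \emph{compatibly with the product connection}: when one computes a Kasparov product $x\hot_\cc y$ with $y=[i]$ of the form $(\ldots,0)$, one must exhibit a connection and check the positivity (Connes--Skandalis) condition, and here one has to make sure the operator homotopy contracting the degenerate part does not interfere with the one-dimensional ``surviving'' piece. I expect the cleanest route is to avoid connections entirely by noting that $[i]$ is represented by a module on which the operator is $0$ and the left $\cc$-action is by a projection, so the external-then-internal product collapses to ``apply $\psi_\pm$, then cut down by $Q_0$''; the product module is then literally $(\psi_+\ci i)(1)\cdot(F\op F)\,\hot\,\B T$ with zero operator, and one identifies this $\zz/2$-graded module-with-zero-operator directly. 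The real work is checking that after stripping the degenerate second copy, what remains is the \emph{rank-one} free module $\B T$ in even degree with the identity $\B T$-action — i.e. that $\psi_+\ci i$ meets the first copy of $F$ in exactly the line $\cc\om$ — which is again the computation $\io_R(\om)=0$, $V_R(\om)=V_1(1)=\delta\in E_1\ot E_1$ of Proposition \ref{p:decompright}, giving $W_R W_R^*=\mathrm{proj}_{F_+\op F}$ and hence $1_{F\op F}-W_RW_R^*=\mathrm{proj}_{\cc\om\op 0}$. Modulo this identification the theorem follows, and the $KK^{SU(2)}$-equivalence statement is then formal.
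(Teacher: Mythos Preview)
Your proposal has a fundamental confusion about which Kasparov product is being computed, and as a result the argument essentially repeats Proposition~\ref{p:easyprod} rather than proving the theorem.

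The product $[\psi_+,\psi_-]\hot_\cc[i]$ lives in $KK_0^{SU(2)}(\B T,\B T)$: it is represented by the quasi-homomorphism $(\psi_+\ot 1_{\B T},\,\psi_-\ot 1_{\B T})$ from $\B T$ into $\B L\big((F\op F)\hot\B T\big)$, where the left $\B T$-action is $x\mapsto\psi_\pm(x)\ot 1_{\B T}$ on the full standard module. There is no ``restriction to one-dimensional pieces'' and no composition $\psi_\pm\ci i$ anywhere in this module; the compositions $\psi_\pm\ci i$ appear only when one computes $[i]\hot_{\B T}[\psi_+,\psi_-]\in KK_0^{SU(2)}(\cc,\cc)$, which is the easy direction already handled in Proposition~\ref{p:easyprod}. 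Your sentences ``$\psi_-\ci i$ lands in the piece $W_R(\om\ot 1)W_R^*$'' and ``$\psi_-$ contributes a module \ldots\ with zero left action'' are symptoms of this reversal: for $x\in\B T$ the operator $\psi_-(x)\ot 1_{\B T}$ acts nontrivially on $(F\op F)\hot\B T$ and is in no sense degenerate. (Also note $\psi_+\ci i(1)=1_{F\op F}$, not $Q_0\op Q_0$: both $i$ and $\psi_+$ are unital.)

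What actually has to be shown is that the quasi-homomorphism $(\psi_+\ot 1_{\B T},\psi_-\ot 1_{\B T})$ is equivariantly homotopic to $(\psi_-\ot 1_{\B T}+(Q_0\op 0)\ot M_{\B T},\,\psi_-\ot 1_{\B T})$, the latter representing ${\bf 1}_{\B T}$. The paper achieves this by a three-stage argument: first an intertwining unitary $H_0$ built from the partial isometry $W=(1_F\ot W_L)(W_R^*\ot 1_F)$ and the flip $\Si$ (Lemma~\ref{l:intertwine}), then a homotopy $H_t$ handling the ``classical'' subspace $G\su F\hot F$ spanned by the images of the structure maps, then a second homotopy $I_t$ on the complement $G^\perp$ built via polar decomposition of a path $y_t$ and controlled commutator estimates involving the dimension operator $D$, and finally a rotation $J_t$ through the spectrum of the resulting selfadjoint unitary $I_1$. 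None of this is formal: the key analytic input is that commutators like $[T_j^*\ot 1,W]$ and $[T_j^*\ot 1,\Te]$ are compact with quantitative control (Proposition~\ref{p:DpcommIII}, Proposition~\ref{p:orthinadj}), which ultimately rests on the fusion-rule computations of Section~\ref{s:fusion}. Your proposal does not engage with any of this and would need to be rebuilt from scratch.
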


We let $F \hot \B T$ denote the standard module over $\B T$, defined as the exterior tensor product of the Fock space $F$ and the Toeplitz algebra $\B T$ viewed as a right Hilbert $C^*$-module over itself. The standard module becomes an $SU(2)$-Hilbert-$C^*$-module via the diagonal representation of $SU(2)$ on $F \hot \B T$ given explicitly by
\[
g ( \xi \ot T_\eta ) := g(\xi) \ot T_{g(\eta)}
\]
for every $g \in SU(2)$, $\xi \in F$ and $\eta \in E_k$. 

We remark that the interior Kasparov product $[\psi_+,\psi_-] { \hot_{\B C}} [i]$ is represented by the $SU(2)$-equivariant quasi-homomorphism
$( \psi_+ \ot 1_{\B T} , \psi_- \ot 1_{\B T})$, where $\psi_+ \ot 1_{\B T} : \B T \to \B L\big( (F \op F) \hot \B T \big)$ and $\psi_- \ot 1_{\B T} : \B T \to \B L\big( (F\op F) \hot \B T\big)$ are $SU(2)$-equivariant $*$-homomorphisms.

We let $M_{\B T} : \B T \to \B L(\B T)$ denote the $SU(2)$-equivariant $*$-homomorphism obtained by letting the Toeplitz algebra act as bounded adjointable operators on itself via left-multiplication. Recall moreover that $Q_0 : F \to F$ is the orthogonal projection onto the vacuum subspace $E_0 \su F$. 

Our proof of Theorem \ref{t:KKequiv} amounts to showing that the $SU(2)$-equivariant quasi-homomorphism $(\psi_+ \ot 1_{\B T}, \psi_- \ot 1_{\B T})$ is homotopic to the $SU(2)$-equivariant quasi-homomorphism $(\psi_- \ot 1_{\B T} + { (Q_0  \op 0) \ot M_{\B T} }, \psi_- \ot 1_{\B T})$. Indeed, we would then obtain the following identities inside $KK_0^{SU(2)}(\B T,\B T)$: 
\[
[\psi_+,\psi_-] { \hot_{\B C}} [i] = [\psi_+ \ot 1_{\B T}, \psi_- \ot 1_{\B T}] 
= [\psi_- \ot 1_{\B T} + { (Q_0 \op 0) } \ot M_{\B T}, \psi_- \ot 1_{\B T}]
= {\bf 1}_{\B T} .
\]

The proof of the $SU(2)$-equivariant homotopy
\[
(\psi_+ \ot 1_{\B T}, \psi_- \ot 1_{\B T}) \sim_h (\psi_- \ot 1_{\B T} + { (Q_0 \op 0) }\ot M_{\B T}, \psi_- \ot 1_{\B T})
\]
is divided into three steps and occupies the remainder of this section.

It will sometimes be convenient to view the standard module $F \hot \B T$ as a closed subspace of bounded operators from $F$ to the Hilbert space tensor product $F \hot F$. Indeed, for every $\xi \in F$ and $x \in \B T$ we have the bounded operator
\[
\xi \ot x : F \to F \hot F \q (\xi \ot x)(\eta) := \xi \ot x(\eta) 
\]
and $F \hot \B T$ does in fact agree with the smallest closed subspace of $\B L(F, F \hot F)$ containing the bounded operators of the form $\xi \ot x$ for all $\xi \in F$ and $x \in \B T$. The inner product on $F \hot \B T$ then agrees with the operation
\[
\inn{\xi,\eta} := \xi^* \cd \eta \q \xi, \eta \in F \hot \B T
\]
using only products and adjoints of bounded operators. Moreover, the right action of $\B T$ on $F \hot \B T$ is simply induced by the composition of bounded operators $\B L(F,F \hot F)$ and $\B L(F)$. Any bounded operator $T : F \hot F \to F \hot F$ acts on the operator space $\B L(F,F\hot F)$ via the composition of bounded operators in $\B L(F \hot F)$ and $\B L(F,F \hot F)$. In this fashion, the unital $C^*$-algebra of bounded adjointable operators on $F \hot \B T$ identifies with the unital $C^*$-subalgebra of $\B L(F \hot F)$ consisting of those bounded operators $T : F \hot F \to F \hot F$ with the property that both $T$ and $T^*$ preserves the closed subspace $F \hot \B T \su \B L(F,F \hot F)$. To wit,
\[
\B L(F \hot \B T) \cong \big\{ T \in \B L(F \hot F) \mid T \cd ( F \hot \B T) \,  , \, \, T^* \cd (F \hot \B T) \su F \hot \B T \big\} . 
\]

\subsection{Intertwining representations of the Toeplitz algebra}
Before we can construct our homotopy we need some preliminaries, explaining better the relationship between the $SU(2)$-equivariant $*$-homomorphisms 
$\psi_+ \ot 1_{\B T}$ and $\psi_- \ot 1_{\B T} + Q_0^T \ot M_{\B T} : \B T \to \B L\big( (F \op F)  \hot \B T\big)$.

We are in this respect particularly interested in the $SU(2)$-equivariant bounded operator
\[
W: { (F \hot F)^{\op 2} \to (F \hot F)^{\op 2}}
\]
defined as the composition 
\[
\xymatrix{ { (F \hot F)^{\op 2}} \ar[rrr]_-{\ma{c}{ \io_R^* \ot 1_F \\ V_R^* \ot 1_F}^* } &&& (F \ot E_1) \hot F \cong F \hot (E_1 \ot F) 
\ar[rr]_-{\ma{c}{ 1_F \ot \iota^*_L \\ 1_F \ot V_L^* }  }  && { (F \hot F)^{\op 2}} .} 
\]
 We express this bounded operator in the following matrix form:
\begin{equation}\label{eq:douumatrix}
W = \ma{cc}{v^{TT} & v^{TB} \\
v^{BT} & v^{BB} } = 
\ma{cc}{ (1 \ot \io^*_L)(\io_R \ot 1) & (1 \ot \io^*_L)(V_R \ot 1) \\ 
(1 \ot V_L^*)(\io_R \ot 1) & (1 \ot V_L^*)(V_R \ot 1) } ,
\end{equation}
where all the entries belong to $\B L(F \hot F)$.


We moreover let $\Si : F \hot F \to F \hot F$ denote the flip map $\Si(\xi \ot \eta) = \eta \ot \xi$ and remark that $\Si$ is an $SU(2)$-equivariant unitary operator.

Using Proposition \ref{p:decompright} and Proposition \ref{p:decompleft} we see that the $SU(2)$-equivariant operators
\[
\begin{split}
& W_R:= \ma{c}{ \io_R^* \\
V_R^* } : F \ot E_1 \to F \oplus F \q \T{and} \\
& W_L:= \ma{c}{ \io_L^*\\
V_L^* } : E_1 \ot F \to F \oplus F .
\end{split}
\]
are isometric with $W_R W_R^*$ and $W_L W_L^*$ both being the orthogonal projection onto $F_+ \op F$. It moreover holds that
\[
W = (1_F \ot W_L) (W_R^* \ot 1_F) \in \B L\big( { (F \hot F) \op (F \hot F)}  \big) .
\]

\begin{lemma}\label{l:conjdouu}
The $SU(2)$-equivariant operator $W$ is a partial isometry with 
\[
1 - W W^* = \ma{cc}{1_F \ot Q_0 & 0 \\ 0 & 0} \quad \mbox{and} \quad
1 - W^*W = \ma{cc}{Q_0 \ot 1_F & 0 \\ 0 & 0 }
\]
Moreover, it holds that
\[
W^* (\psi_+(x) \ot 1_F) W = \psi_-(x) \ot 1_F
\]
for all $x \in \B T$.
\end{lemma}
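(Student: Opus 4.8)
The proof will rest on the factorisation $W = (1_F \ot W_L)(W_R^* \ot 1_F)$ recorded above, where $W_R : F \ot E_1 \to F \op F$ and $W_L : E_1 \ot F \to F \op F$ are the isometries from Propositions~\ref{p:decompright} and~\ref{p:decompleft} and the canonical reassociation $(F \ot E_1) \hot F \cong F \hot (E_1 \ot F)$ is suppressed. The only facts we need are that $W_R$ and $W_L$ are isometries, so that $W_R^* W_R = 1_{F \ot E_1}$ and $W_L^* W_L = 1_{E_1 \ot F}$, and that $W_R W_R^*$ and $W_L W_L^*$ both equal the orthogonal projection of $F \op F$ onto $F_+ \op F$, whose complement is $\cc \om \op 0 = Q_0 F \op 0$, i.e.\ the projection $\ma{cc}{Q_0 & 0 \\ 0 & 0}$.

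For the statement about the range projections I would argue as follows. Since $(W_R^* \ot 1_F)(W_R \ot 1_F) = (W_R^* W_R) \ot 1_F$ is the identity, the factorisation collapses to $WW^* = (1_F \ot W_L)(1_F \ot W_L^*) = 1_F \ot (W_L W_L^*)$; reading this operator on $(F \hot F)^{\op 2} \cong F \hot (F \op F)$ gives $1 - WW^* = 1_F \ot \ma{cc}{Q_0 & 0 \\ 0 & 0} = \ma{cc}{1_F \ot Q_0 & 0 \\ 0 & 0}$. Symmetrically, $(1_F \ot W_L^*)(1_F \ot W_L)$ is the identity, so $W^*W = (W_R \ot 1_F)(W_R^* \ot 1_F) = (W_R W_R^*) \ot 1_F$, and reading this on $(F \hot F)^{\op 2} \cong (F \op F) \hot F$ gives $1 - W^*W = \ma{cc}{Q_0 & 0 \\ 0 & 0} \ot 1_F = \ma{cc}{Q_0 \ot 1_F & 0 \\ 0 & 0}$. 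Since $W^*W$ is a projection, $W$ is a partial isometry.

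For the intertwining identity, the key point is that $\psi_+(x) \ot 1_F = (x \op x) \ot 1_F$ acts by $x$ on the leftmost Fock-space leg only, hence commutes past $1_F \ot W_L$ (and the reassociation); combining this with $W_L^* W_L = 1$ one obtains
\[
W^*(\psi_+(x) \ot 1_F) W = (W_R \ot 1_F)\big( x \ot 1_{E_1} \ot 1_F \big)(W_R^* \ot 1_F) = \big( W_R (x \ot 1) W_R^* \big) \ot 1_F = \psi_-(x) \ot 1_F .
\]
The only care required — and the main source of potential slips — is keeping track of which of the two natural identifications of $(F \hot F)^{\op 2}$, namely $(F \op F) \hot F$ versus $F \hot (F \op F)$, is in force at each stage, since $W$ interpolates between them; once that is fixed, each of the steps above is a one-line manipulation of isometries on tensor products and no genuine difficulty remains.
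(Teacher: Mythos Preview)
Your proof is correct and follows essentially the same route as the paper: both use the factorisation $W = (1_F \ot W_L)(W_R^* \ot 1_F)$, collapse $WW^*$ and $W^*W$ via $W_R^* W_R = 1$ and $W_L^* W_L = 1$ to $1_F \ot W_L W_L^*$ and $W_R W_R^* \ot 1_F$ respectively, and for the intertwining identity observe that $\psi_+(x) \ot 1_F$ acts only on the first leg so passes through $1_F \ot W_L$, leaving $(W_R \ot 1_F)(x \ot 1 \ot 1_F)(W_R^* \ot 1_F) = \psi_-(x) \ot 1_F$. Your extra remark about tracking the two identifications of $(F \hot F)^{\op 2}$ is a fair bookkeeping point but does not alter the argument.
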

\begin{proof}
The first claim follows immediately from the above remarks and the computations
\[
\begin{split}
W W^* & = (1_F \ot W_L) (W_R^* \ot 1_F)(W_R \ot 1_F)(1_F \ot W_L^*) = 1_F \ot W_L W_L^* \q \T{and} \\
W^* W & = (W_R \ot 1_F)(1_F \ot W_L^*)(1_F \ot W_L)(W_R^* \ot 1_F) = W_R W_R^* \ot 1_F .
\end{split}
\]
Let now $x \in \B T$ be given. The second claim follows from the computations
\[
\begin{split}
W^* (\psi_+(x) \ot 1_F) W 
& = (W_R \ot 1_F)(1_F \ot W_L^*)(x \ot 1_{F \op F})(1_F \ot W_L)(W_R^* \ot 1_F) \\
& = (W_R \ot 1_F)(x \ot 1 \ot 1_F)(W_R^* \ot 1_F) = \psi_-(x) \ot 1_F  ,
\end{split}
\]
using that $W_L : F \ot E_1 \to F \op F$ is an isometry.
\end{proof}

\begin{lemma}\label{l:intertwine}
The operator 
\[
H_0 := -W + {\ma{cc}{ \Si (Q_0 \ot 1_F) & 0 \\ 0 & 0 } }
\in \B L\big( { (F \hot F) \op (F \hot F)}\big)
\]
is an $SU(2)$-equivariant unitary operator and we have the identity
\[
H_0^* (\psi_+(x) \ot 1_F) H_0 
= \psi_-(x) \ot 1_F + { \ma{cc}{Q_0 \ot x & 0 \\ 0 & 0} } \in \B L\big( { (F \hot F) \op (F \hot F)} \big)
\]
for all $x \in \B T$.
\end{lemma}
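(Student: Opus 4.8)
The plan is first to recognise $H_0$ as a unitary assembled from two partial isometries with complementary supports and complementary ranges, and then to verify the intertwining formula by a $2\times 2$ block computation, the only genuinely subtle input being a commutation remark that forces the off-diagonal contributions to vanish. Set $S := \ma{cc}{\Si(Q_0 \ot 1_F) & 0 \\ 0 & 0}$, so that $H_0 = -W + S$. Since $\Si$ is unitary and $Q_0$ projects onto $\cc\om \su F$, the operator $\Si(Q_0 \ot 1_F)$ is a partial isometry from $\cc\om \ot F$ onto $F \ot \cc\om$, whence $S^*S = \ma{cc}{Q_0 \ot 1_F & 0 \\ 0 & 0}$ and $SS^* = \ma{cc}{1_F \ot Q_0 & 0 \\ 0 & 0}$. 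Comparing with Lemma~\ref{l:conjdouu} we read off $S^*S = 1 - W^*W$ and $SS^* = 1 - WW^*$. Using $W = (WW^*)W$, $S^* = S^*(SS^*)$, and $(SS^*)(WW^*) = 0$ one gets $S^*W = S^*(SS^*)(WW^*)W = 0$ and, by the analogous argument on supports, $SW^* = 0$; taking adjoints gives $W^*S = 0 = WS^*$. Hence $H_0^*H_0 = W^*W + S^*S = 1$ and $H_0H_0^* = WW^* + SS^* = 1$, so $H_0$ is unitary (the sign is immaterial since all cross terms vanish). Its $SU(2)$-equivariance follows from that of $W$, from the $SU(2)$-equivariance of $\Si$, and from the fact that $Q_0$ commutes with $\tau_\infty$ because $E_0$ carries the trivial representation.

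For the intertwining identity I would expand
\[
H_0^*(\psi_+(x)\ot 1_F)H_0 = W^*(\psi_+(x)\ot1_F)W - W^*(\psi_+(x)\ot1_F)S - S^*(\psi_+(x)\ot1_F)W + S^*(\psi_+(x)\ot1_F)S .
\]
The first term equals $\psi_-(x)\ot 1_F$ by Lemma~\ref{l:conjdouu}. For the last term, using $\Si(x \ot 1_F)\Si = 1_F \ot x$ together with $(Q_0 \ot 1_F)(1_F \ot x)(Q_0 \ot 1_F) = Q_0 \ot x$, one obtains $S^*(\psi_+(x)\ot1_F)S = \ma{cc}{Q_0 \ot x & 0 \\ 0 & 0}$. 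The crucial point is that the two cross terms vanish: since $1 - WW^* = \ma{cc}{1_F \ot Q_0 & 0 \\ 0 & 0}$ has $Q_0$ acting on the \emph{second} Fock leg while $\psi_+(x) \ot 1_F$ acts on the \emph{first}, these operators commute; combining this with $S = (SS^*)S$, $SS^* = 1 - WW^*$, and $(1-WW^*)W = 0$ yields
$S^*(\psi_+(x)\ot1_F)W = S^*(1-WW^*)(\psi_+(x)\ot1_F)W = S^*(\psi_+(x)\ot1_F)(1-WW^*)W = 0$, and the remaining cross term is the adjoint of this expression with $x$ replaced by $x^*$. Assembling the three surviving pieces produces exactly $\psi_-(x)\ot 1_F + \ma{cc}{Q_0 \ot x & 0 \\ 0 & 0}$, as claimed.

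I expect the only delicate step to be the bookkeeping that identifies $S^*S$ and $SS^*$ with the complementary projections $1-W^*W$ and $1-WW^*$ supplied by Lemma~\ref{l:conjdouu}, and in particular the observation that $Q_0$ on the second tensor leg commutes with $\psi_+(x)$ on the first leg; once these are in hand, everything else is routine manipulation of partial isometries and block operator matrices on $(F\hot F)^{\op 2}$.
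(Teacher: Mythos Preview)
Your proof is correct and follows essentially the same strategy as the paper's. Both arguments establish unitarity by recognising $W$ and $S$ as partial isometries whose source and range projections are complementary (via Lemma~\ref{l:conjdouu}), and both extract the intertwining identity from Lemma~\ref{l:conjdouu} together with the vanishing of the mixed $W$--$S$ contributions. The only organisational difference is that the paper first simplifies $(\psi_+(x)\ot 1_F)H_0$ to $-W(\psi_-(x)\ot 1_F) + \ma{cc}{\Si(Q_0\ot x)&0\\0&0}$ using the explicit factorisation $W=(1_F\ot W_L)(W_R^*\ot 1_F)$ and then multiplies by $H_0^*$, whereas you expand $H_0^*(\psi_+(x)\ot 1_F)H_0$ into four terms from the outset and invoke Lemma~\ref{l:conjdouu} directly for the $W^*(\cdot)W$ term; your commutation observation $[\,1-WW^*,\,\psi_+(x)\ot 1_F\,]=0$ then disposes of the cross terms just as the paper's range/support considerations do.
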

\begin{proof}
The fact that $H_0$ is a unitary operator follows by noting that both $W$ and $\ma{cc}{\Si(Q_0 \ot 1_F) & 0 \\ 0 & 0}$ are partial isometries satisfying that
\[
W W^* + {\ma{cc}{\Si (Q_0 \ot 1_F) (Q_0 \ot 1_F) \Si & 0 \\ 0 & 0 } } = 1 
= W^* W + { \ma{cc}{(Q_0 \ot 1_F)  \Si \Si (Q_0 \ot 1_F) & 0 \\ 0 & 0}}.
\]
Since all the involved operators are $SU(2)$-equivariant it holds that $H_0$ is $SU(2)$-equivariant as well.

Let now $x \in \B T$ be given. Using that $W_R : F \ot E_1 \to F \op F$ is an isometry together with the definitions of the involved operators we compute that
\[
\begin{split}
( \psi_+(x) \ot 1_F ) H_0 
& = - ( x \ot 1_{F \op F} ) W + \ma{cc}{ (x \ot 1_F) \Si (Q_0 \ot 1_F) & 0 \\ 0 & 0 } \\
& = - ( x \ot 1_{F \op F} ) (1_F \ot W_L)(W_R^* \ot 1_F) + \ma{cc}{ \Si (Q_0 \ot x) & 0 \\ 0 & 0 } \\
& = - W (\psi_-(x) \ot 1_F) + { \ma{cc}{ \Si (Q_0 \ot x) & 0 \\ 0 & 0 }} .
\end{split}
\]
This computation and the first part of the present proof imply the intertwining identity stated in the lemma.
\end{proof}

Let us apply the notation $j: \B T \to \B L(F)$ for the inclusion $\B T \su \B L(F)$ so that $j$ becomes a unital $*$-homomorphism. The above lemma then shows that the two $SU(2)$-equivariant $*$-homomorphisms 
\[
\psi_+ \ot 1_F \, \, \T{and} \, \, \, \psi_- \ot 1_F + { (Q_0 \op 0) \ot j}
: \B T \to \B L\big( (F \hot F) \op (F \hot F) \big)
\]
are unitarily equivalent via the $SU(2)$-equivariant unitary operator $H_0 \in \B L\big( (F \hot F) \op (F \hot F) \big)$. We emphasise that $H_0$ does \emph{not} define a bounded adjointable operator on $(F \hot \B T) \op (F \hot \B T)$ (because of the part containing the flip map). The two $*$-homomorphisms
\[
\psi_+ \ot 1_{\B T} \, \, \T{and} \, \, \, { \psi_- \ot 1_{\B T} + (Q_0 \op 0 ) \ot M_{\B T} } 
: \B T \to \B L\big( (F \hot \B T) \op (F \hot \B T) \big)
\]
are therefore most likely \emph{not} unitarily equivalent. 

In any case, we now start analysing the unitary operator $H_0 \in \B L\big( (F \hot F) \op (F \hot F) \big)$ in more details, paying particular attention to the partial isometry $W \in \B L\big( (F \hot F) \op (F \hot F) \big)$.

Recall that the invertible element $\Phi \in \B T$ was introduced in \eqref{eq:defphi}.

\begin{lemma}\label{l:WinM2}
The partial isometry $W$ defines a bounded adjointable operator on $(F \hot \B T) \op (F \hot \B T)$. In fact, we explicitly have that 
\[
W = \ma{cc}{v^{TT} & v^{TB} \\
v^{BT} & v^{BB} } = 
\sum_{j = 0}^n \ma{cc}{ (T_j')^* \ot T_j & (-1)^{n-j} T'_{n-j} \Phi^{1/2} \ot T_j \\ 
(-1)^j (T_j')^* \ot \Phi^{1/2} T^*_{n-j} & (-1)^n T'_{n - j} \Phi^{1/2} \ot \Phi^{1/2} T_{n-j}^*} .
\]
\end{lemma}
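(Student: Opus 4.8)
The plan is to compute the four matrix entries of $W$ directly from the definition $W = (1_F \ot W_L)(W_R^* \ot 1_F)$, using the explicit formulae for $\io_R^*$, $\io_L^*$, $V_R$ and $V_L$ obtained earlier in the paper, and then to check that the resulting closed-form expressions visibly preserve the standard module $F \hot \B T \su \B L(F, F \hot F)$ under both $W$ and $W^*$.

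First I would expand the composition in \eqref{eq:douumatrix}, namely $v^{TT} = (1 \ot \io_L^*)(\io_R \ot 1)$, $v^{TB} = (1 \ot \io_L^*)(V_R \ot 1)$, $v^{BT} = (1 \ot V_L^*)(\io_R \ot 1)$, $v^{BB} = (1 \ot V_L^*)(V_R \ot 1)$, where $\io_R, V_R : F \to F \ot E_1$ act on the first leg and $\io_L^*, V_L^* : E_1 \ot F \to F$ act on the second leg after the canonical identification $(F \ot E_1) \hot F \cong F \hot (E_1 \ot F)$. By Lemma \ref{lem:iota*TT*} we have $\io_L^* = \sum_j \inn{e_j,\cd} \ot T_j$ and $\io_R^* = \sum_j T_j' \ot \inn{e_j,\cd}$; dualising, $\io_R = \sum_j (T_j')^* \ot (e_j \cd)$ where $(e_j \cd) : \cc \to E_1$ is the inclusion of $e_j$, so that inserting this into $v^{TT}$ and pairing the middle $E_1$ factors gives $v^{TT} = \sum_j (T_j')^* \ot T_j$. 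Similarly, Proposition \ref{p:isometries} gives $V_L(\xi) = \sum_j (-1)^j e_j \ot T_{n-j}\Phi^{1/2}(\xi)$ and $V_R(\xi) = \sum_j (-1)^{n-j} T_{n-j}'\Phi^{1/2}(\xi) \ot e_j$; taking adjoints, $V_L^* = \sum_j (-1)^j \inn{e_j,\cd} \ot \Phi^{1/2} T_{n-j}^*$ and $V_R = \sum_j (-1)^{n-j} T_{n-j}'\Phi^{1/2} \ot (e_j \cd)$. Substituting these into the three remaining entries and contracting the shared $E_1$ legs yields exactly the claimed matrix, with $v^{TB} = \sum_j (-1)^{n-j} T'_{n-j}\Phi^{1/2} \ot T_j$, $v^{BT} = \sum_j (-1)^j (T_j')^* \ot \Phi^{1/2} T_{n-j}^*$ and $v^{BB} = \sum_j (-1)^n T'_{n-j}\Phi^{1/2} \ot \Phi^{1/2} T_{n-j}^*$. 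One must be slightly careful to track which Hilbert-space tensor factor is the ``module'' factor when passing through the identification $(F \ot E_1) \hot F \cong F \hot (E_1 \ot F)$, and to note that $\Phi \in \B T$ (Lemma \ref{l:phitoe}) so that $\Phi^{1/2} \in \B T$ by continuous functional calculus, and $T_j, T_{n-j}, T_{n-j}' \in \B T$ by definition (the right creation operators $T_j'$ lie in $\B T$ by the commutation relations identifying them, or can simply be taken as elements of $\B L(F)$ mapping $F \hot \B T$ appropriately).

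Once the matrix form is established, the statement that $W$ defines a bounded adjointable operator on $(F \hot \B T)^{\op 2}$ is immediate: every entry is a finite sum of elementary tensors $a \ot b$ with $a \in \B L(F)$ acting on the Hilbert-space factor and $b \in \B T$ acting on the module factor, hence each such $a \ot b$ maps $F \hot \B T$ into itself and has adjoint $a^* \ot b^*$ of the same shape, so $W \in \B L\big((F \hot \B T)^{\op 2}\big)$ by the description $\B L(F \hot \B T) \cong \{ T \in \B L(F \hot F) \mid T, T^* \text{ preserve } F \hot \B T\}$ recorded just before the lemma. The only mild subtlety — and the place where I would be most careful — is bookkeeping: matching the left/right creation operators ($T_j$ versus $T_j'$) and the signs $(-1)^j$ versus $(-1)^{n-j}$ correctly to the two appearances of $E_1$ in $V_R$ (which sits on the \emph{right} of $F$) and $V_L$ (which sits on the \emph{left} of $F$), together with the flip implicit in the identification $(F \ot E_1)\hot F \cong F \hot(E_1 \ot F)$. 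Beyond that, the proof is a direct substitution-and-contraction computation with no conceptual obstacle.
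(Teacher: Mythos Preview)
Your proposal is correct and follows exactly the approach of the paper: the paper's proof is a one-line reference to the matrix description \eqref{eq:douumatrix}, Lemma~\ref{lem:iota*TT*}, Proposition~\ref{p:isometries}, and Lemma~\ref{l:phitoe}, and you have simply unpacked that reference in full detail. One small clarification on your aside about $T_j'$: there is no need to argue that $T_j' \in \B T$, since the $T_j'$ and $(T_j')^*$ only occur in the \emph{first} tensor leg (the Hilbert-space factor), where any element of $\B L(F)$ suffices; it is only the second leg that must land in $\B T$, and there you correctly have $T_j$, $T_{n-j}^*$, and $\Phi^{1/2}$.
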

\begin{proof}
This follows from Lemma \ref{l:phitoe} and the matrix description of $W$ from \eqref{eq:douumatrix} together with the formulae provided in Lemma \ref{lem:iota*TT*} and Proposition \ref{p:isometries}. 
\end{proof}

Remark that it follows from Lemma \ref{l:WinM2} that
\begin{equation}\label{eq:adjointTB}
v^{BT} = (\Phi^{-1/2} \ot \Phi^{1/2}) \cd (v^{TB})^* \q \T{and} \q
v^{BB} = (-1)^n (1_F \ot \Phi^{1/2}) \cd (v^{TT})^* \cd (\Phi^{1/2} \ot 1_{\B T}) .
\end{equation}

For later use, we now relate the bounded operator $v^{TB} : F \hot F \to F \hot F$ to the bounded operators $\si_{k,m} : E_k \ot E_m \to E_{k+1} \ot E_{m+1}$ introduced in \eqref{eq:defsigma} for $k,m \in \nn_0$.

\begin{lemma}\label{l:sigteebot}
We have the identity
\[
v^{TB}(\xi) = \frac{(-1)^{(n+1)k}}{\sqrt{\mu_{k+1}}} \cd \si_{k,m}(\xi) = \frac{(-1)^{(n+1) k} \sqrt{ n+1}}{ \sqrt{d_k d_{k+1}}} \cd \si_{k,m}(\xi) .
\]
for all $\xi \in E_k \ot E_m$.
\end{lemma}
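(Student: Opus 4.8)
The statement follows by directly unwinding the definitions, so the plan is simply to keep track of the tensor factors carefully. Recall from the matrix description \eqref{eq:douumatrix} that $v^{TB} = (1_F \ot \io^*_L)(V_R \ot 1) : F \hot F \to F \hot F$, where the composition is formed using the canonical associativity identification $(F \ot E_1) \hot F \cong F \hot (E_1 \ot F)$ appearing in the definition of the operator $W$ in Section \ref{s:KKequiv}. First I would note that $V_R \ot 1$ restricts, on the graded summand $E_k \ot E_m \su F \hot F$, to $V_{k+1} \ot 1_m : E_k \ot E_m \to (E_{k+1} \ot E_1) \ot E_m$, since $V_R$ agrees with $V_{k+1}$ on $E_k$ by definition. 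Likewise, after the associativity identification, $1_F \ot \io^*_L$ restricts to $1_{k+1} \ot \io^*_{1,m}$ on $E_{k+1} \ot (E_1 \ot E_m)$, because $\io_L$ agrees with $\io_{1,m}$ on $E_{m+1}$. Hence $v^{TB}\big|_{E_k \ot E_m} = (1_{k+1} \ot \io^*_{1,m})(V_{k+1} \ot 1_m)$.

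Next I would substitute the definition \eqref{eq:V_m} of the isometry $V_{k+1} = \frac{(-1)^{(n+1)k}}{\sqrt{\mu_{k+1}}} \cd G_{k+1}$ and compare with the definition \eqref{eq:defsigma} of $\si_{k,m} = (1_{k+1} \ot \io^*_{1,m})(G_{k+1} \ot 1_m)$. Factoring the scalar out of the composition then gives
\[
v^{TB}(\xi) = \frac{(-1)^{(n+1)k}}{\sqrt{\mu_{k+1}}} \cd (1_{k+1} \ot \io^*_{1,m})(G_{k+1} \ot 1_m)(\xi) = \frac{(-1)^{(n+1)k}}{\sqrt{\mu_{k+1}}} \cd \si_{k,m}(\xi)
\]
for all $\xi \in E_k \ot E_m$, which is the first claimed identity. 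The second identity then follows at once by inserting the definition \eqref{eq:mucon}, namely $\mu_{k+1} = d_{k+1} d_k / d_1 = d_k d_{k+1}/(n+1)$, so that $1/\sqrt{\mu_{k+1}} = \sqrt{n+1}/\sqrt{d_k d_{k+1}}$.

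The only point that requires genuine care — and hence what I would regard as the ``main obstacle'', although it amounts to bookkeeping rather than to any real difficulty — is to ensure that the grading decomposition $F \hot F = \bop_{k,m} E_k \ot E_m$ and the associativity identification $(F \ot E_1) \hot F \cong F \hot (E_1 \ot F)$ are used consistently with the conventions fixed when $W$ was defined; once these are pinned down, no further computation is needed.
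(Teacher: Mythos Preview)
Your proof is correct and follows exactly the approach of the paper, which simply says the identity is immediate from the definitions \eqref{eq:V_m}, \eqref{eq:defsigma}, \eqref{eq:douumatrix} together with $\mu_{k+1} = d_k d_{k+1}/d_1$. You have just written out those substitutions explicitly, and your bookkeeping regarding the restriction of $V_R$, $\io_L^*$ to the graded pieces is accurate.
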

\begin{proof}
This follows immediately from the definition of the involved operators, see \eqref{eq:V_m}, \eqref{eq:defsigma} and \eqref{eq:douumatrix}. Recall also from \eqref{eq:mucon} that $\mu_{k+1} = (d_k d_{k+1})/d_1$ for all $k \in \nn_0$. 
\end{proof}

\begin{prop}\label{p:compactdouu}
For every $x \in \B T$ we have that
\[
[\psi_+(x) \ot 1_{\B T}, W ] \in M_2( \B K \hot \B T) .
\]
\end{prop}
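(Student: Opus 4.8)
The plan is to avoid computing commutators with the generators $T_j^*$ of $\B T$ directly, and instead to combine the intertwining identity of Lemma \ref{l:conjdouu} with the explicit matrix form of $W$ from Lemma \ref{l:WinM2}. All computations will be carried out in the picture $\B L(F \hot \B T) \su \B L(F \hot F)$ recalled earlier in this section, under which $\psi_+(x) \ot 1_{\B T}$ is the restriction of the operator $\psi_+(x) \ot 1_F$ on $(F \hot F)^{\op 2}$, while $W$ restricts to a bounded adjointable operator on $(F \hot \B T)^{\op 2}$ by Lemma \ref{l:WinM2}.

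First I would rewrite the commutator. By Lemma \ref{l:conjdouu} we have $W^*(\psi_+(x) \ot 1_F) W = \psi_-(x) \ot 1_F$ and $W W^* = 1 - \ma{cc}{1_F \ot Q_0 & 0 \\ 0 & 0}$. Multiplying the first identity on the left by $W$ and substituting the second gives
\[
(\psi_+(x) \ot 1_F) W = W(\psi_-(x) \ot 1_F) + \ma{cc}{1_F \ot Q_0 & 0 \\ 0 & 0}(\psi_+(x) \ot 1_F) W ,
\]
and therefore
\[
[\psi_+(x) \ot 1_F, W] = W\big( (\psi_- - \psi_+)(x) \ot 1_F \big) + \ma{cc}{1_F \ot Q_0 & 0 \\ 0 & 0}(\psi_+(x) \ot 1_F) W .
\]
It then suffices to treat the two summands on the right separately.

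For the first summand, Proposition \ref{p:quasi} gives that $(\psi_+,\psi_-)$ is a quasi-homomorphism, so $(\psi_- - \psi_+)(x) \in \B K(F \op F) = M_2(\B K(F))$. Since $\B T$ is unital, the induced operator $(\psi_- - \psi_+)(x) \ot 1_{\B T}$ on $(F \hot \B T)^{\op 2}$ lies in $M_2(\B K(F) \hot \B T) = M_2(\B K \hot \B T)$; as $W \in M_2(\B L(F \hot \B T))$ and $M_2(\B K \hot \B T)$ is a closed two-sided ideal there, the product $W\big( (\psi_- - \psi_+)(x) \ot 1_{\B T} \big)$ again lies in $M_2(\B K \hot \B T)$. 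For the second summand I would use the explicit formulae of Lemma \ref{l:WinM2}: the top row of $W$ is $v^{TT} = \sum_{j=0}^n (T_j')^* \ot T_j$ and $v^{TB} = \sum_{j=0}^n (-1)^{n-j} T_{n-j}' \Phi^{1/2} \ot T_j$, so in both cases the right tensor leg is the creation operator $T_j$. Since $T_j$ maps $E_m$ into $E_{m+1}$ we have $Q_0 T_j = 0$, hence $(1_F \ot Q_0)(x \ot 1_F) v^{TT} = \sum_{j} x (T_j')^* \ot Q_0 T_j = 0$ and likewise $(1_F \ot Q_0)(x \ot 1_F) v^{TB} = 0$; since the bottom row of $\ma{cc}{1_F \ot Q_0 & 0 \\ 0 & 0}$ is zero, the whole second summand vanishes. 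Putting the two together yields $[\psi_+(x) \ot 1_{\B T}, W] = W\big( (\psi_- - \psi_+)(x) \ot 1_{\B T} \big) \in M_2(\B K \hot \B T)$.

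I expect the only genuinely delicate point to be the bookkeeping between the Hilbert-space picture on $(F \hot F)^{\op 2}$, where Lemma \ref{l:conjdouu} is stated, and the module picture on $(F \hot \B T)^{\op 2}$, where the claim lives — specifically, verifying that $(\psi_- - \psi_+)(x) \ot 1_F$ restricts to a bounded adjointable operator on $(F \hot \B T)^{\op 2}$ which actually lands in the compacts $M_2(\B K \hot \B T)$, not merely in $M_2(\B L(F \hot \B T))$. This is exactly the place where unitality of $\B T$ is used, so that $1_{\B T} \in \B T$ and $K \ot 1_{\B T} \in \B K(F) \hot \B T$ for $K \in \B K(F)$. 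No analytic estimates are needed anywhere: the vanishing of the correction term is purely algebraic, a consequence of $Q_0 T_j = 0$.
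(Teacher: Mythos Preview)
Your proof is correct and follows essentially the same route as the paper: both arguments combine the intertwining identity $W^*(\psi_+(x)\ot 1_F)W = \psi_-(x)\ot 1_F$ of Lemma \ref{l:conjdouu}, the fact that $W \in M_2(\B L(F \hot \B T))$ from Lemma \ref{l:WinM2}, and the compactness of $\psi_+(x)-\psi_-(x)$ from Proposition \ref{p:quasi}. The only cosmetic difference is that you show the correction term $(1-WW^*)(\psi_+(x)\ot 1_F)W$ vanishes via $Q_0 T_j = 0$ from the explicit formulae, whereas the paper observes that $WW^*$ commutes with $\psi_+(x)\ot 1_{\B T}$ and uses $WW^*W=W$ --- these are two phrasings of the same fact.
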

\begin{proof}
Let $x \in \B T$ be given. We know from Proposition \ref{p:quasi} that the difference
\[
\psi_-(x) - \psi_+(x) : F \op F \to F \op F
\]
is a compact operator. Notice also that it follows from Lemma \ref{l:conjdouu} that $W W^* ( \psi_+(x) \ot 1_{\B T} ) = ( \psi_+(x) \ot 1_{\B T} ) W W^*$. Using these facts together with one more application of Lemma \ref{l:conjdouu} and Lemma \ref{l:WinM2} we may compute modulo compact operators in the following way: 
\[
\begin{split}
[\psi_+(x) \ot 1_{\B T}, W ] 
& \sim (\psi_+(x) \ot 1_{\B T}) W  - W (\psi_-(x) \ot 1_{\B T}) \\
& = (\psi_+(x) \ot 1_{\B T}) W  - W W^* (\psi_+(x) \ot 1_{\B T}) W
= 0 .
\end{split}
\]
This proves the present proposition.
\end{proof}

We now present a more refined estimate on the commutator between the generator $T_j^* : F \to F$ and the intertwining partial isometry $W \in M_2( \B L(F \hot \B T))$.

\begin{prop}\label{p:DpcommIII}
Let $p \in [0,1]$ and $j \in \{ 0, 1, \dotsc, n  \}$. The unbounded operators
\[ 
\begin{split}
& (D^p \ot 1_{\cc^2 \ot \B T}) [ \psi_+(T_j^*) \ot 1_{\B T},W] (D^{1-p} \ot 1_{\cc^2 \ot \B T}) \q \mbox{and} \\
& (D^p \ot 1_{\cc^2 \ot \B T}) [ \psi_+(T_j^*) \ot 1_{\B T},W^*] (D^{1-p} \ot 1_{\cc^2 \ot \B T}) 
: (\Falg \ot \cc^2 \ot \B T) \to (F \ot \cc^2) \hot \B T 
\end{split}
\]
both extend to elements in $M_2\big(\B L (F \hot \B T) \big)$.
\end{prop}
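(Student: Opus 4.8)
The plan is to reduce both statements to Lemma~\ref{l:commdim} by exploiting the factorisations of $W$ and $W^*$ from Section~\ref{s:quasi}. Recall from \eqref{eq:douumatrix} that, viewed on $(F\hot\B T)^{\op 2}$ (and going through $(F\ot E_1)\hot\B T\cong F\hot(E_1\ot\B T)$),
\[
W = \ma{c}{1_F\hot\io_L^*\\1_F\hot V_L^*}\ma{cc}{\io_R\hot 1 & V_R\hot 1}\,,\qquad W^* = \ma{c}{\io_R^*\hot 1\\V_R^*\hot 1}\ma{cc}{1_F\hot\io_L & 1_F\hot V_L}\,.
\]
In each case exactly one of the two halves acts only on tensor legs on which $\psi_+(T_j^*)\ot 1_{\B T}$, $D^p\ot 1$ and $D^{1-p}\ot 1$ restrict to the identity: for $W$ it is the outer block $\ma{c}{1_F\hot\io_L^*\\1_F\hot V_L^*}$, for $W^*$ the inner block $\ma{cc}{1_F\hot\io_L & 1_F\hot V_L}$. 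First I would treat $W$. Since $\psi_+(T_j^*)\ot 1_{\B T}$ commutes with the outer block and $(T_j^*\ot 1)\io_R=\io_R T_j^*$ holds \emph{exactly} (an immediate consequence of the subproduct axioms, already used in the proof of Proposition~\ref{p:quasi}), the commutator collapses to
\[
[\psi_+(T_j^*)\ot 1_{\B T},\,W]=\ma{c}{1_F\hot\io_L^*\\1_F\hot V_L^*}\,\ma{cc}{\;0\; & \big((T_j^*\ot 1_{E_1})V_R - V_R T_j^*\big)\hot 1_{\B T}}\,,
\]
only the second block column surviving.

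Now conjugating by $D^p\ot 1$ and $D^{1-p}\ot 1$: these are functions of the grading on the first Fock leg, so they commute past $\io_L^*$, $V_L^*$ and the associativity identification and land precisely on the defect operator, producing $\big(D^p\ot 1_{E_1}\big)\big((T_j^*\ot 1_{E_1})V_R - V_R T_j^*\big)D^{1-p}$. By Lemma~\ref{l:commdim} this extends to a bounded operator $F\to F\ot E_1$; tensoring with $1_{\B T}$ and composing with the bounded adjointable block $\ma{c}{1_F\hot\io_L^*\\1_F\hot V_L^*}$ gives the asserted element of $M_2(\B L(F\hot\B T))$ for $W$. Adjointability on the standard module is automatic here, since the closure has been exhibited as a composition of a \emph{genuinely} bounded Hilbert space operator tensored with $1_{\B T}$ with the adjointable building blocks $\io_L^*,V_L^*$.

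For $W^*$ the same mechanism applies with the two halves interchanged: the commutator reduces to conjugating the column $\ma{c}{\io_R^*\hot 1\\V_R^*\hot 1}$ by $T_j^*$, producing defects $T_j^*\io_R^*-\io_R^*(T_j^*\ot 1_{E_1})$ and $T_j^*V_R^*-V_R^*(T_j^*\ot 1_{E_1})$. Passing to adjoints turns these into $(T_j\ot 1_{E_1})\io_R-\io_R T_j$ and $(T_j\ot 1_{E_1})V_R-V_R T_j$ carrying the weights $D^{1-p}$ and $D^p$, so the $W^*$ bound is an estimate of exactly the type of Lemma~\ref{l:commdim} but with the creation operator $T_j$ in place of the coisometry $T_j^*$; I would establish it by rerunning the computation behind Lemma~\ref{l:diffvee} and \eqref{eq:normdiff} with the roles of creation and annihilation interchanged, using the relation \eqref{eq:adjointTB} to eliminate $v^{BT},v^{BB}$ in favour of $v^{TB},v^{TT}$ and the decay estimate $|d_m/d_{m+1}-\ga_n|=O(d_m^{-2})$ extracted from Lemma~\ref{l:quocon} to absorb the auxiliary $\Phi^{\pm 1/2}$-factors that then appear. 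The hard part is precisely this $W^*$ step together with the attendant bookkeeping: one has to check carefully that each weight slides past every structure map and every $\Phi^{\pm 1/2}$, and that after trading $T_j^*$ for $T_j$ via adjoints the resulting difference still obeys the $D^p$–$D^{1-p}$ estimate; the $W$ case is comparatively soft once the factorisation has been set up.
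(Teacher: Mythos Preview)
Your treatment of the commutator with $W$ is correct and coincides with the paper's argument: the outer factor $1_F\hot W_L$ commutes with $\psi_+(T_j^*)\ot 1$ and with the $D$-weights, the $\io_R$-column contributes nothing because $(T_j^*\ot 1)\io_R=\io_R T_j^*$, and the surviving $V_R$-column is handled by Lemma~\ref{l:commdim}.

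For $W^*$ your route diverges from the paper's, and the sketch you give does not close. After your factorisation the defect column is $\ma{c}{T_j^*\io_R^*-\io_R^*(T_j^*\ot 1)\\ T_j^*V_R^*-V_R^*(T_j^*\ot 1)}$, and \emph{neither} entry vanishes: taking adjoints you face $(T_j\ot 1)\io_R-\io_R T_j$ and $(T_j\ot 1)V_R-V_R T_j$, both carrying a full power of $D$. The first of these is new (for $T_j^*$ it was exactly zero), and ``rerunning Lemma~\ref{l:diffvee} with creation and annihilation interchanged'' is not a defined procedure: that lemma computes a very specific inner product between $(\io_{1,m-1}\ot 1)V_m$ and $(1\ot V_{m-1})\io_{1,m-2}$, and there is no symmetric analogue once you swap $T_j^*$ for $T_j$. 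The appeal to \eqref{eq:adjointTB} and to the decay $|d_m/d_{m+1}-\ga_n|=O(d_m^{-2})$ is not connected to these defects in any concrete way. So the $W^*$ half is a genuine gap.

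The paper avoids all of this with a short algebraic reduction to the $W$ case. First, $D^r W^* D^{-r}$ extends boundedly for every $r\in\rr$: since $\io_R^*$ and $V_R^*$ shift the first-leg grading by $+1$ and $-1$ respectively, $D^r\io_R^*(D^{-r}\ot 1)=\io_R^*(\Phi^{-r}\ot 1)$ and $D^rV_R^*(D^{-r}\ot 1)=\Phi^rV_R^*$, and one reads off from \eqref{eq:douumatrix} that $D^rW^*D^{-r}\in M_2(\B L(F\hot\B T))$. Second, using $T_j^*WW^*=WW^*T_j^*$ (because $1-WW^*=(1_F\ot Q_0)\op 0$) one writes
\[
D^p[T_j^*,W^*]D^{1-p}=(1-W^*W)\,D^pT_j^*W^*D^{1-p}\;+\;(D^pW^*D^{-p})\cdot\big(D^p[W,T_j^*]D^{1-p}\big)\cdot(D^{p-1}W^*D^{1-p}).
\]
The first summand is bounded because $1-W^*W=(Q_0\ot 1_{\B T})\op 0$; the second is bounded by the $W$ case already established together with the boundedness of $D^rW^*D^{-r}$. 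This is both shorter and avoids any new $T_j$-estimate.
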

\begin{proof}
We start with the claim regarding the commutator with $W : (F \op F) \hot \B T \to (F \op F) \hot \B T$. By the identity in \eqref{eq:douumatrix} and the fact that $(T_j^* \ot 1)\io_R = \io_R T_j^*$ we have that
\begin{equation}\label{eq:commtj*}
\begin{split}
& [ \psi_+(T_j^*) \ot 1_{\B T},W] = 
\ma{cc}{ 0 & (1_F \ot \io_L^*)\Big( \big( (T_j^* \ot 1)V_R - V_R T_j^*\big) \ot 1_{\B T}\Big) \\
0 & (1_F \ot V_L^*)\Big( \big( (T_j^* \ot 1)V_R - V_R T_j^*\big) \ot 1_{\B T}\Big). }
\end{split}
\end{equation}
Now, from Lemma \ref{lem:iota*TT*} and Proposition \ref{p:isometries} we obtain that the bounded operators
\[
1_F \ot \io_L^* \, \, \T{and} \, \, \, 1_F \ot V_L^* 
: F \hot (E_1 \ot F) \to F \hot  F
\]
both define elements in $\B L\big( (F \ot E_1) \hot \B T, F \hot \B T\big)$. It therefore suffices to show that
\[
(D^p \ot 1) ( (T_j^* \ot 1)V_R - V_R T_j^* ) D^{1-p} : \Falg \to  F \ot E_1
\]
extends to a bounded operator. But this was already proved in Lemma \ref{l:commdim}. 

We continue with the claim regarding the commutator with $W^* : (F \op F) \hot \B T \to (F \op F) \hot \B T$. We are going to suppress the extra ``$\ot 1_{\cc^2 \ot \B T}$'' from the notation, e.g. writing $D^p$ instead of $D^p \ot 1_{\cc^2 \ot \B T}$. Notice first that the unbounded operator
\[
D^r W^* D^{-r} : (\Falg \ot \cc^2 \ot \B T) \to (F \ot \cc^2) \hot \B T
\]
extends to a bounded adjointable operators on $(F \op F) \hot \B T$ for all $r \in \rr$. To see this, we remark that 
\[
\begin{split}
D^r \io_R^* (D^{-r} \ot 1)(\xi) & = \io_R^* (\Phi^{-r} \ot 1)(\xi) \q \T{and} \\
D^r V_R^* (D^{-r} \ot 1)(\xi) & = \Phi^r V_R^*(\xi)
\end{split}
\]
for all $\xi \in \Falg \ot E_1$ and hence, upon using \eqref{eq:douumatrix}, Lemma \ref{lem:iota*TT*} and Proposition \ref{p:isometries}, we obtain that $D^r W^* D^{-r}$ extends to the bounded adjointable operator
\[
\ma{cc}{ (v^{TT})^* (\Phi^{-r} \ot 1_{\B T}) & (v^{BT})^* (\Phi^{-r} \ot 1_{\B T}) \\
(\Phi^r \ot 1_{\B T}) (v^{TB})^* & (\Phi^r \ot 1_{\B T}) (v^{BB})^* } \in \B L\big( (F \op F) \hot \B T \big) .
\] 
Next, remark that $T_j^* W W^* = WW^* T_j^*$ since $1 - WW^* =  (1_F \ot Q_0) \op 0$. Then, for every $\xi \in \Falg \ot \cc^2 \ot \B T$ we have that
\[
\begin{split}
D^p [ T_j^*,W^*] D^{1-p}(\xi) 
& = (1 - W^* W) D^p T_j^* W^* D^{1-p}(\xi)
+ D^p W^* W T_j^* W^* D^{1-p} - D^p W^* T_j^* D^{1-p}(\xi) \\
& = (1 - W^* W) D^p T_j^* W^* D^{1-p}(\xi) \\
& \q + D^p W^* D^{-p} \cd ( D^p [W,T_j^*] D^{1-p}) \cd  D^{p-1} W^* D^{1-p}(\xi) .
\end{split}
\]
Each of the terms in this sum extends to a bounded adjointable operator on $(F \op F) \hot \B T$. For the first term this follows since $1 - W^* W =  (Q_0 \ot 1_{\B T}) \op 0$, and for the second term this follows from the argument carried out earlier in this proof. 
\end{proof}





\subsection{Decomposition of the standard module}
We define the Hilbert space $G \su F \hot F$ as the closure of the subspace
\begin{equation}\label{eq:classsub}
\T{span}\big\{ \io_{k,m}(\xi) \mid k,m \in \nn_0 \, , \, \, \xi \in E_{k + m} \big\} \su F \hot F .
\end{equation}
Our strategy for constructing our homotopy is to work separately on the closed subspace 
\[
( G \op \{ 0 \} ) \su (F \hot F) \op (F \hot F)
\]
and the orthogonal complement $G^\perp \op (F \hot F)$. In fact, it turns out that our homotopy behaves very much like the classical $U(1)$-case (cf. \cite[Section 4]{Pim97}) on the closed subspace $G \op \{ 0 \}$ whereas the remaining part (taking place on $G^\perp \op (F \hot F)$) requires a separate argument. We therefore need to understand the orthogonal projection $\Pi : F \hot F \to F \hot F$ onto the orthogonal complement $G^\perp \su F \hot F$. We show here below that $\Pi$ defines a bounded adjointable operator on $F \hot \B T$ and that the commutator $[ x \ot 1_{\B T}, \Pi ]$ is a compact operator for every $x \in \B T$.  

It turns out that the orthogonal projection $\Pi : F \hot F \to F \hot F$ is related to the bounded operator $v^{TB} : F \hot F \to F \hot F$ and a proper description of this relationship requires a better understanding of the polar decomposition of $v^{TB} : F \hot F \to F \hot F$. 

We are going to apply Proposition \ref{p:polar} with $X := F \hot \B T$ and $y := v^{TB} : F \hot \B T \to F \hot \B T$. The relevant dense submodule is the algebraic tensor product $\s X := \Falg \ot \B T$. We fix $j \in \{0,1,\ldots,n\}$ and put $x_j := T_j^* \ot 1_{\B T} : X \to X$. We immediately remark that
\[
x_j(\s X) \, , \, \, x^*_j(\s X) \, , \, \, y^*(\s X) \su \s X,
\]
where the last inclusion follows from Lemma \ref{l:WinM2}.

We now compute the bounded adjointable operator $y^* y = (v^{TB})^* v^{TB} : F \hot \B T \to F \hot \B T$. To this end, we apply Theorem \ref{t:iso_fusion} and define positive invertible operators
\[
\Ga_{k,m} : E_k \ot E_m \to E_k \ot E_m  \q k,m \in \nn_0
\]
using the prescription
\begin{equation}\label{eq:gammadef}
\Ga_{k,m} (\sigma^j \io_{k-j,m-j} \xi) := \left( 1- \frac{d_{k-j}d_{m-j-1}}{d_{k+1}d_m}\right)( \sigma^j \io_{k-j,m-j} \xi) ,
\end{equation}
for all $\xi \in E_{k+m-2j}$ and $0\leq j \leq k,m$. A quick computation shows that 
\[
\| \Ga_{k,m} \| = 1 - \frac{d_{k-l} d_{m-l-1}}{d_{k+1} d_m} \leq 1
\]
where $l = \min\{k,m\}$ and we therefore obtain a positive bounded operator
\[
\Ga : F \hot F \to F \hot F \q \Ga|_{E_k \ot E_m} := \Ga_{k,m} 
\]
with dense image. We are here applying our standing convention that $n \in \nn$ so that the irreducible representation $\rho_n : SU(2) \to U(L_n)$ is non-trivial.

\begin{lemma}\label{l:gamma}
We have the identity
\[
(v^{TB})^* v^{TB} = \Ga : F \hot F \to F \hot F .
\]
\end{lemma}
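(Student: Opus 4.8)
The plan is to reduce the identity $(v^{TB})^* v^{TB} = \Ga$ to a direct verification on each finite-dimensional summand $E_k \ot E_m \su F \hot F$, using the fusion decomposition of Theorem~\ref{t:iso_fusion}. First I would observe that by Lemma~\ref{l:sigteebot} the operator $v^{TB}$ restricts on $E_k \ot E_m$ to a scalar multiple of $\si_{k,m}$, namely $v^{TB}|_{E_k \ot E_m} = \tfrac{(-1)^{(n+1)k}\sqrt{n+1}}{\sqrt{d_k d_{k+1}}}\cd \si_{k,m}$, so that
\[
(v^{TB})^* v^{TB}\big|_{E_k \ot E_m} = \frac{n+1}{d_k d_{k+1}} \cd \si_{k,m}^* \si_{k,m} = \frac{1}{\mu_{k+1}} \cd \si_{k,m}^* \si_{k,m} .
\]
Since both sides of the claimed identity preserve the grading, it suffices to check that this equals $\Ga_{k,m}$ for all $k,m \in \nn_0$.

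The next step is to diagonalise both operators against the orthonormal-type decomposition $E_k \ot E_m \cong \bop_{j=0}^l E_{k+m-2j}$ furnished by $W_{k,m}$, where $l = \min\{k,m\}$. By construction (see \eqref{eq:gammadef}) the operator $\Ga_{k,m}$ acts on the $j$-th summand $\si^j \io_{k-j,m-j}(E_{k+m-2j})$ as the scalar $1 - \tfrac{d_{k-j}d_{m-j-1}}{d_{k+1}d_m}$. So I need to show that $\tfrac{1}{\mu_{k+1}}\si_{k,m}^*\si_{k,m}$ acts as the same scalar on each $\si^j \io_{k-j,m-j}(E_{k+m-2j})$. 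For this I would apply Lemma~\ref{l:sigma_propII}: the vectors $\si^j \io_{k-j,m-j}\xi$ are, up to the normalising constants appearing in Theorem~\ref{t:iso_fusion}, exactly the images of the orthogonal isometries $W_{k,m}^j$, and Lemma~\ref{l:sigma_propII} gives $(\si^*)^j \si^j \io_{k,m} = \prod_{i=1}^j \mu_{k+i}\big(1 - \tfrac{d_k d_{m-1}}{d_{k+i}d_{m+i-1}}\big)\cd \io_{k,m}$. Combining this with Lemma~\ref{l:sigma_propI}, which expresses $\si^*\si^{j}$ in terms of $\si^{j-1}$ and $\si^{j}\si^*$, one computes $\si_{k,m}^* \si_{k,m}$ applied to $\si^j \io_{k-j,m-j}\xi$: the $\si^{j}\si^*$-term vanishes against $\io_{k-j,m-j}$ by the first identity of Lemma~\ref{l:sigma_propII} (shifted indices), leaving only the term proportional to $\si^{j}\io_{k-j,m-j}\xi$ itself with coefficient $\mu_{k+1}\big(1 - \tfrac{d_{k-j}d_{m-j-1}}{d_{k+1}d_{m}}\big)$ after the index bookkeeping $k \rightsquigarrow k-j,\ m \rightsquigarrow m-j$ and $j \rightsquigarrow j$ in Lemma~\ref{l:sigma_propI}. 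Dividing by $\mu_{k+1}$ yields precisely the eigenvalue defining $\Ga_{k,m}$.

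Finally, I would note that since $W_{k,m}$ is a unitary onto $E_k \ot E_m$ and its columns $W_{k,m}^j$ have mutually orthogonal ranges spanning $E_k \ot E_m$, checking agreement of two operators on each $\mathrm{Im}(W_{k,m}^j)$ suffices to conclude agreement on $E_k \ot E_m$; summing over $k,m$ gives the identity on all of $F \hot F$ (the operators are bounded, $\|\Ga\| \le 1$, and both are block-diagonal, so there is no convergence subtlety). The main obstacle I anticipate is purely notational: the index shifts in Lemma~\ref{l:sigma_propI} and Lemma~\ref{l:sigma_propII} have to be tracked carefully so that the coefficient $\mu_{k+j-i}\big(1 - \tfrac{d_{k-j}d_{m-j-1}}{d_{k-j+i}d_{m-j+i-1}}\big)$ collapses, after division by $\mu_{k+1}$ and use of the identity $d_{k-j}d_{m-j-1}$ appearing in both places, to exactly $1 - \tfrac{d_{k-j}d_{m-j-1}}{d_{k+1}d_m}$; this is bookkeeping rather than a conceptual difficulty, and the key vanishing input ($\si^* \io = 0$) is already isolated in Lemma~\ref{l:sigma_propII}.
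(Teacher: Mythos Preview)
Your proposal is correct and follows essentially the same route as the paper: reduce to $E_k \ot E_m$, use Lemma~\ref{l:sigteebot} to write $(v^{TB})^* v^{TB} = \tfrac{1}{\mu_{k+1}}\si_{k,m}^*\si_{k,m}$, then apply Lemma~\ref{l:sigma_propI} (with the shift $k \to k-j$, $m \to m-j$, $j \to j+1$) together with the vanishing $\si^*\io = 0$ from Lemma~\ref{l:sigma_propII} to identify the eigenvalue on each $\si^j\io_{k-j,m-j}(E_{k+m-2j})$. The paper's proof is just a terser version of exactly this computation.
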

\begin{proof}
Let $k,m \in \nn_0$, let $j \in \{0,1,\ldots, \min\{k,m\} \}$ and let $\xi \in E_{k+m-2j}$ be given. Using Theorem \ref{t:iso_fusion} it suffices to show that
\[
(v^{TB})^* v^{TB}( \si^j \io_{k -j, m -j} \xi) = \Ga ( \si^j \io_{k - j,m-j} \xi) .
\]
However, by Lemma \ref{l:sigteebot} we have that
\[
(v^{TB})^* v^{TB}(\eta) = \frac{1}{\mu_{k+1}} \si_{k,m}^* \si_{k,m}(\eta) 
\]
for every $\eta \in E_k \ot E_m$. Hence we see from Lemma \ref{l:sigma_propI} and Lemma \ref{l:sigma_propII} that
\[
\begin{split}
(v^{TB})^* v^{TB}( \si^j \io_{k -j, m -j} \xi)
& = \frac{1}{\mu_{k+1}} \si^* \si^{j+1} \io_{k-j,m-j} \xi
= \big( 1 - \frac{d_{k-j} d_{m-j-1}}{d_{k+1} d_m } \big) \cd \si^j \io_{k-j,m-j} \xi \\
& = \Ga_{k,m}(\si^j \io_{k-j,m-j} \xi ) .
\end{split}
\]
This proves the present lemma.
\end{proof}

It follows from Lemma \ref{l:WinM2} and Lemma \ref{l:gamma} that the positive bounded operator $\Ga : F \hot F \to F \hot F$ defines a positive bounded adjointable operator $\Ga : F \hot \B T \to F \hot \B T$.

\begin{lemma}\label{l:gammadense}
The image of the positive bounded adjointable operator $\Ga : F \hot \B T \to F \hot \B T$ contains the dense submodule $\s X = \Falg \ot \B T \su F \hot \B T$. 
\end{lemma}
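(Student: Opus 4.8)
The plan is to reduce the statement to a positive lower bound on $\Ga$ after cutting it down to the finite-rank pieces of the Fock leg. First I would observe that $\Ga$ respects the decomposition $F\hot F=\bop_{k,l}E_k\ot E_l$, acting as $\Ga_{k,l}:=\Ga|_{E_k\ot E_l}$; in particular, for every $m\in\nn_0$ the Hilbert-space operator $\Ga$ commutes with $Q_m^F\ot 1_F$, and hence, viewing $\Ga$ as left composition with $\Ga$ on $\B L(F,F\hot F)\supseteq F\hot\B T$, the adjointable operator $\Ga\in\B L(F\hot\B T)$ (cf. Lemma \ref{l:WinM2} and Lemma \ref{l:gamma}) commutes with the adjointable projection $Q_m\hot 1_{\B T}\in\B L(F\hot\B T)$ (note $Q_m\in\B K(F)\su\B T$). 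Therefore $\Ga$ restricts to a positive bounded adjointable operator $\Ga_m$ on the complemented submodule $E_m\hot\B T\su F\hot\B T$. Since $\dim E_m=d_m<\infty$, each $E_m\hot\B T$ coincides with the algebraic tensor product $E_m\ot\B T$, so $\s X=\Falg\ot\B T=\bop_m(E_m\hot\B T)$ is an internal direct sum of submodules and any element of $\s X$ is a finite sum of vectors lying in the various $E_m\hot\B T$. Consequently it suffices to prove that each $\Ga_m$ maps $E_m\hot\B T$ \emph{onto} $E_m\hot\B T$.

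To prove surjectivity of $\Ga_m$ I would show that $\Ga_m\geq c_m\cdot 1$ for some $c_m>0$: then $0\notin\T{spec}(\Ga_m)$, so $\Ga_m$ is invertible in $\B L(E_m\hot\B T)$ and in particular surjective. To obtain the constant $c_m$, realise $E_m\hot\B T$ as a submodule of $\B L(F,E_m\hot F)$ so that $\inn{\Ga_m(T),T}=T^*\Ga_m^{\T{Hilb}}T$, where $\Ga_m^{\T{Hilb}}\in\B L(E_m\hot F)$ is the Hilbert-space restriction of $\Ga$ to $E_m\hot F=\bop_{l\geq 0}E_m\ot E_l$, whose block-diagonal pieces are the $\Ga_{m,l}$. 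If $\Ga_m^{\T{Hilb}}\geq c_m\cdot 1$ on this Hilbert space, then writing $\Ga_m^{\T{Hilb}}-c_m=B^*B$ gives $\inn{\Ga_m(T),T}-c_m\inn{T,T}=(BT)^*(BT)\geq 0$, so indeed $\Ga_m\geq c_m$ on the module.

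It remains to bound $\Ga_m^{\T{Hilb}}$ below by a positive constant. By \eqref{eq:gammadef} together with the orthogonal decomposition of Theorem \ref{t:iso_fusion}, the operator $\Ga_{m,l}$ on $E_m\ot E_l$ has spectrum
\[
\Big\{\, 1-\frac{d_{m-j}\,d_{l-j-1}}{d_{m+1}\,d_l}\ :\ 0\leq j\leq \min\{m,l\}\,\Big\}\qquad(\text{with }d_{-1}=0),
\]
so I must check that $\inf_{l\geq 0}\min_{j}\big(1-\frac{d_{m-j}d_{l-j-1}}{d_{m+1}d_l}\big)>0$ for each fixed $m$. Fix $j\in\{0,1,\ldots,m\}$; telescoping gives $d_{l-j-1}/d_l=\prod_{i=0}^{j}\big(d_{l-j-1+i}/d_{l-j+i}\big)$, which by Lemma \ref{l:quocon} tends to $\ga_n^{\,j+1}$ as $l\to\infty$, so $1-\frac{d_{m-j}d_{l-j-1}}{d_{m+1}d_l}$ tends to $1-\frac{d_{m-j}}{d_{m+1}}\,\ga_n^{\,j+1}$, which is strictly positive since $d_{m-j}<d_{m+1}$ by strict monotonicity of $\{d_r\}$ and $\ga_n\leq 1$. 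Each term of this sequence is itself strictly positive (being an eigenvalue of the invertible positive operator $\Ga_{m,l}$), and a convergent sequence of positive reals with a positive limit has a strictly positive infimum; taking the minimum over the finitely many admissible $j$ yields the desired $c_m>0$, completing the argument.

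The main obstacle is this last uniform estimate, and especially the case $n=1$, where $\ga_n=1$: one cannot then defeat the leading $1$ by a power of $\ga_n$ uniformly in $j$, and must instead use that for fixed $m$ only finitely many $j$ occur and that, for each of them, the relevant quantities form a sequence of positive numbers with a positive limit. A secondary point to treat with the usual care is the passage, in the second step, from a Hilbert-space lower bound on $\Ga_m^{\T{Hilb}}$ to a Hilbert-module lower bound on $\Ga_m$, which is what forces one to spell out the $C^*$-module inner product through operator composition.
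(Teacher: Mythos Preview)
Your proposal is correct and follows the same overall strategy as the paper: localise to the slice $E_k \hot \B T$, obtain a uniform lower bound on the restricted operator, and conclude invertibility there. The differences are only in execution. First, you treat each eigenvalue index $j$ separately and invoke a limit argument in the second Fock index $l$, worrying in particular about $n=1$; the paper instead observes that since $\{d_r\}$ is strictly increasing, the product $d_{k-j}d_{l-j-1}$ is maximised at $j=0$, so the minimal eigenvalue of $\Ga_{k,l}$ is always $1-\tfrac{d_k d_{l-1}}{d_{k+1}d_l}\geq 1-\tfrac{d_k}{d_{k+1}}$, a bound uniform in $l$ with no limit or case distinction needed. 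Second, to pass from the Hilbert-space lower bound to the Hilbert-module statement you write out the inner product via operator composition and factor through a square root; the paper simply notes that $\Ga(Q_k\ot 1_F)+(1_F-Q_k)\ot 1_F$ is an invertible element of $\B L(F\hot F)$ which happens to lie in the unital $C^*$-subalgebra $\B L(F\hot\B T)$, and hence is invertible there by spectral permanence. Both shortcuts are worth knowing: the first removes the apparent $n=1$ obstacle entirely, and the second is the standard device for transferring invertibility from a Hilbert space to a Hilbert $C^*$-module.
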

\begin{proof}
Let us fix a $k \in \nn_0$ and show that $E_k \ot \B T \su \T{Im}(\Ga)$. We recall that $Q_k : F \to F$ denotes the orthogonal projection with image $E_k \su F$. It then follows from the definition of $\Ga : F \hot F \to F \hot F$ that the bounded operator
\[
\Ga (Q_k \ot 1_F) + (1_F - Q_k) \ot 1_F = (Q_k \ot 1_F) \Ga (Q_k \ot 1_F) + (1_F - Q_k) \ot 1_F : F \hot F \to F \hot F 
\]
has a bounded inverse. Indeed, for every $m \in \nn_0$ it holds that $\Ga_{k,m} : E_k \ot E_m \to E_k \ot E_m$ is invertible with $\| \Ga_{k,m}^{-1} \| \leq ( 1 - \frac{d_k }{d_{k + 1}} )^{-1}$. Now, since the invertible  bounded operator $\Ga (Q_k \ot 1_F) + (1_F - Q_k) \ot 1_F \in \B L(F \hot F)$ belongs to the unital $C^*$-subalgebra $\B L(F \hot \B T) \su \B L(F \hot F)$ we obtain that the bounded adjointable operator $\Ga (Q_k \ot 1_{\B T}) + (1_F - Q_k) \ot 1_{\B T} : F \hot \B T \to F \hot \B T$ is invertible as well. But this shows that 
\[
E_k \ot \B T = \T{Im}( \Ga (Q_k \ot 1_{\B T}) ) \su \T{Im}(\Ga) . \qedhere
\] 
\end{proof}

As a consequence of Lemma \ref{l:gammadense} we obtain that $\Ga^{-1} : \T{Im}(\Ga) \to F \hot \B T$ is an unbounded positive and regular operator on the Hilbert $C^*$-module $F \hot \B T$. Moreover, we see from the proof of Lemma \ref{l:gammadense} that the domain of $\Ga^{-1}$ contains the algebraic tensor product $\s X = \Falg \ot \B T$.


\begin{lemma}\label{l:image}
The closure of $v^{TB} \Ga^{-1/2} : \T{Im}(\Ga^{1/2}) \to F \hot \B T$ is a bounded adjointable isometry $\Te : F \hot \B T \to F \hot \B T$ and the associated orthogonal projection $\Te \Te^* \in \B L(F \hot \B T)$ agrees with $\Pi \in \B L(F \hot F)$ (upon suppressing the inclusion $\B L(F \hot \B T) \su \B L(F \hot F)$).
\end{lemma}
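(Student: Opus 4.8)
The plan is to produce $\Te$ as the partial isometry in a polar decomposition of $v^{TB}$ and then identify its range projection with $\Pi$ via the fusion rules. First I would assemble the two facts already available: by Lemma \ref{l:gamma} we have $(v^{TB})^* v^{TB} = \Ga$, and by Lemma \ref{l:gammadense} the positive bounded adjointable operator $\Ga$ on $F \hot \B T$ has image containing the dense submodule $\s X = \Falg \ot \B T$; in particular $\T{Im}(\Ga^{1/2}) \supseteq \T{Im}(\Ga) \supseteq \s X$ is dense. Feeding $y := v^{TB}$, $X := F \hot \B T$, $\s X := \Falg \ot \B T$ and $x_j := T_j^* \ot 1_{\B T}$ into Proposition \ref{p:polar} (the inclusions $x_j(\s X), x_j^*(\s X), y^*(\s X) \su \s X$ having been checked above), we obtain that the closure $\Te$ of $v^{TB} \Ga^{-1/2} : \T{Im}(\Ga^{1/2}) \to F \hot \B T$ is a bounded adjointable operator with $v^{TB} = \Te\, \Ga^{1/2}$ and with $\Te^* \Te$ equal to the orthogonal projection onto $\overline{\T{Im}(\Ga^{1/2})} = F \hot \B T$. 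Hence $\Te$ is an isometry, and consequently $\Te \Te^* \in \B L(F \hot \B T)$ is the orthogonal projection onto $\overline{\T{Im}(\Te)} = \overline{\T{Im}(v^{TB})}$, where the last equality uses that $\Ga^{1/2}$ has dense range.

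It therefore remains to establish the identity $\overline{\T{Im}(v^{TB})} = G^\perp$ inside $F \hot F$, for then $\Te \Te^*$ (viewed inside $\B L(F \hot F)$) and $\Pi$ are the same orthogonal projection. By Lemma \ref{l:sigteebot} the restriction of $v^{TB}$ to $E_k \ot E_m$ is a nonzero scalar multiple of $\si_{k,m} : E_k \ot E_m \to E_{k+1} \ot E_{m+1}$; since the subspaces $E_k \ot E_m$ are mutually orthogonal and finite-dimensional in $F \hot F$, this gives $\overline{\T{Im}(v^{TB})} = \bop_{k,m \geq 0} \si_{k,m}(E_k \ot E_m)$. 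To compute $\si_{k,m}(E_k \ot E_m)$ I would invoke Theorem \ref{t:iso_fusion}, which expresses $E_k \ot E_m = \bop_{j=0}^{\min\{k,m\}} \si^j \io_{k-j,m-j}(E_{k+m-2j})$ as an orthogonal sum, each summand being the image of the isometry $W^j_{k,m}$, a nonzero multiple of $\si^j \io_{k-j,m-j}$ (isometric by Lemma \ref{l:sigma_propII}). Applying $\si_{k,m}$ term by term carries the $j$-th summand isomorphically onto $\si^{j+1} \io_{k-j,m-j}(E_{k+m-2j})$, which is precisely the $(j+1)$-st summand in the corresponding decomposition of $E_{k+1} \ot E_{m+1}$. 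Since $j$ runs over $0,\dots,\min\{k,m\}$ and $\min\{k,m\}+1 = \min\{k+1,m+1\}$, the image $\si_{k,m}(E_k \ot E_m)$ exhausts every summand of $E_{k+1} \ot E_{m+1}$ except the $0$-th one, i.e. it equals $(E_{k+1} \ot E_{m+1}) \ominus \io_{k+1,m+1}(E_{k+m+2})$.

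Summing over $k,m \geq 0$ and re-indexing yields $\overline{\T{Im}(v^{TB})} = \bop_{k,m \geq 1}\big( (E_k \ot E_m) \ominus \io_{k,m}(E_{k+m})\big)$. On the other hand, $G$ is by definition the closed span of the subspaces $\io_{k,m}(E_{k+m})$; as these lie in the mutually orthogonal spaces $E_k \ot E_m$, and since $\io_{0,m}$ and $\io_{m,0}$ are the canonical identifications (so the degenerate indices contribute the full spaces $E_0 \ot E_m$ and $E_m \ot E_0$), we get $G = \bop_{k,m \geq 0} \io_{k,m}(E_{k+m})$ and hence $G^\perp = \bop_{k,m \geq 1}\big( (E_k \ot E_m) \ominus \io_{k,m}(E_{k+m})\big)$. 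Comparing the two expressions gives $\overline{\T{Im}(v^{TB})} = G^\perp$, so $\Te \Te^* = \Pi$; in passing this also shows that $\Pi$ preserves $F \hot \B T$, justifying the suppression of the inclusion. I expect the genuine obstacle to lie in the first paragraph: polar decompositions of adjointable operators on Hilbert $C^*$-modules do not exist in general, so the essential work is carried by Proposition \ref{p:polar}, whose applicability rests on the density statement of Lemma \ref{l:gammadense}; the range computation, although it draws on several earlier lemmas, is bookkeeping once one notes that $v^{TB}$ is block-diagonal of degree $(k,m) \mapsto (k+1,m+1)$.
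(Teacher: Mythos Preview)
Your range computation in the second and third paragraphs matches the paper's exactly: both pass from $v^{TB}$ to $\si_{k,m}$ via Lemma~\ref{l:sigteebot} and then use Theorem~\ref{t:iso_fusion} to identify $\si_{k,m}(E_k\ot E_m)$ with $(E_{k+1}\ot E_{m+1})\ominus \io_{k+1,m+1}(E_{k+m+2})$, whence $\overline{\T{Im}(v^{TB})}=G^\perp$.

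The only discrepancy is in how you obtain that $\Te$ is a bounded adjointable isometry. You invoke Proposition~\ref{p:polar}, but that proposition carries hypotheses well beyond the inclusions you list: a positive compact $K$, factorizations $K^{1/2}LK^{1/2}=[x,y]=MK$ and $K^{1/2}\ov LK^{1/2}=[x,y^*]=K\ov M$, and uniform bounds on $K R_\la$ and $K^{1/2}yR_\la$. You verify none of these; in the paper they are established only \emph{after} Lemma~\ref{l:image}, in Lemmas~\ref{l:decomptheta} and~\ref{l:DGamma_inv_sqrt}. The partial conclusion you actually need --- that $y|y|^{-1}$ extends to a bounded adjointable isometry --- does follow from the opening paragraph of the proof of Proposition~\ref{p:polar}, using only $\s X\su\T{Im}(y^*y)$ and $y^*(\s X)\su\s X$, so the substance of your argument is sound; but invoking the full proposition without its full hypotheses is formally a gap. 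The paper avoids this detour entirely: it simply notes that since $\Ga=(v^{TB})^*v^{TB}$ and the domains of $v^{TB}\Ga^{-1/2}$ and of its formal adjoint $(v^{TB}\Ga^{-1/2})^*$ both contain the dense submodule $\Falg\ot\B T$, the closure $\Te$ is a well-defined bounded adjointable isometry --- no appeal to Proposition~\ref{p:polar} at all.
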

\begin{proof}
Since $\Ga = (v^{TB})^* v^{TB}$ and the domains of both $v^{TB} \Ga^{-1/2}$ and $(v^{TB} \Ga^{-1/2})^*$ contain the dense submodule $\Falg \ot \B T$ we obtain that $\Te : F \hot \B T \to F \hot \B T$ is a well-defined bounded adjointable isometry. We now compute the image of $\Te$ considered as a bounded operator on $F \hot F$. This image clearly agrees with the closure of the image of $v^{TB}$ restricted to the algebraic tensor product $\Falg \ot \Falg$. For each $k,m \in \nn_0$ we know that the image of $v^{TB}|_{E_k \ot E_m} : E_k \ot E_m \to E_{k+1} \ot E_{m+1}$ agrees with the image of $\si_{k,m} : E_k \ot E_m \to E_{k+1} \ot E_{m+1}$. However, from Theorem \ref{t:iso_fusion} we see that the image of $\si_{k,m} : E_k \ot E_m \to E_{k+1} \ot E_{m+1}$ agrees with the orthogonal complement of $\io_{k+1,m+1}( E_{k+m + 2}) \su E_{k+1} \ot E_{m+1}$. These observations entail that the image of $\Te : F \hot F \to F \hot F$ agrees with 
\[
\T{span}\{ \io_{k,m}( \xi ) \mid k,m \in \nn_0 \, , \, \, \xi \in E_{k+m} \}^\perp \su F \hot F .
\]
In other words, we have that $\T{Im}(\Te : F \hot F \to F \hot F) = G^\perp = \T{Im}(\Pi)$. This proves the present lemma.
\end{proof}

Let us introduce the compact operator
\[
K := D^{-1} \ot 1_{\B T} : F \hot \B T \to F \hot \B T ,
\]
recalling that the dimension operator $D : \T{Dom}(D) \to F$ was introduced in Definition \ref{d:dim_op}.

Recall that $x_j := T_j^* \ot 1_{\B T}$ and $y := v^{TB} : F \hot \B T \to F \hot \B T$.

\begin{lemma}\label{l:decomptheta}
There exist bounded adjointable operators $L, \ov{L}, M, \ov{M} : F \hot \B T \to F \hot \B T$ such that
\[
K^{1/2} L K^{1/2} = [x_j,y] = M K \q \mbox{and} \q  K^{1/2} \ov{L} K^{1/2} = [x_j,y^*] = K \ov{M} . 
\]
\end{lemma}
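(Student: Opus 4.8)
The plan is to obtain all four factorisations from Proposition~\ref{p:DpcommIII}, once the commutators $[x_j,y]$ and $[x_j,y^*]$ have been recognised as matrix entries of the commutators treated there. Throughout I work on the dense submodule $\s X = \Falg \ot \B T \su F \hot \B T$, which is preserved by $[x_j,y]$, $[x_j,y^*]$ and by every $D^{r} \ot 1_{\B T}$, $r \in \rr$ (by Lemma~\ref{l:WinM2}, \eqref{eq:adjointTB}, and because $D^{r}$ acts as the scalar $d_m^{r}$ on $E_m$). Since $\psi_+(T_j^*) \ot 1_{\B T} = x_j \op x_j$ is diagonal on $(F \hot \B T) \op (F \hot \B T)$, expanding the products against the matrix descriptions \eqref{eq:douumatrix}, \eqref{eq:commtj*} shows that $[x_j,y] = [x_j,v^{TB}]$ is the $(1,2)$-entry of $[\psi_+(T_j^*) \ot 1_{\B T},W]$ and that $[x_j,y^*] = [x_j,(v^{TB})^{*}]$ is the $(2,1)$-entry of $[\psi_+(T_j^*) \ot 1_{\B T},W^{*}]$ (the $(2,1)$-entry of $W^{*}$ being $(v^{TB})^{*}$). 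Hence Proposition~\ref{p:DpcommIII} furnishes, for every $p \in [0,1]$, bounded adjointable extensions on $F \hot \B T$ of the densely defined operators $(D^p \ot 1_{\B T})[x_j,y](D^{1-p} \ot 1_{\B T})$ and $(D^p \ot 1_{\B T})[x_j,y^*](D^{1-p} \ot 1_{\B T})$.

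Next I would record the degree behaviour of the two commutators. By Lemma~\ref{l:sigteebot} (or directly from Lemma~\ref{l:WinM2}), $y = v^{TB}$ maps $E_k \ot \B T$ into $E_{k+1} \ot \B T$ and $x_j = T_j^* \ot 1_{\B T}$ maps $E_k \ot \B T$ into $E_{k-1} \ot \B T$; therefore $[x_j,y]$ preserves the degree in the first tensor leg — it maps $E_m \ot \B T$ into $E_m \ot \B T$, hence commutes with $D \ot 1_{\B T}$ and with $K$ — whereas $[x_j,y^*]$ strictly lowers this degree by two, mapping $E_m \ot \B T$ into $E_{m-2} \ot \B T$, so that it does \emph{not} commute with $K$.

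I would then set $M$ and $\ov{M}$ to be the bounded adjointable extensions (case $p = 1$) of $(D \ot 1_{\B T})[x_j,y]$ and $(D \ot 1_{\B T})[x_j,y^*]$, and $L$, $\ov{L}$ the extensions (case $p = 1/2$) of $(D^{1/2} \ot 1_{\B T})[x_j,y](D^{1/2} \ot 1_{\B T})$ and $(D^{1/2} \ot 1_{\B T})[x_j,y^*](D^{1/2} \ot 1_{\B T})$. On $\s X$ the operators $K^{1/2}(D^{1/2} \ot 1_{\B T})$, $(D^{1/2} \ot 1_{\B T})K^{1/2}$ and $K(D \ot 1_{\B T})$ all restrict to the identity, which gives immediately $K^{1/2}LK^{1/2} = [x_j,y]$, $K^{1/2}\ov{L}K^{1/2} = [x_j,y^*]$ and $K\ov{M} = K(D \ot 1_{\B T})[x_j,y^*] = [x_j,y^*]$ on $\s X$; for the remaining identity $MK = [x_j,y]$ one additionally uses that $[x_j,y]$ commutes with $K$, so that $MK = KM = K(D \ot 1_{\B T})[x_j,y] = [x_j,y]$ on $\s X$. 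Since all sides are bounded and $\s X$ is dense, the four identities hold on $F \hot \B T$.

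The only point requiring care is this matching of constants: the exponents $p = 1$ and $p = 1/2$ have to be chosen so that Proposition~\ref{p:DpcommIII} supplies the boundedness, and at the same time $K$ must sit on the right in $MK$ but on the \emph{left} in $K\ov{M}$ (with $D^{1/2}$ split symmetrically in $L$ and $\ov{L}$); this is forced by the degree observation of the second step, since $[x_j,y^*]$ strictly lowers the degree by two and one checks that $\ov{M}K \neq [x_j,y^*]$ in general. Everything else reduces to scalar bookkeeping on each $E_m \ot \B T$.
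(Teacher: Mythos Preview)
Your proof is correct and follows essentially the same route as the paper: both reduce everything to Proposition~\ref{p:DpcommIII}, taking $p=1/2$ for $L,\ov{L}$ and defining $\ov{M}$ as the bounded extension of $(D\ot 1_{\B T})[x_j,y^*]$. The only difference is that for $M$ the paper chooses $p=0$, i.e.\ $M$ is the extension of $[x_j,y](D\ot 1_{\B T})$, which gives $MK=[x_j,y]$ immediately without your degree-preservation/commutation detour; your choice $p=1$ with the observation that $[x_j,y]$ commutes with $K$ is a valid alternative but slightly less direct.
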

\begin{proof}
This follows immediately from Proposition \ref{p:DpcommIII}. Firstly, $L$ and $\ov{L}$ are the bounded adjointable extensions of $(D^{1/2} \ot 1_{\B T})[T_j^* \ot 1_{\B T},v^{TB}](D^{1/2} \ot 1_{\B T})$ and $(D^{1/2} \ot 1_{\B T})[T_j^* \ot 1_{\B T},(v^{TB})^*](D^{1/2} \ot 1_{\B T})$, respectively. Secondly, $M$ and $\ov{M}$ are the bounded adjointable extensions of $[T_j^* \ot 1_{\B T},v^{TB}](D \ot 1_{\B T})$ and $(D \ot 1_{\B T})[T_j^* \ot 1_{\B T},(v^{TB})^*]$, respectively. It is here understood that all the involved unbounded operators are defined on the algebraic tensor product $\Falg \ot \B T$ even though this is not properly reflected in the notation.
\end{proof}

In order to apply Proposition \ref{p:polar} we still have to control the growth of the resolvent $R_\la := (\la + (v^{TB})^* v^{TB})^{-1}$ as the parameter $\la > 0$ approaches zero.

\begin{lemma}\label{l:DGamma_inv_sqrt}
It holds that $(D^{-1} \ot 1_{\B T}) (v^{TB})^* v^{TB}  = (v^{TB})^* v^{TB} (D^{-1} \ot 1_{\B T})$. Moreover, there exists a constant $C > 0$ such that
\[
\| (D^{-1} \ot 1_{\B T}) R_\la \| \leq C \q \mbox{and} \q \| (D^{-1/2} \ot 1_{\B T}) v^{TB} R_\la \| \leq C
\]
for all $\la > 0$.
\end{lemma}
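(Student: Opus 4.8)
The plan is to reduce the statement to a blockwise spectral estimate for the positive operator $(v^{TB})^* v^{TB}$, which Lemma~\ref{l:gamma} already identifies with the operator $\Ga : F \hot F \to F \hot F$. Recall from \eqref{eq:gammadef} and Theorem~\ref{t:iso_fusion} that $\Ga$ preserves each finite-dimensional summand $E_k \ot E_m \su F \hot F$ and that, under the fusion decomposition $W_{k,m}$, the restriction $\Ga_{k,m}$ is diagonal with eigenvalues $\ga_{k,m,j} = 1 - d_{k-j} d_{m-j-1}/(d_{k+1} d_m)$ for $0 \le j \le \min\{k,m\}$. The commutation statement is then immediate: $D^{-1} \ot 1_{\B T}$ acts on the summand $E_k \ot E_m$ as the scalar $d_k^{-1}$, so it commutes with $\Ga_{k,m}$ on every summand, and therefore $(D^{-1} \ot 1_{\B T})(v^{TB})^* v^{TB} = (v^{TB})^* v^{TB}(D^{-1} \ot 1_{\B T})$.

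For the resolvent bounds, the main step is to pin down the smallest eigenvalue of $\Ga_{k,m}$. Since the sequence $\{d_l\}$ is strictly increasing with $d_{-1} = 0$, the product $d_{k-j} d_{m-j-1}$ is non-increasing in $j$; hence $\ga_{k,m,j}$ is smallest at $j = 0$, and by the identity $d_{k+1} d_m - d_k d_{m-1} = d_{k+m+1}$ from \eqref{eq:did4} we get
\[
\la_{k,m} := \min\nolimits_j \ga_{k,m,j} = 1 - \frac{d_k d_{m-1}}{d_{k+1} d_m} = \frac{d_{k+m+1}}{d_{k+1} d_m} > 0 .
\]
All the operators occurring in the lemma are block operators for the orthogonal decomposition $F \hot F = \bop_{k,m} E_k \ot E_m$ — with $v^{TB}$ shifting the block $E_k \ot E_m$ to $E_{k+1} \ot E_{m+1}$ — so their norms are suprema of the norms of the blocks. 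Using $\| R_\la|_{E_k \ot E_m} \| = (\la + \la_{k,m})^{-1} \le \la_{k,m}^{-1}$ one gets $\| (D^{-1} \ot 1_{\B T}) R_\la|_{E_k \ot E_m} \| \le (d_k \la_{k,m})^{-1} = d_{k+1} d_m/(d_k d_{k+m+1})$, which is bounded by $n+1$ because $d_{k+1} \le (n+1) d_k$ (from the recursion \eqref{eq:d_m_rec}) and $d_m \le d_{k+m+1}$. Since $(v^{TB})^* v^{TB} = \Ga$ commutes with $R_\la$, one similarly obtains $\| (D^{-1/2} \ot 1_{\B T}) v^{TB} R_\la|_{E_k \ot E_m} \|^2 \le d_{k+1}^{-1} \| \Ga_{k,m}(\la + \Ga_{k,m})^{-2} \| \le (d_{k+1} \la_{k,m})^{-1} = d_m/d_{k+m+1} \le 1$, using $t(\la+t)^{-2}\le t^{-1}$ for $t>0$. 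Thus $C := n+1$ works for both estimates, uniformly in $\la > 0$.

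Finally, these inequalities pass from the Hilbert space $F \hot F$ to the Hilbert $C^*$-module $F \hot \B T$ without change, since $D^{-1} \ot 1_{\B T} = K$, $v^{TB}$ (Lemma~\ref{l:WinM2}), and therefore $R_\la$ all belong to $\B L(F \hot \B T)$, which sits inside $\B L(F \hot F)$ as a $C^*$-subalgebra, hence isometrically. The one point requiring care is that the bound must be uniform in $\la$ \emph{and} in the pair $(k,m)$: the eigenvalue $\la_{k,m}$ decays to $0$ as $k,m \to \infty$, and it is exactly the cancellation in $d_{k+1} d_m/(d_k d_{k+m+1})$ made visible by \eqref{eq:did4}, together with the crude estimate $d_m \le d_{k+m+1}$, that keeps $d_k \la_{k,m}$ and $d_{k+1} \la_{k,m}$ bounded below. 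I expect this bookkeeping — rather than any conceptual difficulty — to be the only obstacle.
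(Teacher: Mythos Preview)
Your proof is correct and follows essentially the same blockwise spectral strategy as the paper: both arguments rest on the fact that $\Ga = (v^{TB})^* v^{TB}$ is diagonalised by the fusion decomposition of Theorem~\ref{t:iso_fusion}, with smallest eigenvalue $1 - d_k d_{m-1}/(d_{k+1} d_m)$ on $E_k \ot E_m$, and both arrive at the bound $C = n+1$. The one genuine difference is in how the second estimate is obtained: the paper first rewrites $(D^{-1/2} \ot 1_{\B T}) v^{TB} R_\la = v^{TB} R_\la (D^{-1/2} \Phi^{1/2} \ot 1_{\B T})$ via the intertwining relation $D^{-1/2} v^{TB} = v^{TB} \Phi^{1/2} D^{-1/2}$ and then reduces to the first estimate, whereas you bound $d_{k+1}^{-1} \| \Ga_{k,m}(\la + \Ga_{k,m})^{-2} \|$ directly using $t(\la+t)^{-2} \le t^{-1}$ and the identity $d_{k+1} d_m - d_k d_{m-1} = d_{k+m+1}$ from \eqref{eq:did4}. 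Your route is slightly more self-contained and yields the sharper constant $1$ for the second bound; the paper's route avoids invoking \eqref{eq:did4} in favour of the cruder monotonicity $d_{m-1}/d_m \le 1$.
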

\begin{proof}
It follows from the definitions of $\Ga = (v^{TB})^* v^{TB}$ and $D^{-1} \ot 1_{\B T} : F \hot \B T \to F \hot \B T$ that these two operators commute. Moreover, similarly to the proof of Proposition \ref{p:DpcommIII}, we obtain that $(D^{-1/2} \ot 1_{\B T}) v^{TB} (D^{1/2} \ot 1_{\B T}) : \Falg \ot \B T \to F \hot \B T$ extends to the bounded adjointable operator $v^{TB}(\Phi^{1/2} \ot 1_{\B T})$. This implies that
\[
(D^{-1/2} \ot 1_{\B T}) v^{TB} R_\la = v^{TB} (\Phi^{1/2}D^{-1/2} \ot 1_{\B T}) R_\la
= v^{TB} R_\la (D^{-1/2} \Phi^{1/2} \ot 1_{\B T}) . 
\]
It therefore suffices to estimate the quantity $\| (D^{-1} \ot 1_{\B T}) R_\la \|$ for all $\la > 0$. 

Let $\la > 0$ and $k,m \in \nn_0$ be given. We remark that $E_k \ot E_m$ is an invariant subspace for the selfadjoint operator $(D^{-1} \ot 1_F) R_\la : F \hot F \to F \hot F$. The restriction to this subspace is given by
\[
d_k^{-1} (\la + \Ga_{k,m})^{-1} : E_k \ot E_m \to E_k \ot E_m .
\]
Using the description of $\Ga_{k,m} : E_k \ot E_m \to E_k \ot E_m$ from \eqref{eq:gammadef} we then obtain that
\[ 
\begin{split}
\| d_k^{-1} (\la + \Ga_{k,m})^{-1} \| & \leq \|  d_k^{-1}\Gamma^{-1}_{k,m} \| 
 = d_k^{-1}  \cd \left( 1 - \frac{d_k d_{m-1}}{d_{k+1} d_m} \right)^{-1} \\
& \leq d_k^{-1}  \cd \left( 1 - \frac{d_k }{d_{k+1}} \right)^{-1} 
= \frac{d_{k+1}}{d_k} \cd (d_{k+1} - d_k)^{-1} \leq n + 1 .
\end{split}
\]
Remark that we are here applying the recursive definition of the sequence $\{d_l\}_{l = 0}^\infty$ from \eqref{eq:d_m_rec} together with Lemma \ref{l:quocon} which ensures that $d_{k+1} - d_k \geq 1$ for all $k \in \nn_0$. 
%
\end{proof}

We are now ready to establish the main result of this subsection:

\begin{prop}\label{p:orthinadj}
The unbounded operator $v^{TB} |v^{TB}|^{-1} : \T{Im}( |v^{TB}|) \to F \hot \B T$ extends to a bounded adjointable isometry $\Te : F \hot \B T \to F \hot \B T$ satisfying that 
\begin{enumerate}
\item the commutator $[ \Te, x \ot 1_{\B T} ] : F \hot \B T \to F \hot \B T$ is a compact operator for all $x \in \B T$;
\item the composition $\Te \Te^*$ agrees with the orthogonal projection $\Pi : F \hot \B T \to F \hot \B T$.
\end{enumerate}
In particular, we obtain that $[x \ot 1_{\B T}, \Pi] \in \B K( F \hot \B T)$ for all $x \in \B T$.
\end{prop}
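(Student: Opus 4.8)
The plan is to invoke the abstract polar-decomposition result, Proposition~\ref{p:polar}, with $X := F \hot \B T$, with $y := v^{TB}$ viewed as a bounded adjointable operator on $X$ (which is licit by Lemma~\ref{l:WinM2}), with dense submodule $\s X := \Falg \ot \B T$ and with generators $x_j := T_j^* \ot 1_{\B T}$, $j = 0,\dots,n$, recalling that $\s X$ is invariant under $x_j$, $x_j^*$ and $y^*$. All the hypotheses of that proposition have been secured in the preceding lemmas: $y^*y = \Gamma$ (Lemma~\ref{l:gamma}) has dense range containing $\s X$ (Lemma~\ref{l:gammadense}), so $|y|^{-1} = \Gamma^{-1/2}$ is a positive regular operator with $\s X$ in its domain; the commutators $[x_j,y]$ and $[x_j,y^*]$ factor through the compact operator $K = D^{-1}\ot 1_{\B T}$ as in Lemma~\ref{l:decomptheta}; and the resolvents $R_\la = (\la + \Gamma)^{-1}$ satisfy the uniform bounds $\|K R_\la\| \leq C$, $\|K^{1/2} y R_\la\| \leq C$ of Lemma~\ref{l:DGamma_inv_sqrt}, with $K$ commuting with $R_\la$. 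Granting Proposition~\ref{p:polar}, it outputs precisely the isometry $\Theta := \overline{v^{TB}|v^{TB}|^{-1}}$ together with claim~(1), that $[\Theta, x_j]$ is compact for every $j$; claim~(2), $\Theta\Theta^* = \Pi$, is already Lemma~\ref{l:image}; and claim~(3) is then immediate, since for $x \in \B T$ the Leibniz rule gives $[x\ot 1_{\B T},\Pi] = [x\ot 1_{\B T},\Theta]\Theta^* + \Theta[x\ot 1_{\B T},\Theta^*]$, which is compact as soon as $[x\ot 1_{\B T},\Theta]$ is compact for all $x$ — and the latter follows from~(1) because $x \mapsto [x\ot 1_{\B T},\Theta]$ is a bounded $*$-derivation of $\B T$ that lands in the ideal $\B K(F\hot\B T)$ on the generating set $\{1_F, T_0^*,\dots,T_n^*\}$.

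It is worth recording how the content of Proposition~\ref{p:polar} unfolds here. Writing $\Theta\xi = \tfrac1\pi\int_0^\infty \la^{-1/2} y R_\la\,\xi\,\d\la$ for $\xi \in \s X$ — valid on the dense domain via the functional calculus identity $\Gamma^{-1/2} = \tfrac1\pi\int_0^\infty \la^{-1/2}R_\la\,\d\la$ on $\T{Im}(\Gamma^{1/2})$ — one expands
\[
[x_j, yR_\la] = [x_j,y]R_\la - yR_\la\big([x_j,y^*]\,y + y^*\,[x_j,y]\big)R_\la
\]
and substitutes $[x_j,y] = MK = K^{1/2}LK^{1/2}$, $[x_j,y^*] = K\ov M = K^{1/2}\ov L K^{1/2}$ from Lemma~\ref{l:decomptheta}, where both $K = D^{-1}\ot 1_{\B T}$ and $K^{1/2} = D^{-1/2}\ot 1_{\B T}$ lie in $\B K(F\hot\B T)$ (as $D^{-1},D^{-1/2}$ are compact on $F$). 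Every summand of $[x_j,yR_\la]$ thus acquires a factor of a compact operator, so is compact for each fixed $\la>0$; and the bounds of Lemma~\ref{l:DGamma_inv_sqrt} (together with $\|R_\la\|\leq 1/\la$ for large $\la$) show that $\|[x_j,yR_\la]\|$ is $O(1)$ as $\la\to 0^+$ and $O(\la^{-1})$ as $\la\to\infty$, so that $\tfrac1\pi\int_0^\infty \la^{-1/2}[x_j,yR_\la]\,\d\la$ converges in operator norm. A norm-limit of compact operators being compact, this is exactly $[\Theta, x_j] \in \B K(F\hot\B T)$. For claim~(2) one computes the image of $\Theta$ inside $\B L(F\hot F)$: it is the closure of $v^{TB}(\Falg\ot\Falg)$, which by Lemma~\ref{l:sigteebot} equals fibrewise the image of $\si_{k,m}$, and by Theorem~\ref{t:iso_fusion} that image is the orthogonal complement of $\io_{k+1,m+1}(E_{k+m+2})$ in $E_{k+1}\ot E_{m+1}$; summing over $k,m$ identifies $\T{Im}(\Theta)$ with $G^\perp = \T{Im}(\Pi)$.

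The delicate point — and the reason all the $D$-weighted estimates are needed — is that $\Gamma = (v^{TB})^*v^{TB}$ need not be bounded below. Indeed for $n=1$ one has $\ga_1 = 1$ by Lemma~\ref{l:quocon}, so the eigenvalues $1 - d_k d_{m-1}/(d_{k+1}d_m)$ of $\Gamma$ accumulate at $0$; hence $\|yR_\la\|$ is only $O(\la^{-1/2})$ near $\la=0$, and $\int_0^\infty \la^{-1/2}yR_\la\,\d\la$ converges strongly but \emph{not} in operator norm, so $\Theta$ cannot be produced directly as a norm-convergent integral. The saving feature is that the \emph{commutator} $[x_j,\Theta]$ gains an extra factor of the compact operator $K$ from Lemma~\ref{l:decomptheta}, while $\|KR_\la\|$ stays bounded down to $\la=0$ by Lemma~\ref{l:DGamma_inv_sqrt}; this is what turns a merely strongly convergent integral into a norm-convergent integral of compacts. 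The technical heart — abstracted into Proposition~\ref{p:polar} — is the careful bookkeeping of on which side of $v^{TB}$ the powers of $D$ sit, using that $D$ does not commute with $v^{TB}$ but satisfies $(D^{-1/2}\ot 1)v^{TB}(D^{1/2}\ot 1) = v^{TB}(\Phi^{1/2}\ot 1)$ up to the obvious identification, so that $D$-weights can always be traded for bounded powers of the invertible operator $\Phi \in \B T$.
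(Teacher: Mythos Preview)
Your proof is correct and follows essentially the same route as the paper: reduce claim~(1) to the generators $T_j^*$ (and their adjoints) and invoke Proposition~\ref{p:polar} with the inputs supplied by Lemmas~\ref{l:gamma}, \ref{l:gammadense}, \ref{l:decomptheta} and \ref{l:DGamma_inv_sqrt}; take claim~(2) from Lemma~\ref{l:image}; and deduce the compactness of $[x\ot 1_{\B T},\Pi]$ from~(1) and~(2). Your additional paragraphs unpacking the integral formula and the $n=1$ subtlety are accurate and match the paper's own post-proof remark; the only cosmetic point is that ``bounded $*$-derivation'' is slightly imprecise --- what you actually use is that $\{x\in\B T : [x\ot 1_{\B T},\Te]\in\B K\}$ is a norm-closed subalgebra (Leibniz) containing both $T_j$ and $T_j^*$ (from the two conclusions of Proposition~\ref{p:polar}), hence all of $\B T$.
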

\begin{proof}
The claim in $(2)$ was already verified in Lemma \ref{l:image}. The claim regarding the commutator with $\Pi$ follows immediately from $(1)$ and $(2)$ and the fact that $\Te$ is a bounded adjointable operator. So we focus on the claim in $(1)$. It suffices to establish this claim for the generators $T_j^*$ and $T_j$, $j \in \{0,1,\ldots,n\}$. But this is a consequence of Proposition \ref{p:polar} upon applying Lemma \ref{l:gamma}, Lemma \ref{l:gammadense}, Lemma \ref{l:decomptheta} and Lemma \ref{l:DGamma_inv_sqrt}.
\end{proof}

\begin{rem}
For $n > 1$ it can be proved that $\Ga : F \hot F \to F \hot F$ has a bounded inverse. It then follows from Lemma \ref{l:WinM2} and Lemma \ref{l:gamma} that $\Ga^{-1} \in \B L(F \hot F)$ defines a positive bounded adjointable operator on the standard module $F \hot \B T$. We therefore immediately obtain that the isometry $\Te = v^{TB} \Ga^{-1/2}$ lies in $\B L(F \hot \B T)$ as well. Remark now that the set of bounded adjointable operators on $F \hot \B T$ which commutes up to compact operators with all operators of the form $x \ot 1_{\B T}$ for $x \in \B T$ form a unital $C^*$-subalgebra of $\B L(F \hot \B T)$. This observation together with Lemma \ref{l:gamma} and Proposition \ref{p:compactdouu} then allow us to conclude that $\Te \in \B L(F \hot \B T)$ has this property as well. The situation is more complicated for $n=1$ since the inverse of $\Ga : F \hot F \to F \hot F$ is in fact unbounded and this is the reason for carrying out the more detailed analysis presented in this subsection. 
\end{rem}





\subsection{First step: the classical part}
In the first step of our homotopy between the two quasi-homomorphisms $(\psi_- \ot 1_{\B T} + { (Q_0 \op 0)} \ot M_{\B T}, \psi_- \ot 1_{\B T})$ and $(\psi_+ \ot 1_{\B T}, \psi_- \ot 1_{\B T})$ we create a homotopy between the two homomorphism 
\[
(Q_0 \op 0) \ot M_{\B T} \, \, \T{and} \, \, \, (\T{inc} \op 0) \ot Q_0 : \B T \to \B L\big( (F \op F) \hot \B T \big) .
\]
This part of the homotopy behaves very much like the classical $U(1)$-case corresponding to Cuntz--Pimsner algebras associated with $C^*$-correspondences, see for instance \cite[Theorem 4.4]{Pim97}. However, since we are working with an $SU(2)$-gauge action instead of a $U(1)$-gauge action it is unreasonable to expect that the $U(1)$-argument would entirely carry over to our situation. Therefore, after this initial step there is still a quite complicated homotopy argument left and this is mainly carried out in Subsection \ref{ss:everything}. 

We recall the definition of the closed subspace $G \su F \hot F$ from \eqref{eq:classsub} and we apply the notation
\[
P := \Pi \op 1_{F \hot F} \in \B L\big( (F \hot F) \op (F \hot F) \big)
\]
for the orthogonal projection onto the closed subspace $G^\perp \op (F \hot F)$. We emphasise that it follows from the definition of the closed subspace $G \su F \hot F$ that the orthogonal projection $\Pi$ onto $G^\perp \su F \hot F$ is $SU(2)$-equivariant.

\begin{lemma}\label{l:WMT}
It holds that  $[W, P] = 0$ and the restriction $W \vert_{\T{Im}(P)} : \T{Im}(P) \to \T{Im}(P)$ is a unitary operator. In fact, we have the identities
\begin{equation}\label{eq:douupi}
v^{TT} (1 - \Pi) = (1 - \Pi) v^{TT} \, \q \T{and} \q \, v^{BT}(1 - \Pi) = 0 = (1 - \Pi) v^{TB}
\end{equation}
among bounded operators on $F \hot F$.
\end{lemma}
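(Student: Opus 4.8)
The plan is to reduce the entire statement to the three operator identities displayed in \eqref{eq:douupi}, since these are literally equivalent to $[W,P]=0$. Writing $1-P=(1-\Pi)\op 0$ and multiplying out the matrix form \eqref{eq:douumatrix} of $W$ on both sides, one finds
\[
W(1-P)=\ma{cc}{v^{TT}(1-\Pi) & 0 \\ v^{BT}(1-\Pi) & 0}, \qquad (1-P)W=\ma{cc}{(1-\Pi)v^{TT} & (1-\Pi)v^{TB} \\ 0 & 0},
\]
so $[W,P]=0$ holds exactly when $v^{TT}(1-\Pi)=(1-\Pi)v^{TT}$, $v^{BT}(1-\Pi)=0$ and $(1-\Pi)v^{TB}=0$. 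After recording this reduction, the task is to verify these three identities.

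Two of them are cheap. The identity $(1-\Pi)v^{TB}=0$ follows from Lemma \ref{l:image} (and its proof), which identifies the closure of $\T{Im}(v^{TB})$ with $G^\perp=\T{Im}(\Pi)$; hence the orthogonal projection $1-\Pi$ onto $G$ annihilates $\T{Im}(v^{TB})$. Taking adjoints gives $(v^{TB})^*(1-\Pi)=0$, and $v^{BT}(1-\Pi)=0$ is then immediate from \eqref{eq:adjointTB}, which writes $v^{BT}=(\Phi^{-1/2}\ot\Phi^{1/2})(v^{TB})^*$.

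The remaining identity — that $v^{TT}$ commutes with $1-\Pi$ — is the heart of the lemma (this is the ``classical part'' of the homotopy, where $v^{TT}$ restricted to $G$ should behave like a shift). I would prove it by showing that both $v^{TT}$ and $(v^{TT})^*$ map the subspace $G$ into itself; preservation of $G^\perp$, and hence commutation with the projection $1-\Pi$ onto $G$, then follows automatically. Concretely, using the entry $v^{TT}=(1\ot\io_L^*)(\io_R\ot 1)$ from \eqref{eq:douumatrix}: for $\xi\in E_{k+m}$ with $k\geq 1$ the coassociativity axiom of Definition \ref{d:sps}(3) gives $(\io_R\ot 1)\io_{k,m}(\xi)=(\io_{k-1,1}\ot 1_m)\io_{k,m}(\xi)=(1_{k-1}\ot\io_{1,m})\io_{k-1,m+1}(\xi)$, and applying $1\ot\io_L^*$ together with the isometry property $\io_{1,m}^*\io_{1,m}=1$ yields $v^{TT}\io_{k,m}(\xi)=\io_{k-1,m+1}(\xi)$ (and $v^{TT}\io_{0,m}(\xi)=0$ since $\io_R|_{E_0}=0$). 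The dual computation, using $(v^{TT})^*=(\io_R^*\ot 1)(1\ot\io_L)$, gives $(v^{TT})^*\io_{k,m}(\xi)=\io_{k+1,m-1}(\xi)$ for $m\geq 1$. Since $G$ is the closed span of the subspaces $\io_{k,m}(E_{k+m})$, this proves $v^{TT}(G)\su G$ and $(v^{TT})^*(G)\su G$. I expect this coassociativity bookkeeping to be the only genuinely delicate point; once it is in place, the identities \eqref{eq:douupi} and hence $[W,P]=0$ are assembled as above.

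For the final assertion that $W|_{\T{Im}(P)}$ is unitary: from $[W,P]=0$ and $P^*=P$ one also has $[W^*,P]=0$, so $W$ restricts to $\T{Im}(P)$. By Lemma \ref{l:conjdouu}, $1-W^*W=(Q_0\ot 1_F)\op 0$ and $1-WW^*=(1_F\ot Q_0)\op 0$, whose ranges are $E_0\ot F$ and $F\ot E_0$; these lie inside $G$ because $\io_{0,m}$ and $\io_{m,0}$ are the canonical identifications (Definition \ref{d:sps}(2)), hence they are orthogonal to $\T{Im}(P)=G^\perp\op(F\hot F)$. Therefore $(1-W^*W)P=(1-WW^*)P=0$, i.e.\ $W^*W$ and $WW^*$ act as the identity on $\T{Im}(P)$, which is exactly the statement that $W|_{\T{Im}(P)}$ is a unitary operator on $\T{Im}(P)$.
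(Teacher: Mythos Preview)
Your proof is correct and the core computation --- showing $v^{TT}\io_{k,m}(\xi)=\io_{k-1,m+1}(\xi)$ and $(v^{TT})^*\io_{k,m}(\xi)=\io_{k+1,m-1}(\xi)$ via coassociativity --- is exactly the computation the paper carries out. The difference is organisational: the paper applies $W$ and $W^*$ directly to the vectors $\io_{k,m}(\xi)\op 0$ spanning $\T{Im}(1-P)$ and reads off all three identities in \eqref{eq:douupi} simultaneously, using only the elementary orthogonality $V_L^*\io_L=0=V_R^*\io_R$ from Propositions~\ref{p:decompright} and~\ref{p:decompleft} to kill the off-diagonal terms. You instead decouple the three identities and handle $(1-\Pi)v^{TB}=0$ and $v^{BT}(1-\Pi)=0$ by invoking Lemma~\ref{l:image} and \eqref{eq:adjointTB}. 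This is legitimate (Lemma~\ref{l:image} is proved earlier and does not depend on the present lemma), but it is a heavier tool than needed: Lemma~\ref{l:image} rests on the fusion-rule machinery of Section~\ref{s:fusion}, whereas the paper's direct route needs only that $V_R$ and $\io_R$ (resp.\ $V_L$ and $\io_L$) have orthogonal ranges. Your argument for the unitarity of $W|_{\T{Im}(P)}$ matches the paper's.
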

\begin{proof}
Let $k,m \in \nn_0$ and $\xi \in E_{k + m}$ be given and consider the vector $\io_{k,m}(\xi) \in { \T{Im}(1 - P)}$. Remark that this kind of vectors span a dense subspace of $\T{Im}(1-P)$. Using the properties of the structure maps for our subproduct system we obtain that
\[
\begin{split}
& (\io_R \ot 1_F) \io_{k,m}(\xi) = (1_F \ot \io_L)\io_{k-1,m+1}(\xi) \q \T{and} \\
& (1_F \ot \io_L) \io_{k,m}(\xi) = (\io_R \ot 1_F)\io_{k+1,m-1}(\xi) ,
\end{split}
\] 
where we apply the convention $\io_{l,-1} = 0 = \io_{l,-1}$ for all $l \in \nn_0$. Since $V_L^* \io_L = 0 = V_R^* \io_R$ and 
$\io_R^* \io_R = 1_F  - Q_0 = \io_L^* \io_L$ we then obtain that
\begin{equation}\label{eq:douushift}
\begin{split}
W\ma{c}{\io_{k,m}(\xi) \\ 0} & = \ma{c}{\io_{k-1,m+1}(\xi) \\ 0} \in \T{Im}(1 - P) \, \, \T{and} \\
W^*\ma{c}{\io_{k,m}(\xi) \\ 0} & = \ma{c}{\io_{k+1,m-1}(\xi) \\ 0} \in \T{Im}(1 - P) ,
\end{split}
\end{equation}
proving the first claim of the lemma together with the identities in \eqref{eq:douupi}. The fact that the restriction $W \vert_{\T{Im}(P)} : \T{Im}(P) \to \T{Im}(P)$ is a unitary operator now follows since both $1 - W^* W = (Q_0 \ot 1_F) \op 0$ and $1 - W W^* = (1_F \ot Q_0) \op 0$ restrict to the zero operator on $\T{Im}(P) \su (F \hot F) \op (F \hot F)$.
\end{proof}

For ease of notation, we put
\[
p_R := 1 - W^* W = \ma{cc}{ Q_0 \ot 1_{\B T} & 0 \\ 0 & 0} \, \, \T{and} \, \, \,
p_L := 1 - W W^* = \ma{cc}{ 1_F \ot Q_0 & 0 \\ 0 & 0} .
\]
For each $t \in (0,\pi/2]$ we then define the $SU(2)$-equivariant bounded adjointable operator
\begin{equation}\label{eq:defuuutee}
\begin{split}
U_t & := - \cos(t) W  + (p_L + \sin(t) W W^*)(1 - \cos(t) W^* )^{-1}(p_R +  \sin(t) W^* W)  \\
& \qq \in M_2\big(\B L(F \hot \B T) \big)  .
\end{split}
\end{equation}
Note that $U_{\pi/2} = 1$. Moreover, we define the $SU(2)$-equivariant bounded adjointable operator
\[
H_t := U_t { (1 - P)}  - W  P  \in M_2\big(\B L(F \hot \B T) \big) \su M_2\big( \B L(F \hot F)\big).
\]
For $t = 0$ we recall from Lemma \ref{l:intertwine} that
\[
H_0 =  -W  + \ma{cc}{ \Si (Q_0 \ot 1_F) & 0 \\ 0 & 0} \in M_2\big( \B L(F \hot F)\big) .
\]

\begin{lemma}\label{l:heiuni}
The $SU(2)$-equivariant bounded operator $H_t \in M_2( \B L(F \hot F) )$ is unitary for all $t \in [0,\pi/2]$.
\end{lemma}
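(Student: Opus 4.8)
The plan is to verify directly that $H_t^* H_t = 1 = H_t H_t^*$ for every $t \in [0,\pi/2]$, using the orthogonal decomposition of $(F \hot F)^{\op 2}$ provided by the $SU(2)$-equivariant projection $P$. Since $[W,P] = 0$ by Lemma \ref{l:WMT}, the operator $H_t$ is block-diagonal with respect to the splitting $\T{Im}(1-P) \op \T{Im}(P)$, and it therefore suffices to check unitarity of each block separately: on $\T{Im}(P)$ the block is $-W|_{\T{Im}(P)}$, which is unitary because $W|_{\T{Im}(P)}$ is unitary (again Lemma \ref{l:WMT}); on $\T{Im}(1-P)$ the block is $U_t|_{\T{Im}(1-P)}$, so the whole point is to show that $U_t$ restricts to a unitary on $\T{Im}(1-P)$.

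For the second block I would first record that $W$ restricted to $\T{Im}(1-P)$ behaves exactly like a (bilateral-type, but here one-sided) shift: from \eqref{eq:douushift} we have $W \io_{k,m}(\xi) \op 0 = \io_{k-1,m+1}(\xi)\op 0$ and $W^* \io_{k,m}(\xi)\op 0 = \io_{k+1,m-1}(\xi)\op 0$, so on this subspace $W$ is a partial isometry with $1 - W^*W = p_R$ and $1 - WW^* = p_L$, where the source and range projections $p_R, p_L$ are mutually orthogonal one-dimensional projections (onto $\io_{0,0}(E_0)\op 0$, viewed on the two sides). This is precisely the classical Toeplitz/shift situation. The formula \eqref{eq:defuuutee} is the standard rotation homotopy connecting $-\cos(t)W + \ldots$ at $t$ to the shift at $t=0$ and to the identity at $t = \pi/2$; the key algebraic identities to exploit are $W^* W W^* = W^*$, $W W^* W = W$, $W p_R = 0 = p_L W$, $W^* p_L = 0 = p_R W^*$, and $p_L p_R = 0$ on $\T{Im}(1-P)$. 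Using these, a direct expansion of $U_t^* U_t$ shows all cross terms cancel and the diagonal terms sum to the identity on $\T{Im}(1-P)$; the computation $U_t U_t^* = 1$ is symmetric. (One should also note that $1 - \cos(t)W^*$ is genuinely invertible for $t \in (0,\pi/2]$ by a Neumann series argument since $\cos(t) < 1$ there and $\|W\| \le 1$, and that the $t=0$ endpoint matches $H_0$ from Lemma \ref{l:intertwine}, which follows since $U_0$ collapses to $-W + \Si(Q_0\ot 1_F)\op 0$ restricted appropriately — this last identification uses that the flip $\Si$ intertwines the two copies of the vacuum line.)

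So the structure of the proof is: (i) invoke $[W,P]=0$ to reduce to the two blocks; (ii) handle the $\T{Im}(P)$ block trivially via unitarity of $W|_{\T{Im}(P)}$; (iii) on $\T{Im}(1-P)$, record the shift relations $1-W^*W = p_R$, $1-WW^* = p_L$, $p_Lp_R = 0$ and the compatibility of $W$ with these; (iv) verify invertibility of $1-\cos(t)W^*$ for $t\in(0,\pi/2]$; (v) expand $U_t^*U_t$ and $U_tU_t^*$ and collect terms to get the identity. I expect the main obstacle to be purely bookkeeping in step (v): the expression for $U_t$ has several summands and one must be careful that the products $W^*W$, $WW^*$, $p_R$, $p_L$ multiply in the expected way on $\T{Im}(1-P)$ — in particular that $p_R$ and $p_L$ act as the defect projections of a shift rather than as general projections. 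There is no deep difficulty; the $SU(2)$-equivariance is automatic throughout because $W$, $P$, $p_R$, $p_L$, $\Si$ and $Q_0$ are all $SU(2)$-equivariant and $U_t$, $H_t$ are built from these by continuous functional calculus and algebraic operations.
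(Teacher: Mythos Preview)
Your overall architecture matches the paper's: split $H_t$ along the decomposition given by $P$ (using $[W,P]=0$), note that $-W|_{\T{Im}(P)}$ is unitary, and reduce to unitarity of $U_t$. Where you diverge is in step (iii)--(v): you plan to verify unitarity of $U_t$ only on $\T{Im}(1-P)$, using special shift-type identities. The paper instead records the cleaner fact that the formula \eqref{eq:defuuutee} defines a unitary $U_t$ on the \emph{whole} space for \emph{any} partial isometry $W$ in a unital $C^*$-algebra (provided $t\in(0,\pi/2]$ so that $1-\cos(t)W^*$ is invertible). Then $H_t^*H_t = U_t^*U_t(1-P) + W^*W\,P = 1$ follows in one line, with no restriction needed.

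Your plan also contains two factual errors that would trip you up if executed as written. First, on $\T{Im}(1-P)$ the defect projections $p_R$ and $p_L$ are \emph{not} one-dimensional: $p_R$ projects onto $\{\io_{0,m}(\xi): m\ge 0,\ \xi\in E_m\}\op\{0\}\cong F$, and likewise for $p_L$. Second, and more seriously, $p_Lp_R \neq 0$ on $\T{Im}(1-P)$: indeed $(Q_0\ot Q_0)(\io_{0,0}(1)) = 1\ot 1 \neq 0$. Since you list $p_Lp_R = 0$ among the ``key algebraic identities to exploit'', a direct expansion relying on it will not close. The unitarity of $U_t$ does hold, but the computation must use only the universal partial-isometry relations ($WW^*W=W$, $Wp_R=0$, $p_RW^*=0$, etc.), not $p_Lp_R=0$. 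Finally, your handling of $t=0$ is off: $U_0$ is simply undefined because $1-W^*$ is not invertible, so the $t=0$ case must be referred directly to Lemma~\ref{l:intertwine} rather than obtained as a limit of $U_t$.
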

\begin{proof}
For $t = 0$ this was already proved in Lemma \ref{l:intertwine}. Thus, let $t \in (0,\pi/2]$ be given. We start by noting that $U_t \in M_2\big( \B L(F \hot \B T)\big)$ is a unitary operator. In fact, a unitary operator like $U_t$ can be constructed from an arbitrary partial isometry $W$ in a unital $C^*$-algebra. It is in this respect crucial that $t \neq 0$ since $(1 - \cos(t) W^*)^{-1}$ would otherwise not be a well-defined bounded operator. Using Lemma \ref{l:WMT}, we then see that
\[
H_t^* H_t = U_t^* U_t (1 - P) + W^* W P = 1 = U_t U_t^* (1 - P) + W W^* P = H_t H_t^* . \qedhere
\]
\end{proof}

\begin{prop}\label{p:cont_op_norm}
Let $j \in \{0,1,\ldots,n\}$. For each $t \in [0,\pi/2]$ we have that
\begin{equation}\label{eq:cont_op_norm}
\begin{split}
& H_t^* \cd ( \psi_+(T_j^*) \ot 1_F) \cdot H_t (1 - P) \\
& \q = \big(W^* W + p_R \cdot \sin(t) \big) \cd \big(\psi_+(T_j^*) \ot 1_F\big) \cd (1 - P) 
+ \cos(t) \cd (1_{F \op F} \ot T_j^*) \cd p_R .
\end{split}
\end{equation}
In particular, it holds that the map
\[
t \mapsto  H_t^* \cd ( \psi_+(T_j^*) \ot 1_F) \cd H_t (1 - P) 
\]
is continuous in operator norm on the interval $[0,\pi/2]$.
\end{prop}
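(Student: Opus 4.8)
The plan is to establish the operator identity \eqref{eq:cont_op_norm} for each fixed $t\in[0,\pi/2]$; the asserted norm-continuity then follows at once, since the right-hand side of \eqref{eq:cont_op_norm} is a trigonometric polynomial in $t$ with fixed bounded-operator coefficients ($W^*W$, $p_R$, $\psi_+(T_j^*)\ot 1_F$, $1-P$ and $1_{F\op F}\ot T_j^*$). I would handle the range $t\in(0,\pi/2]$, where $U_t$ is a genuine unitary built from the partial isometry $W$ (cf. Lemma \ref{l:heiuni}), separately from the endpoint $t=0$, where $H_0$ is the operator of Lemma \ref{l:intertwine}.

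Fix $t\in(0,\pi/2]$. Since $[W,P]=0$ by Lemma \ref{l:WMT}, the operators $p_L=1-WW^*$, $p_R=1-W^*W$, and hence $U_t$, all commute with $P$, so $U_t$ restricts to a unitary of the closed subspace $\T{Im}(1-P)$. Using $WP(1-P)=0$ we get $H_t(1-P)=U_t(1-P)$; moreover $\psi_+(T_j^*)\ot 1_F$ maps $\T{Im}(1-P)$ into itself, because coassociativity of the structure maps yields $(T_j^*\ot 1_F)\io_{k,m}(\xi)=\io_{k-1,m}(T_j^*\xi)$, and $W^*$ preserves $\T{Im}(1-P)$ by \eqref{eq:douushift}. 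Consequently the $-PW^*$ summand of $H_t^*$ drops out when applied after $\psi_+(T_j^*)\ot 1_F$, and
\[
H_t^*(\psi_+(T_j^*)\ot 1_F)H_t(1-P)=U_t^*(\psi_+(T_j^*)\ot 1_F)U_t(1-P).
\]
Writing $A:=\psi_+(T_j^*)\ot 1_F$ and working throughout on $\T{Im}(1-P)$, one reads off from \eqref{eq:douushift} and the coassociativity formula (being careful with the boundary cases at first index $0$) that $[A,W]=0$, $[A,p_L]=0$, and $[A,W^*]=(1_{F\op F}\ot T_j^*)p_R$, this last commutator being supported on the vacuum subspace $\T{Im}(p_R)$. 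Substituting these into $U_t^*AU_t-A=U_t^*[A,U_t]$, expanding $U_t$ by \eqref{eq:defuuutee}, and using the standard resolvent commutator identity $[A,(1-\cos t\,W^*)^{-1}]=\cos t\,(1-\cos t\,W^*)^{-1}[A,W^*](1-\cos t\,W^*)^{-1}$, the geometric series telescopes and leaves
\[
U_t^*AU_t=(W^*W+\sin t\,p_R)A+\cos t\,(1_{F\op F}\ot T_j^*)p_R\qquad\text{on }\T{Im}(1-P).
\]
Both sides of \eqref{eq:cont_op_norm} vanish on $\T{Im}(P)$ — here one uses $p_RP=0$, which holds because $\cc\om\hot F\su G$ — so this is precisely \eqref{eq:cont_op_norm}.

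For $t=0$ I would argue directly from $H_0=-W+\ma{cc}{\Si(Q_0\ot 1_F)&0\\0&0}$ and the intertwining identity $H_0^*(\psi_+(x)\ot 1_F)H_0=\psi_-(x)\ot 1_F+\ma{cc}{Q_0\ot x&0\\0&0}$ of Lemma \ref{l:intertwine}: restricting to $1-P$ and using that $\psi_-(x)\ot 1_F=W^*(\psi_+(x)\ot 1_F)W$ (Lemma \ref{l:conjdouu}) is supported on $\T{Im}(W^*W)$, while $\ma{cc}{\Si(Q_0\ot 1_F)&0\\0&0}$ acts on $\T{Im}(1-P)$ as the reflection $\io_{0,m}(\xi)\mapsto\io_{m,0}(\xi)$, one identifies the left-hand side of \eqref{eq:cont_op_norm} with its value $W^*W\,(\psi_+(T_j^*)\ot 1_F)(1-P)+(1_{F\op F}\ot T_j^*)p_R$ at $t=0$. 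The \emph{main obstacle} is the bookkeeping in the middle step: one must verify that the boundary contributions produced by $p_L,p_R\neq0$ and by the edge behaviour of the shifts $W,W^*$ recombine, after the geometric series is summed, into exactly the single term $\cos t\,(1_{F\op F}\ot T_j^*)p_R$ and not a messier correction, and that the resulting formula connects continuously to the separately-defined endpoint at $t=0$.
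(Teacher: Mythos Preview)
Your proposal is correct and follows essentially the same route as the paper. Both arguments split off $t=0$ via Lemma~\ref{l:intertwine}, reduce the $t\in(0,\pi/2]$ case to $U_t^*(\psi_+(T_j^*)\ot 1_F)U_t(1-P)$ using $[W,P]=0$ and the invariance of $\T{Im}(1-P)$ under $\psi_+(T_j^*)\ot 1_F$, and then push $T_j^*$ through $U_t$ by means of the relations $[A,W](1-P)=0$, $[A,WW^*]=0$, and $[A,W^*](1-P)=(1_{F\op F}\ot T_j^*)p_R$ derived from \eqref{eq:douushift}. The only difference is organizational: you package the computation as $U_t^*[A,U_t]$ together with the resolvent commutator identity, whereas the paper moves $T_j^*$ past each factor of $U_t$ in turn and uses the algebraic identity $T_j^*(p_R+\sin(t)W^*W)=(p_R+\sin(t)W^*W)(\sin(t)p_R+W^*W)T_j^*$ (which relies on $T_j^*p_R=0$) to close the computation; what you flag as the ``main obstacle'' is exactly the content of that step.
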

\begin{proof}
We start by remarking that
\begin{equation}\label{eq:actiontee*}
(T_j^* \ot 1_F) \io_{k,m}(\xi) = \io_{k-1,m}(T_j^* \xi)
\end{equation}
for all $k,m \in \nn_0$ and all $\xi \in E_{k+m}$. 

For the rest of this proof we sometimes use the shorthand notation $T_j^*$ for $\psi_+(T_j^*) \ot 1_F$. It follows from \eqref{eq:actiontee*} that $T_j^*(1 - P) = (1 - P) T_j^*(1 - P)$ and hence we obtain from Lemma \ref{l:WMT} and \eqref{eq:defuuutee} that
\[
H_t^* \cd T_j^* \cdot H_t (1 - P) = U_t^* T_j^* U_t (1 - P)
\]
for all $t \in (0,\pi/2]$.

Using the identities in \eqref{eq:actiontee*} and \eqref{eq:douushift} we moreover see that
\begin{equation}\label{eq:tee*W*}
\begin{split}
T_j^* W \cd (1 - P) & = W T_j^* \cd (1 - P) \q \T{and} \\
T_j^* W^*\cd (1 - P) & = W^*T_j^*  \cd (1 - P) + (1 \ot T_j^*) \cd p_R .
\end{split}
\end{equation}

For $t = 0$ we then know from Lemma \ref{l:conjdouu} and Lemma \ref{l:intertwine} that
\[
\begin{split}
H_0^* \cd T_j^* \cdot H_0 (1 - P) 
& = (\psi_-(T_j^*) \ot 1_F) \cd (1 - P) + (1 \ot T_j^*) \cd p_R \\
& = W^* T_j^* W \cd (1 - P) + (1 \ot T_j^*) \cd p_R \\
& = W^*W T_j^* \cd (1 - P) + (1 \ot T_j^*) \cd p_R .
\end{split}
\]
This proves the identity in \eqref{eq:cont_op_norm} for $t = 0$.

For $t \in (0,\pi/2]$ we record that
\[
\begin{split}
& T_j^* WW^* = WW^* T_j^*  \q \T{and} \\
& T_j^* (1 - \cos(t) W^*)^{-1} \cd (1 - P) \\
& \q = (1 - \cos(t) W^*)^{-1}\cd \big( T_j^* \cd (1 - P)
+ \cos(t) (1 \ot T_j^*) \cd p_R \big) ,
\end{split}
\]
where the first identity relies on Lemma \ref{l:conjdouu} and the second identity uses \eqref{eq:tee*W*} together with the fact that $p_R W^* = 0$. We also remark that
\[
T_j^* \cd (p_R + \sin(t) W^* W) = \sin(t) \cd T_j^* = (p_R + \sin(t) W^* W) \cd (\sin(t) p_R + W^* W) T_j^*.
\]
For $t \in (0,\pi/2]$ the identity in \eqref{eq:cont_op_norm} then follows from the computation
\[
\begin{split}
& T_j^* U_t \cd (1 - P) \\
& \q = -\cos(t) W \cd T_j^* \cd (1 - P) \\
& \qq + ( p_L + \sin(t) WW^* ) (1 - \cos(t) W^*)^{-1} \\
& \qqq  \q { \cd \big(T_j^*(1 - P) + \cos(t)(1 \ot T_j^*) p_R\big)(p_R + \sin(t) W^* W)} \\
& { \q = -\cos(t) W \cd T_j^* \cd (1 - P) + U_t \cd \cos(t)(1 \ot T_j^*) p_R } \\
& \qq { + ( p_L + \sin(t) WW^* ) (1 - \cos(t) W^*)^{-1}(p_R + \sin(t) W^* W)  } \\
& \qqq \q { \cd (\sin(t) p_R + W^* W) T_j^* \cd (1 - P) } \\
& \q = U_t \cd (\sin(t) p_R + W^* W)  T_j^* \cd (1 - P) 
+ U_t \cd \cos(t) (1 \ot T_j^*) \cd  p_R  . \qedhere
\end{split}
\]
\end{proof}

\begin{lemma}\label{l:cont_op_norm}
Let $K \in M_2(\B K \hot \B T) \subseteq M_2\big( \B L (F \hot F) \big)$. The map $t \mapsto H_t^* K$ is continuous in operator norm on the interval $[ 0, \pi/2]$.
\end{lemma}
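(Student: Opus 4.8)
The plan is to derive the lemma from the uniform bound $\|H_t^*\|=\|H_t\|=1$, valid for all $t\in[0,\pi/2]$ by Lemma~\ref{l:heiuni}, together with a separate analysis at the endpoint $t=0$. For $t\in(0,\pi/2]$ nothing is delicate: since $W$ is a partial isometry we have $\|\cos t\,W^*\|\le\cos t<1$, so $(1-\cos t\,W^*)^{-1}=\sum_{k\ge0}(\cos t)^k(W^*)^k$ converges in $M_2\big(\B L(F\hot\B T)\big)$ and depends norm-continuously on $t$; by the explicit formula \eqref{eq:defuuutee} the same then holds for $U_t$, hence for $H_t=U_t(1-P)-WP$ and for $t\mapsto H_t^*K$. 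Thus the whole content of the lemma is continuity at $t=0$, and this is where the compactness of $K$ must be used.

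At the endpoint I would first rewrite $H_t^*$. By Lemma~\ref{l:WMT} we have $[W,P]=0$, and $p_L=(1_F\ot Q_0)\op 0$ and $p_R=(Q_0\ot1_F)\op 0$ also commute with $P$, because $\io_{k,0}$ and $\io_{0,m}$ are the canonical identifications, so that $F\ot E_0$ and $E_0\ot F$ lie in $G$ while $G^\perp$ lies in their orthogonal complements. Hence $U_t$ commutes with $P$ and $H_t^*=U_t^*(1-P)-W^*P$, the second summand being independent of $t$. Comparing with $H_0^*=-W^*+R^*$, where $R^*:=(Q_0\ot1_F)\Si\op 0$ is read off from Lemma~\ref{l:intertwine} and satisfies $R^*P=0$, one gets the clean formula $H_t^*-H_0^*=A_t(1-P)$ with $A_t:=(U_t^*+W^*)-R^*$, so it suffices to show that $\|A_t(1-P)K\|\to0$ as $t\to0^+$.

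Now I would exploit the block structure of $\T{Im}(1-P)=G\op\{0\}$. Since $\Pi=\bigoplus_{k,m}p_{k,m}$ is block-diagonal we have $G=\bigoplus_d G_d$ with $G_d=\bigoplus_{k+m=d}\io_{k,m}(E_d)$ finite-dimensional, and by \eqref{eq:douushift} the operator $W$ restricts on $G_d$ to the nilpotent shift $\io_{d-m,m}(\xi)\mapsto\io_{d-m-1,m+1}(\xi)$; consequently $(1-\cos t\,W)^{-1}$ restricts to a polynomial in $\cos t$ on each $G_d$, and $A_t|_{G_d}$ is a matrix with entries polynomial in $\cos t,\sin t$ which vanishes at $t=0$. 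The key point is that a vector of the form $(1-\Pi)(\xi\ot a)$ with $\xi\in E_k$ of \emph{fixed} degree $k$ has, in each $G_d$, a component lying only in the single summand $\io_{k,d-k}(E_d)$, which sits at distance at most $k$ from the boundary summands $\io_{d,0}(E_d)$ and $\io_{0,d}(E_d)$; hence the operator norm of $A_t|_{G_d}$ applied to that summand is bounded by a polynomial in $\sin t,\cos t$ whose degree depends only on $k$ and not on $d$, and which vanishes at $t=0$. Orthogonality of the $G_d$ then upgrades this to $\|A_t(1-P)(\xi\ot a)\|_{F\hot\B T}\le\varepsilon^{(k)}_t\,\|\xi\ot a\|$ with $\varepsilon^{(k)}_t\to0$, and by linearity $H_t^*\mathbf u\to H_0^*\mathbf u$ for every finite-degree vector $\mathbf u\in(F\hot\B T)^{\op2}$. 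Combined with the norm-continuity on $(0,\pi/2]$ this shows that $t\mapsto H_t^*$ is strongly continuous on $[0,\pi/2]$; writing an arbitrary $K\in M_2(\B K\hot\B T)$ as a norm limit of finite sums of rank-one operators $\theta_{\mathbf u,\mathbf v}$, and using $H_t^*\theta_{\mathbf u,\mathbf v}=\theta_{H_t^*\mathbf u,\mathbf v}$ together with $\|H_t^*\|\le1$, then yields the desired norm-continuity of $t\mapsto H_t^*K$.

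The main obstacle I anticipate is precisely this uniform-in-$d$ estimate for $A_t|_{G_d}$ near $t=0$ — the bookkeeping showing that the sums occurring when $A_t$ is applied to the depth-$k$ summand of $G_d$ have length bounded in terms of $k$ alone — together with verifying that the $t\to0^+$ limit matches the flip term $R^*$ coming from $H_0$, which hinges on $\Si$ interchanging the boundary embeddings $\io_{d,0}(\xi)$ and $\io_{0,d}(\xi)$.
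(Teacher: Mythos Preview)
Your proposal is correct and follows essentially the same approach as the paper. Both arguments reduce to continuity at $t=0$ and exploit that, on vectors with fixed first-factor degree $k$, the Neumann series $(1-\cos t\,W)^{-1}$ restricted to $\T{Im}(1-P)$ truncates after $k$ steps, giving convergence uniform in the second-factor degree; the paper implements this via the approximation $K\approx(\sum_{k\le N}Q_k\ot1_{F\op F})K$ and then computes the limit explicitly, while you package the same computation as strong continuity of $t\mapsto H_t^*$ on finite-degree module vectors combined with a rank-one approximation of $K$. One small slip: where you write $\Pi=\bigoplus_{k,m}p_{k,m}$ it should be $1-\Pi$.
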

\begin{proof}
Since the map $t \to H_t$ is continuous in operator norm on the interval $(0,\pi/2]$, it is enough to check continuity at $t= 0$. 

We recall that 
\[
\begin{split}
H_t^* & = U_t^* (1 - P) - W^* P \\
 & = \big( - \cos(t) W^* + (p_R + \sin(t) W^* W)(1 - \cos(t) W)^{-1}(p_L + \sin(t) W W^*) \big)(1 - P)
- W^* P
\end{split}
\]
for $t \in (0,\pi/2]$ whereas 
\[
\begin{split}
H_0^* = -W^* P  -W^*(1 - P)  + \ma{cc}{ (Q_0 \ot 1_F )\Si & 0 \\ 0 & 0 } .
\end{split}
\]

We remark that $\lim_{N \to \infty}( \sum_{k = 0}^N Q_k \ot 1_{F \op F} ) K = K$, where the convergence takes place in operator norm. Next, we recall from Proposition \ref{p:orthinadj} that $P \in M_2\big(\B L( F \hot \B T)\big)$ and moreover that $M_2(\B K \hot \B T) \su M_2(\B L(F \hot \B T))$ is an ideal. Because of the structure of the involved operators, we may then focus on proving that
\[
\begin{split}
& \lim_{t \to 0} (p_R + \sin(t) W^* W)(1 - \cos(t) W)^{-1}(p_L + \sin(t) WW^*) \cd (Q_k \ot 1_{F \op F}) \cd (1 - P) \\
& \q = {(Q_0 \ot 1_F) \Si (Q_k \ot 1_F) \op 0 }  .
\end{split}
\]
for every fixed $k \in \nn_0$. However, by \eqref{eq:douushift} we have that
\[
\begin{split}
& \lim_{t \to 0} (p_R + \sin(t) W^* W)(1 - \cos(t) W)^{-1}(p_L + \sin(t) WW^*) (Q_k \ot 1_{F \op F}) \cd (1 - P) \\
& \q = { \lim_{t \to 0} } (p_R + \sin(t) W^* W) \sum_{j = 0}^k (\cos(t) W )^j (p_L + \sin(t) WW^*) (Q_k \ot 1_{F \op F}) \cd (1 - P) \\
& \q = p_R \sum_{j = 0}^k W^j (Q_k \ot 1_{F \op F}) p_L
= p_R W^k (Q_k \ot 1_{F \op F}) p_L
= { (Q_0 \ot 1_F) \Si (Q_k \ot 1_F) \op 0 }.
\end{split}
\]
This proves the result of the lemma.
\end{proof}

\begin{prop}\label{p:Ht_cont}
Let $x \in \B T$. The difference
\[ 
H_t^* (\psi_+(x) \ot 1_F) H_t - (\psi_-(x) \ot 1_F)
\]
defines a compact operator on $(F \op F) \hot \B T$ for all $t \in [0,\pi/2]$ and the map
\[
[0,\pi/2] \to  \B L( (F \op F) \hot \B T) 
\q t \mapsto H_t^* (\psi_+(x) \ot 1_F) H_t
\]
is norm-continuous. In particular, we have the identity
\[
{\bf 1}_{\B T} = \big[ H_{\pi/2}^* ( \psi_+ \ot 1_{\B T} ) H_{\pi/2} , \psi_- \ot 1_{\B T} \big]
\]
inside $KK_0^{SU(2)}(\B T,\B T)$.
\end{prop}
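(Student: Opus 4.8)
The plan is to assemble the three previously established ingredients into a single continuity-and-compactness statement. First I would reduce the compactness claim to the generators: since $\B T$ is generated as a $C^*$-algebra by $1_F$ and the operators $T_j^*$ for $j\in\{0,1,\ldots,n\}$, and since for a fixed $t$ the map $x\mapsto H_t^*(\psi_+(x)\ot 1_F)H_t-(\psi_-(x)\ot 1_F)$ is a $*$-linear map into $\B L\big((F\op F)\hot\B T\big)$ whose image lands in the ideal $\B K\big((F\op F)\hot\B T\big)$ as soon as it does so on the generators (because $\psi_+\ot 1_F$ and $\psi_-\ot 1_F$ are $*$-homomorphisms, $H_t$ is unitary by Lemma \ref{l:heiuni}, and $\psi_+(x)-\psi_-(x)$ is compact on $F\op F$ by Proposition \ref{p:quasi}), it suffices to treat $x=1_F$ and $x=T_j^*$. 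For $x=1_F$ the difference $H_t^* H_t-1 = 0$ by Lemma \ref{l:heiuni}, so there is nothing to check. For $x=T_j^*$ I would split $H_t^*(\psi_+(T_j^*)\ot 1_F)H_t$ using the orthogonal decomposition $1 = (1-P)\op P$ of $(F\hot F)^{\op 2}$, i.e. write
\[
H_t^*(\psi_+(T_j^*)\ot 1_F)H_t = H_t^*(\psi_+(T_j^*)\ot 1_F)H_t(1-P) + H_t^*(\psi_+(T_j^*)\ot 1_F)H_t P .
\]
On the range of $1-P$, Proposition \ref{p:cont_op_norm} gives the explicit formula \eqref{eq:cont_op_norm}, which exhibits this part as $\big(W^*W+p_R\sin(t)\big)(\psi_+(T_j^*)\ot 1_F)(1-P)+\cos(t)(1_{F\op F}\ot T_j^*)p_R$; comparing with $\psi_-(T_j^*)\ot 1_F = W(T_j^*\ot 1)W^*$ restricted to $1-P$ and using Lemma \ref{l:WMT}, the difference is a finite-rank operator supported on $p_R = (Q_0\ot 1_{\B T})\op 0$, hence compact, and it depends norm-continuously on $t$ by the second half of Proposition \ref{p:cont_op_norm}. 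On the range of $P$, Lemma \ref{l:WMT} shows $H_t|_{\T{Im}(P)} = -W|_{\T{Im}(P)}$ for every $t$ (the $U_t$-term acts only on $1-P$), so $H_t^*(\psi_+(T_j^*)\ot 1_F)H_t P = W^*(\psi_+(T_j^*)\ot 1_F)W P = (\psi_-(T_j^*)\ot 1_F)P$ by Lemma \ref{l:conjdouu}, which is independent of $t$ and agrees with $\psi_-(T_j^*)\ot 1_F$ on $\T{Im}(P)$; thus this part contributes nothing to the difference and is trivially continuous.

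Combining the two pieces, $H_t^*(\psi_+(T_j^*)\ot 1_F)H_t-(\psi_-(T_j^*)\ot 1_F)$ is a norm-continuous family of compact operators on $(F\op F)\hot\B T$, and by the reduction above the same holds for every $x\in\B T$ (with the proviso that one also records the analogous statement for the generators $T_j$ by taking adjoints). Continuity of $t\mapsto H_t^*(\psi_+(x)\ot 1_F)H_t$ then follows since $t\mapsto\psi_-(x)\ot 1_{\B T}$ is constant. Finally, for the $KK$-identity: by the first part, $\big(H_t^*(\psi_+\ot 1_{\B T})H_t,\psi_-\ot 1_{\B T}\big)$ is an $SU(2)$-equivariant quasi-homomorphism $\B T\to\B T$ for each $t$, the $SU(2)$-equivariance being inherited from that of $H_t$ (each $U_t$, $W$, $P$, $\Si$, $Q_0$ is $SU(2)$-equivariant) and of $\psi_\pm$; norm-continuity in $t$ makes $t\mapsto\big(H_t^*(\psi_+\ot 1_{\B T})H_t,\psi_-\ot 1_{\B T}\big)$ an operator homotopy of $SU(2)$-equivariant quasi-homomorphisms, so the class in $KK_0^{SU(2)}(\B T,\B T)$ is independent of $t$. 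At $t=0$, Lemma \ref{l:intertwine} identifies $H_0^*(\psi_+(x)\ot 1_F)H_0$ with $\psi_-(x)\ot 1_{\B T}+(Q_0\op 0)\ot M_{\B T}$, giving the class $[\psi_-\ot 1_{\B T}+(Q_0\op 0)\ot M_{\B T},\psi_-\ot 1_{\B T}] = {\bf 1}_{\B T}$ (a degenerate-summand argument, exactly as in the $U(1)$-case of \cite{Pim97}); at $t=\pi/2$ we have $H_{\pi/2}=1$ by construction, so the class is $[\psi_+\ot 1_{\B T},\psi_-\ot 1_{\B T}] = [\psi_+,\psi_-]\hot_{\B C}[i]$. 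Reading the homotopy invariance at the two endpoints yields ${\bf 1}_{\B T} = \big[H_{\pi/2}^*(\psi_+\ot 1_{\B T})H_{\pi/2},\psi_-\ot 1_{\B T}\big]$, which is the asserted identity.

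The main obstacle is the endpoint $t=0$: the operator $H_0$ contains the flip-map summand $\Si(Q_0\ot 1_F)\op 0$ and does \emph{not} define a bounded adjointable operator on $(F\hot\B T)^{\op 2}$, so the naive claim ``$\psi_+\ot 1_{\B T}$ and $\psi_-\ot 1_{\B T}+(Q_0\op 0)\ot M_{\B T}$ are unitarily equivalent over $\B T$'' is false and cannot be used. What saves the argument is that one never needs $H_0$ itself to be adjointable over $\B T$: one only needs that $H_t^*(\psi_+(x)\ot 1_F)H_t$ lands in $\B L\big((F\op F)\hot\B T\big)$ for each $t$ (including $t=0$) and varies norm-continuously — and this is precisely what Proposition \ref{p:cont_op_norm} and Lemma \ref{l:cont_op_norm} were designed to deliver. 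The delicate point is therefore to verify the $t\to 0$ limit of the $(1-P)$-part carefully, tracking how the geometric-series expansion of $(1-\cos(t)W)^{-1}$ collapses onto the single term $p_R W^k (Q_k\ot 1)p_L = (Q_0\ot 1_F)\Si(Q_k\ot 1_F)\op 0$ on each finite block $Q_k\ot 1_{F\op F}$, which is exactly the content of the computation in Lemma \ref{l:cont_op_norm}; invoking that lemma together with Proposition \ref{p:cont_op_norm} disposes of the difficulty.
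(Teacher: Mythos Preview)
There is a genuine gap in your handling of the $P$-part. You assert
\[
H_t^*(\psi_+(T_j^*)\ot 1_F)H_t P = W^*(\psi_+(T_j^*)\ot 1_F)W P,
\]
reasoning from $H_t|_{\T{Im}(P)} = -W|_{\T{Im}(P)}$. But that only gives $H_tP = -WP$; it does \emph{not} let you replace $H_t^*$ by $-W^*$ on the left, because the vector $(\psi_+(T_j^*)\ot 1_F)WP\xi$ need not lie in $\T{Im}(P)$. Indeed $T_j^*\ot 1_F$ preserves $G$ (by \eqref{eq:actiontee*}) but not $G^\perp$: any nonzero $\eta\in (E_1\ot E_1)\ominus\io_{1,1}(E_2)\subset G^\perp$ is mapped by $T_j^*\ot 1_F$ into $E_0\ot E_1\subset G$. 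On that $(1-P)$-component $H_t^*$ acts as $(1-P)U_t^*$, which is $t$-dependent, so your claimed $t$-independence on $\T{Im}(P)$ fails, and with it both your compactness and continuity arguments for this piece.

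The paper repairs exactly this by the commutator trick you omit: writing $T_j^*WP = WPT_j^* + [T_j^*,WP]$ and using $-H_t^*WP = H_t^*H_tP = P$ (unitarity of $H_t$) yields
\[
H_t^*(\psi_+(T_j^*)\ot 1_F)H_tP = PT_j^* - H_t^*[T_j^*,WP].
\]
The first term is $t$-independent; in the second, $[T_j^*,WP]\in M_2(\B K\hot\B T)$ by Proposition~\ref{p:compactdouu} and Proposition~\ref{p:orthinadj}, and it is precisely Lemma~\ref{l:cont_op_norm} that makes $t\mapsto H_t^*K$ norm-continuous through $t=0$ for compact $K$. So Lemma~\ref{l:cont_op_norm} is needed on the $P$-part, not the $(1-P)$-part as you suggest. (Separately, $H_{\pi/2} = (1-P) - WP \neq 1$; only $U_{\pi/2}=1$. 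The proposition records the identity at $t=\pi/2$ without yet identifying it with $[\psi_+\ot 1_{\B T},\psi_-\ot 1_{\B T}]$ --- that comes in the subsequent steps.)
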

\begin{proof}
We start by proving the statement on compactness. For $t = 0$ we know from Lemma \ref{l:intertwine} that
\[
H_0^* (\psi_+(x) \ot 1_F) H_0 - (\psi_-(x) \ot 1_F) = p_R (1_{F \op F} \ot x) ,
\]
which belongs to $M_2( \B K(F \hot \B T))$ since $p_R =( Q_0 \ot 1_{\B T}) \op 0$. For $t \in (0,\pi/2]$ we see from Lemma \ref{l:WinM2}, Proposition \ref{p:compactdouu} and Proposition \ref{p:orthinadj} that $[ \psi_+(x) \ot 1_{\B T} , H_t ] \in M_2( \B K(F \hot \B T))$. An application of Lemma \ref{l:heiuni} and Proposition \ref{p:quasi} then yields that
\[
H_t^* (\psi_+(x) \ot 1_{\B T}) H_t \sim \psi_+(x) \ot 1_{\B T} \sim \psi_-(x) \ot 1_{\B T}
\] 
hence proving the statement regarding compactness. 

We now focus on proving norm-continuity. Using standard density arguments, we may restrict our attention to the case where $x$ is one of the generators $x = T_j^*$ for some $j \in \{0,1,\ldots,n\}$. Once more, we use the shorthand notation $T_j^* := \psi_+(T_j^*) \ot 1_F$. We already know from Proposition \ref{p:cont_op_norm} that the path $t \mapsto H_t^* T_j^* H_t (1 - P)$ is continuous in operator norm on $[0,\pi/2]$. Now, for $t \in [0,\pi/2]$ we have that 
\[
H_t^* T_j^* H_t P = - H_t^* T_j^* W P 
= - H_t^* W P T_j^* - H_t^* [T_j^*,W P]
= T_j^* - H_t^* [T_j^*,WP] . 
\]
Since the commutator $[T_j^*, W P]$ belongs to $M_2(\B K \hot \B T)$ by Proposition \ref{p:compactdouu} and Proposition \ref{p:orthinadj}, it follows from Lemma \ref{l:cont_op_norm} that $t \mapsto H_t^* T_j^* H_t P$ is norm-continuous as well. This proves the statement regarding continuity.

The remaining claim on classes in $SU(2)$-equivariant $KK$-theory now follows from the above considerations upon remarking that all the involved quasi-homomorphisms are $SU(2)$-equivariant. Indeed, we then have the string of identities
\[
\begin{split}
{\bf 1_{\B T}} 
& = [ \psi_- \ot 1_{\B T} + p_R (1_{F \op F} \ot M_{\B T}), \psi_- \ot 1_{\B T}]
= [ H_0^*(\psi_+ \ot 1_F) H_0, \psi_- \ot 1_{\B T}] \\
& = [H_{\pi/2}^* (\psi_+ \ot 1_{\B T}) H_{\pi/2}, \psi_- \ot 1_{\B T} ]
\end{split}
\]
inside $KK_0^{SU(2)}( \B T,\B T)$.
\end{proof}





\subsection{Second step: everything else}\label{ss:everything}
For each $t \in [0,1]$ we define the $SU(2)$-equivariant bounded adjointable operator
\[
y_t := 1 - P + \ma{cc}{ (1-t)^{1/2} v^{TT} & v^{TB} \\ v^{BT} & (1 - t)^{1/2} v^{BB}} P
: (F \op F) \hot \B T \to (F \op F) \hot \B T .
\]
Since the assignment $t \mapsto y_t$ is continuous in operator norm we obtain a bounded adjointable operator
\[
y : (F \op F) \hot C( [0,1], \B T) \to (F \op F) \hot C( [0,1], \B T) ,
\]
which acts as $y_t$ on the fibre $(F \op F) \hot \B T$ associated with the evaluation at the point $t \in [0,1]$.

We shall see in this subsection that both $y$ and $y^*$ have dense images and that the corresponding unitary operator (obtained via polar decomposition)
\[
I : (F \op F) \hot C( [0,1], \B T) \to (F \op F) \hot C( [0,1], \B T)
\]
yields the next step of our homotopy. 

More precisely, it is the aim of this subsection to prove the following:

\begin{prop}
For each $x \in \B T$, it holds that the path $t \mapsto I_t^* (\psi_+(x) \ot 1_{\B T}) I_t - (\psi_+(x) \ot 1_{\B T})$ is a norm-continuous path of compact operators on $(F \op F) \hot \B T$. In particular, we have the identity
\[
{\bf 1}_{\B T} = [ I_1^* (\psi_+ \ot 1_{\B T}) I_1, \psi_- \ot 1_{\B T}]
\]
inside the $SU(2)$-equivariant $KK$-group, $KK_0^{SU(2)}(\B T,\B T)$.
\end{prop}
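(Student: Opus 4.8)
The plan is to run, over the coefficient algebra $C([0,1],\B T)$, the polar–decomposition argument that earlier produced the isometry $\Te$ of Proposition~\ref{p:orthinadj}, now applied to the bounded adjointable operator $y$ whose fibre over $t$ is $y_t$. The first observation to make is that $y_t$ commutes with $P=\Pi\op 1_{F\hot F}$ for every $t$: by the orthogonality relations $(1-\Pi)v^{TB}=0$ and $v^{TT}(1-\Pi)=(1-\Pi)v^{TT}$ of Lemma~\ref{l:WMT}, the ``off--$\T{Im}(P)$'' part of the $2\times 2$ matrix entering the definition of $y_t$ vanishes, so $y_t$ acts as the identity on $\T{Im}(1-P)$ and $y_t^*y_t=(1-P)+P\,B_t^*B_t\,P$, where $B_t$ denotes that matrix. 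Using \eqref{eq:adjointTB}, Lemma~\ref{l:gamma} (identifying $(v^{TB})^*v^{TB}$ with $\Ga$) and again Lemma~\ref{l:WMT}, one writes $y_t^*y_t$ explicitly in terms of $\Phi$ and $\Ga$ and checks that it is injective with dense range, so that both $y$ and $y^*$ have dense range, fibrewise and continuously in $t$; the deformation factor $(1-t)$ occurring in $B_t$ only improves the relevant lower bounds as $t$ grows, so no uniformity is lost.

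Next one verifies the hypotheses of the polar–decomposition result, Proposition~\ref{p:polar}, for the Hilbert module $(F\op F)\hot C([0,1],\B T)$, the operator $y$, the dense submodule $(\Falg\op\Falg)\ot C([0,1],\B T)$, the generators $x_j:=\psi_+(T_j^*)\ot 1_{\B T}$, and the compact operator $K:=(D^{-1}\op D^{-1})\ot 1_{\B T}$. The commutator factorizations $K^{1/2}L\,K^{1/2}=[x_j,y]=MK$ and $K^{1/2}\ov L\,K^{1/2}=[x_j,y^*]=K\ov M$ follow entrywise from Proposition~\ref{p:DpcommIII}, Lemma~\ref{l:WinM2} and \eqref{eq:adjointTB}, exactly as in Lemma~\ref{l:decomptheta}; the resolvent bounds $\|K\,R_\la\|\le C$ and $\|K^{1/2}y\,R_\la\|\le C$ for $\la>0$, together with $[K,y^*y]=0$, follow as in Lemma~\ref{l:DGamma_inv_sqrt}, uniformly in $t\in[0,1]$ because $t\mapsto y_t$ is norm-continuous and the $(1-t)$–terms are harmless. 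Proposition~\ref{p:polar} then yields the partial isometry $I=y|y|^{-1}\in\B L\big((F\op F)\hot C([0,1],\B T)\big)$ — in fact a unitary, since $y$ and $y^*$ have dense range — with $[I,\,x\ot 1_{\B T}]\in\B K\hot C([0,1],\B T)$ for all $x\in\B T$, its $SU(2)$-equivariance being inherited from that of $y$.

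Granting $I$, the two assertions are formal. For $x\in\B T$ one has $I_t^*(\psi_+(x)\ot 1_{\B T})I_t-\psi_+(x)\ot 1_{\B T}=I_t^*[\psi_+(x)\ot 1_{\B T},I_t]$, which is a norm-continuous path of compact operators on $(F\op F)\hot\B T$; combined with the compactness of $\psi_+(x)\ot 1_{\B T}-\psi_-(x)\ot 1_{\B T}$ (Proposition~\ref{p:quasi}) this exhibits $\big(I_t^*(\psi_+\ot 1_{\B T})I_t,\,\psi_-\ot 1_{\B T}\big)$ as an $SU(2)$-equivariant homotopy of quasi-homomorphisms, so the classes at $t=0$ and $t=1$ coincide in $KK_0^{SU(2)}(\B T,\B T)$. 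It remains to compute the class at $t=0$. There $y_0=1-P+WP$ is unitary ($W$ commutes with $P$ and restricts to a unitary on $\T{Im}(P)$ by Lemma~\ref{l:WMT}), hence $I_0=y_0$; and, using that $[\psi_+(x)\ot 1_{\B T},P]$ is compact (Proposition~\ref{p:orthinadj}), that $W^*(\psi_+(x)\ot 1_F)W=\psi_-(x)\ot 1_F\sim\psi_+(x)\ot 1_F$ modulo compacts (Lemma~\ref{l:conjdouu}, Proposition~\ref{p:quasi}), and that the generating set $\{T_j,T_j^*\}$ is self-adjoint (to absorb both cross-terms), one obtains
\[
\begin{split}
y_0^*(\psi_+(x)\ot 1_{\B T})y_0 & \sim (1-P)(\psi_+(x)\ot 1_{\B T})(1-P)+PW^*(\psi_+(x)\ot 1_{\B T})WP \\
& \sim H_{\pi/2}^*(\psi_+(x)\ot 1_{\B T})H_{\pi/2}
\end{split}
\]
modulo compact operators, for every $x\in\B T$. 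Since the $KK$-class of a quasi-homomorphism is unchanged under a compact perturbation of its $*$-homomorphisms, this gives $[I_0^*(\psi_+\ot 1_{\B T})I_0,\psi_-\ot 1_{\B T}]=[H_{\pi/2}^*(\psi_+\ot 1_{\B T})H_{\pi/2},\psi_-\ot 1_{\B T}]={\bf 1}_{\B T}$ by Proposition~\ref{p:Ht_cont}. Together with the homotopy above and with $I_1^*(\psi_+\ot 1_{\B T})I_1\sim\psi_+\ot 1_{\B T}$, this also delivers ${\bf 1}_{\B T}=[\psi_+\ot 1_{\B T},\psi_-\ot 1_{\B T}]=[\psi_+,\psi_-]\hot_{\B C}[i]$, which is Theorem~\ref{t:KKequiv}.

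The genuine obstacle is the construction of $I$, precisely in the resolvent estimates when $n=1$. For $n>1$ the operator $\Ga=(v^{TB})^*v^{TB}$ is boundedly invertible (cf.\ the Remark following Proposition~\ref{p:orthinadj}), so $y_t^*y_t$ is boundedly invertible on $\T{Im}(P)$ and $I$ comes from a bare functional calculus; but for $n=1$ this inverse is unbounded — the ratios $d_m/d_{m+1}$ accumulating at $\ga_1=1$ — and one genuinely needs the controlled-compactness bookkeeping of Proposition~\ref{p:polar}, phrased through the auxiliary compact operator $K$ rather than through an honest bounded inverse. Checking that all of these estimates persist after passing to the coefficient algebra $C([0,1],\B T)$, with the family of deformations $y_t$ in place, is then routine but somewhat laborious.
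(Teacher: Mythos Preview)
Your overall strategy matches the paper's, and your endpoint analysis at $t=0$ (connecting $I_0=y_0=(1-P)+WP$ to $H_{\pi/2}=(1-P)-WP$ via a compact perturbation of the conjugation) is in fact more careful than the paper's one-line identification in \eqref{eq:Iide}. There is, however, a genuine gap in how you feed the data into Proposition~\ref{p:polar}.

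With your choice of generators $x_j=\psi_+(T_j^*)\ot 1_{\B T}$ (no $P$-conjugation), the commutator $[x_j,y_t]$ does \emph{not} reduce to the entries $[T_j^*,v^{\bullet\bullet}]$: since $[x_j,P]\neq 0$, one picks up an extra term whose only nonzero entry is a multiple of $(1-\Pi)(T_j^*\ot 1_{\B T})\,\Pi$. For the factorisations $K^{1/2}LK^{1/2}=[x_j,y]=MK$ you would therefore need $(D^{1/2}\ot 1_{\B T})\,(1-\Pi)T_j^*\Pi\,(D^{1/2}\ot 1_{\B T})$ and $(1-\Pi)T_j^*\Pi\,(D\ot 1_{\B T})$ to extend to bounded adjointable operators on $F\hot\B T$. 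Neither Proposition~\ref{p:DpcommIII} nor Lemma~\ref{l:decomptheta} gives this --- they concern commutators with the matrix entries of $W$, not with $\Pi$ --- and the paper never establishes such an estimate. The paper circumvents the problem by taking instead $x_j:=P\big(\psi_+(T_j^*)\ot 1_{C([0,1],\B T)}\big)P$, which commutes with $P$, so that $[x_j,y_t]$ is precisely the matrix with entries $\Pi[T_j^*,v^{\bullet\bullet}]\Pi$ (respectively unconjugated in the second row and column), to which Proposition~\ref{p:DpcommIII} applies entrywise (this is Lemma~\ref{l:yfact}). The price is a short extra step at the end: one writes $\psi_+(T_j^*)\ot 1=x_j+(1-P)T_j^*(1-P)+(1-P)T_j^*P$, observes that $I$ restricts to the identity on $\T{Im}(1-P)$ so the middle term contributes nothing to $[I,\cdot]$, and uses Proposition~\ref{p:orthinadj} for compactness of the last term.

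A secondary omission: your formula for $y_t^*y_t$ ``in terms of $\Phi$ and $\Ga$'' misses that on the $\Pi$-part of the top-left block one sees not $\Ga$ but $\De:=(v^{BT})^*v^{BT}$ (compare \eqref{eq:y*yyy*} and Lemma~\ref{l:y*ydelgam}). The resolvent bound $\|K R_\la\|\le C$ then also requires $\sup_{k,m\in\nn}d_k^{-1}\|(\De_{k,m}\Pi)^{-1}\|<\infty$, which needs its own fusion-rule computation (Lemma~\ref{l:delta} and the estimate \eqref{eq:boudelpi}); it does not follow from Lemma~\ref{l:DGamma_inv_sqrt}.
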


The proof of this proposition relies on the results in Appendix \ref{a:quasi}. Aligning with the notation applied in Appendix \ref{a:quasi} we define
\begin{equation}\label{eq:ydefinit}
\begin{split}
X := (F \op F) \hot C( [0,1], \B T) \q \s X := (\Falg \op \Falg) \ot C( [0,1], \B T) \\
x_j := P \big( \psi_+(T_j^*) \ot 1_{ C( [0,1],\B T)} \big) P \q  K := (D^{-1} \op D^{-1}) \ot 1_{ C( [0,1],\B T)} ,
\end{split}
\end{equation}
for all $j \in \{0,1,2,\ldots,n\}$. Remark here that $P : X \to X$ is the orthogonal projection which agrees with $P \in \B L\big( (F \op F) \hot \B T \big)$ in each fibre (corresponding to the evaluations at the points $t \in [0,1]$). We notice that $x_j : X \to X$ is a bounded adjointable operator for every $j \in \{0,1,2\ldots,n\}$ whereas $K : X \to X$ is a compact operator.

\begin{lemma}\label{l:ydense}
The bounded adjointable operators $y$ and $y^* : X \to X$ both have norm-dense image. Moreover, it holds for each $j \in \{0,1,2\ldots,n\}$ that $x_j(\s X) , x_j^*(\s X), y(\s X), y^*(\s X) \su \s X$.
\end{lemma}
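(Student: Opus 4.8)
The plan is to prove Lemma~\ref{l:ydense} by verifying the two assertions separately, as they are of rather different character. The statement about invariance of the algebraic submodule $\s X$ is bookkeeping: by definition $x_j = P(\psi_+(T_j^*)\ot 1)P$ and $P = \Pi \op 1_{F\hot F}$, so invariance of $\s X$ under $x_j$ and $x_j^*$ reduces to invariance of $\Falg \ot C([0,1],\B T)$ under $\Pi$, $\psi_+(T_j^*)\ot 1$ and $\psi_+(T_j)\ot 1$. The first follows because $\Pi = \Te\Te^*$ with $\Te = \overline{v^{TB}|v^{TB}|^{-1}}$ and the domain considerations of Proposition~\ref{p:orthinadj} show $\Te$ and $\Te^*$ preserve $\Falg \ot \B T$ (using Lemma~\ref{l:gammadense} and that $v^{TB}$ maps $E_k\ot E_m$ into $E_{k+1}\ot E_{m+1}$). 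The invariance under $\psi_+(T_j^*)\ot 1 = (T_j^*\op T_j^*)\ot 1$ and its adjoint follows because $T_j, T_j^*$ preserve $\Falg$ (they shift the grading by $\pm 1$) and $W, W^*$ act by finite matrices over $\B T$ by Lemma~\ref{l:WinM2}; the same applies to $v^{TT}, v^{TB}, v^{BT}, v^{BB}$ appearing in the definition of $y_t$. Since $y$ is built from $P$ and these four entries, $y(\s X), y^*(\s X) \su \s X$ is immediate.

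The substantive part is density of the images of $y$ and $y^*$. First I would split according to the orthogonal decomposition $(F\hot F)\op (F\hot F) = \T{Im}(1-P) \op \T{Im}(P)$. On $\T{Im}(1-P)$ the operator $y_t$ acts as the identity for every $t$, so there is nothing to prove there. On $\T{Im}(P) = G^\perp \op (F\hot F)$ we must show that the $2\times 2$ operator matrix $\ma{cc}{(1-t)^{1/2}v^{TT} & v^{TB} \\ v^{BT} & (1-t)^{1/2}v^{BB}}$, compressed to $\T{Im}(P)$, has dense range together with its adjoint, fibrewise in $t\in[0,1]$ and then uniformly enough to give density in $X$. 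The key inputs are: $v^{TB}$ restricted to $G^\perp$-relevant subspaces is essentially the map $\si_{k,m}$ (Lemma~\ref{l:sigteebot}) whose image is exactly $G^\perp$ in each $E_{k+1}\ot E_{m+1}$ (Theorem~\ref{t:iso_fusion} and Lemma~\ref{l:image}), so $v^{TB}$ alone maps the relevant part of the first summand onto $\T{Im}(\Pi)$; and by Proposition~\ref{p:decompright}, Proposition~\ref{p:decompleft} the partial isometry $W$ restricts to a \emph{unitary} on $\T{Im}(P)$ (Lemma~\ref{l:WMT}), so the $t=0$ matrix is unitary on $\T{Im}(P)$, giving density there trivially. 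For $t\in(0,1]$ one exploits that $v^{TB}$ surjects onto the second-summand-relevant subspace while the diagonal $(1-t)^{1/2}v^{TT}$ and $(1-t)^{1/2}v^{BB}$ terms, combined with the off-diagonal $v^{BT}$ (which by \eqref{eq:adjointTB} is a $\Phi$-twisted adjoint of $v^{TB}$), still allow one to hit a dense subset. Concretely I would show that for each $t$, $y_t|_{\T{Im}(P)}$ has a left inverse on the algebraic level — i.e.\ $y_t^* y_t$ restricted to $\s X \cap \T{Im}(P)$ is invertible with the inverse again defined on this algebraic submodule, using the explicit formula for $(v^{TB})^*v^{TB} = \Ga$ from Lemma~\ref{l:gamma} and the identities of Proposition~\ref{p:sigma*sigma}; density of the image then follows since $\Falg$ is dense.

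The hard part will be handling the uniformity in $t$ near $t=0$: the operator $v^{TT}$ is a shift-type operator (it acts as $\sum_j (T_j')^*\ot T_j$ by Lemma~\ref{l:WinM2}), and its restriction to $G^\perp$ is \emph{not} invertible — indeed on $\T{Im}(\Pi)$ the operator $W$ is unitary but $v^{TT}$ alone (the top-left entry) is only one block of it, so as $t\to 0$ the block $(1-t)^{1/2}v^{TT}$ degenerates and one must argue that the off-diagonal entries $v^{TB}, v^{BT}$ compensate. The clean way around this is to observe that $y_t$ differs from the unitary $H_{\pi/2}$-type operator (or rather from $\ma{cc}{v^{TT} & v^{TB}\\ v^{BT}&v^{BB}} = W$ compressed to $\T{Im}(P)$) by a deformation that remains invertible modulo the explicit kernel/cokernel computed via Theorem~\ref{t:iso_fusion}; since $W|_{\T{Im}(P)}$ is unitary and the deformation only rescales the $v^{TT}, v^{BB}$ blocks by $(1-t)^{1/2}$, one checks $y_t^* y_t \geq c(t)\cdot 1$ on $\T{Im}(P)$ for a constant $c(t) > 0$ that stays bounded below on compact subsets away from... but in fact $c(0) > 0$ too because at $t=0$ the matrix is literally $W$, which is unitary on $\T{Im}(P)$. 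Thus $y_t^*y_t$ is uniformly invertible on $\T{Im}(P)$ for $t$ in a neighbourhood of $0$, hence on all of $[0,1]$ by compactness and continuity, which yields density of $\T{Im}(y)$ and, by the symmetric argument applied to $y^*$ (using \eqref{eq:adjointTB} to relate $yy^*$ to $\Ga$ conjugated by $\Phi$), density of $\T{Im}(y^*)$. I would present the estimate $y_t^* y_t \geq c\cdot 1$ on $\T{Im}(P)$ as the main technical lemma, deduced from the block structure and Lemma~\ref{l:gamma}, and then density is immediate.
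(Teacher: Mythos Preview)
Your invariance argument is correct in spirit but over-engineered: the paper simply observes that $\Pi$ commutes with each $Q_k \ot Q_m$ (immediate from the definition of $G$), so $\Pi$ preserves $\Falg \ot \B T$ directly; the rest follows from Lemma~\ref{l:WinM2}.

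There is a genuine gap in your density argument. You propose establishing a uniform lower bound $y_t^* y_t \geq c\cdot 1$ on $\T{Im}(P)$ for all $t \in [0,1]$, using continuity and compactness once invertibility near $t=0$ is secured. You correctly identify that $y_0|_{\T{Im}(P)} = W|_{\T{Im}(P)}$ is unitary, so $t=0$ is harmless. But the actual obstruction is at $t=1$, not $t=0$: when the diagonal blocks vanish, $y_1^* y_1$ has bottom-right entry $(v^{TB})^* v^{TB} = \Ga$, and for $n=1$ this operator is \emph{not} bounded below. Indeed, on the $j=0$ eigenspace of $E_k \ot E_m$ the eigenvalue of $\Ga$ is $1 - \tfrac{d_k d_{m-1}}{d_{k+1} d_m} = 1 - \tfrac{(k+1)m}{(k+2)(m+1)} \to 0$ as $k,m \to \infty$. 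So your compactness argument cannot cover $t=1$, and the uniform bound you aim for is simply false when $n=1$.

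The paper sidesteps this by arguing differently: it reduces to fibrewise density (via norm-continuity of $t \mapsto y_t$), then computes $y_t^* y_t$ and $y_t y_t^*$ explicitly as diagonal $2\times 2$ matrices (using Lemma~\ref{l:WMT}), proves genuine invertibility for $t \in [0,1)$ (the term $1-t > 0$ bounds things below), and treats $t=1$ separately by writing $y_1$ and $y_1^*$ in terms of $v^{TB}$ and $(v^{TB})^*$ via \eqref{eq:adjointTB}, then invoking Lemma~\ref{l:gammadense} and Lemma~\ref{l:image} to get density of the image without invertibility. The Remark following Proposition~\ref{p:I_t} explicitly notes that for $n>1$ your uniform-invertibility route does work, but for $n=1$ only density holds.
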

\begin{proof}
We first remark that $\Pi (Q_k \ot Q_m) = (Q_k \ot Q_m) \Pi$ for all $k,m \in \nn_0$ and this implies that $\Pi$ preserves the dense submodule $\Falg \ot \B T \su F \hot \B T$. The fact that $x_j, x_j^*, y$ and $y^*$ all preserve the dense submodule $\s X = (\Falg \op \Falg) \ot C( [0,1],\B T)$ is then a consequence of Lemma \ref{l:WinM2} and the definition of the Toeplitz operators $T_j$ and $T_j^* \in \B T$.

We continue by focusing on the claim regarding the images of $y$ and $y^*$. Since the path $t \mapsto y_t$ is norm-continuous it suffices to verify that $y_t$ and $y_t^* : (F \op F) \hot \B T \to (F \op F ) \hot \B T$ both have norm-dense image for each $t \in [0,1]$. Applying Lemma \ref{l:WMT} we obtain that
\begin{equation}\label{eq:y*yyy*}
\begin{split}
y_t^* {y_t} & = { \ma{cc}{ (1 - \Pi) + (1 - t + t \cd (v^{BT})^* v^{BT} ) \Pi & 0 \\ 0 &  1 - t + t  \cd (v^{TB})^* v^{TB}  }} \q \T{and} \\
y_t y_t^* & = { \ma{cc}{ (1 - \Pi) + ( 1 - t + t \cd v^{TB} (v^{TB})^* ) \Pi & 0 \\ 0 &  1 - t + t \cd v^{BT} (v^{BT})^* } }
\end{split}
\end{equation}
for all $t \in [0,1]$. For $t \in [0,1)$ we see from these identities that $y_t$ and $y_t^*$ are in fact invertible as bounded adjointable operators (and they are therefore in particular surjective). 

For $t = 1$ we obtain from \eqref{eq:adjointTB} that 
\[
\begin{split}
y_1 & = \ma{cc}{1 - \Pi & v^{TB} \\ v^{BT} & 0} 
= \ma{cc}{1 - \Pi & v^{TB} \\ (\Phi^{-1/2} \ot \Phi^{1/2}) \cd (v^{TB})^* & 0}
\q \T{and} \\
y_1^* & = \ma{cc}{1 - \Pi & (v^{BT})^* \\ (v^{TB})^* & 0}
= \ma{cc}{1 - \Pi & v^{TB} \cd (\Phi^{-1/2} \ot \Phi^{1/2}) \\ (v^{TB})^* & 0} .
\end{split}
\]
We recall that $\Phi : F \to F$ is an invertible element in $\B T \su \B L(F)$. The fact that $y_1$ and $y_1^*$ have dense images then follows from an application of Lemma \ref{l:gammadense} and Lemma \ref{l:image}.
\end{proof}

In order to achieve a better understanding of the bounded adjointable operator $y^* y : X \to X$ we apply the decomposition from Theorem \ref{t:iso_fusion}. This decomposition allows us for each $k,m \in \nn_0$ to introduce the bounded operator
\[
\begin{split}
& \De_{k,m} : E_k \ot E_m \to E_k \ot E_m \\
& \De_{k,m}\big(\si^j \io_{k-j,m-j}(\xi)\big) := \fork{ccc}{ 0 & \T{for} & j = 0 \\ 
\frac{d_k d_{m-1}}{d_{k-1} d_m} \cd (1 - \frac{d_{k-j} d_{m-j-1}}{d_k d_{m-1}}) \cd \si^j \io_{k-j,m-j}(\xi) 
& \T{for} & 0 < j \leq k,m }
\end{split}
\]
defined whenever $0 \leq j \leq k,m$ and $\xi \in E_{k+m-2j}$. We notice that 
\[
\| \De_{k,m} \| \leq \frac{d_k d_{m-1}}{d_{k-1} d_m} \leq n + 1
\]
for all $k,m \in { \nn}$ and we therefore obtain a bounded operator 
\[
\De : F \hot F \to F \hot F \q \De(\xi) := \De_{k,m}(\xi) \, \, , \, \, \, k,m \in \nn_0 .
\]
Remark also that $\De_{k,m} = 0$ for $k = 0$ or $m = 0$.

\begin{lemma}\label{l:delta}
We have the identity $(v^{BT})^* v^{BT} = \De$. In particular, it holds that $\De \in \B L(F \hot F)$ belongs to the unital $C^*$-subalgebra $\B L(F \hot \B T) \su \B L(F \hot F)$.
\end{lemma}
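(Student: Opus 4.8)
The plan is to deduce the identity from the relation \eqref{eq:adjointTB} between $v^{BT}$ and $(v^{TB})^*$, combined with Lemma \ref{l:gamma} and the orthogonal decomposition of Theorem \ref{t:iso_fusion}. Since $v^{BT} = (\Phi^{-1/2} \ot \Phi^{1/2}) (v^{TB})^*$ and $\Phi$ is positive and invertible, taking adjoints gives $(v^{BT})^* = v^{TB} (\Phi^{-1/2} \ot \Phi^{1/2})$, so that
\[
(v^{BT})^* v^{BT} = v^{TB} (\Phi^{-1} \ot \Phi) (v^{TB})^* : F \hot F \to F \hot F .
\]
Because $v^{TB}$ maps $E_{k-1} \ot E_{m-1}$ into $E_k \ot E_m$ (Lemma \ref{l:sigteebot}) and raises each tensor degree by one, its adjoint carries $E_k \ot E_m$ into $E_{k-1} \ot E_{m-1}$ and annihilates $E_0 \ot E_m$ and $E_k \ot E_0$; on $E_{k-1} \ot E_{m-1}$ the operator $\Phi^{-1} \ot \Phi$ acts as the scalar $d_k d_{m-1} / (d_{k-1} d_m)$. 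Hence on $E_k \ot E_m$ with $k,m \geq 1$ we have $(v^{BT})^* v^{BT} = \tfrac{d_k d_{m-1}}{d_{k-1} d_m} \cd v^{TB} (v^{TB})^*$, while $(v^{BT})^* v^{BT}$ vanishes on $E_0 \ot E_m$ and on $E_k \ot E_0$, matching $\De_{k,m} = 0$ in those cases. It therefore remains to compute $v^{TB} (v^{TB})^*$ on the summands $W_{k,m}^j(E_{k+m-2j})$ of Theorem \ref{t:iso_fusion}.

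For $j = 0$ these vectors are of the form $\io_{k,m}(\xi)$, $\xi \in E_{k+m}$, and the first identity of Lemma \ref{l:sigma_propII} together with Lemma \ref{l:sigteebot} gives $(v^{TB})^* \io_{k,m}(\xi) = 0$, hence $v^{TB}(v^{TB})^* \io_{k,m}(\xi) = 0$. For $1 \leq j \leq \min\{k,m\}$ and $\xi \in E_{k+m-2j}$, Lemma \ref{l:sigteebot} rewrites $\si^j \io_{k-j,m-j}(\xi)$ as a nonzero scalar multiple of $v^{TB}\big( \si^{j-1} \io_{k-j,m-j}(\xi) \big)$, the argument $\si^{j-1} \io_{k-j,m-j}(\xi)$ being exactly the degree-$(j-1)$ basis vector of the fusion decomposition of $E_{k-1} \ot E_{m-1}$. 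Using $v^{TB}(v^{TB})^* v^{TB} = v^{TB} \cd \big( (v^{TB})^* v^{TB} \big) = v^{TB} \Ga$ (Lemma \ref{l:gamma}) together with the explicit action \eqref{eq:gammadef} of $\Ga_{k-1,m-1}$, which multiplies $\si^{j-1} \io_{k-j,m-j}(\xi)$ by $1 - \tfrac{d_{k-j} d_{m-j-1}}{d_k d_{m-1}}$, I would conclude
\[
v^{TB}(v^{TB})^* \big( \si^j \io_{k-j,m-j}(\xi) \big) = \Big( 1 - \frac{d_{k-j} d_{m-j-1}}{d_k d_{m-1}} \Big) \cd \si^j \io_{k-j,m-j}(\xi) .
\]
Combining the two paragraphs, $(v^{BT})^* v^{BT}$ multiplies $\si^j \io_{k-j,m-j}(\xi)$ by $\tfrac{d_k d_{m-1}}{d_{k-1} d_m}\big( 1 - \tfrac{d_{k-j} d_{m-j-1}}{d_k d_{m-1}} \big)$ for $0 < j \leq \min\{k,m\}$ and by $0$ for $j = 0$, which is precisely the defining formula for $\De_{k,m}$; hence $(v^{BT})^* v^{BT} = \De$.

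The final clause is then immediate: by Lemma \ref{l:WinM2} the matrix $W$ belongs to $M_2\big( \B L(F \hot \B T) \big)$, so its entry $v^{BT}$ lies in the $C^*$-algebra $\B L(F \hot \B T)$, and therefore $\De = (v^{BT})^* v^{BT} \in \B L(F \hot \B T)$. I expect the only delicate point to be the index bookkeeping — tracking the shift $j - 1 \mapsto j$ when transporting the eigenvalues of $\Ga_{k-1,m-1}$ through $v^{TB}$, and separately handling the degenerate cases $j = 0$ and $k = 0$ or $m = 0$; once this is done the scalars coincide on the nose and no analytic input beyond the finite-dimensionality of the spaces $E_k \ot E_m$ is required.
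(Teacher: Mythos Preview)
Your proof is correct and follows essentially the same route as the paper: both use \eqref{eq:adjointTB} to rewrite $(v^{BT})^* v^{BT}$ in terms of $v^{TB}(\Phi^{-1}\ot\Phi)(v^{TB})^*$, reduce via Lemma \ref{l:sigteebot} to $\si\si^*$, and then check the scalar action on the basis vectors $\si^j\io_{k-j,m-j}(\xi)$ of Theorem \ref{t:iso_fusion}. The only cosmetic difference is that for $j>0$ the paper appeals directly to Lemma \ref{l:sigma_propI} and Lemma \ref{l:sigma_propII}, whereas you package the same computation through the already-established identity $(v^{TB})^* v^{TB}=\Ga$ of Lemma \ref{l:gamma}; this is a neat shortcut but not a different argument.
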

\begin{proof}
The identity holds trivially on $E_k \ot E_m$ for $k = 0$ or $m = 0$. Thus, let $k,m \in \nn$. From the identities in \eqref{eq:adjointTB}, Lemma \ref{l:sigteebot} and the definition in \eqref{eq:defphi} we obtain that
\begin{equation}\label{eq:vbottop}
((v^{BT})^* v^{BT})(\eta) = v^{TB} (\Phi^{-1} \ot \Phi) (v^{TB})^*(\eta)
= \frac{d_k d_{m-1} }{\mu_k \cd d_{k-1} d_m} \si_{k-1,m-1} \si_{k-1,m-1}^*(\eta)
\end{equation}
for all $\eta \in E_k \ot E_m$. Let $0 \leq j \leq k,m$ and let $\xi \in E_{k+m-2j}$ be given. Using Theorem \ref{t:iso_fusion} we only need to verify that
\[
((v^{BT})^* v^{BT})( \si^j \io_{k-j,m-j}(\xi)) = \De_{k,m}( \si^j \io_{k-j,m-j}) .
\]
The case where $j = 0$ follows since $v^{BT}(1 - \Pi) = 0$ and the remaining cases follow from \eqref{eq:vbottop} upon applying Lemma \ref{l:sigma_propI} and Lemma \ref{l:sigma_propII}. 
\end{proof}

The next lemma is a straightforward consequence of Lemma \ref{l:gamma}, Lemma \ref{l:delta} and \eqref{eq:y*yyy*}.

\begin{lemma}\label{l:y*ydelgam}
Let $t \in [0,1]$. We have the identity
\[
\begin{split}
y_t^* y_t 
& = \ma{cc}{ 1 -  \Pi + \big( (1 - t) + t \cd \De \big) \cd \Pi & 0 \\
0 &  1 - t + t  \cd \Ga } .
\end{split}
\]
\end{lemma}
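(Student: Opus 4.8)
The statement to be established is the identity
\[
y_t^* y_t = \ma{cc}{ 1 - \Pi + \big( (1-t) + t \cd \De \big) \cd \Pi & 0 \\ 0 & 1 - t + t \cd \Ga }
\]
among bounded adjointable operators on $(F \op F) \hot \B T$, for each fixed $t \in [0,1]$. The plan is to start from the definition of $y_t$ and simply expand the product $y_t^* y_t$, then identify each entry of the resulting $2 \times 2$ matrix with the right-hand side using the two previously established identities $(v^{TB})^* v^{TB} = \Ga$ (Lemma \ref{l:gamma}) and $(v^{BT})^* v^{BT} = \De$ (Lemma \ref{l:delta}), together with the orthogonality relations in Lemma \ref{l:WMT}.

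Writing $v := \ma{cc}{ (1-t)^{1/2} v^{TT} & v^{TB} \\ v^{BT} & (1-t)^{1/2} v^{BB}}$ so that $y_t = (1 - P) + v P$, I would first note that $P v P = v P$, since by Lemma \ref{l:WMT} the partial isometry $W$ (and hence each of its matrix entries $v^{TT}, v^{TB}, v^{BT}, v^{BB}$, after composing with $P$) maps into $\T{Im}(P)$; more precisely \eqref{eq:douupi} gives $v^{BT}(1-\Pi) = 0 = (1-\Pi) v^{TB}$ and $v^{TT}(1-\Pi) = (1-\Pi) v^{TT}$, which I use to verify $(1-P) v P = 0$ and hence $(1-P)$ and $v P$ are orthogonal in the sense that $((1-P))^*(vP) = 0$. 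Consequently
\[
y_t^* y_t = (1 - P)^2 + (vP)^* (vP) = (1 - P) + P v^* v P .
\]
It then remains to compute $P v^* v P$. Since $P = \Pi \op 1_{F \hot F}$, the off-diagonal blocks of $v^* v$ restricted by $P$ involve cross terms such as $(v^{TT})^* v^{TB}$, $(v^{BT})^* v^{BB}$, etc.; I would check, again using \eqref{eq:douushift} and the relations $V_L^* \io_L = 0 = V_R^* \io_R$ and $\io_R^* \io_R = 1_F - Q_0 = \io_L^* \io_L$ from the proof of Lemma \ref{l:WMT}, that these cross terms vanish after multiplication by $P$ on both sides — this is the one genuinely computational point and amounts to the fact that $W$ is a partial isometry intertwining the $\io$-shifts on $G$ and acting block-diagonally. (Alternatively one observes directly that $v$ is, by construction, a scalar combination of $W$ with the extra shrinking factors on the $v^{TT}$ and $v^{BB}$ entries, and $W^* W = \ma{cc}{W_R W_R^* & 0 \\ 0 & 0}$ restricted to $\T{Im}(P)$ is the identity, so only the diagonal entries of $v^* v$ survive.) The $(1,1)$-entry of $P v^* v P$ is then $\Pi\big( (1-t)(v^{TT})^* v^{TT} + (v^{BT})^* v^{BT} \big)\Pi$, and using $(v^{TT})^* v^{TT}\,\Pi = \Pi$ (which follows from $W^* W$ being a projection with $W^* W\, P = P$, noting $v^{BT}$ contributes nothing to $(1,1)$ on $\T{Im}(1-\Pi)$... more carefully: on $\T{Im}(\Pi)$ one has $(v^{TT})^* v^{TT} + (v^{BT})^* v^{BT} = $ the $(1,1)$-block of $W^*W = $ identity, while $(v^{BT})^* v^{BT} = \De$ by Lemma \ref{l:delta}, giving $(v^{TT})^* v^{TT} = 1 - \De$ on $\T{Im}(\Pi)$) yields $(1-t)(1 - \De)\Pi + \De\,\Pi = \big((1-t) + t\De\big)\Pi$. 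Similarly the $(2,2)$-entry is $(1-t)(v^{BB})^* v^{BB} + (v^{TB})^* v^{TB}$, and from $W^* W = \ma{cc}{W_R W_R^* & 0 \\ 0 & 0}$ one reads that the $(2,2)$-block of $W^* W$ is $0$, i.e. $(v^{BB})^* v^{BB}$ does not enter there; rather the $(2,2)$-block of $v^* v$ is $(1-t)(v^{BB})^* v^{BB} + (v^{TB})^* v^{TB}$ and using $W W^*$ having $(2,2)$-block $0$ together with $(v^{TB})^* v^{TB} = \Ga$ and the analogous relation $(v^{BB})^* v^{BB} = 1 - \Ga$ (obtained the same way from $WW^*$... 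I would double-check the bookkeeping of which block gives $1 - \Ga$), one arrives at $(1-t)(1-\Ga) + t\Ga = (1-t) + t\Ga$ after adding back the $1 - P$ term, which contributes $1$ in the $(2,2)$ slot. Combining the diagonal blocks with the already-computed $(1-P)$ summand gives exactly the claimed matrix.

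\textbf{Main obstacle.} The only delicate step is verifying that the genuinely mixed products $(v^{TT})^* v^{TB}$, $(v^{TB})^* v^{BB}$, and their adjoints vanish when sandwiched by $P$, and correctly accounting for which of the four blocks of $W^* W$ and $W W^*$ are the identity and which are zero on $\T{Im}(P)$ — i.e., getting the bookkeeping of $(v^{TT})^* v^{TT} + (v^{BT})^* v^{BT} = \Pi$ versus $(v^{TB})^* v^{TB} + (v^{BB})^* v^{BB} = 1$ straight. Both relations follow from Lemma \ref{l:conjdouu} (the formulas $1 - W^*W = \ma{cc}{Q_0 \ot 1_F & 0 \\ 0 & 0}$ and $1 - WW^* = \ma{cc}{1_F \ot Q_0 & 0 \\ 0 & 0}$, restricted to $\T{Im}(P)$ where these corrections vanish) together with the block form \eqref{eq:douumatrix} of $W$, so no new input is required beyond the cited lemmas and \eqref{eq:adjointTB}; it is purely a matter of careful multiplication of the $2 \times 2$ matrices. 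I expect the write-up to be short: one displayed expansion of $y_t^* y_t$, one invocation of Lemma \ref{l:WMT} to kill the cross terms, and one each of Lemma \ref{l:gamma} and Lemma \ref{l:delta} to name the surviving diagonal entries.
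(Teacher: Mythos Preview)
Your approach is exactly the paper's: the paper simply cites the already-computed formula \eqref{eq:y*yyy*} (itself obtained from Lemma \ref{l:WMT}) and substitutes $\Ga$ and $\De$ via Lemma \ref{l:gamma} and Lemma \ref{l:delta}. You are re-deriving \eqref{eq:y*yyy*}, which is fine.

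However, your bookkeeping in the $(2,2)$-block is wrong in several places. From Lemma \ref{l:conjdouu} one has $1 - W^*W = (Q_0 \ot 1_F) \op 0$, so the $(2,2)$-block of $W^*W$ equals $1$, not $0$; likewise $1 - WW^* = (1_F \ot Q_0) \op 0$ gives $(WW^*)_{22} = 1$, not $0$. The relation $(v^{BB})^* v^{BB} = 1 - \Ga$ that you need comes from reading off the $(2,2)$-block of $W^*W$ (which is $(v^{TB})^* v^{TB} + (v^{BB})^* v^{BB} = 1$), not from $WW^*$. Also, since $P = \Pi \op 1$ you have $1 - P = (1 - \Pi) \op 0$, so $1 - P$ contributes $0$ to the $(2,2)$-slot, not $1$; the entry $(1-t) + t\Ga$ already comes out directly from $(v^*v)_{22} = (1-t)(1-\Ga) + \Ga$. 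Finally, your formula ``$W^*W = \ma{cc}{W_R W_R^* & 0 \\ 0 & 0}$'' does not type-check: what is true is $W^*W = W_R W_R^* \ot 1_F$ with $W_R W_R^*$ the projection onto $F_+ \op F$, i.e.\ $W^*W = \ma{cc}{(1-Q_0) \ot 1_F & 0 \\ 0 & 1}$. Once you fix these, the off-diagonal blocks of $v^* v$ vanish because $(v^{TT})^* v^{TB} + (v^{BT})^* v^{BB} = (W^*W)_{12} = 0$, and your argument goes through cleanly.
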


\begin{lemma}\label{l:y*ydense}
The norm-dense submodule $\s X = (\Falg \op \Falg) \ot C( [0,1],\B T) { \su X}$ is contained in the image of $y^* y : X \to X$.
\end{lemma}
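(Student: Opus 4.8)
The plan is to exploit that, by Lemma~\ref{l:y*ydelgam}, the operator $y^*y$ is block diagonal with respect to the decomposition $X = \big( F \hot C([0,1],\B T) \big) \op \big( F \hot C([0,1],\B T) \big)$, so that it suffices to show separately that each of the two copies of $\Falg \ot C([0,1],\B T)$ lies in the image of the corresponding diagonal block. For each block the argument follows the pattern of the proof of Lemma~\ref{l:gammadense}: fixing $k \in \nn_0$, I will show that the block in question, pre-composed with the spectral projection $Q_k \ot 1$ and corrected by $(1 - Q_k) \ot 1$ off the range, is an invertible bounded adjointable operator with a bound on the inverse that is uniform in the evaluation parameter $t \in [0,1]$; since $E_k \ot C([0,1],\B T)$ is invariant under that block, this forces $E_k \ot C([0,1],\B T) \su \T{Im}(y^*y)$, and summing over $k$ gives the claim.

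For the second summand the relevant block is $1 - t + t\Ga$. Since $\Ga$ preserves every $E_k \ot E_m$ with $\Ga|_{E_k \ot E_m} = \Ga_{k,m}$, the corrected operator $\big( (1-t) + t\Ga \big)(Q_k \ot 1_F) + (1 - Q_k) \ot 1_F$ restricts to $1 - t + t\Ga_{k,m}$ on $E_k \ot E_m$ and to the identity elsewhere. From the explicit eigenvalues of $\Ga_{k,m}$ in \eqref{eq:gammadef} and the monotonicity of the quotients $d_{l-1}/d_l$ (Lemma~\ref{l:quocon}) one reads off $\Ga_{k,m} \geq 1 - d_k/d_{k+1}$ uniformly in $m$, hence $1 - t + t\Ga_{k,m} \geq 1 - d_k/d_{k+1} > 0$ uniformly in $m$ and $t$. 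Thus the corrected operator is invertible on $F \hot F$ with a $t$-independent bound on the inverse; since $\Ga \in \B L(F \hot \B T)$ by the remarks after Lemma~\ref{l:gamma}, it is invertible already in $\B L(F \hot \B T)$, and the norm-continuous family of uniformly bounded invertibles obtained by varying $t$ assembles, via $F \hot C([0,1],\B T) \cong C\big([0,1], F \hot \B T \big)$, into an invertible element of $\B L(F \hot C([0,1],\B T))$. Therefore $E_k \ot C([0,1],\B T) \su \T{Im}\big( (1 - t + t\Ga)(Q_k \ot 1) \big) \su \T{Im}(1 - t + t\Ga)$.

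For the first summand the block is $1 - \Pi + \big( (1-t) + t\De \big)\Pi$. The defining formula for $\De_{k,m}$ shows that $\De$ annihilates the range of $1 - \Pi$ and takes values in the range of $\Pi$ (recall from Theorem~\ref{t:iso_fusion} and Lemma~\ref{l:image} that the $j=0$ part of $E_k \ot E_m$ is exactly $\io_{k,m}(E_{k+m})$), so $\De = \Pi \De \Pi = \De\Pi$ and the block equals $1 - t\Pi + t\De$; it acts as the identity on $\T{Im}(1 - \Pi)$ and as $1 - t + t\De$ on $\T{Im}(\Pi)$. On $\T{Im}(\Pi) \cap (E_k \ot E_m)$ the operator $\De_{k,m}$ is diagonalised by the vectors $\si^j \io_{k-j,m-j}(\xi)$ with $1 \le j \le \min\{k,m\}$, and using $d_{k-j} \le d_{k-1}$, $d_{m-j-1} \le d_{m-2} < d_{m-1}$ for $j \geq 1$, together with $d_{m-1}/d_m \ge d_0/d_1 = 1/(n+1)$, their eigenvalues $\tfrac{d_k d_{m-1}}{d_{k-1} d_m}\big( 1 - \tfrac{d_{k-j} d_{m-j-1}}{d_k d_{m-1}}\big)$ are bounded below by the $t$- and $m$-independent constant $\tfrac{1}{n+1}\big( 1 - \tfrac{d_{k-1}}{d_k}\big) > 0$ for every $k \geq 1$ (for $k = 0$ the subspace $\T{Im}(\Pi) \cap (E_0 \ot E_m)$ is $\{0\}$). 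Exactly as for the second summand — now using $\Pi \in \B L(F \hot \B T)$ from Proposition~\ref{p:orthinadj} and $\De \in \B L(F \hot \B T)$ from Lemma~\ref{l:delta} — one concludes $E_k \ot C([0,1],\B T) \su \T{Im}\big( (1 - t\Pi + t\De)(Q_k \ot 1) \big)$.

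Putting the two cases together and taking the linear span over all $k \in \nn_0$ gives $\s X \su \T{Im}(y^*y)$. I expect the only genuinely delicate points to be the two uniform (in $m$ and $t$) lower eigenvalue estimates for $\Ga_{k,m}$ and $\De_{k,m}$, and the bookkeeping needed to pass from fibrewise invertibility with $t$-uniform bounds to invertibility of the associated operator on the Hilbert $C^*$-module over $C([0,1],\B T)$; both are routine given the identities for $\{d_m\}$ recorded in Section~\ref{s:fusion} and the fact that $C^*$-subalgebras inherit invertibility.
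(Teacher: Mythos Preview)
Your proposal is correct and follows essentially the same approach as the paper: reduce via Lemma~\ref{l:y*ydelgam} to the two diagonal blocks, fix $k \in \nn_0$, and show that each block restricted to the $Q_k$-range is invertible with an $m$- and $t$-uniform bound on the inverse, thereby forcing $E_k \ot C([0,1],\B T)$ into the image. The only cosmetic differences are that the paper phrases the passage from fibrewise to global invertibility via a supremum over $t$ rather than via the identification $F \hot C([0,1],\B T) \cong C([0,1], F \hot \B T)$, and for the $\De$-block the paper records the exact inverse norm $\|(\De_{k,m}\Pi)^{-1}\| = \tfrac{d_{k-1}d_m}{d_k d_{m-1}}\big(1 - \tfrac{d_{k-1}d_{m-2}}{d_k d_{m-1}}\big)^{-1} \le d_1\big(\tfrac{d_k}{d_{k-1}}-1\big)^{-1}$, which is the $j=1$ eigenvalue, whereas you bound all eigenvalues below by $\tfrac{1}{n+1}\big(1 - \tfrac{d_{k-1}}{d_k}\big)$; both estimates are $m$-independent and suffice.
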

\begin{proof}
For $k \in \nn_0$ we sometimes apply the identification $Q_k := Q_k \ot 1_{\B T} : F \hot \B T \to F \hot \B T$. It follows from Lemma \ref{l:y*ydelgam} and the definition of the involved operators that
\[
y^*_t y_t (Q_k \op Q_k) = (Q_k \op Q_k) y^*_t y_t
\]
for all $t \in [0,1]$. In particular, upon identifying $Q_k \in \B L( F \hot \B T)$ with the constant path with value $Q_k$ for all $t \in [0,1]$, we obtain that
\[
\T{Im}\Big( (Q_k \op Q_k) \cd \big( y^* y (Q_k \op Q_k) + (1 - Q_k) \op (1 - Q_k) \big) \Big) 
\su \T{Im}(y^* y)
\]
for all $k \in \nn_0$. Since $\T{Im}(Q_k \op Q_k) = (E_k \op E_k) \ot C( [0,1],\B T)$ it therefore suffices to show that
\[
y^* y(Q_k \op Q_k) : (Q_k \op Q_k) X \to (Q_k \op Q_k) X 
\]
is invertible. In other words, we have to show that the fibre 
\[
(y^*_t y_t)(Q_k \op Q_k) : Q_k (F \hot \B T) \op Q_k (F \hot \B T) \to Q_k (F \hot \B T) \op Q_k(F \hot \B T)
\]
is invertible for each $t \in [0,1]$ and that
\[
\sup_{t \in [0,1]}\big\| \big( (y^*_t y_t)(Q_k \op Q_k) \big)^{-1} \big\| < \infty .
\] 
As we did in Lemma \ref{l:gammadense} we may switch over and solve the corresponding problem on the Hilbert space $(Q_k F \hot F) \op (Q_k F \hot F)$. We apply Lemma \ref{l:y*ydelgam} and deal with each component separately, namely
\[
1 - \Pi + \big( (1 - t) + t \cd \De \big) \cd \Pi \q \T{and} \q
1 - t + t \cd \Ga : F \hot F \to F \hot F .
\] 

Let $k \in \nn_0$ be fixed. We saw in the proof of Lemma \ref{l:gammadense} that $(1 - t + t \cd \Ga ) (Q_k \ot 1_F) : E_k \ot F \to E_k \ot F$ for all $t \in [0,1]$ is invertible and that 
\[
\sup_{t \in [0,1]} \big\| \big( (1 - t + t \cd \Ga ) (Q_k \ot 1_F) \big)^{-1} \big\| < \infty .
\]
Remark that we are here also applying that $\Ga : F \hot F \to F \hot F$ is a positive bounded operator.

We now consider the problematic part of the other component of $y_t^* y_t(Q_k \op Q_k)$: \[(1 - t + t \De) \Pi (Q_k \ot 1_F) :  \Pi( E_k \ot F ) \to \Pi( E_k \ot F).\] Remark in this respect that $(Q_k \ot Q_m)\Pi = \Pi (Q_k \ot Q_m)$ for all $m \in \nn_0$.

For each $t \in [0,1]$ and $m \in \nn_0$ we are interested in the invertible operator
\[
(1 - t + t \De) \Pi (Q_k \ot Q_m) : \Pi( E_k \ot E_m) \to \Pi(E_k \ot E_m)
\]
For $k = 0$ or $m = 0$ we have that $\Pi( E_k \ot E_m)  = \{0\}$ so suppose that $k,m \in \nn$. In this case, we have that the bounded operator $\De_{k,m} \Pi : \Pi (E_k \ot E_m) \to \Pi (E_k \ot E_m)$ is invertible with
\begin{equation}\label{eq:boudelpi}
\| (\De_{k,m} \Pi )^{-1} \| 
= \frac{d_{k-1} d_m}{d_k d_{m-1}} \left( 1 - \frac{d_{k-1} d_{m-2}}{ d_k d_{m-1}} \right)^{-1} 
\leq \frac{d_{k-1} d_1}{d_k} \left( 1 - \frac{d_{k-1}}{ d_k} \right)^{-1}
= d_1 \cd \left( \frac{d_k}{d_{k-1}} - 1 \right)^{-1}  .
\end{equation}
Since this norm-bound is independent of $m \in \nn$ we conclude that
\[
( t + (1 - t)\De) \Pi (Q_k \ot 1_F)
: \Pi( E_k \ot F) \to \Pi(E_k \ot F)
\]
is invertible for all $t \in [0,1]$ and that
\[
\sup_{t \in [0,1]} \big\| \big( ( t + (1 - t)\De) \Pi  (Q_k \ot 1_F)  \big)^{-1} \big\|  < \infty .
\]
We are here also relying on the positivity of the bounded operator $\De : F \hot F \to F \hot F$. 
\end{proof}

Recall the definition of the bounded adjointable operators $x_j, y$ and $K : X \to X$ from \eqref{eq:ydefinit}. 

\begin{lemma}\label{l:yfact}
Let $j \in \{0,1,2,\ldots,n\}$. There exist bounded adjointable operators $L, \ov{L}, M, \ov{M} : X \to X$ such that
\[
K^{1/2} L K^{1/2} = [x_j,y] = MK \q \mbox{and} \q K^{1/2} \ov{L} K^{1/2} = [x_j,y^*] = K \ov{M} 
\]
\end{lemma}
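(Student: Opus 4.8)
The plan is to reduce everything to the commutator estimates already obtained for the building block $v^{TB}$ in Proposition \ref{p:DpcommIII} together with the observations about how the orthogonal projection $P$ and the operator $W$ interact with the generators $x_j := P(\psi_+(T_j^*)\ot 1)P$, and then to repackage those estimates after conjugating by $P$. First I would write $y_1$ out in block form (the computation already appears in the proof of Lemma \ref{l:ydense}) so that the only nontrivial entries of $y$ are built from $1-\Pi$, $v^{TT}$, $v^{TB}$, $v^{BT}$, $v^{BB}$ compressed by $\Pi$, all scaled by continuous functions of the homotopy parameter $t\in[0,1]$. Since the scaling factors $(1-t)^{1/2}$ are just norm-bounded scalars that commute with everything, it suffices to produce the factorisations for each of the four matrix entries separately (uniformly in $t$), and then assemble them into a $2\times2$ block operator over $C([0,1],\B T)$; the fibrewise bounds are uniform, so the resulting $L,\ov L,M,\ov M$ are genuine bounded adjointable operators on $X$.

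Next I would handle the commutators with the individual entries. The key point is that $x_j=P(\psi_+(T_j^*)\ot 1)P$, and by Lemma \ref{l:WMT} the entries $v^{TB},v^{BT},v^{TT}$ intertwine $P$ with $1-P$ in the ways recorded in \eqref{eq:douupi}, while $\psi_+(T_j^*)\ot 1$ preserves the decomposition $\T{Im}(1-P)\op\T{Im}(P)$ in the sense made explicit in \eqref{eq:actiontee*}. Using these facts, each commutator $[x_j,\,\cdot\,]$ reduces, modulo terms that are literally zero by \eqref{eq:douupi} and \eqref{eq:douushift}, to the compression to $\T{Im}(P)$ of a commutator of the form $[\psi_+(T_j^*)\ot 1,v^{\bullet\bullet}]$. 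For $v^{TB}$ this is exactly the content of Proposition \ref{p:DpcommIII}, which (with $p=1$ and $p=0$) gives both $(D\ot 1)[x_j,v^{TB}]$ and $[x_j,v^{TB}](D\ot 1)$ bounded; the entries $v^{BT},v^{TT},v^{BB}$ are obtained from $v^{TB}$ and $(v^{TB})^*$ by multiplying by the invertible element $\Phi^{\pm 1/2}\in\B T$ (see \eqref{eq:adjointTB} and Lemma \ref{l:WinM2}), and $\Phi$ commutes with $D$ up to bounded operators since both act diagonally on the grading, so the same two-sided boundedness persists. Recalling $K=(D^{-1}\op D^{-1})\ot 1$, the identity $(D\ot 1)[x_j,v^{\bullet\bullet}]=:M$ bounded is precisely $[x_j,v^{\bullet\bullet}]=K^{-1}\cdot M$ rewritten as $[x_j,v^{\bullet\bullet}]=M'K$ after moving $D^{-1}$ to the right (again using that $D^{-1}$ commutes with $\Phi^{\pm1/2}$ up to bounded operators), and similarly the two-sided estimate $D^{1/2}[x_j,v^{\bullet\bullet}]D^{1/2}$ bounded is $[x_j,v^{\bullet\bullet}]=K^{1/2}LK^{1/2}$; the analogous statements for $[x_j,y^*]$ come from the same proposition applied to $W^*$ instead of $W$.

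The main obstacle I anticipate is purely bookkeeping rather than conceptual: keeping track of where the projection $P$ (equivalently $1-\Pi$) sits, and checking that conjugating by $P$ does not destroy the factorisations. Since $\Pi$ is a bounded adjointable operator on $F\hot\B T$ by Proposition \ref{p:orthinadj} and, by that same proposition, $[\,x\ot 1,\Pi\,]$ is compact for every $x\in\B T$, the compression $x_j=P(\psi_+(T_j^*)\ot 1)P$ differs from $(\psi_+(T_j^*)\ot 1)P$ only by something that is a compact operator times $P$, i.e.\ already of the form $MK$-ish; one then absorbs these correction terms into the $M,\ov M$ (respectively $L,\ov L$) by noting that $[\,x\ot 1,\Pi\,]\in\B K(F\hot\B T)=\B K\hot\B T$ factors through $K^{1/2}$ on either side, because $K^{-1/2}=(D^{1/2}\op D^{1/2})\ot 1$ times a compact operator is still bounded by the spectral decomposition of $D$ along the grading. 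Once all correction terms are seen to have the required $K^{1/2}(\,\cdot\,)K^{1/2}$ and $(\,\cdot\,)K$ shapes, the four sought-after operators $L,\ov L,M,\ov M$ are obtained by simply collecting the block entries, and the lemma follows.
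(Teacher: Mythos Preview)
Your overall strategy---reduce to Proposition \ref{p:DpcommIII} via the block structure of $W$ and the relations in Lemma \ref{l:WMT}---is exactly the paper's, and the observation that $\Pi$ commutes with $D$ (so compressing by $P$ does not interfere with the $K^{1/2}(\cdot)K^{1/2}$ factorisation) is the right point. The paper's proof simply carries out the matrix computation you allude to: using $v^{TT}\Pi=\Pi v^{TT}$, $v^{BT}(1-\Pi)=0$ and $(1-\Pi)v^{TB}=0$ from \eqref{eq:douupi}, together with $P=\Pi\oplus 1$, one obtains directly
\[
[x_j,y_t]=\ma{cc}{(1-t)^{1/2}\,\Pi[T_j^*,v^{TT}]\Pi & \Pi[T_j^*,v^{TB}] \\ {[T_j^*,v^{BT}]}\,\Pi & (1-t)^{1/2}[T_j^*,v^{BB}]}
\]
and similarly for $[x_j,y_t^*]$. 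Since $\Pi$ commutes with $D^{-1}\ot 1$, the operators $L,\ov L,M,\ov M$ are then literally the bounded extensions of $K^{-1/2}[x_j,y]K^{-1/2}$, $K^{-1/2}[x_j,y^*]K^{-1/2}$, $[x_j,y]K^{-1}$, $K^{-1}[x_j,y^*]$ on $\s X$, whose boundedness is exactly Proposition \ref{p:DpcommIII}. No correction terms arise.

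Where your proposal goes wrong is the third paragraph. You suggest that if residual terms involving $[\,T_j^*\ot 1,\Pi\,]$ appeared, they could be absorbed because ``$K^{-1/2}$ times a compact operator is still bounded''. This is false in general: for instance $D^{-1/4}$ is compact on $F$ but $D^{1/2}\cdot D^{-1/4}=D^{1/4}$ is unbounded. Proposition \ref{p:orthinadj} gives only compactness of $[\,x\ot 1,\Pi\,]$, not a $K^{1/2}(\cdot)K^{1/2}$ factorisation, so this absorption argument does not go through. Fortunately it is never needed: the identities in \eqref{eq:douupi} make the ``bookkeeping obstacle'' you anticipate disappear entirely, as the explicit formula above shows. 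If you actually write out $[PTP,\,(1-P)+W_tP\,]$ in $2\times 2$ blocks you will see every cross term vanish on the nose, and the proof is then two lines.
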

\begin{proof}
To ease the notation we put $T_j^* := T_j^* \ot 1_{\B T}$. For each $t \in [0,1]$ we apply Lemma \ref{l:WMT} and compute that
\[
\begin{split}
[x_j,y_t] & = \ma{cc}{ (1 - t)^{1/2} \Pi \cd [ T_j^*,v^{TT} ] \cd \Pi & \Pi   [ T_j^*, v^{TB} ]   \\ 
\, [ T_j^*,v^{BT} ] \cd  \Pi & (1 - t)^{1/2}  [ T_j^*,v^{BB} ]  } \q \T{and} \\
[x_j,y_t^*] & = \ma{cc}{ (1 - t)^{1/2} \Pi \cd [T_j^*,(v^{TT})^*] \cd \Pi & \Pi \cd [T_j^*,(v^{BT})^*] \\ 
\, [T_j^*,(v^{TB})^*] \cd \Pi & (1 - t)^{1/2} [T_j^*,(v^{BB})^*] } .
\end{split}
\]
We consider the inverses $K^{-1/2}$ and $K^{-1}$. These positive and regular unbounded operators both have $\s X$ as a core and on this core they are given by
\[
D^{1/2} \ot 1_{\cc^2 \ot C([0,1],\B T)} \T{ and } D \ot 1_{\cc^2 \ot C([0,1],\B T)} : \s X \to X ,
\]
respectively. The result of the lemma now follows from Proposition \ref{p:DpcommIII}. Indeed, $L$ and $\ov{L}$ are the bounded adjointable extensions of $D^{1/2} [x_j,y] D^{1/2}$ and $D^{1/2}[x_j,y^*]D^{1/2}$, respectively. Whereas $M$ and $\ov{M}$ are the bounded adjointable extensions of $[x_j, y] D$ and $D[x_j, y^*]$, respectively. We remark that all of these four unbounded operators are understood to be defined on the algebraic tensor product $\s X = (\Falg \op \Falg) \ot C([0,1],\B T)$. Indeed, this algebraic tensor product works well in this respect since it is a core for both $D$ and $D^{1/2}$ and since it is invariant under $x_j, y$ and $y^*$.
\end{proof}

For each $\la > 0$ we put $R_\la := (\la + y^* y)^{-1/2}$.

\begin{lemma}\label{l:ysupfin}
It holds that $K y^* y = y^* y K$. Moreover, there exists a constant such that
\[
\| K  R_\la \| \leq C \q \mbox{and} \q \| K^{1/2} y R_\la \| \leq C
\]
for all $\la > 0$.
\end{lemma}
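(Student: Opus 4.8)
The plan is to follow the template of the proof of Lemma~\ref{l:DGamma_inv_sqrt}, using the explicit block-diagonal form of $y_t^* y_t$ recorded in Lemma~\ref{l:y*ydelgam}. All of $K$, $y^* y$ and $R_\la$ act fibrewise over $t \in [0,1]$, and within each fibre they belong to the unital $C^*$-subalgebra $\B L\big( (F \op F) \hot \B T \big) \su \B L\big( (F \op F) \hot F \big)$, built out of $D^{-1}$, $\Pi$, the operators $v^{\ast\ast}$ and $\Phi$; so for norm estimates it is enough to work with the corresponding operators on the Hilbert space $F \hot F$ and to bound them uniformly in $\la > 0$ and $t \in [0,1]$, exactly as in the proof of Lemma~\ref{l:gammadense}.

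For the commutation relation, I would observe that, by Lemma~\ref{l:y*ydelgam}, $y_t^* y_t$ is block diagonal with entries assembled from $1$, $\Pi$, $\De$ and $\Ga$. Each of $\Pi$, $\De$ and $\Ga$ preserves every subspace $E_k \ot E_m \su F \hot F$, on which $D^{-1} \ot 1$ acts as the scalar $d_k^{-1}$; hence all three commute with $D^{-1} \ot 1$ (for $\Ga = (v^{TB})^* v^{TB}$ this is already part of Lemma~\ref{l:DGamma_inv_sqrt}). Therefore $K$ commutes with $y_t^* y_t$ for each $t$, so $K y^* y = y^* y K$, and in particular $K$ commutes with $R_\la = (\la + y^* y)^{-1/2}$ by continuous functional calculus.

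The bound $\| K^{1/2} y R_\la \| \leq C$ is immediate: $\| y R_\la \|^2 = \| R_\la\, y^* y\, R_\la \| = \| y^* y (\la + y^* y)^{-1} \| \leq 1$, and $\| K^{1/2} \| = \| D^{-1/2} \| = 1$ since $d_0 = 1$; so $\| K^{1/2} y R_\la \| \leq 1$ for all $\la > 0$.

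The estimate $\| K R_\la \| \leq C$ is the heart of the matter, and the only place where the weights $d_k^{-1}$ are genuinely needed, since for $n = 1$ both $\Ga$ and $\De$ have unbounded inverses. As $K$ and $R_\la$ are commuting block-diagonal operators, by Lemma~\ref{l:y*ydelgam} it suffices to bound, uniformly on each $E_k \ot E_m$, the operator $d_k^{-1}(\la + 1)^{-1/2}$ on the $(1-\Pi)$-summand (which has norm $\leq 1$), the operator $d_k^{-1}\big(\la + (1-t) + t\De_{k,m}\big)^{-1/2}$ on the $\Pi$-summand, and $d_k^{-1}\big(\la + 1 - t + t\Ga_{k,m}\big)^{-1/2}$ on $E_k \ot E_m$. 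For the second operator I would use that $\De_{k,m}$ on $\Pi(E_k \ot E_m)$ satisfies $\De_{k,m}\Pi \geq \lambda_1 \Pi$ with $\lambda_1 := \| (\De_{k,m}\Pi)^{-1} \|^{-1}$, whence $(1-t) + t\De_{k,m} \geq \min\{1,\lambda_1\}$ on that summand and $d_k^{-1}\big(\la + (1-t) + t\De_{k,m}\big)^{-1/2} \leq \max\{1,\, d_k^{-1}\lambda_1^{-1/2}\}$; the bound \eqref{eq:boudelpi} together with $d_k - d_{k-1} \geq 1$ (Lemma~\ref{l:quocon}) gives $d_k^{-1}\lambda_1^{-1} \leq d_1 = n+1$, hence $d_k^{-1}\lambda_1^{-1/2} \leq \sqrt{n+1}$. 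For the third operator I would use $\Ga_{k,m} \geq \big(1 - \tfrac{d_k d_{m-1}}{d_{k+1} d_m}\big)\cd 1 \geq \big(1 - \tfrac{d_k}{d_{k+1}}\big)\cd 1$, so that $\la + 1 - t + t\Ga_{k,m} \geq 1 - \tfrac{d_k}{d_{k+1}}$, and then $d_k^{-1}\big(1 - \tfrac{d_k}{d_{k+1}}\big)^{-1} = \tfrac{d_{k+1}}{d_k}(d_{k+1}-d_k)^{-1} \leq n+1$ by the recursion \eqref{eq:d_m_rec} and $d_{k+1}-d_k \geq 1$, giving again $\leq \sqrt{n+1}$. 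Assembling these yields $\| K R_\la \| \leq \sqrt{n+1}$ for all $\la > 0$ and $t \in [0,1]$, so the lemma holds with, say, $C = \sqrt{n+1}$. The entire difficulty is concentrated in this last step: pitting the blow-up of $\Ga^{-1}$ and $\De^{-1}$ against the decay of $d_k^{-1}$, which is resolved by the two sharp elementary inequalities $d_{k+1}-d_k \geq 1$ and $d_{k+1} \leq (n+1)d_k$.
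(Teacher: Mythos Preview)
Your proof is correct and follows the paper's template closely for the commutation $Ky^*y = y^*yK$ and for the estimate $\|KR_\la\| \leq C$: in both cases you pass to the blocks $E_k \ot E_m$ via Lemma~\ref{l:y*ydelgam} and bound $d_k^{-1}\Ga_{k,m}^{-1}$ and $d_k^{-1}(\De_{k,m}\Pi)^{-1}$ using the same elementary inequalities $d_{k+1}-d_k \geq 1$ and $d_{k+1} \leq (n+1)d_k$, exactly as the paper does (it cites Lemma~\ref{l:DGamma_inv_sqrt} for the first and~\eqref{eq:boudelpi} together with Lemma~\ref{l:quocon} for the second).

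Where you genuinely diverge is in the bound on $\|K^{1/2} y R_\la\|$. The paper does not use the trivial factorisation $\|K^{1/2}\|\cdot\|yR_\la\| \leq 1$; instead it argues that $K^{1/2} y K^{-1/2}$ extends to a bounded adjointable operator (computed explicitly via $\Phi^{\pm 1/2}$, as in the proof of Proposition~\ref{p:DpcommIII}), and then, using that each block of $R_\la$ commutes with $\Phi \ot 1$, reduces this estimate back to the bound on $\|K R_\la\|$. Your argument is shorter and entirely self-contained: since $R_\la$ is a function of $y^*y$, one has $\|yR_\la\|^2 = \|y^*y(\la + y^*y)^{-1}\| \leq 1$, and $\|K^{1/2}\| = d_0^{-1/2} = 1$. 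The paper's route has the advantage that it would still work if $\|K^{1/2}\|$ were large (it never uses that $K \leq 1$), whereas yours exploits precisely this normalisation; but in the present setting your version is cleaner.
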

\begin{proof}
The fact that $K y^* y = y^* y K$ follows since $y^* y$ leaves the submodule $(E_k \op E_l) \ot C([0,1],\B T)$ invariant for all $k,l \in \nn_0$. Moreover, writing $y : X \to X$ as a $2 \ti 2$-matrix in the following fashion
\[
y = \ma{cc}{y^{TT} & y^{TB} \\ y^{BT} & y^{BB}} \in M_2\big( \B L( F \hot C([0,1],\B T) ) \big)
\]
we see from the argument given in the proof of Proposition \ref{p:DpcommIII} that $K^{1/2} y K^{-1/2} : \s X \to X$ extends to the bounded adjointable
\[
\ma{cc}{1 - \Pi + (\Phi^{-r} \ot 1)y^{TT} \Pi & y^{TB} (\Phi^r \ot 1) \\ (\Phi^{-r} \ot 1)y^{BT} & y^{BB} (\Phi^r \ot 1)}
\]
operator on $X$. Since each component in 
\[
R_\la = \ma{cc}{R_\la^{TT} & 0 \\ 0 & R_\la^{BB}} = \ma{cc}{(1 - \Pi) R_\la^{TT}(1 - \Pi) + \Pi R_\la^{TT} \Pi & 0 \\ 0 & R_\la^{BB}}
\]
commutes with $\Phi \ot 1$, it suffices to find a constant $C > 0$ such that $\| K R_\la \| \leq C$ for all $\la > 0$. Using the description of $y^* y : X \to X$ from Lemma \ref{l:y*ydelgam}, together with the definitions of $\Ga$ and $\De : F \hot F \to F \hot F$, we may focus on showing that
\[
\sup_{k,m \in \nn_0} \| d_k^{-1} \Ga_{k,m}^{-1} \| < \infty \q \T{and} \q
\sup_{k,m \in \nn} \| d_k^{-1} (\De_{k,m} \Pi)^{-1} \| < \infty ,
\]
where we consider $\De_{k,m} \Pi$ as a bounded invertible operator on the Hilbert space $\Pi(E_k \ot E_m)$ for $k,m \in \nn$. The first estimate was already established in the proof of Lemma \ref{l:DGamma_inv_sqrt} and the second estimate follows from Lemma \ref{l:quocon} and the estimate in \eqref{eq:boudelpi}. Indeed, we have that
\[
\| d_k^{-1} (\De_{k,m} \Pi)^{-1} \| \leq \frac{d_1}{d_k} \left( \frac{d_k}{d_{k-1}} - 1 \right)^{-1} 
\leq d_1 \cd \frac{d_{k-1}}{d_k} \leq (n+1) \cd \ga_n
\]
for all $k,m \in \nn$. 
\end{proof}

For each $t \in [0,1]$, define $I_t : (F \op F) \hot \B T \to (F \op F) \hot \B T$ as the bounded adjointable extension of
\[
y_t |y_t|^{-1} : \T{Im}( |y_t|) \to (F \op F) \hot \B T .
\]
We emphasise that
\begin{equation}\label{eq:Iide}
I_0 = y_0 = H_{\pi/2} \, \, \T{and} \, \, \, I_1 = \ma{cc}{1 - \Pi & \Te \\ \Te^* & 0} 
: (F \op F) \hot \B T \to (F \op F) \hot \B T ,
\end{equation}
where the bounded adjointable isometry $\Te : F \hot \B T \to F \hot \B T$ was introduced in Lemma \ref{l:image}.

We are now ready to prove the main result of this subsection:

\begin{prop}\label{p:I_t}
The map $t \mapsto I_t$ is a strictly continuous path of $SU(2)$-equivariant unitary operators on $(F \op F) \hot \B T$. Moreover, for every $x \in \B{T}$, the map $t \mapsto I_t^* (\psi_+(x) \ot 1_{\B T}) I_t -\psi_+(x) \ot 1_{\B T}$ is a norm-continuous path of compact operators on $(F \op F) \hot \B{T}$. In particular, we have the identity
\[
{\bf 1}_{\B T} = [ I_1^* (\psi_+ \ot 1_{\B T}) I_1, \psi_- \ot 1_{\B T}]
\]
inside $KK_0^{SU(2)}(\B T,\B T)$.
\end{prop}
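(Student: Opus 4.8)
The plan is to deduce the proposition from the abstract polar-decomposition result of Appendix~\ref{a:quasi}, namely Proposition~\ref{p:polar}, applied to the data $X$, $\s X$, $x_j$, $y$, $K$ fixed in \eqref{eq:ydefinit}, and then to transport its conclusions from the ``total space'' $X = (F \op F) \hot C([0,1],\B T)$ down to the individual fibres. First I would collect the hypotheses of Proposition~\ref{p:polar}: that $y$ and $y^*$ have norm-dense image and that $x_j, x_j^*, y, y^*$ all preserve the core $\s X = (\Falg \op \Falg) \ot C([0,1],\B T)$ (Lemma~\ref{l:ydense}); that $\s X \su \T{Im}(y^*y)$ (Lemma~\ref{l:y*ydense}); that the commutators $[x_j,y]$ and $[x_j,y^*]$ factor through the compact operator $K$ both in the symmetric form $K^{1/2}(\cdot)K^{1/2}$ and in the one-sided forms $MK$ and $K\ov M$ (Lemma~\ref{l:yfact}); and that the resolvents $R_\la = (\la + y^*y)^{-1/2}$ obey the bounds $\|K R_\la\| \leq C$ and $\|K^{1/2} y R_\la\| \leq C$ uniformly in $\la > 0$ (Lemma~\ref{l:ysupfin}). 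With these in place, Proposition~\ref{p:polar} provides the bounded adjointable extension $I = y|y|^{-1}$ of the polar part on $X$, together with the compactness of all commutators $[I,x_j] \in \B K(X)$.

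I would next observe that $I$ is unitary: $\ker y = 0$ because $\overline{\T{Im}(y^*)} = X$, and $\overline{\T{Im}(y)} = X$, so the initial and final projections of the partial isometry $I$ are both $1_X$. Since $y$ is assembled from the $SU(2)$-equivariant operators $v^{TT}, v^{TB}, v^{BT}, v^{BB}$ and the $SU(2)$-equivariant projection $P$, and the polar decomposition is natural for the $SU(2)$-action, $I$ is $SU(2)$-equivariant. Evaluating at $t \in [0,1]$ through $\T{ev}_t \colon C([0,1],\B T) \to \B T$ turns $I$ into the operator $I_t$ of the statement; a bounded adjointable operator on a Hilbert $C^*$-module over $C([0,1],\B T)$ induces a strictly continuous path of such operators over $\B T$, so $t \mapsto I_t$ is strictly continuous and each $I_t$ is an $SU(2)$-equivariant unitary. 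Likewise $\B K(X) \cong C\big([0,1], \B K((F \op F) \hot \B T)\big)$, so $t \mapsto [I_t, x_{j,t}]$ is a norm-continuous path of compact operators, where $x_{j,t} = P(\psi_+(T_j^*) \ot 1_{\B T}) P$.

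The remaining work is to upgrade this commutator estimate from $x_{j,t} = P(\psi_+(T_j^*)\ot 1)P$ to $\psi_+(x) \ot 1_{\B T}$ for arbitrary $x \in \B T$. Here I would use that $\Pi$, hence $P = \Pi \op 1$, commutes with $\psi_+(x) \ot 1_{\B T}$ modulo compacts (Proposition~\ref{p:orthinadj}), and that, by Lemma~\ref{l:WMT}, $y_t$ preserves both $\T{Im}(P)$ and $\T{Im}(1-P)$ and restricts to the identity on the latter; consequently the same holds for $|y_t|$ and for $I_t$, so $I_t = (1-P) \op (I_t P)$ is block-diagonal with respect to $P$ and $I_t P$ is unitary on $\T{Im}(P)$. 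Writing $a := \psi_+(T_j^*) \ot 1_{\B T}$, so that $a \equiv PaP + (1-P)a(1-P)$ modulo (fixed) compacts, a short computation gives
\[
I_t^* a I_t - a \equiv (I_t P)^*(PaP)(I_t P) - PaP = (I_t P)^*[\,PaP,\, I_t P\,] = (I_t P)^*[\,x_{j,t},\, I_t\,]
\]
modulo a norm-continuous path of compact operators, the last term being one by the previous paragraph. The set of $x \in \B T$ for which $t \mapsto I_t^*(\psi_+(x) \ot 1_{\B T}) I_t - \psi_+(x) \ot 1_{\B T}$ is a norm-continuous path of compact operators is a $C^*$-subalgebra of $\B T$ containing all the generators $T_j, T_j^*$, hence is all of $\B T$.

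Finally, for each $t$ the pair $(I_t^*(\psi_+ \ot 1_{\B T}) I_t,\ \psi_- \ot 1_{\B T})$ is an $SU(2)$-equivariant quasi-homomorphism from $\B T$ to $\B T$: conjugation by the $SU(2)$-equivariant unitary $I_t$ yields a $*$-homomorphism whose difference from $\psi_- \ot 1_{\B T}$ is compact, since $\psi_+ \ot 1_{\B T} - \psi_- \ot 1_{\B T}$ is compact (Proposition~\ref{p:quasi}) and so is $I_t^*(\psi_+(x) \ot 1_{\B T}) I_t - \psi_+(x) \ot 1_{\B T}$. The norm-continuity just established makes $t \mapsto (I_t^*(\psi_+ \ot 1_{\B T}) I_t, \psi_- \ot 1_{\B T})$ a homotopy of $SU(2)$-equivariant quasi-homomorphisms, so all members represent the same class in $KK_0^{SU(2)}(\B T, \B T)$; since $I_0 = y_0 = H_{\pi/2}$ (see \eqref{eq:Iide}), Proposition~\ref{p:Ht_cont} identifies this common class with ${\bf 1}_{\B T}$, which is the asserted identity ${\bf 1}_{\B T} = [I_1^*(\psi_+ \ot 1_{\B T}) I_1, \psi_- \ot 1_{\B T}]$. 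The main obstacle I anticipate is the bookkeeping in this block-diagonal reduction and in checking the hypotheses of Proposition~\ref{p:polar} cleanly; the genuinely analytic difficulties---the existence of the polar part and the compactness of $[I,x_j]$---are already isolated in Proposition~\ref{p:polar} together with its inputs, Lemmas~\ref{l:yfact} and~\ref{l:ysupfin}.
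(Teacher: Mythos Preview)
Your proposal is correct and follows essentially the same route as the paper: verify the hypotheses of Proposition~\ref{p:polar} via Lemmas~\ref{l:ydense}, \ref{l:y*ydense}, \ref{l:yfact}, \ref{l:ysupfin}, read off the unitary $I$ and the compactness of $[x_j,I]$ on the total module $X$, pass to fibres for strict continuity, then upgrade from $x_j = P(\psi_+(T_j^*)\ot 1)P$ to arbitrary $x\in\B T$ using that $I_t$ is block-diagonal for $P$ and that the off-diagonal pieces of $\psi_+(T_j^*)\ot 1$ are compact, and finally invoke $I_0=H_{\pi/2}$ together with Proposition~\ref{p:Ht_cont}. The only cosmetic difference is that the paper uses the exact identity $P(\psi_+(T_j^*)\ot 1)(1-P)=0$ (from \eqref{eq:actiontee*}) to write $[I,T_j^*]=[I,x_j]+[I,(1-P)T_j^*P]$ directly, whereas you use the weaker fact that $[P,\psi_+(T_j^*)\ot 1]$ is compact; both lead to the same conclusion.
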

\begin{proof}
By Lemma \ref{l:ydense} the operator $y |y|^{-1}: \T{Im}(|y|) \to X$ extends to a unitary operator $I$ on $X = (F \op F) \hot C([0,1],\B T)$. The fibres of this unitary operator are exactly the unitary operators $I_t : (F \op F) \hot \B T \to (F \op F) \hot \B T$, $t \in [0,1]$. This means that the path $t \mapsto I_t$ is a strictly continuous path of unitary operators on $(F \op F) \hot \B T$. Moreover, since $y_t \in \B L\big( (F \op F) \hot \B T\big)$ is $SU(2)$-equivariant we obtain that $I_t \in \B L\big( (F \op F) \hot \B T\big)$ is $SU(2)$-equivariant as well.

Next, a combination of Proposition \ref{p:polar}, Lemma \ref{l:ydense}, Lemma \ref{l:y*ydense}, Lemma \ref{l:yfact} and Lemma \ref{l:ysupfin} shows that the commutators $[x_j,I]$ and $[x_j^*,I]$ belong to the compact operators on $(F \op F) \hot C( [0,1],\B T)$ for every $j \in \{0,1,2,\ldots,n\}$. Now, put $T_j^* := \psi_+(T_j^*) \ot 1_{C([0,1],\B T)}$ and remark that
\[
T_j^* = x_j + (1 - P) T_j^* (1 - P) + (1 - P) T_j^* P .
\]
We know from Proposition \ref{p:orthinadj} that $(1 - P) T_j^* P$ is a compact operator on $(F \op F) \hot C( [0,1],\B T)$ and we moreover have that
\[
[I,T_j^*] = [I,x_j] + [I, (1 - P) T_j^* P]
\]
and similarly for with $I^*$ instead of $I$. This shows that $[I,T_j^*]$ and $[I^*,T_j^*]$ are compact operators on $(F \op F) \hot C( [0,1],\B T)$ for all $j \in \{0,1,2,\ldots,n\}$ and hence that
\[
I^* (\psi_+(x) \ot 1_{C([0,1],\B T)}) I - \psi_+(x) \ot 1_{C([0,1],\B T)}
\]
is a compact operator on $(F \op F) \hot C([0,1],\B T)$ for all $x \in \B T$. But this means that the path
\[
t \mapsto I_t^* (\psi_+(x) \ot 1_{\B T}) I_t - \psi_+(x) \ot 1_{\B T}
\]  
is a norm-continuous path of compact operators on $(F \op F) \hot \B T$. Since $\psi_+(x) \ot 1_{\B T} - \psi_-(x) \ot 1_{\B T}$ is a compact operator as well (for every $x \in \B T$) we obtain the identity
\[
[ I_0^* (\psi_+ \ot 1_{\B T}) I_0, \psi_- \ot 1_{\B T}] = [I_1^* (\psi_+ \ot 1_{\B T}) I_1, \psi_- \ot 1_{\B T}]
\]
inside the $SU(2)$-equivariant $KK$-group $KK^{SU(2)}_0(\B T,\B T)$. Since $I_0 = H_{\pi/2}$ we obtain the result of the present proposition by an application of Proposition \ref{p:Ht_cont}.
\end{proof}

\begin{rem}
For $n>1$, it can be established that both $y^*y$ and $yy^*$ are invertible as bounded adjointable operators on $X$. The proof of Proposition \ref{p:I_t} therefore simplifies a lot for $n > 1$. For $n = 1$ it only holds that $y^* y$ and $y y^*$ have dense images in $X$ and this is the reason for some of the more detailed analysis carried out in this subsection. 
\end{rem}





\subsection{Third step: proof of $KK$-equivalence}
We are now ready to finish the proof of Theorem \ref{t:KKequiv} establishing that $\B T$ and $\B C$ are $KK^{SU(2)}$-equivalent. 

\begin{proof}[Proof of Theorem \ref{t:KKequiv}]
From Proposition \ref{p:I_t} we have the identity
\[
{\bf 1}_{\B T} = [ I_1^* (\psi_+ \ot 1_{\B T}) I_1, \psi_- \ot 1_{\B T}]
\]
inside the $SU(2)$-equivariant $KK$-group $KK_0^{SU(2)}(\B T,\B T)$. Thus in order to prove the identity
\[
{\bf 1}_{\B T} = { [\psi_+,\psi_-] \hot_{\B C} [i]}
\]
we only need to show that
\begin{equation}\label{eq:rota}
[ I_1^* (\psi_+ \ot 1_{\B T}) I_1, \psi_- \ot 1_{\B T}] = [ \psi_+ \ot 1_{\B T}, \psi_- \ot 1_{\B T}] .
\end{equation}
We recall from \eqref{eq:Iide} that 
\[
I_1 = \ma{cc}{ 1 - \Pi & \Te \\ \Te^* & 0}
\]
and hence that $I_1 \in \B L\big( (F \op F) \hot \B T \big)$ is an $SU(2)$-equivariant selfadjoint unitary operator.

For each $t \in [0,1]$, define 
\[ 
J_t := \frac{1+I_1}{2} + \exp (\pi i t)  \cd \frac{I_1 -1}{2}
\]
so that $J_t \in \B L\big( (F \op F) \hot \B T \big)$ is an $SU(2)$-equivariant unitary operator and $t \mapsto J_t$ is a norm continuous path with $J_0=I_1$ and $J_1=1$. Moreover, for every $x \in \B{T}$, the assignment 
\[
[0,1] \ni t \mapsto { J_t^* (\psi_+(x) \ot 1_{\B T}) J_t - \psi_+(x) \ot 1_{\B T}}
\]
yields a norm continuous path of compact operators on the module $(F \op F) \hot \B{T}$. Indeed, the last claim on compactness follows immediately from Proposition \ref{p:I_t}. 

The existence of the path $t \mapsto J_t$ with the above properties establishes the identity in \eqref{eq:rota} and we have proved our main theorem.
\end{proof}

\section{The Gysin sequence}\label{s:gysin}
Throughout this section, we fix a strictly positive integer $n$ and consider the irreducible representation $\rho_n : SU(2) \to U(L_n)$. We apply the notation
\[
\B K(F) := \B K(F(\rho_n,L_n))  \, , \, \, \B T := \B T(\rho_n,L_n) \, \, \T{and} \, \, \, \B O := \B T(\rho_n,L_n) / \B K(F(\rho_n,L_n))
\]
for the associated compact operators, Toeplitz algebra and Cuntz--Pimsner algebra. By construction, we have the exact sequence
\[
\begin{CD}
0 @>>> \B K( F(\rho_n,L_n)) @>{j}>> \B T(\rho_n,L_n) @>{q}>> \B O(\rho_n,L_n) @>>> 0
\end{CD}
\]
of $C^*$-algebras. This exact sequence in turn results in the following six term exact sequence of $K$-groups:
\[
\begin{CD}
K_0( \B K( F) ) @>{j_*}>> K_0( \B T) @>{q_*}>> K_0( \B O ) \\
@A{\pa}AA & & @VV{\pa}V \\ 
K_1( \B O)  @<{q_*}<< K_1( \B T) @<{j_*}<< K_1( \B K(F))
\end{CD}
\]
We recall that the compact operators $\B K( F)$ are strongly Morita equivalent to the complex number via the $C^*$-correspondence $F = F(\rho_n,L_n)$ from $\B K(F(\rho_n,L_n))$ to $\cc$. In particular, this $C^*$-correspondence together with its dual $F(\rho_n,L_n)^*$ implements a $KK$-equivalence between $\B K(F)$ and $\cc$. We denote the corresponding classes in $KK$-theory by
\[
[F] \in KK_0( \B K(F), \cc) \q \T{and} \q [F^*] \in KK_0(\cc, \B K(F) ) .
\]
Combining these observations with the $KK$-equivalence from Theorem \ref{t:KKequiv}, we obtain the exact sequence
\[
\xymatrix{
& K_1(\B O) \ar[rr]^{([F] \hot_{\B K(F)} \cd ) \ci \pa} & & K_0( \cc ) \ar[rrr]^{ [F^*] \hot_{\B K(F)} { [j]} \hot_{\B T}  [\psi_+,\psi_-] } & & & K_0( \cc) \ar[r]^{(q \ci i)_* } & K_0( \B O ) \ar[dlll] \\
& & & & \{0\} \ar[ulll] & & 
}
\]
We recall that $i : \cc \to \B{T}$ denotes the unital inclusion of $\cc$ into the Toeplitz algebra and remark that $q \ci i : \cc \to \B{O}$ agrees with the unital inclusion of the complex numbers into $\B{O}$. We will abuse notation and denote the latter inclusion with the same symbol $i$. 

In the next proposition we compute the composition $[F^*] \hot_{\B K(F)} [j] \hot_{\B T}  [\psi_+,\psi_-] $, which we identify with the Euler class of the irreducible representation $\rho_n : SU(2) \to U(L_n)$, i.e. the alternating sum of $KK$-classes ${\bf 1}_\cc - [L_n] + [ \T{det}(\rho_n,L_n) ] \in KK_0(\cc,\cc)$.

\begin{prop}
We have the identity
\[
[j] \hot_{\B T}  [\psi_+,\psi_-]  =   [F] \hot_{\cc} \big( {\bf 1}_\cc -[L_n]+[\T{det}(\rho_n, L_n)]\big)  
\]
in $KK_0(\B K(F), \cc)$.
\end{prop}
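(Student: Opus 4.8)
The plan is to compute the Kasparov product $[j] \hot_{\B T} [\psi_+,\psi_-]$ explicitly by tracking what the quasi-homomorphism $(\psi_+,\psi_-)$ does when precomposed with the inclusion $j : \B K(F) \to \B T$. The key observation is that both $*$-homomorphisms $\psi_+$ and $\psi_-$ are defined on all of $\B L(F) \supseteq \B K(F)$, so $(\psi_+ \ci j, \psi_- \ci j)$ is a quasi-homomorphism from $\B K(F)$ to $\cc$ directly, and this pair represents $[j] \hot_{\B T} [\psi_+,\psi_-] \in KK_0(\B K(F),\cc)$. Thus the task reduces to identifying the class of $(\psi_+ \ci j, \psi_- \ci j)$ with $[F] \hot_\cc ({\bf 1}_\cc - [L_n] + [\det(\rho_n,L_n)])$.

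First I would unpack $\psi_+ \ci j$ and $\psi_- \ci j$ on a compact operator $k \in \B K(F)$: $\psi_+(k) = k \op k$ acting on $F \op F$, while $\psi_-(k) = W_R(k \ot 1)W_R^*$, where $W_R = \ma{c}{\io_R^* \\ V_R^*} : F \ot E_1 \to F \op F$ is the $SU(2)$-equivariant isometry from \eqref{eq:WR} with range $F_+ \op F$. Using the decomposition $F \ot E_1 \cong F_+ \op F$ afforded by Proposition \ref{p:decompright}, together with the degree-shifting identities $\io_R^*(T_j^* \ot 1) = T_j^* \io_R^*$ and the fact that $\io_R^* \io_R = 1 - Q_0$, I would rewrite $\psi_- \ci j$ so that its ``$\io_R$-part'' is conjugate to the standard action of $\B K(F)$ on $F$ and its ``$V_R$-part'' is conjugate to the action of $\B K(F)$ on $F \ot E_1 \cong F \ot L_n$ restricted through $V_R$. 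The upshot should be that, modulo the one-dimensional vacuum correction coming from $1 - W_R W_R^*$, the quasi-homomorphism $(\psi_+ \ci j, \psi_- \ci j)$ splits as the difference of the identity representation of $\B K(F)$ on $F$ and the representation of $\B K(F)$ on $F \ot E_1$ (with the vacuum piece $Q_0 F \cong \cc$ accounting for the summand ${\bf 1}_\cc$ after applying $[F]$). The appearance of $[\det(\rho_n,L_n)]$ then comes from the further decomposition $F \ot E_1 \cong F \ot L_n \cong (F \ot L_n \ominus \text{range}(V_R)) \op \text{range}(V_R)$, where $\text{range}(V_R) \cong F$ carries the twisted $SU(2)$-structure, and where — via the recursive formula for $V_R$ built from $\de \in \det(\rho_n,L_n)$ — the complementary module realizes $F \ot L_n$ minus a copy of $F$, which under the Morita equivalence $[F]$ becomes $[L_n] - [\det(\rho_n,L_n)]$ up to sign.

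Concretely, the cleanest route is probably to pass everything through the Morita equivalence bimodule $F$ from the start: since $[F^*] \hot_{\B K(F)} [j] \hot_{\B T} [\psi_+,\psi_-]$ is what actually appears in the Gysin sequence, I would compute that composite and recognize it as an explicit element of $KK_0(\cc,\cc)$. Under $[F^*] \hot_{\B K(F)} (-)$, the representation of $\B K(F)$ on $F$ becomes ${\bf 1}_\cc$; the representation on $F_+ \op F$ splits as the representation on $F$ plus the representation on $E_1 \ot F$ (by Proposition \ref{p:decompright}, since $F_+ \op F \cong F \ot E_1$ as $SU(2)$-modules but we must be careful which tensor factor carries the $\B K(F)$-action — here it is the $F$-factor in $F \ot E_1$, i.e. $E_1 \ot F$ with $\B K(F)$ acting on $F$). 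That representation of $\B K(F)$ on $E_1 \ot F$ corresponds under $[F^*] \hot_{\B K(F)} (-)$ to the class $[E_1] = [L_n]$. Finally I must peel off the further copy of $F$ sitting inside via $V_R$: Proposition \ref{p:decompleft} (or its right-handed analogue) gives $E_1 \ot F \cong F \op F_+$ again, but iterating this is not what produces $[\det]$; rather, the point is that $V_R : F \to F \ot E_1$ has its image supported by the determinant vector $\de$, so the orthogonal complement of $\text{Im}(V_R)$ inside $E_1 \ot F$, as a $\B K(F)$-$\cc$-correspondence, realizes $[L_n] - [\det(\rho_n,L_n)]$.

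The main obstacle I anticipate is bookkeeping the $SU(2)$-equivariant structure correctly and making sure the various identifications $F \ot E_1 \cong F \op F$ respect the $\B K(F)$-module structure on the correct side, so that passing through $[F^*]$ genuinely produces the $KK$-classes ${\bf 1}_\cc$, $[L_n]$ and $[\det(\rho_n,L_n)]$ with the right signs. The signs are governed by the fact that $(\psi_+,\psi_-)$ is a \emph{difference} of representations and that $V_R$ carries an odd power of $(-1)^{n+1}$ in its definition \eqref{eq:V_m}; I would need to verify that these signs cancel so that the final alternating sum reads ${\bf 1}_\cc - [L_n] + [\det(\rho_n,L_n)]$ rather than some other combination. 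The routine functoriality of $KK$-products and the additivity of $KK$ under direct sums of correspondences handle the rest, so the heart of the argument is really the unitary $SU(2)$-equivariant decomposition of $E_1 \ot F$ into $\text{Im}(V_R) \cong F$, a piece corresponding to $\det(\rho_n,L_n)^\perp$-type contributions, and the vacuum line.
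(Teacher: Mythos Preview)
Your starting observation is right: the product $[j]\hot_{\B T}[\psi_+,\psi_-]$ is represented by the quasi-homomorphism $(\psi_+\ci j,\psi_-\ci j)$. But from there you take a wrong turn, and the proof as sketched will not close.

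The main gap is that you are hunting for the class $[\det(\rho_n,L_n)]$ inside the module structure via $V_R$, when in fact its appearance in the statement is purely formal: by Proposition~\ref{p:irrdet} the determinant is one-dimensional, so $[\det(\rho_n,L_n)]={\bf 1}_\cc$ in $KK_0(\cc,\cc)$, and the right-hand side is just $[F]\hot_\cc(2\cdot{\bf 1}_\cc-[L_n])$. Your proposed decomposition of $E_1\ot F$ into $\T{Im}(V_R)$ and its complement would only reproduce Proposition~\ref{p:decompright} (giving $F_+\op F$ again), not isolate a ``determinant piece''; the claim that the complement of $\T{Im}(V_R)$ realises $[L_n]-[\det]$ is not correct.

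The second missing ingredient is the split of the quasi-homomorphism. Because $j$ lands in the compacts and $W_R$ is an isometry, both $\psi_+\ci j$ and $\psi_-\ci j$ factor through $\B K(F\op F)$, so the class decomposes as $[\psi_+\ci j,0]-[\psi_-\ci j,0]$. Then $\psi_+\ci j=\phi\op\phi$ on $F\op F$ gives $2[F]$ immediately, while $\psi_-\ci j$ is, up to a degenerate summand on the one-dimensional kernel of $W_RW_R^*$, unitarily conjugate via $W_R$ to $\phi\ot 1_{E_1}$ on $F\ot E_1$, yielding $[F]\hot_\cc[E_1]=[F]\hot_\cc[L_n]$. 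No further decomposition of $F\ot E_1$ is needed, and the vacuum line is \emph{not} where the ${\bf 1}_\cc$ comes from --- it contributes a degenerate (hence trivial) cycle on the $\psi_-$ side. The alternating sum then reads $2[F]-[F]\hot_\cc[L_n]=[F]\hot_\cc({\bf 1}_\cc-[L_n]+[\det])$ once you invoke $[\det]={\bf 1}_\cc$.
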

\begin{proof}
By Proposition \ref{p:irrdet} we have that $\T{det}(\rho_n, L_n)$ is a one-dimensional complex vector space and hence that $[\T{det}(\rho_n, L_n)]={\bf 1}_\cc$ inside $KK_0(\cc,\cc)$. Hence we have to show that 
\begin{equation}\label{eq:lastKK}
[j] \hot_{\B T}  [\psi_+,\psi_-]  = 2 \cd [F] - [F] \hot_{\cc} [E_1] .
\end{equation}

Since $j : \B K(F) \to \B T$ is the inclusion we have that both $\psi_+ \ci j$ and $\psi_- \ci j : \B K(F) \to \B L( F \op F)$ factorises through the compact operators on $F \op F$ and the left hand side of \eqref{eq:lastKK} is therefore given by
\[
[j] \hot_{\B T} [\psi_+, \psi_-] = [\psi_+ \ci j, 0] - [\psi_- \ci j, 0] .
\] 
Now, letting $\phi : \B K(F) \to \B L(F)$ denote the inclusion of the compact operators into the bounded operators we have that $\psi_+ \ci j = \phi \op \phi : \B K(F) \to \B L(F \op F)$ and hence that $[\psi_+ \ci j,0] = 2 \cd [F]$ inside $KK_0(\B K(F),\cc)$.

Next, recall that $\psi_-(x) = W_R ( x \ot 1_{E_1} ) W_R^* : F \op F \to F \op F$ for all $x \in \B T$, where $W_R : F \ot E_1 \to F \op F$ is the isometry defined in \eqref{eq:WR}. In particular, we have that $W_R$ implements a unitary isomorphism between $F \ot E_1$ and $W_R W_R^*(F \op F)$.

We define the $*$-homomorphism $\phi_- : \B K(F) \to \B L\big( W_R W_R^* (F \op F) \big)$ by 
\[
\phi_-(x)(\xi) = (\psi_- \ci j)(\xi)
\]
for all $\xi \in W_R W_R^* (F \op F)$. We then have that $(\phi_-,0)$ is unitarily equivalent to the quasi-homomorphism $(\phi \ot 1_{E_1},0)$. Moreover, we see that the quasi-homomorphisms $(\psi_- \ci j,0)$ and $(\phi_-,0)$ agree up to addition of a degenerate quasi-homomorphism. We therefore obtain the identities
\[
[\psi_- \ci j,0] = [\phi_-,0] = [\phi \ot 1_{E_1},0] = [F] \hot_{\cc} [E_1]
\]
inside the $KK$-group $KK_0(\B K(F),\cc)$. 
\end{proof}

Combining the above results we obtain the $KK$-theoretic Gysin-sequence associated with the irreducible representation $\rho_n : SU(2) \to U(L_n)$:

\begin{thm}
The following sequence of $K$-groups is exact:
\[
\xymatrix{
& K_1(\B O) \ar[rr]^{([F] \hot_{\B K(F)} \cdot ) \ci \pa} & & K_0( \cc ) \ar[rrr]^{ {\bf 1}_\cc -[L_n]+[\T{det}(\rho_n, L_n)]} & & & K_0( \cc) \ar[r]^{ i_*} & K_0( \B O ) \ar[dlll] \\
& & & & \{0\} \ar[ulll] & & 
}
\]
\end{thm}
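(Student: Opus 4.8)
The plan is to substitute the two available $KK$-equivalences into the six term exact sequence of the defining extension $0 \to \B K(F) \xrightarrow{j} \B T \xrightarrow{q} \B O \to 0$ displayed above, and then to identify the resulting maps with the help of the preceding proposition.

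First I would record that $\B K(F)$ is $KK$-equivalent to $\cc$ (via the classes $[F]$ and $[F^*]$) and that $\B T$ is $KK$-equivalent to $\cc$ by Theorem \ref{t:KKequiv} (via the classes $[\psi_+,\psi_-]$ and $[i]$), so that in particular $K_1(\B K(F)) = K_1(\B T) = 0$. Feeding these vanishing groups into the six term exact sequence, the connecting map out of $K_0(\B O)$ and the map $j_* : K_1(\B K(F)) \to K_1(\B T)$ are both zero, so the sequence collapses to
\[
0 \to K_1(\B O) \xrightarrow{\pa} K_0(\B K(F)) \xrightarrow{j_*} K_0(\B T) \xrightarrow{q_*} K_0(\B O) \to 0 ,
\]
which is exact, the injectivity of $\pa$ and the surjectivity of $q_*$ being the exactness statements at $K_1(\B O)$ and $K_0(\B O)$, respectively. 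Since $[F]$ induces an isomorphism $K_0(\B K(F)) \xrightarrow{\cong} K_0(\cc)$ and $[i]$ induces an isomorphism $K_0(\cc) \xrightarrow{\cong} K_0(\B T)$ (with inverses induced by $[F^*]$ and $[\psi_+,\psi_-]$), transporting the above sequence along these isomorphisms produces the exact sequence already recorded before the statement of the theorem: its connecting map is $([F] \hot_{\B K(F)} \cd) \ci \pa$, its middle map is the interior Kasparov product with $[F^*] \hot_{\B K(F)} [j] \hot_{\B T} [\psi_+,\psi_-]$, and its third map is $(q \ci i)_*$.

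It then remains only to identify the middle and third maps concretely. By the preceding proposition, $[j] \hot_{\B T} [\psi_+,\psi_-] = [F] \hot_\cc \big( {\bf 1}_\cc - [L_n] + [\T{det}(\rho_n,L_n)] \big)$, so, using the equivalence identity $[F^*] \hot_{\B K(F)} [F] = {\bf 1}_\cc$, one obtains
\[
\begin{split}
[F^*] \hot_{\B K(F)} [j] \hot_{\B T} [\psi_+,\psi_-]
& = [F^*] \hot_{\B K(F)} [F] \hot_\cc \big( {\bf 1}_\cc - [L_n] + [\T{det}(\rho_n,L_n)] \big) \\
& = {\bf 1}_\cc - [L_n] + [\T{det}(\rho_n,L_n)] ,
\end{split}
\]
so that the middle map is precisely interior Kasparov product with the Euler class ${\bf 1}_\cc - [L_n] + [\T{det}(\rho_n,L_n)]$. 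Finally, $q \ci i : \cc \to \B O$ is the unital inclusion of $\cc$ into the Cuntz--Pimsner algebra, so $(q \ci i)_* = i_*$ in the notation of the theorem. Substituting these identifications into the displayed exact sequence yields exactly the asserted Gysin sequence. The only point that requires genuine care is the bookkeeping of directions and of associativity in the chain of interior Kasparov products when conjugating $j_*$, $q_*$ and $\pa$ by the two equivalences; there is no remaining analytic obstacle, all the substantial work being already contained in Theorem \ref{t:KKequiv} and in the computation of $[j] \hot_{\B T} [\psi_+,\psi_-]$.
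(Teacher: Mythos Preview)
Your proposal is correct and follows exactly the route taken in the paper: the theorem is stated there as an immediate consequence of the six term exact sequence, the two $KK$-equivalences, and the preceding proposition computing $[j] \hot_{\B T} [\psi_+,\psi_-]$, with the identification $(q\ci i)_* = i_*$ already noted in the text. You have simply spelled out the bookkeeping that the paper leaves implicit in the phrase ``Combining the above results''.
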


\begin{cor}
For every $n \in \nn$ we have 
\begin{equation}
K_0(\B O(\rho_n,L_n)) \cong \zz / (n-1)\zz \qquad  K_1(\B O(\rho_n,L_n)) \cong \begin{cases} \zz & n=1,\\
\{0\} & \mbox{otherwise.}
\end{cases}
\end{equation}
\end{cor}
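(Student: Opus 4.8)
The plan is to extract both $K$-groups directly from the exact sequence established in the previous theorem. Since $K_1(\cc) = \{0\}$, that sequence is really a four-term exact sequence of abelian groups
\[
0 \longrightarrow K_1(\B O) \longrightarrow K_0(\cc) \longrightarrow K_0(\cc) \stackrel{i_*}{\longrightarrow} K_0(\B O) \longrightarrow 0 ,
\]
where the third arrow is multiplication by the integer $e$ representing the Euler class ${\bf 1}_\cc - [L_n] + [\T{det}(\rho_n,L_n)] \in KK_0(\cc,\cc) \cong \zz$ and the fourth is the map induced by the unital inclusion $i : \cc \to \B O$. Exactness then immediately yields $K_1(\B O) \cong \ker(\cdot e)$ and $K_0(\B O) \cong \T{coker}(\cdot e)$, so the whole computation reduces to evaluating $e$ as an integer.

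The second step is the identification $KK_0(\cc,\cc) \cong \zz$, under which the class $[V]$ of a finite-dimensional Hilbert space $V$, regarded as a correspondence over $\cc$, corresponds to $\dim(V)$; equivalently, it acts on $K_0(\cc) \cong \zz$ by multiplication by $\dim(V)$. This follows from additivity of $KK$ under orthogonal direct sums together with $[\cc] = {\bf 1}_\cc$. Feeding in $\dim(L_n) = n+1$ and, by Proposition \ref{p:irrdet}, $\dim(\T{det}(\rho_n,L_n)) = 1$ (so that $[\T{det}(\rho_n,L_n)] = {\bf 1}_\cc$), one gets $e = 1 - (n+1) + 1 = 1 - n$. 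The sign is irrelevant, since $\cdot(1-n)$ and $\cdot(n-1)$ on $\zz$ have the same kernel and the same cokernel.

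Finally I would split into cases. For $n = 1$ the map $\cdot e$ is the zero map on $\zz$, whence $K_1(\B O) \cong \zz$ and $K_0(\B O) \cong \zz$, in agreement with $\zz/(n-1)\zz = \zz/0\zz = \zz$. For $n \geq 2$ the map $\cdot(n-1) : \zz \to \zz$ is injective with cokernel $\zz/(n-1)\zz$, so $K_1(\B O) = \{0\}$ and $K_0(\B O) \cong \zz/(n-1)\zz$. Collecting the two cases gives the stated formulas.

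The only point that is not pure bookkeeping is the second step: the assertion that multiplying a class in $KK_0(\cc,\cc)$ by $[V]$ amounts to scaling the corresponding integer by $\dim(V)$. It is standard, but it is precisely what pins down the numerical value of the Euler class, and hence fixes the answer; everything else is a formal consequence of the Gysin sequence.
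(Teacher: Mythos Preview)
Your argument is correct and is exactly the intended one: the corollary is stated in the paper without proof, as an immediate consequence of the Gysin sequence, and your derivation --- identifying the Euler class with multiplication by $1-(n+1)+1 = 1-n$ on $K_0(\cc)\cong\zz$ and reading off kernel and cokernel --- is precisely the computation that justifies it.
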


\section{Concluding remarks and open problems}
The present paper raises a number of questions and open problems and we would like to conclude by listing a few of them:

\begin{enumerate}
\item It is relevant to consider the case where the representation $\tau : SU(2) \to U(H)$ is no longer irreducible, but where $H$ remains finite dimensional. We expect however that a lot of the considerations appearing in the present paper could be carried over to this more general context without too much trouble. In this direction we have so far only computed the determinant of the representation, see Proposition \ref{p:compdetredu}. 
\item In the present work we have only been studying $SU(2)$-subproduct systems in a Hilbert space context, meaning that we have in some sense been looking at $SU(2)$-bundles with a one-point parameter space. In order to find a noncommutative analogue of the classical $K$-theoretic Gysin sequence arising from a complex hermitian vector bundle of rank $2$, \cite{Ka78}, it is necessary to extend our work to $SU(2)$-subproduct systems with a non-trivial parameter space. This means that an interesting starting point could be a general $SU(2)$-$C^*$-correspondence where the left action factorises through the compact operators. In this context it could be relevant to compare the corresponding extension class with the class appearing in \cite{CaMe}.
\item In analogy with the case of Cuntz--Pimsner algebras arising from a $C^*$-correspondence it is an important problem to settle the universal properties both for the Toeplitz algebras and the Cuntz--Pimsner algebras coming from our $SU(2)$-equivariant data. In particular, it would be worthwhile to look for an $SU(2)$-gauge invariant uniqueness theorem as obtained in the $U(1)$-setting by Katsura in \cite[Theorem 6.4]{Kat}. 
\end{enumerate}

\appendix
\section{Commutators and polar decompositions}\label{a:quasi}
Throughout this appendix we let $X$ be a countably generated Hilbert $C^*$-module over a $C^*$-algebra $B$. 

\begin{prop}\label{p:polar}
Suppose that $x,y : X \to X$ are bounded adjointable operator and that there exists a norm-dense submodule $\s X \su X$ such that
\[
\s X \su \T{Im}(y^* y) \, \, \mbox{and} \, \, \, x( \s X) , x^*(\s X) , y^*( \s X) \su \s X .
\] 
Suppose moreover that $K : X \to X$ is a positive compact operator and that $L,\ov{L}, M, \ov{M} : X \to X$ are bounded adjointable operators such that 
\begin{enumerate}
\item $K^{1/2} L K^{1/2} = [x,y]$ and $K^{1/2} \ov{L} K^{1/2} = [x,y^*]$;
\item $M K = [x,y]$ and $K \ov{M} = [x,y^*]$.
\end{enumerate}
Suppose finally that there exists a constant $C > 0$ such that
\[
\| K^{1/2} (\la + y^* y)^{-1/2} \|  \, , \, \, \| K (\la + y^* y)^{-1} \|  \, ,  \, \,  \| K^{1/2} y (\la + y^* y)^{-1} \|  \leq C 
\]
for all $\la > 0$. Then we may conclude that the unbounded operator $y |y|^{-1} : \T{Im}( |y|) \to X$ extends to a bounded adjointable isometry $\te : X \to X$ satisfying that $[x,\te]$ and $[x^*,\te]$ both lie in $\B K(X)$.
\end{prop}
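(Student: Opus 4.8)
The plan is to build $\te$ from the polar decomposition $y = \te |y|$ and then show that the obstructions to $\te$ commuting with $x$ and $x^*$ vanish because everything bad is absorbed by the compact operator $K$. First I would establish that $\te$ is a well-defined bounded adjointable isometry. Since $\s X \su \T{Im}(y^* y)$, the positive regular operator $|y|^{-1}$ has dense domain containing $\T{Im}(|y|) \supseteq \s X$, so $y|y|^{-1}$ is densely defined; the identity $\binn{y|y|^{-1}\xi, y|y|^{-1}\eta} = \binn{|y|^{-1}\xi, y^* y |y|^{-1}\eta} = \binn{\xi,\eta}$ for $\xi,\eta \in \T{Im}(|y|)$ shows it is isometric on a dense submodule, hence extends to an isometry $\te \in \B L(X)$ with $\te^* \te = 1$. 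The adjoint $\te^*$ is the extension of $|y|^{-1} y^*$, using that $y^*(\s X) \su \s X \su \T{Dom}(|y|^{-1})$.

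The second and main part is the commutator estimate. The standard integral formula for the absolute value,
\[
|y|^{-1} = y^* (y^* y)^{-1/2}\cdot \text{(appropriate normalization)},
\]
or more usefully the resolvent representation $|y|^{-1} = \frac{1}{\pi}\int_0^\infty \la^{-1/2}(\la + y^* y)^{-1}\, d\la$ combined with $\te = y|y|^{-1}$, lets me write
\[
\te = \frac{1}{\pi}\int_0^\infty \la^{-1/2}\, y(\la + y^* y)^{-1}\, d\la .
\]
Then $[x,\te] = \frac{1}{\pi}\int_0^\infty \la^{-1/2}\,[x,\, y(\la + y^* y)^{-1}]\, d\la$, and I expand the integrand using the Leibniz rule and the identity $[x,(\la + y^* y)^{-1}] = -(\la + y^* y)^{-1}[x, y^* y](\la + y^* y)^{-1}$, with $[x,y^*y] = [x,y^*]y + y^*[x,y]$. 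Substituting (1) and (2) and inserting $R_\la := (\la + y^* y)^{-1/2}$ everywhere, each resulting term becomes a product in which $K$ or $K^{1/2}$ appears sandwiched between factors of the form $K^{s}R_\la$, $K^{s}yR_\la$ (and adjoints), each of which is uniformly bounded in $\la$ by hypothesis; what is left over is an overall factor of $K$ (or $K^{1/2}\,\cdot\,K^{1/2} = K$), which is compact. Since $\B K(X)$ is a closed two-sided ideal in $\B L(X)$ and the operator-norm integrand is dominated by $\text{const}\cdot \la^{-1/2}\min\{1,\la^{-1}\}\cdot\|K\|$, which is integrable on $(0,\infty)$, the Bochner integral lands in $\B K(X)$. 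The argument for $[x^*,\te]$ is identical after taking adjoints, using that $[x^*,y] = -[x,y^*]^* = -(K^{1/2}\ov L K^{1/2})^*$ and $[x^*,y^*] = -[x,y]^* = -(K^{1/2}LK^{1/2})^*$, and that the same uniform resolvent bounds apply (note $\|R_\la y^* K^{1/2}\| = \|K^{1/2} y R_\la\| \le C$).

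The hard part will be the careful bookkeeping in the expansion: after one differentiation of the resolvent there are several terms, and in each I must verify that the factors $K^{1/2}$ from hypotheses (1)--(2) can genuinely be paired so as to leave one net copy of $K$, and that no spurious unbounded factor $y$ or $(\la + y^*y)^{-1}$ is left uncontrolled — this is exactly where the three uniform bounds $\|K^{1/2}R_\la\|$, $\|KR_\la^2\|$, $\|K^{1/2}yR_\la^2\|$ (rewritten as $\|K^{1/2}R_\la\|$, $\|K R_\la\|\cdot\|R_\la\|$, $\|K^{1/2}yR_\la\|\cdot\|R_\la\|$ with $\|R_\la\|\le\la^{-1/2}$) are designed to be used. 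A subsidiary technical point is justifying the resolvent integral representation and the interchange of $x$ with the Bochner integral, which follows from $x$ being bounded adjointable and the integrand being Bochner-integrable in operator norm. Once these are in place the conclusion $[x,\te],[x^*,\te]\in\B K(X)$ is immediate.
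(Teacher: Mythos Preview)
Your strategy is essentially the one used in the paper: represent $|y|^{-1}$ via the resolvent integral $\frac{1}{\pi}\int_0^\infty \la^{-1/2}(\la+y^*y)^{-1}\,d\la$, expand $[x,\cdot]$ of the resolvent using the Leibniz rule, and use hypotheses (1), (2) together with the uniform bounds to exhibit each term as a norm-convergent integral of compacts. The symmetry argument for $[x^*,\te]$ is also the same.

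There is, however, one genuine technical gap in your write-up. You assert that $\te = \frac{1}{\pi}\int_0^\infty \la^{-1/2}\, y(\la+y^*y)^{-1}\,d\la$ as an operator-norm Bochner integral and then commute $x$ past the integral. But this integral does \emph{not} converge in operator norm: near $\la=0$ one only has $\|y(\la+y^*y)^{-1}\| = \||y|(\la+|y|^2)^{-1}\| \le (2\sqrt{\la})^{-1}$, so the integrand is of order $\la^{-1}$ at $0$. Consequently your justification for ``interchanging $x$ with the Bochner integral'' does not apply to $\te$ itself. The paper handles this by working pointwise first: for $\xi \in \s X \su \T{Im}(y^*y)$ the integral $|y|^{-1}\xi = \frac{1}{\pi}\int_0^\infty \la^{-1/2} R_\la\xi\,d\la$ converges absolutely in $X$, one writes $[x,\te]\xi = [x,y]\,|y|^{-1}\xi + y\,[x,|y|^{-1}]\xi$, and then shows that the right-hand side is of the form $M\cdot\big(\frac{1}{\pi}\int_0^\infty \la^{-1/2} K R_\la\,d\la\big)\xi + \big(\frac{1}{\pi}\int_0^\infty \la^{-1/2} y[x,R_\la]\,d\la\big)\xi$, where now both integrals converge in operator norm to compacts (splitting at $\la=1$ and using the uniform bound near $0$ and $\|R_\la\|\le\la^{-1}$ near $\infty$). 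Once you make this adjustment your argument goes through and matches the paper.
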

\begin{proof}
We start by recording that since $|y| : X \to X$ is positive and has dense image we know that $|y|^{-1} : \T{Im}( |y|) \to X$ is a well-defined unbounded positive and regular operator. The unbounded operator $y |y|^{-1} : \T{Im}(|y|) \to X$ then extends to an isometry $\te : X \to X$ and this isometry is adjointable since $|y|^{-1} y^*$ is densely defined as well (the domain of $|y|^{-1} y^*$ contains $\s X$ and the adjoint $\te^* : X \to X$ is the unique bounded extension of $|y|^{-1} y^*$). 

It follows from the identities in $(1)$ and compactness of $K : X \to X$ that both $[x,y]$ and $[x,y^*]$ lie in $\B K(X)$. 

For each $\la > 0$ we put $R_\la := (\la + y^* y)^{-1}$. For every $\xi \in \T{Im}(y^*y)$ we have that $|y|^{-1} \xi = \frac{1}{\pi} \int_0^\infty \la^{-1/2} R_\la \xi d \la$ where the integral converges absolutely (using the norm on $X$). We compute that
\[
\begin{split}
[x, R_\la] 
& = - R_\la [x,y^* y] R_\la = - R_\la [x,y^*] y R_\la - R_\la y^* [x,y] R_\la \\
& = - R_\la K^{1/2} \ov{L} K^{1/2} y R_\la - R_\la y^* M K R_\la .
\end{split}
\]
This in particular implies that $[x,R_\la] \in \B K(X)$. Notice now that $\| y^* y R_\la \| \leq 1$ for all $\la > 0$. Combining this estimate with our assumptions we obtain that
\begin{equation}\label{eq:estimate}
\begin{split}
\big\| y [x,R_\la] \big\| 
& \leq \| y R_\la K^{1/2} \|  \cd \| \ov{L} \| \cd \| K^{1/2} y R_\la \| 
+ \| y R_\la y^* \| \cd \| M \| \cd \| K R_\la \| \\
& \leq { C^2 \cd \| \ov{L} \|  + C \cd \| M \| }
\end{split}
\end{equation}
for all $\la > 0$. 


Remark now that the integral $\int_1^\infty \la^{-1/2} y [x,R_\la] d\la$ converges absolutely in operator norm since $\| R_\la \| \leq \la^{-1}$ for all $\la > 0$. Moreover, we obtain from the estimate in \eqref{eq:estimate} that the integral $\int_0^1 \la^{-1/2} y [x,R_\la] d\la$ converges absolutely in operator norm as well. The whole integral
\[
\int_0^\infty \la^{-1/2} y [ x, R_\la  ] d \la 
\]
therefore converges absolutely in operator norm and since the integrand is a continuous map $(0,\infty) \to \B K(X)$ we conclude that
\[
\frac{1}{\pi} \int_0^\infty \la^{-1/2} y [ x, R_\la ] d \la  \in \B K(X) .
\]
We may likewise show that the integral
\[
\frac{1}{\pi} \int_0^\infty \la^{-1/2} K R_\la d\la
\]
converges absolutely to a compact operator.

The claim that $[x,\te]$ is a compact operator is now verified by noting that
\[
\begin{split}
[x, \te] \xi 
& = [x,y] |y|^{-1} \xi + y [x, |y|^{-1}] \xi \\ 
& = M \frac{1}{\pi} \int_0^\infty \la^{-1/2} K  R_\la(\xi) d\la + \frac{1}{\pi} \int_0^\infty \la^{-1/2} y [ x, R_\la ] (\xi) d \la ,  
\end{split}
\]
for all $\xi \in \s X$.

Since our assumptions are symmetric in $x$ and $x^*$ it follows immediately that $[x^*,\te]$ is a compact operator as well.
\end{proof}

%
%

\subsubsection*{Acknowledgements} This work has benefited from conversation with our colleagues and collaborators Adam Dor-On, Evgenios Kakariadis, Magnus Goffeng, Bram Mesland, Ryszard Nest, Adam Rennie, Michael Skeide, Wojciech Szyma\'nski, Mike Whittaker. The authors would like to express their gratitude to Giovanni Landi for the many inspiring conversations and discussions that lead us to considering this problem.

{\bf This work is part of the research programme VENI with project number 016.192.237, which is (partly) financed by the Dutch Research Council (NWO).} 

{\bf JK gratefully acknowledges the financial support from the Independent Research Fund Denmark through grant no. 7014-00145B and grant no. 9040-00107B.}

\begin{figure}[h]
    \centering
    \begin{minipage}{0.45\textwidth}
        \centering
        \includegraphics[width=0.4\textwidth]{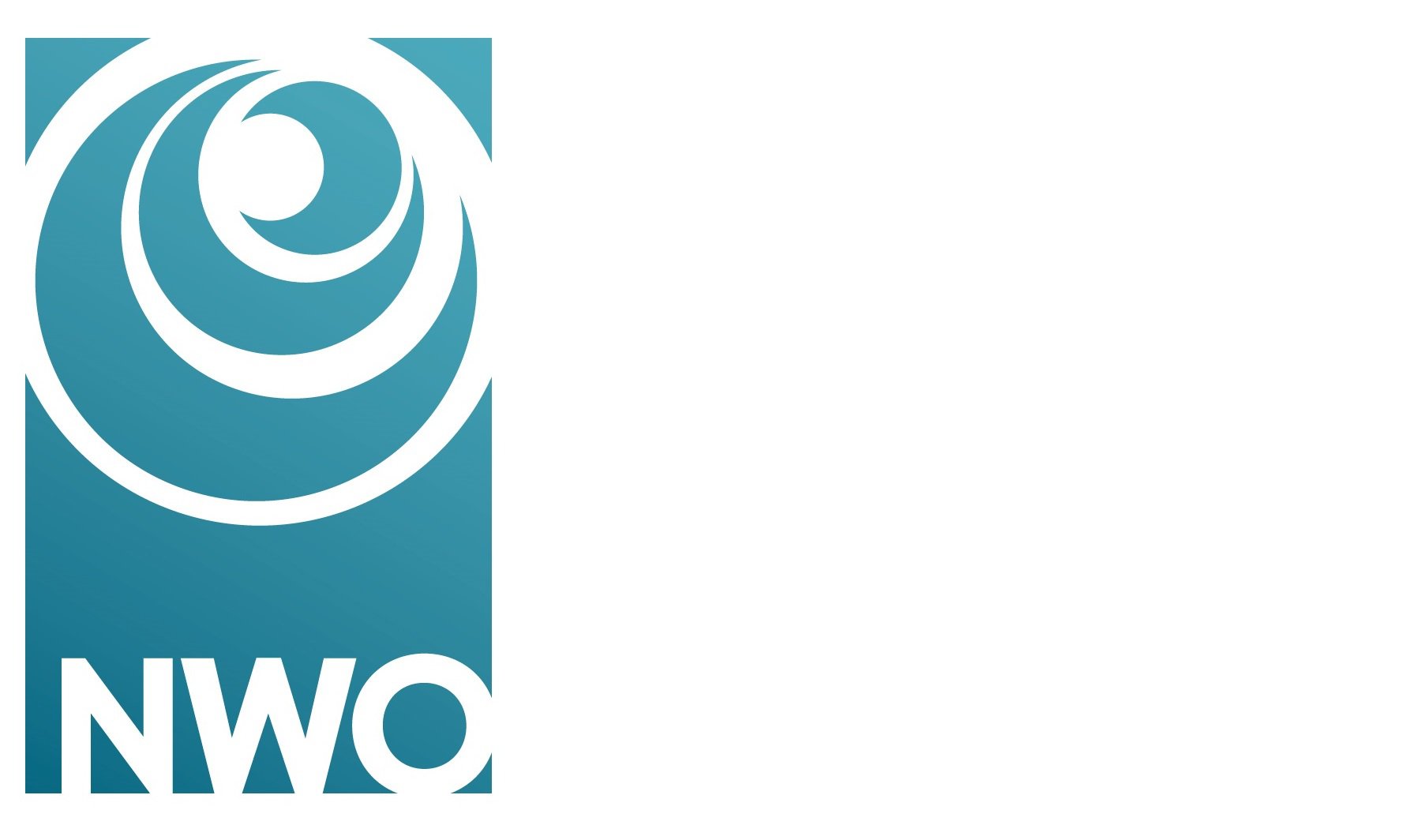} 
    \end{minipage}\hfill
    \begin{minipage}{0.45\textwidth}
        \centering
        \includegraphics[width=0.4\textwidth]{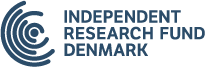} 
    \end{minipage}
\end{figure}

\end{document}